\definecolor{liens}{rgb}{1,0,0}
\newtheorem{thmintro}{Theorem}
\newtheorem{thm}{Theorem}[section]
\newtheorem{cor}[thm]{Corollary}
\newtheorem{lemma}[thm]{Lemma}
\newtheorem{prop}[thm]{Proposition}
\theoremstyle{definition}
\newtheorem{defi}[thm]{Definition} 
\theoremstyle{remark}
\newtheorem{rmk}[thm]{Remark}
\newtheorem{ex}[thm]{Example}
\numberwithin{equation}{section}
\def\iup{{\tilde{\iota}}}
\def\N{\mathbb{N}}
\def\Z{\mathbb{Z}}
\def\C{\mathbb{C}}
\def\R{\mathbb{R}}
\def\Q{\mathbb{Q}}
\def\P1{\mathbb{P}^{1}}
\def\beq{\begin{equation}}
\def\eeq{\end{equation}}
\def\P2{\mathbb{P}^{2}}
\def\cW{\mathcal{W}}
\def\a{\alpha}
\def\b{\beta}
\def\ZX{{\mathbb Z}}
\def\GL{{\rm GL}}
\def\P1{\mathbb{P}^{1}}
\def\cU{\mathcal{U}}
\def\s{\sigma}
\def\q{\mathbf{q}}
\def\cM{\mathcal{M}}
\def\P1{\bold{P}^1}
\def\cW{\mathcal{W}}
\begin{document}

\title[$t$-derivative transcendence of the generating function of walks]{Length derivative of the generating function of  walks confined in the quarter plane}
\author{Thomas Dreyfus}
\address{Institut de Recherche Math\'ematique Avanc\'ee, U.M.R. 7501 Universit\'e de Strasbourg et C.N.R.S. 7, rue Ren\'e Descartes 67084 Strasbourg, FRANCE}
\email{dreyfus@math.unistra.fr}
\author{Charlotte Hardouin}
\address{Universit\'e Paul Sabatier - Institut de Math\'ematiques de Toulouse, 118 route de Narbonne, 31062 Toulouse.}
\email{hardouin@math.univ-toulouse.fr}
\keywords{Random walks, Difference Galois theory, Transcendence, Valued differential fields.}

\thanks{This project has been partially founded by ANR De rerum natura project (ANR-19-CE40-0018). The second author would like to thank the ANR-11-LABX-0040-CIMI within
the program ANR-11-IDEX-0002-0 for its partial support.}

 \subjclass[2010]{05A15,30D05,39A06}
\date{\today}

\bibliographystyle{amsalpha} 
\begin{abstract} 
In the present paper, we use difference Galois theory  to study the nature of the generating function  counting walks with small steps in the quarter plane. These series are trivariate formal power series
$Q(x,y,t)$ that count the number of walks  confined in the first quadrant of the plane with a fixed  set of admissible steps, called the model of the walk. While the variables $x$ and $y$ are associated to the ending point of the path, the variable $t$ encodes its length. In this paper, we prove 
that in the unweighted case, $Q(x,y,t)$ satisfies an algebraic differential relation with respect to $t$ if and only if it satisfies an algebraic differential relation with respect  $x$ (resp.~$y$). Combined   with \cite{BMM,BostanKauersTheCompleteGenerating,BBMR16,DHRS,DreyfusHardouinRoquesSingerGenuszero},   we  are able to characterize the $t$-differential transcendence of the $79$ models of walks  listed by Bousquet-M\'elou  and Mishna.
\end{abstract}

\maketitle
\setcounter{tocdepth}{1}
\tableofcontents 
\sloppy 
\section*{Introduction}

Classifying   lattice walks in restricted domains is an important problem in enumerative combinatorics. Recently much progress has been 
made in the study of walks with small steps in the quarter plane. A small steps model in the quarter plane $\Z_{\geq 0}\times \Z_{\geq 0}$ is composed by a set of admissible cardinal directions $\mathcal{D}\subset \{
\begin{tikzpicture}[scale=0.3]
;
\draw[thick,->](0,0)--(-1,0);
\end{tikzpicture}
,
\begin{tikzpicture}[scale=0.3]
;
\draw[thick,->](0,0)--(-1,1);
\end{tikzpicture}
 ,
 \begin{tikzpicture}[scale=0.3]
;
\draw[thick,->](0,0)--(0,1);
\end{tikzpicture}
 ,\begin{tikzpicture}[scale=0.3]
;
\draw[thick,->](0,0)--(1,1);
\end{tikzpicture},
\begin{tikzpicture}[scale=0.3]
;
\draw[thick,->](0,0)--(1,0);
\end{tikzpicture}
,
\begin{tikzpicture}[scale=0.3]
;
\draw[thick,->](0,0)--(1,-1);
\end{tikzpicture}
,
\begin{tikzpicture}[scale=0.3]
;
\draw[thick,->](0,0)--(0,-1);
\end{tikzpicture}
,
\begin{tikzpicture}[scale=0.3]
;
\draw[thick,->](0,0)--(-1,-1);
\end{tikzpicture} \}$. Given $\mathcal{D}$, we consider the walks that start at $(0,0)$, with directions in $\mathcal{D}$, and that stay in the quarter plane, for instance:
\begin{center}
\begin{tikzpicture}[scale=.8, baseline=(current bounding box.center)]
\foreach \x in {0,1,2,3,4,5,6,7,8,9,10}
  \foreach \y in {0,1,2,3,4}
    \fill(\x,\y) circle[radius=0pt];
\draw (0,0)--(10,0);
\draw (0,0)--(0,4);
\draw[->](0,0)--(1,1);
\draw[->](1,1)--(1,0);
\draw[->](1,0)--(0,1);
\draw[->](0,1)--(1,2);
\draw[->](1,2)--(2,1);
\draw[->](2,1)--(2,0);
\draw[->](2,0)--(3,1);
\draw[->](3,1)--(3,0);
\draw[->](3,0)--(4,1);
\draw[->](4,1)--(3,2);
\draw[->](3,2)--(2,3);
\draw[->](2,3)--(2,2);
\draw[->](2,2)--(3,3);
\draw[->](3,3)--(4,2);
\draw[->](4,2)--(4,1);
\draw[->](4,1)--(5,0);
\draw[->](5,0)--(6,1);
\draw[->](6,1)--(6,0);
\draw[->](6,0)--(7,1);
\draw[->](7,1)--(8,0);
\draw[->](8,0)--(9,1);
\draw[->](9,1)--(9,0);
\draw[->](9,0)--(10,1);
\draw[->](10,1)--(9,2);
\draw[->](9,2)--(8,3);
\draw[->](8,3)--(8,2);
\end{tikzpicture}
\quad\quad
$\mathcal{D}=\left\{\begin{tikzpicture}[scale=.4, baseline=(current bounding box.center)]
\foreach \x in {-1,0,1} \foreach \y in {-1,0,1} \fill(\x,\y) circle[radius=2pt];
\draw[thick,->](0,0)--(-1,1);
\draw[thick,->](0,0)--(1,1);
\draw[thick,->](0,0)--(1,-1);
\draw[thick,->](0,0)--(0,-1);
\end{tikzpicture}\right\}$
\end{center}

 For a given model, one defines $q_{i,j,k}$ to be the number of walks confined to the first quadrant of the plane that 
begin at $(0,0)$ and end at $(i,j)$ in $k$ admissible steps.  The algebraic nature of the  associated complete generating function $Q(x,y,t)=\sum_{i,j,k=0}^{\infty}q_{i,j,k}x^{i}y^{j}t^{k}$ captures many important combinatorial properties of the model: symmetries,  asymptotic information, and  recursive relations of the coefficients.

Among the $2^8-1=255$ models  in the first quadrant of the plane, Bousquet-M\'elou  and Mishna proved in \cite{BMM} that, after accounting for symmetries and eliminating the trivial and one dimensional cases, only $79$ cases remained.  It is worth mentioning that the generating function is algebraic in all the trivial and one dimensional cases.

For any choice of a variable  $\star$ among $x,y,t$, one classifies the algebraic nature of the generating series $Q(x,y,t)$ with respect to $\star$
as follows:
\begin{itemize}
\item  \emph{Algebraic cases:} the series $Q(x,y,t)$  satisfies a nontrivial polynomial relation with coefficients in $\Q(x,y,t)$. 
\item \emph{Transcendental $\star$-holonomic cases:} the series $Q(x,y,t)$ is transcendental and holonomic with respect to $\star$, i.e. there exists $n\in \Z_{\geq 0}$, such that  there exist $a_{0},\dots,a_{n}\in \Q (x,y,t)$, not 	all zero, such that 
$$
0 =\displaystyle \sum_{\ell=0}^{n}a_{\ell}\frac{d}{d \star}^{\ell}Q(x,y,t).
$$
\item \emph{Nonholonomic $\frac{d}{d\star}$-differentially algebraic cases:} the series $Q(x,y,t)$ is nonholonomic and $\frac{d}{d \star}$-differentially algebraic, i.e. 
there exists $n\in \Z_{\geq 0}$, such that there exists nonzero multivariate polynomial $P_{\star}\in  \Q(x,y,t)[X_{0},\dots,X_{n}]$, such that 
$$
0=P_{\star}(Q(x,y,t),\dots,\frac{d}{d \star}^{n}Q(x,y,t)).$$
   We stress out the fact that in the above definition, it is equivalent to  require that  $P_{\star}\in  \Q[X_{0},\dots,X_{n}]$, see Remark \ref{rmk:CdifftransQdifftrans}.
\item \emph{$\frac{d}{d \star}$-differentially transcendental cases:} the series is  not $\frac{d}{d \star}$-differentially algebraic.
\end{itemize}

The authors of \cite{BMM,BostanKauersTheCompleteGenerating,BBMR16,DHRS,DreyfusHardouinRoquesSingerGenuszero}   proved that the  algebraic nature of the generating series was identical for the variables $x$ and $y$. The classification of the  models of walks regarding the algebraic nature of their series with respect to the variables $x$ and $y$  is the culmination of ten years of research and the works of many researchers  (see Figure \ref{figcas} below).

\pagebreak

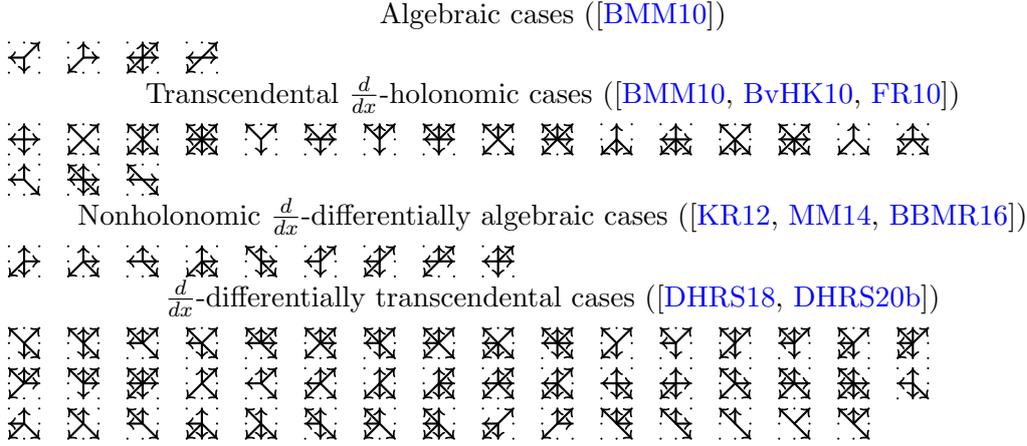
\begin{figure}[!h]
\begin{trivlist}
\item  \begin{center}
Algebraic cases (\cite{BMM})
\end{center}
$\begin{array}{llll}
\begin{tikzpicture}[scale=.2, baseline=(current bounding box.center)]
\foreach \x in {-1,0,1} \foreach \y in {-1,0,1} \fill(\x,\y) circle[radius=2pt];
\draw[thick,->](0,0)--(1,1);
\draw[thick,->](0,0)--(-1,0);
\draw[thick,->](0,0)--(0,-1);
\end{tikzpicture}
& 
\begin{tikzpicture}[scale=.2, baseline=(current bounding box.center)]
\foreach \x in {-1,0,1} \foreach \y in {-1,0,1} \fill(\x,\y) circle[radius=2pt];
\draw[thick,->](0,0)--(0,1);
\draw[thick,->](0,0)--(1,0);
\draw[thick,->](0,0)--(-1,-1);
\end{tikzpicture} 
&
\begin{tikzpicture}[scale=.2, baseline=(current bounding box.center)]
\foreach \x in {-1,0,1} \foreach \y in {-1,0,1} \fill(\x,\y) circle[radius=2pt];
\draw[thick,->](0,0)--(0,1);
\draw[thick,->](0,0)--(1,1);
\draw[thick,->](0,0)--(-1,0);
\draw[thick,->](0,0)--(1,0);
\draw[thick,->](0,0)--(-1,-1);
\draw[thick,->](0,0)--(0,-1);
\end{tikzpicture}
&
\begin{tikzpicture}[scale=.2, baseline=(current bounding box.center)]
\foreach \x in {-1,0,1} \foreach \y in {-1,0,1} \fill(\x,\y) circle[radius=2pt];
\draw[thick,->](0,0)--(1,1);
\draw[thick,->](0,0)--(-1,-1);
\draw[thick,->](0,0)--(1,0);
\draw[thick,->](0,0)--(-1,0);
\end{tikzpicture}
\end{array}$

\item \begin{center} Transcendental $\frac{d}{dx}$-holonomic cases (\cite{BMM,BostanKauersTheCompleteGenerating,FR10})\end{center}

$\begin{array}{llllllllllllllll}
\begin{tikzpicture}[scale=.2, baseline=(current bounding box.center)]
\foreach \x in {-1,0,1} \foreach \y in {-1,0,1} \fill(\x,\y) circle[radius=2pt];
\draw[thick,->](0,0)--(0,1);
\draw[thick,->](0,0)--(-1,0);
\draw[thick,->](0,0)--(1,0);
\draw[thick,->](0,0)--(0,-1);
\end{tikzpicture}
&
\begin{tikzpicture}[scale=.2, baseline=(current bounding box.center)]
\foreach \x in {-1,0,1} \foreach \y in {-1,0,1} \fill(\x,\y) circle[radius=2pt];
\draw[thick,->](0,0)--(-1,1);
\draw[thick,->](0,0)--(1,1);
\draw[thick,->](0,0)--(-1,-1);
\draw[thick,->](0,0)--(1,-1);
\end{tikzpicture}
&
\begin{tikzpicture}[scale=.2, baseline=(current bounding box.center)]
\foreach \x in {-1,0,1} \foreach \y in {-1,0,1} \fill(\x,\y) circle[radius=2pt];
\draw[thick,->](0,0)--(-1,1);
\draw[thick,->](0,0)--(0,1);
\draw[thick,->](0,0)--(1,1);
\draw[thick,->](0,0)--(-1,-1);
\draw[thick,->](0,0)--(0,-1);
\draw[thick,->](0,0)--(1,-1);
\end{tikzpicture}
&
\begin{tikzpicture}[scale=.2, baseline=(current bounding box.center)]
\foreach \x in {-1,0,1} \foreach \y in {-1,0,1} \fill(\x,\y) circle[radius=2pt];
\draw[thick,->](0,0)--(-1,1);
\draw[thick,->](0,0)--(0,1);
\draw[thick,->](0,0)--(1,1);
\draw[thick,->](0,0)--(-1,0);
\draw[thick,->](0,0)--(1,0);
\draw[thick,->](0,0)--(-1,-1);
\draw[thick,->](0,0)--(0,-1);
\draw[thick,->](0,0)--(1,-1);
\end{tikzpicture}
&
\begin{tikzpicture}[scale=.2, baseline=(current bounding box.center)]
\foreach \x in {-1,0,1} \foreach \y in {-1,0,1} \fill(\x,\y) circle[radius=2pt];
\draw[thick,->](0,0)--(-1,1);
\draw[thick,->](0,0)--(1,1);
\draw[thick,->](0,0)--(0,-1);
\end{tikzpicture}
&
\begin{tikzpicture}[scale=.2, baseline=(current bounding box.center)]
\foreach \x in {-1,0,1} \foreach \y in {-1,0,1} \fill(\x,\y) circle[radius=2pt];
\draw[thick,->](0,0)--(-1,1);
\draw[thick,->](0,0)--(1,1);
\draw[thick,->](0,0)--(-1,0);
\draw[thick,->](0,0)--(1,0);
\draw[thick,->](0,0)--(0,-1);
\end{tikzpicture}
&
\begin{tikzpicture}[scale=.2, baseline=(current bounding box.center)]
\foreach \x in {-1,0,1} \foreach \y in {-1,0,1} \fill(\x,\y) circle[radius=2pt];
\draw[thick,->](0,0)--(-1,1);
\draw[thick,->](0,0)--(0,1);
\draw[thick,->](0,0)--(1,1);
\draw[thick,->](0,0)--(0,-1);
\end{tikzpicture}
&
\begin{tikzpicture}[scale=.2, baseline=(current bounding box.center)]
\foreach \x in {-1,0,1} \foreach \y in {-1,0,1} \fill(\x,\y) circle[radius=2pt];
\draw[thick,->](0,0)--(-1,1);
\draw[thick,->](0,0)--(0,1);
\draw[thick,->](0,0)--(1,1);
\draw[thick,->](0,0)--(-1,0);
\draw[thick,->](0,0)--(1,0);
\draw[thick,->](0,0)--(0,-1);
\end{tikzpicture}
&
\begin{tikzpicture}[scale=.2, baseline=(current bounding box.center)]
\foreach \x in {-1,0,1} \foreach \y in {-1,0,1} \fill(\x,\y) circle[radius=2pt];
\draw[thick,->](0,0)--(-1,1);
\draw[thick,->](0,0)--(0,1);
\draw[thick,->](0,0)--(1,1);
\draw[thick,->](0,0)--(-1,-1);
\draw[thick,->](0,0)--(1,-1);
\end{tikzpicture}
& 
\begin{tikzpicture}[scale=.2, baseline=(current bounding box.center)]
\foreach \x in {-1,0,1} \foreach \y in {-1,0,1} \fill(\x,\y) circle[radius=2pt];
\draw[thick,->](0,0)--(-1,1);
\draw[thick,->](0,0)--(0,1);
\draw[thick,->](0,0)--(1,1);
\draw[thick,->](0,0)--(-1,0);
\draw[thick,->](0,0)--(1,0);
\draw[thick,->](0,0)--(-1,-1);
\draw[thick,->](0,0)--(1,-1);
\end{tikzpicture}
&
\begin{tikzpicture}[scale=.2, baseline=(current bounding box.center)]
\foreach \x in {-1,0,1} \foreach \y in {-1,0,1} \fill(\x,\y) circle[radius=2pt];
\draw[thick,->](0,0)--(0,1);
\draw[thick,->](0,0)--(-1,-1);
\draw[thick,->](0,0)--(0,-1);
\draw[thick,->](0,0)--(1,-1);
\end{tikzpicture}
&
\begin{tikzpicture}[scale=.2, baseline=(current bounding box.center)]
\foreach \x in {-1,0,1} \foreach \y in {-1,0,1} \fill(\x,\y) circle[radius=2pt];
\draw[thick,->](0,0)--(0,1);
\draw[thick,->](0,0)--(-1,0);
\draw[thick,->](0,0)--(1,0);
\draw[thick,->](0,0)--(-1,-1);red
\draw[thick,->](0,0)--(0,-1);
\draw[thick,->](0,0)--(1,-1);
\end{tikzpicture}
&
\begin{tikzpicture}[scale=.2, baseline=(current bounding box.center)]
\foreach \x in {-1,0,1} \foreach \y in {-1,0,1} \fill(\x,\y) circle[radius=2pt];
\draw[thick,->](0,0)--(-1,1);
\draw[thick,->](0,0)--(1,1);
\draw[thick,->](0,0)--(-1,-1);
\draw[thick,->](0,0)--(0,-1);
\draw[thick,->](0,0)--(1,-1);
\end{tikzpicture}
&
\begin{tikzpicture}[scale=.2, baseline=(current bounding box.center)]
\foreach \x in {-1,0,1} \foreach \y in {-1,0,1} \fill(\x,\y) circle[radius=2pt];
\draw[thick,->](0,0)--(-1,1);
\draw[thick,->](0,0)--(1,1);
\draw[thick,->](0,0)--(-1,0);
\draw[thick,->](0,0)--(1,0);
\draw[thick,->](0,0)--(-1,-1);
\draw[thick,->](0,0)--(0,-1);
\draw[thick,->](0,0)--(1,-1);
\end{tikzpicture}
&
\begin{tikzpicture}[scale=.2, baseline=(current bounding box.center)]
\foreach \x in {-1,0,1} \foreach \y in {-1,0,1} \fill(\x,\y) circle[radius=2pt];
\draw[thick,->](0,0)--(0,1);
\draw[thick,->](0,0)--(1,-1);
\draw[thick,->](0,0)--(-1,-1);
\end{tikzpicture} 
&
\begin{tikzpicture}[scale=.2, baseline=(current bounding box.center)]
\foreach \x in {-1,0,1} \foreach \y in {-1,0,1} \fill(\x,\y) circle[radius=2pt];
\draw[thick,->](0,0)--(0,1);
\draw[thick,->](0,0)--(1,-1);
\draw[thick,->](0,0)--(-1,-1);
\draw[thick,->](0,0)--(1,0);
\draw[thick,->](0,0)--(-1,0);
\end{tikzpicture}\\
\begin{tikzpicture}[scale=.2, baseline=(current bounding box.center)]
\foreach \x in {-1,0,1} \foreach \y in {-1,0,1} \fill(\x,\y) circle[radius=2pt];
\draw[thick,->](0,0)--(0,1);
\draw[thick,->](0,0)--(-1,0);
\draw[thick,->](0,0)--(1,-1);
\end{tikzpicture}
&
\begin{tikzpicture}[scale=.2, baseline=(current bounding box.center)]
\foreach \x in {-1,0,1} \foreach \y in {-1,0,1} \fill(\x,\y) circle[radius=2pt];
\draw[thick,->](0,0)--(-1,1);
\draw[thick,->](0,0)--(0,1);
\draw[thick,->](0,0)--(-1,0);
\draw[thick,->](0,0)--(1,0);
\draw[thick,->](0,0)--(0,-1);
\draw[thick,->](0,0)--(1,-1);
\end{tikzpicture}&
\begin{tikzpicture}[scale=.2, baseline=(current bounding box.center)]
\foreach \x in {-1,0,1} \foreach \y in {-1,0,1} \fill(\x,\y) circle[radius=2pt];
\draw[thick,->](0,0)--(-1,1);
\draw[thick,->](0,0)--(1,-1);
\draw[thick,->](0,0)--(1,0);
\draw[thick,->](0,0)--(-1,0);
\end{tikzpicture}
&&&&&&&&&&&&
\end{array}$
\item
\begin{center} Nonholonomic $\frac{d}{dx}$-differentially algebraic cases (\cite{KurkRasch, MelcMish,BBMR16}) \end{center}
$\begin{array}{lllllllll}
\begin{tikzpicture}[scale=.2, baseline=(current bounding box.center)]
\foreach \x in {-1,0,1} \foreach \y in {-1,0,1} \fill(\x,\y) circle[radius=2pt];
\draw[thick,->](0,0)--(0,1);
\draw[thick,->](0,0)--(1,0);
\draw[thick,->](0,0)--(-1,-1);
\draw[thick,->](0,0)--(0,-1);
\end{tikzpicture}
&
\begin{tikzpicture}[scale=.2, baseline=(current bounding box.center)]
\foreach \x in {-1,0,1} \foreach \y in {-1,0,1} \fill(\x,\y) circle[radius=2pt];
\draw[thick,->](0,0)--(0,1);
\draw[thick,->](0,0)--(1,0);
\draw[thick,->](0,0)--(-1,-1);
\draw[thick,->](0,0)--(1,-1);
\end{tikzpicture}
&
\begin{tikzpicture}[scale=.2, baseline=(current bounding box.center)]
\foreach \x in {-1,0,1} \foreach \y in {-1,0,1} \fill(\x,\y) circle[radius=2pt];
\draw[thick,->](0,0)--(0,1);
\draw[thick,->](0,0)--(-1,0);
\draw[thick,->](0,0)--(1,0);
\draw[thick,->](0,0)--(1,-1);
\end{tikzpicture}
&
\begin{tikzpicture}[scale=.2, baseline=(current bounding box.center)]
\foreach \x in {-1,0,1} \foreach \y in {-1,0,1} \fill(\x,\y) circle[radius=2pt];
\draw[thick,->](0,0)--(0,1);
\draw[thick,->](0,0)--(1,0);
\draw[thick,->](0,0)--(-1,-1);
\draw[thick,->](0,0)--(0,-1);
\draw[thick,->](0,0)--(1,-1);
\end{tikzpicture}
&
\begin{tikzpicture}[scale=.2, baseline=(current bounding box.center)]
\foreach \x in {-1,0,1} \foreach \y in {-1,0,1} \fill(\x,\y) circle[radius=2pt];
\draw[thick,->](0,0)--(-1,1);
\draw[thick,->](0,0)--(0,1);
\draw[thick,->](0,0)--(1,0);
\draw[thick,->](0,0)--(0,-1);
\draw[thick,->](0,0)--(1,-1);
\end{tikzpicture}
&
\begin{tikzpicture}[scale=.2, baseline=(current bounding box.center)]
\foreach \x in {-1,0,1} \foreach \y in {-1,0,1} \fill(\x,\y) circle[radius=2pt];
\draw[thick,->](0,0)--(0,1);
\draw[thick,->](0,0)--(1,1);
\draw[thick,->](0,0)--(-1,0);
\draw[thick,->](0,0)--(0,-1);
\end{tikzpicture}
&
\begin{tikzpicture}[scale=.2, baseline=(current bounding box.center)]
\foreach \x in {-1,0,1} \foreach \y in {-1,0,1} \fill(\x,\y) circle[radius=2pt];
\draw[thick,->](0,0)--(0,1);
\draw[thick,->](0,0)--(1,1);
\draw[thick,->](0,0)--(-1,0);
\draw[thick,->](0,0)--(-1,-1);
\draw[thick,->](0,0)--(0,-1);
\end{tikzpicture}
&
\begin{tikzpicture}[scale=.2, baseline=(current bounding box.center)]
\foreach \x in {-1,0,1} \foreach \y in {-1,0,1} \fill(\x,\y) circle[radius=2pt];
\draw[thick,->](0,0)--(0,1);
\draw[thick,->](0,0)--(1,1);
\draw[thick,->](0,0)--(-1,0);
\draw[thick,->](0,0)--(1,0);
\draw[thick,->](0,0)--(-1,-1);
\end{tikzpicture} 
&
\begin{tikzpicture}[scale=.2, baseline=(current bounding box.center)]
\foreach \x in {-1,0,1} \foreach \y in {-1,0,1} \fill(\x,\y) circle[radius=2pt];
\draw[thick,->](0,0)--(0,1);
\draw[thick,->](0,0)--(1,1);
\draw[thick,->](0,0)--(-1,0);
\draw[thick,->](0,0)--(1,0);
\draw[thick,->](0,0)--(0,-1);
\end{tikzpicture}\end{array}$

\item
\begin{center} $\frac{d}{dx}$-differentially transcendental cases (\cite{DHRS,DreyfusHardouinRoquesSingerGenuszero})\end{center}
$\begin{array}{llllllllllllllll}
\begin{tikzpicture}[scale=.2, baseline=(current bounding box.center)]
\foreach \x in {-1,0,1} \foreach \y in {-1,0,1} \fill(\x,\y) circle[radius=2pt];
\draw[thick,->](0,0)--(-1,1);
\draw[thick,->](0,0)--(1,1);
\draw[thick,->](0,0)--(0,-1);
\draw[thick,->](0,0)--(1,-1);
\end{tikzpicture}
& 
\begin{tikzpicture}[scale=.2, baseline=(current bounding box.center)]
\foreach \x in {-1,0,1} \foreach \y in {-1,0,1} \fill(\x,\y) circle[radius=2pt];
\draw[thick,->](0,0)--(-1,1);
\draw[thick,->](0,0)--(0,1);
\draw[thick,->](0,0)--(1,1);
\draw[thick,->](0,0)--(0,-1);
\draw[thick,->](0,0)--(1,-1);
\end{tikzpicture}
&
\begin{tikzpicture}[scale=.2, baseline=(current bounding box.center)]
\foreach \x in {-1,0,1} \foreach \y in {-1,0,1} \fill(\x,\y) circle[radius=2pt];
\draw[thick,->](0,0)--(-1,1);
\draw[thick,->](0,0)--(0,1);
\draw[thick,->](0,0)--(1,1);
\draw[thick,->](0,0)--(-1,0);
\draw[thick,->](0,0)--(1,-1);
\end{tikzpicture}
&
\begin{tikzpicture}[scale=.2, baseline=(current bounding box.center)]
\foreach \x in {-1,0,1} \foreach \y in {-1,0,1} \fill(\x,\y) circle[radius=2pt];
\draw[thick,->](0,0)--(-1,1);
\draw[thick,->](0,0)--(1,1);
\draw[thick,->](0,0)--(-1,0);
\draw[thick,->](0,0)--(0,-1);
\draw[thick,->](0,0)--(1,-1);
\end{tikzpicture}
&
\begin{tikzpicture}[scale=.2, baseline=(current bounding box.center)]
\foreach \x in {-1,0,1} \foreach \y in {-1,0,1} \fill(\x,\y) circle[radius=2pt];
\draw[thick,->](0,0)--(-1,1);
\draw[thick,->](0,0)--(0,1);
\draw[thick,->](0,0)--(1,1);
\draw[thick,->](0,0)--(-1,0);
\draw[thick,->](0,0)--(1,0);
\draw[thick,->](0,0)--(1,-1);
\end{tikzpicture}
&
\begin{tikzpicture}[scale=.2, baseline=(current bounding box.center)]
\foreach \x in {-1,0,1} \foreach \y in {-1,0,1} \fill(\x,\y) circle[radius=2pt];
\draw[thick,->](0,0)--(-1,1);
\draw[thick,->](0,0)--(0,1);
\draw[thick,->](0,0)--(1,1);
\draw[thick,->](0,0)--(1,0);
\draw[thick,->](0,0)--(-1,-1);
\draw[thick,->](0,0)--(1,-1);
\end{tikzpicture}
&
\begin{tikzpicture}[scale=.2, baseline=(current bounding box.center)]
\foreach \x in {-1,0,1} \foreach \y in {-1,0,1} \fill(\x,\y) circle[radius=2pt];
\draw[thick,->](0,0)--(-1,1);
\draw[thick,->](0,0)--(0,1);
\draw[thick,->](0,0)--(1,1);
\draw[thick,->](0,0)--(-1,0);
\draw[thick,->](0,0)--(0,-1);
\draw[thick,->](0,0)--(1,-1);
\end{tikzpicture}
&
\begin{tikzpicture}[scale=.2, baseline=(current bounding box.center)]
\foreach \x in {-1,0,1} \foreach \y in {-1,0,1} \fill(\x,\y) circle[radius=2pt];
\draw[thick,->](0,0)--(-1,1);
\draw[thick,->](0,0)--(0,1);
\draw[thick,->](0,0)--(1,1);
\draw[thick,->](0,0)--(-1,0);
\draw[thick,->](0,0)--(-1,-1);
\draw[thick,->](0,0)--(1,-1);
\end{tikzpicture}
&
\begin{tikzpicture}[scale=.2, baseline=(current bounding box.center)]
\foreach \x in {-1,0,1} \foreach \y in {-1,0,1} \fill(\x,\y) circle[radius=2pt];
\draw[thick,->](0,0)--(-1,1);
\draw[thick,->](0,0)--(1,1);
\draw[thick,->](0,0)--(-1,0);
\draw[thick,->](0,0)--(-1,-1);
\draw[thick,->](0,0)--(0,-1);
\draw[thick,->](0,0)--(1,-1);
\end{tikzpicture}
&
\begin{tikzpicture}[scale=.2, baseline=(current bounding box.center)]
\foreach \x in {-1,0,1} \foreach \y in {-1,0,1} \fill(\x,\y) circle[radius=2pt];
\draw[thick,->](0,0)--(-1,1);
\draw[thick,->](0,0)--(0,1);
\draw[thick,->](0,0)--(1,1);
\draw[thick,->](0,0)--(-1,0);
\draw[thick,->](0,0)--(1,0);
\draw[thick,->](0,0)--(0,-1);
\draw[thick,->](0,0)--(1,-1);
\end{tikzpicture}
&
\begin{tikzpicture}[scale=.2, baseline=(current bounding box.center)]
\foreach \x in {-1,0,1} \foreach \y in {-1,0,1} \fill(\x,\y) circle[radius=2pt];
\draw[thick,->](0,0)--(-1,1);
\draw[thick,->](0,0)--(1,1);
\draw[thick,->](0,0)--(-1,-1);
\draw[thick,->](0,0)--(0,-1);
\end{tikzpicture}
&
\begin{tikzpicture}[scale=.2, baseline=(current bounding box.center)]
\foreach \x in {-1,0,1} \foreach \y in {-1,0,1} \fill(\x,\y) circle[radius=2pt];
\draw[thick,->](0,0)--(-1,1);
\draw[thick,->](0,0)--(1,1);
\draw[thick,->](0,0)--(-1,0);
\draw[thick,->](0,0)--(0,-1);
\end{tikzpicture}
&
\begin{tikzpicture}[scale=.2, baseline=(current bounding box.center)]
\foreach \x in {-1,0,1} \foreach \y in {-1,0,1} \fill(\x,\y) circle[radius=2pt];
\draw[thick,->](0,0)--(-1,1);
\draw[thick,->](0,0)--(0,1);
\draw[thick,->](0,0)--(1,1);
\draw[thick,->](0,0)--(-1,-1);
\draw[thick,->](0,0)--(0,-1);
\end{tikzpicture}
&
\begin{tikzpicture}[scale=.2, baseline=(current bounding box.center)]
\foreach \x in {-1,0,1} \foreach \y in {-1,0,1} \fill(\x,\y) circle[radius=2pt];
\draw[thick,->](0,0)--(-1,1);
\draw[thick,->](0,0)--(0,1);
\draw[thick,->](0,0)--(1,1);
\draw[thick,->](0,0)--(-1,0);
\draw[thick,->](0,0)--(0,-1);
\end{tikzpicture}
&
\begin{tikzpicture}[scale=.2, baseline=(current bounding box.center)]
\foreach \x in {-1,0,1} \foreach \y in {-1,0,1} \fill(\x,\y) circle[radius=2pt];
\draw[thick,->](0,0)--(-1,1);
\draw[thick,->](0,0)--(1,1);
\draw[thick,->](0,0)--(-1,0);
\draw[thick,->](0,0)--(-1,-1);
\draw[thick,->](0,0)--(0,-1);
\end{tikzpicture}
&
\begin{tikzpicture}[scale=.2, baseline=(current bounding box.center)]
\foreach \x in {-1,0,1} \foreach \y in {-1,0,1} \fill(\x,\y) circle[radius=2pt];
\draw[thick,->](0,0)--(-1,1);
\draw[thick,->](0,0)--(0,1);
\draw[thick,->](0,0)--(1,1);
\draw[thick,->](0,0)--(-1,0);
\draw[thick,->](0,0)--(-1,-1);
\draw[thick,->](0,0)--(0,-1);
\end{tikzpicture}
\\
\begin{tikzpicture}[scale=.2, baseline=(current bounding box.center)]
\foreach \x in {-1,0,1} \foreach \y in {-1,0,1} \fill(\x,\y) circle[radius=2pt];
\draw[thick,->](0,0)--(-1,1);
\draw[thick,->](0,0)--(0,1);
\draw[thick,->](0,0)--(1,1);
\draw[thick,->](0,0)--(1,0);
\draw[thick,->](0,0)--(-1,-1);
\end{tikzpicture}
&
\begin{tikzpicture}[scale=.2, baseline=(current bounding box.center)]
\foreach \x in {-1,0,1} \foreach \y in {-1,0,1} \fill(\x,\y) circle[radius=2pt];
\draw[thick,->](0,0)--(-1,1);
\draw[thick,->](0,0)--(0,1);
\draw[thick,->](0,0)--(1,1);
\draw[thick,->](0,0)--(1,0);
\draw[thick,->](0,0)--(0,-1);
\end{tikzpicture}
&
\begin{tikzpicture}[scale=.2, baseline=(current bounding box.center)]
\foreach \x in {-1,0,1} \foreach \y in {-1,0,1} \fill(\x,\y) circle[radius=2pt];
\draw[thick,->](0,0)--(-1,1);
\draw[thick,->](0,0)--(0,1);
\draw[thick,->](0,0)--(1,1);
\draw[thick,->](0,0)--(-1,0);
\draw[thick,->](0,0)--(1,0);
\draw[thick,->](0,0)--(-1,-1);
\draw[thick,->](0,0)--(0,-1);
\end{tikzpicture}
&
\begin{tikzpicture}[scale=.2, baseline=(current bounding box.center)]
\foreach \x in {-1,0,1} \foreach \y in {-1,0,1} \fill(\x,\y) circle[radius=2pt];
\draw[thick,->](0,0)--(0,1);
\draw[thick,->](0,0)--(1,1);
\draw[thick,->](0,0)--(-1,-1);
\draw[thick,->](0,0)--(1,-1);
\end{tikzpicture}
&
\begin{tikzpicture}[scale=.2, baseline=(current bounding box.center)]
\foreach \x in {-1,0,1} \foreach \y in {-1,0,1} \fill(\x,\y) circle[radius=2pt];
\draw[thick,->](0,0)--(0,1);
\draw[thick,->](0,0)--(1,1);
\draw[thick,->](0,0)--(-1,0);
\draw[thick,->](0,0)--(1,-1);
\end{tikzpicture}
&
\begin{tikzpicture}[scale=.2, baseline=(current bounding box.center)]
\foreach \x in {-1,0,1} \foreach \y in {-1,0,1} \fill(\x,\y) circle[radius=2pt];
\draw[thick,->](0,0)--(0,1);
\draw[thick,->](0,0)--(1,1);
\draw[thick,->](0,0)--(-1,0);
\draw[thick,->](0,0)--(-1,-1);
\draw[thick,->](0,0)--(1,-1);
\end{tikzpicture}
&
\begin{tikzpicture}[scale=.2, baseline=(current bounding box.center)]
\foreach \x in {-1,0,1} \foreach \y in {-1,0,1} \fill(\x,\y) circle[radius=2pt];
\draw[thick,->](0,0)--(0,1);
\draw[thick,->](0,0)--(1,1);
\draw[thick,->](0,0)--(-1,-1);
\draw[thick,->](0,0)--(0,-1);
\draw[thick,->](0,0)--(1,-1);
\end{tikzpicture}
&
\begin{tikzpicture}[scale=.2, baseline=(current bounding box.center)]
\foreach \x in {-1,0,1} \foreach \y in {-1,0,1} \fill(\x,\y) circle[radius=2pt];
\draw[thick,->](0,0)--(0,1);
\draw[thick,->](0,0)--(1,1);
\draw[thick,->](0,0)--(1,0);
\draw[thick,->](0,0)--(-1,-1);
\draw[thick,->](0,0)--(0,-1);
\draw[thick,->](0,0)--(1,-1);
\end{tikzpicture}
&
\begin{tikzpicture}[scale=.2, baseline=(current bounding box.center)]
\foreach \x in {-1,0,1} \foreach \y in {-1,0,1} \fill(\x,\y) circle[radius=2pt];
\draw[thick,->](0,0)--(0,1);
\draw[thick,->](0,0)--(1,1);
\draw[thick,->](0,0)--(-1,0);
\draw[thick,->](0,0)--(1,0);
\draw[thick,->](0,0)--(-1,-1);
\draw[thick,->](0,0)--(1,-1);
\end{tikzpicture}
&
 \begin{tikzpicture}[scale=.2, baseline=(current bounding box.center)]
\foreach \x in {-1,0,1} \foreach \y in {-1,0,1} \fill(\x,\y) circle[radius=2pt];
\draw[thick,->](0,0)--(0,1);
\draw[thick,->](0,0)--(1,1);
\draw[thick,->](0,0)--(-1,0);
\draw[thick,->](0,0)--(-1,-1);
\draw[thick,->](0,0)--(0,-1);
\draw[thick,->](0,0)--(1,-1);
\end{tikzpicture}
&
\begin{tikzpicture}[scale=.2, baseline=(current bounding box.center)]
\foreach \x in {-1,0,1} \foreach \y in {-1,0,1} \fill(\x,\y) circle[radius=2pt];
\draw[thick,->](0,0)--(0,1);
\draw[thick,->](0,0)--(-1,0);
\draw[thick,->](0,0)--(1,0);
\draw[thick,->](0,0)--(0,-1);
\draw[thick,->](0,0)--(1,-1);
\end{tikzpicture}
&
\begin{tikzpicture}[scale=.2, baseline=(current bounding box.center)]
\foreach \x in {-1,0,1} \foreach \y in {-1,0,1} \fill(\x,\y) circle[radius=2pt];
\draw[thick,->](0,0)--(0,1);
\draw[thick,->](0,0)--(-1,0);
\draw[thick,->](0,0)--(1,0);
\draw[thick,->](0,0)--(-1,-1);
\draw[thick,->](0,0)--(0,-1);
\end{tikzpicture}
&
\begin{tikzpicture}[scale=.2, baseline=(current bounding box.center)]
\foreach \x in {-1,0,1} \foreach \y in {-1,0,1} \fill(\x,\y) circle[radius=2pt];
\draw[thick,->](0,0)--(-1,1);
\draw[thick,->](0,0)--(0,1);
\draw[thick,->](0,0)--(1,0);
\draw[thick,->](0,0)--(-1,-1);
\draw[thick,->](0,0)--(1,-1);
\end{tikzpicture}
&
\begin{tikzpicture}[scale=.2, baseline=(current bounding box.center)]
\foreach \x in {-1,0,1} \foreach \y in {-1,0,1} \fill(\x,\y) circle[radius=2pt];
\draw[thick,->](0,0)--(-1,1);
\draw[thick,->](0,0)--(0,1);
\draw[thick,->](0,0)--(-1,0);
\draw[thick,->](0,0)--(1,0);
\draw[thick,->](0,0)--(-1,-1);
\draw[thick,->](0,0)--(1,-1);
\end{tikzpicture}
&
\begin{tikzpicture}[scale=.2, baseline=(current bounding box.center)]
\foreach \x in {-1,0,1} \foreach \y in {-1,0,1} \fill(\x,\y) circle[radius=2pt];
\draw[thick,->](0,0)--(-1,1);
\draw[thick,->](0,0)--(0,1);
\draw[thick,->](0,0)--(-1,0);
\draw[thick,->](0,0)--(1,0);
\draw[thick,->](0,0)--(-1,-1);
\draw[thick,->](0,0)--(0,-1);
\draw[thick,->](0,0)--(1,-1);
\end{tikzpicture}
&
\begin{tikzpicture}[scale=.2, baseline=(current bounding box.center)]
\foreach \x in {-1,0,1} \foreach \y in {-1,0,1} \fill(\x,\y) circle[radius=2pt];
\draw[thick,->](0,0)--(0,1);
\draw[thick,->](0,0)--(-1,0);
\draw[thick,->](0,0)--(0,-1);
\draw[thick,->](0,0)--(1,-1);
\end{tikzpicture}
\\
\begin{tikzpicture}[scale=.2, baseline=(current bounding box.center)]
\foreach \x in {-1,0,1} \foreach \y in {-1,0,1} \fill(\x,\y) circle[radius=2pt];
\draw[thick,->](0,0)--(0,1);
\draw[thick,->](0,0)--(-1,0);
\draw[thick,->](0,0)--(-1,-1);
\draw[thick,->](0,0)--(1,-1);
\end{tikzpicture}
&
\begin{tikzpicture}[scale=.2, baseline=(current bounding box.center)]
\foreach \x in {-1,0,1} \foreach \y in {-1,0,1} \fill(\x,\y) circle[radius=2pt];
\draw[thick,->](0,0)--(-1,1);
\draw[thick,->](0,0)--(0,1);
\draw[thick,->](0,0)--(-1,-1);
\draw[thick,->](0,0)--(1,-1);
\end{tikzpicture}
&
\begin{tikzpicture}[scale=.2, baseline=(current bounding box.center)]
\foreach \x in {-1,0,1} \foreach \y in {-1,0,1} \fill(\x,\y) circle[radius=2pt];
\draw[thick,->](0,0)--(-1,1);
\draw[thick,->](0,0)--(0,1);
\draw[thick,->](0,0)--(-1,0);
\draw[thick,->](0,0)--(1,-1);
\end{tikzpicture}
&
\begin{tikzpicture}[scale=.2, baseline=(current bounding box.center)]
\foreach \x in {-1,0,1} \foreach \y in {-1,0,1} \fill(\x,\y) circle[radius=2pt];
\draw[thick,->](0,0)--(0,1);
\draw[thick,->](0,0)--(-1,0);
\draw[thick,->](0,0)--(-1,-1);
\draw[thick,->](0,0)--(0,-1);
\draw[thick,->](0,0)--(1,-1);
\end{tikzpicture}
&
\begin{tikzpicture}[scale=.2, baseline=(current bounding box.center)]
\foreach \x in {-1,0,1} \foreach \y in {-1,0,1} \fill(\x,\y) circle[radius=2pt];
\draw[thick,->](0,0)--(-1,1);
\draw[thick,->](0,0)--(0,1);
\draw[thick,->](0,0)--(-1,-1);
\draw[thick,->](0,0)--(0,-1);
\draw[thick,->](0,0)--(1,-1);
\end{tikzpicture}
&
 \begin{tikzpicture}[scale=.2, baseline=(current bounding box.center)]
\foreach \x in {-1,0,1} \foreach \y in {-1,0,1} \fill(\x,\y) circle[radius=2pt];
\draw[thick,->](0,0)--(-1,1);
\draw[thick,->](0,0)--(0,1);
\draw[thick,->](0,0)--(-1,0);
\draw[thick,->](0,0)--(0,-1);
\draw[thick,->](0,0)--(1,-1);
\end{tikzpicture}
&
 \begin{tikzpicture}[scale=.2, baseline=(current bounding box.center)]
\foreach \x in {-1,0,1} \foreach \y in {-1,0,1} \fill(\x,\y) circle[radius=2pt];
\draw[thick,->](0,0)--(-1,1);
\draw[thick,->](0,0)--(0,1);
\draw[thick,->](0,0)--(-1,0);
\draw[thick,->](0,0)--(-1,-1);
\draw[thick,->](0,0)--(1,-1);
\end{tikzpicture}
&
\begin{tikzpicture}[scale=.2, baseline=(current bounding box.center)]
\foreach \x in {-1,0,1} \foreach \y in {-1,0,1} \fill(\x,\y) circle[radius=2pt];
\draw[thick,->](0,0)--(-1,1);
\draw[thick,->](0,0)--(0,1);
\draw[thick,->](0,0)--(-1,0);
\draw[thick,->](0,0)--(-1,-1);
\draw[thick,->](0,0)--(0,-1);
\draw[thick,->](0,0)--(1,-1);
\end{tikzpicture}
& 
\begin{tikzpicture}[scale=.2, baseline=(current bounding box.center)]
\foreach \x in {-1,0,1} \foreach \y in {-1,0,1} \fill(\x,\y) circle[radius=2pt];
\draw[thick,->](0,0)--(1,1);
\draw[thick,->](0,0)--(-1,0);
\draw[thick,->](0,0)--(-1,-1);
\draw[thick,->](0,0)--(0,-1);
\end{tikzpicture}
&
\begin{tikzpicture}[scale=.2, baseline=(current bounding box.center)]
\foreach \x in {-1,0,1} \foreach \y in {-1,0,1} \fill(\x,\y) circle[radius=2pt];
\draw[thick,->](0,0)--(0,1);
\draw[thick,->](0,0)--(1,1);
\draw[thick,->](0,0)--(1,0);
\draw[thick,->](0,0)--(-1,-1);
\end{tikzpicture}
& 
\begin{tikzpicture}[scale=.2, baseline=(current bounding box.center)]
\foreach \x in {-1,0,1} \foreach \y in {-1,0,1} \fill(\x,\y) circle[radius=2pt];
\draw[thick,->](0,0)--(-1,1);
\draw[thick,->](0,0)--(0,1);
\draw[thick,->](0,0)--(1,1);
\draw[thick,->](0,0)--(1,0);
\draw[thick,->](0,0)--(1,-1);
\end{tikzpicture} 
&
\begin{tikzpicture}[scale=.2, baseline=(current bounding box.center)]
\foreach \x in {-1,0,1} \foreach \y in {-1,0,1} \fill(\x,\y) circle[radius=2pt];
\draw[thick,->](0,0)--(-1,1);
\draw[thick,->](0,0)--(0,1);
\draw[thick,->](0,0)--(1,0);
\draw[thick,->](0,0)--(1,-1);
\end{tikzpicture} 
&
\begin{tikzpicture}[scale=.2, baseline=(current bounding box.center)]
\foreach \x in {-1,0,1} \foreach \y in {-1,0,1} \fill(\x,\y) circle[radius=2pt];
\draw[thick,->](0,0)--(-1,1);
\draw[thick,->](0,0)--(0,1);
\draw[thick,->](0,0)--(1,-1);
\end{tikzpicture} 
& 
\begin{tikzpicture}[scale=.2, baseline=(current bounding box.center)]
\foreach \x in {-1,0,1} \foreach \y in {-1,0,1} \fill(\x,\y) circle[radius=2pt];
\draw[thick,->](0,0)--(-1,1);
\draw[thick,->](0,0)--(1,1);
\draw[thick,->](0,0)--(1,-1);
\end{tikzpicture} 
&
\begin{tikzpicture}[scale=.2, baseline=(current bounding box.center)]
\foreach \x in {-1,0,1} \foreach \y in {-1,0,1} \fill(\x,\y) circle[radius=2pt];
\draw[thick,->](0,0)--(-1,1);
\draw[thick,->](0,0)--(0,1);
\draw[thick,->](0,0)--(1,1);
\draw[thick,->](0,0)--(1,-1);
\end{tikzpicture}&
\end{array}$
\end{trivlist}
\caption{Classification of the $79$ models with respect to the $x$ and $y$-variables.}\label{figcas}
\end{figure}

\subsection*{Statement of the main result}
In this paper, we  address the question of the classification with respect to the variable $t$ and we prove that this classification coincides with the classification with respect to $x$ and $y$. There is a priori no relation between the $\frac{d}{dx}$ and $\frac{d}{dt}$ differential algebraic properties of a function in these two variables. For instance,   the function $t\Gamma(x)$ is holonomic with respect to $t$ but not differentially algebraic with respect to $x$, thanks to Hölder's result. In that case, the fibration induced by $t$ is ``isotrivial''. The main difficulty in our case is to show that such a situation does not happen and that the $x$ and $t$-algebraic behavior are intrinsically connected.

 Our main result is as follows:

 \begin{thmintro}[Theorem \ref{thm:genre0} and Corollary \ref{cor:diffalgxequivalentdiffalgtgenusone} below]\label{thm:introtdiffalgimpliesxdiffalg}
For any of the $79$ models of Figure~\ref{figcas},  the complete generating function
  is $\frac{d}{dt}$-differentially algebraic  over $\Q$ if and only if 
 it is $\frac{d}{dx}$-differentially algebraic over $\Q$. 
 \end{thmintro}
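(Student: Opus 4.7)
My plan is to use the kernel functional equation together with the geometric structure of the family of kernel curves $E_t = \{K(x,y,t)=0\}$ and the difference-Galois machinery of \cite{DHRS,DreyfusHardouinRoquesSingerGenuszero}. Restricting the kernel equation to $E_t$ produces a relation between $F(x,t)$ and $G(y,t)$ (the boundary sections of $Q$ along the axes); combined with the two involutions $\iota_1,\iota_2$ swapping the two roots of $K$ in $x$ and in $y$, this yields a discrete equation $F(\sigma_t(x),t) - F(x,t) = b(x,t)$, where $\sigma_t = \iota_1 \circ \iota_2$ is an automorphism of $E_t$ and $b(x,t)$ is an explicit rational function. From the earlier works, one knows that $\frac{d}{dx}$-differential algebraicity of $Q$ is equivalent to a telescoping-type criterion on $b$ (roughly, solvability of a cohomological equation of the form $h - h\circ\sigma_t = \partial b$ for some meromorphic $h$ on $E_t$), and this criterion has been checked model by model to distinguish the rows of Figure~\ref{figcas}.

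The core of the proof will be to show that $\frac{d}{dt}$-differential algebraicity of $Q$ forces the \emph{same} telescoping criterion. If $Q$ satisfies an algebraic ODE in $t$ then so do $F$ and $G$, since they are obtained from $Q$ by a rational manipulation of the kernel equation. Fixing an algebraic section $t \mapsto (x(t),y(t))$ of the family $\{E_t\}_t$ and iterating $\sigma_t$ produces a whole orbit of sections along which $F$ satisfies $t$-differential relations; the crux is then to translate this information into an $x$-differential relation on each fiber. This is precisely where the main obstacle lies: one must rule out an ``isotrivial'' pathology, as warned by the counterexample $t\Gamma(x)$ in the introduction, in which $t$-algebraicity says nothing about $x$-algebraicity. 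The argument hinges on verifying that for the $79$ models, the fibration $\{E_t\}_t$ is genuinely non-isotrivial: in the genus~$1$ case the $j$-invariant of $E_t$ is non-constant in $t$, and in the genus~$0$ case the singular points of $E_t$ move with $t$. This non-isotriviality is what allows a differential relation transverse to the fibers to impose a non-trivial relation along each fiber.

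Concretely, I would handle the genus~$0$ case directly (Theorem~\ref{thm:genre0}): a rational uniformization of $E_t$ presents $\sigma_t$ as a $q$-difference operator with $q = q(t)$ a non-constant function of $t$, and differentiating $F(\sigma_t(x),t) - F(x,t) = b(x,t)$ in $t$ while exploiting the non-constancy of $q(t)$ should produce a differential relation purely in $x$. The genus~$1$ case (Corollary~\ref{cor:diffalgxequivalentdiffalgtgenusone}) would then follow by passing through the theta-function uniformization: on the elliptic curve $E_t$, $\sigma_t$ becomes translation by a point whose elliptic logarithm depends non-trivially on $t$, and the same infinitesimal $t$-deformation yields an $x$-relation. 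For the converse direction ($\frac{d}{dx}$ implies $\frac{d}{dt}$), the argument should be symmetric but substantially easier: an $x$-differential relation for $F$ combined with the polynomial dependence of the kernel on $t$ yields, by iterated $t$-differentiation and elimination, a $t$-differential relation, which propagates to $Q$ through the kernel equation.
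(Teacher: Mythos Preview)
Your overall architecture for the forward implication ($\tfrac{d}{dt}$-algebraic $\Rightarrow$ $\tfrac{d}{dx}$-algebraic) is the same as the paper's: restrict to the kernel curve, obtain a rank-one difference equation $\sigma(\widetilde F)-\widetilde F=b$, and argue that a $t$-differential relation on $\widetilde F$ forces a $\partial_s$-telescoper for $b$, which is exactly the $\tfrac{d}{dx}$-algebraicity criterion from \cite{DHRS,DreyfusHardouinRoquesSingerGenuszero}. Where your sketch stops short is the mechanism that makes this work. ``Differentiating the functional equation in $t$'' is not well-posed unless you have a $t$-derivation that \emph{commutes} with $\sigma$; the paper constructs one explicitly, $\Delta_{t,\q}=\partial_t(\q)\ell_\q\partial_s+\partial_t$, and to make sense of $\ell_\q$, $\partial_t(\q)$, and the uniformization simultaneously it works over the nonarchimedean field of Hahn series (so that $t$ remains a variable and the kernel curve is a Tate curve). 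The Galois-theoretic criterion then yields a telescoper whose coefficients live in large compositum fields like $C_\q.C_q(\ell_\q,\ell_q)$, and a substantial amount of work (transcendence of $\ell_\q$, $\ell_q$ over these fields, linear disjointness of $C_q$ and $C_\q$) is needed to descend to a telescoper with constant coefficients. Your appeal to ``non-isotriviality'' is the right intuition for why such a descent is possible, but it does not by itself supply the argument.

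There is a genuine gap in your treatment of the converse. You write that $\tfrac{d}{dx}$-algebraicity $\Rightarrow$ $\tfrac{d}{dt}$-algebraicity is ``symmetric but substantially easier'' and can be obtained by ``iterated $t$-differentiation and elimination'' from the polynomial $t$-dependence of the kernel. This does not work: the kernel equation expresses $K(x,y,t)Q(x,y,t)$ in terms of $F^1,F^2$, and differentiating it in $t$ produces new unknowns $\partial_t Q$, $\partial_t F^i$ without any closure. The paper does \emph{not} prove this direction by its own methods. For the finite-group models it is already known (the series are holonomic in all variables by \cite{BMM,BostanKauersTheCompleteGenerating}); for the nine genus-one infinite-group models that are $\tfrac{d}{dx}$-algebraic, the $\tfrac{d}{dt}$-algebraicity is taken from \cite{BBMR16}, where it is established via an explicit closed form in terms of analytic invariants---a construction entirely different from, and not symmetric to, the Galois-theoretic argument. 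For the remaining models both sides of the equivalence are false (by \cite{DHRS,DreyfusHardouinRoquesSingerGenuszero} for $x$, by the paper's new results for $t$), so the equivalence holds vacuously. You should therefore replace your sketch of the converse with this case split and the citation to \cite{BBMR16}.
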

 
Theorem \ref{thm:introtdiffalgimpliesxdiffalg} is the corollary  of the following proposition proved in the more general setting of 
weighted walks that are walks whose directions are weighted  (see \S \ref{sec1}). To any such a  walk, one attaches an algebraic curve of genus zero or one called the \emph{kernel curve} and a group of automorphisms of that curve called \emph{the group of the walk} (see \S \ref{sec1}). The following holds.

\begin{thmintro}[Theorems \ref{thm:genre0} and \ref{theo2} below]\label{propintro:gtransimpliesxtrans}
For a  genus zero kernel curve attached to the models \eqref{G0}, the generating series is $\frac{d}{dt}$-differentially transcendental over $\Q$. For a genus one kernel curve with infinite group of the walk,  if the generating series is $\frac{d}{dt}$-differentially algebraic over $\Q$, then it is $\frac{d}{dx}$-differentially algebraic over $\Q$.
\end{thmintro}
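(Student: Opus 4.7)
The plan is to analyze, for each value of $t$ in an appropriate domain, the restriction of $Q(x,y,t)$ to the kernel curve $\Et$ and to exploit the functional equation induced by the group of the walk. First I would combine the kernel equation with the action of the involutions $\iup$ and $\dup$ to obtain a telescoping identity expressing $\sigma(F)-F$ as a rational function on $\Et$, where $\sigma=\dup\circ\iup$ and $F$ is the image on $\Et$ of $Q(x,0,t)$ (and symmetrically for $Q(0,y,t)$). This reduces the study of the $\frac{d}{dt}$-differential nature of $Q$ to the study of a single $\sigma$-difference equation on $\Et$ and of its interaction with the derivation $\frac{d}{dt}$.

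For the genus zero case (Theorem \ref{thm:genre0}), the curve $\Et$ admits a rational parametrization that conjugates $\sigma$ to a multiplicative shift $x\mapsto q(t)x$, where $q(t)$ is an algebraic function of $t$ which is generically non-torsion. For the models in \eqref{G0}, I would extract the precise pole structure of the inhomogeneity of the resulting $q(t)$-difference equation at $0$ and $\infty$, and argue by contradiction: if $F$ satisfied a nontrivial $\frac{d}{dt}$-algebraic differential relation, the associated linear $\frac{d}{dt}$-relation would combine with the $q$-difference equation to produce a linear $\sigma$-relation whose right-hand side has incompatible order/residue behavior with the one dictated by the kernel, contradicting the rigidity of the inhomogeneity. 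This rigidity step is essentially parallel to the $\frac{d}{dx}$-arguments of \cite{DreyfusHardouinRoquesSingerGenuszero}, adapted to the $t$-direction.

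For the genus one case with infinite group (Theorem \ref{theo2}), $\Et$ admits an elliptic uniformization $\C/(\Z\omega_1(t)+\Z\omega_2(t))\cong \Et$ in which $\sigma$ becomes translation by some $\omega_3(t)$, and the infinite-group hypothesis means that $\omega_3(t)/\omega_2(t)$ is irrational. Assuming $Q$ is $\frac{d}{dt}$-differentially algebraic, the telescoping identity yields a $\frac{d}{dt}$-differentially algebraic relation for $F$. I would then transfer this relation, using a valued differential field framework with valuation coming from the uniformization, into the $\frac{d}{dx}$-setting: the derivations $\frac{d}{dx}$ and $\frac{d}{dt}$ are linked on $\Et$ by the differential of the uniformization, so that a $\frac{d}{dt}$-polynomial relation on $F$ specializes to a $\frac{d}{dx}$-polynomial relation on $F$, and hence via the kernel equation on $Q(x,0,t)$ and $Q(x,y,t)$ itself.

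The main obstacle is excluding the isotrivial phenomenon illustrated by $t\Gamma(x)$: a priori the $t$-dependence of $\Et$ and of the shift $\omega_3(t)$ could be so trivial that $\frac{d}{dt}$-algebraicity carries no information about the fiber-wise $\frac{d}{dx}$-behavior. To rule this out, I would show that for all $79$ models the family $\{\Et\}_t$ varies non-isotrivially, and that the period ratio and the group shift $\omega_3(t)$ depend on $t$ in a way that is transcendentally coupled to the fiber geometry. Concretely, this amounts to proving that the specialization from the parametric Picard--Vessiot setting (over $\Q(t)$) to a generic fiber is faithful on the relevant Galois-theoretic invariants, so that any differential relation detected by varying $t$ must already be present in the $x$-direction on each fiber.
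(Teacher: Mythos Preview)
Your overall architecture---uniformize the kernel curve, obtain a rank-one difference equation for the restriction of $Q(x,0,t)$, and apply difference Galois theory---matches the paper. But there is a genuine gap in the genus one argument, and it is exactly the isotriviality issue you flag but do not actually resolve.

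You propose the complex uniformization $\C/(\Z\omega_1(t)+\Z\omega_2(t))\cong E_t$. This forces you to fix $t\in\C$, after which $\frac{d}{dt}$ no longer makes sense on the uniformizing variable; the periods $\omega_i(t)$ become constants on each fiber, and there is no derivation on $\C$ that remembers the $t$-direction. Your proposed remedy---showing the family is non-isotrivial and that ``specialization from the parametric Picard--Vessiot setting to a generic fiber is faithful''---is not a proof: faithfulness of such a specialization for \emph{differential} relations in a transverse derivation is precisely the content of the theorem, not a known input. The sentence ``the derivations $\frac{d}{dx}$ and $\frac{d}{dt}$ are linked on $E_t$ by the differential of the uniformization, so that a $\frac{d}{dt}$-polynomial relation specializes to a $\frac{d}{dx}$-polynomial relation'' is false as stated: the link involves a transcendental quantity (the analogue of a logarithm of the uniformizer), and the descent from a mixed relation to a pure $\frac{d}{dx}$-relation requires a nontrivial argument.

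The paper's resolution is to abandon the archimedean uniformization entirely. It works over the nonarchimedean complete algebraically closed field $C$ of Hahn series in $t$, so that $t$ remains a genuine variable and $\partial_t$ is a derivation of $C$. Over $C$ the kernel curve has $|J(E)|>1$, hence is a Tate curve $C^*/q^{\Z}$; the automorphism $\sigma$ becomes $s\mapsto \q s$ on $C^*$, unifying the genus zero and one cases as $\q$-difference equations. The derivation encoding $\frac{d}{dt}$ is not $\partial_t$ (which does not commute with $\sigma_{\q}$) but the twist $\Delta_{t,\q}=\partial_t(\q)\ell_{\q}\partial_s+\partial_t$, where $\ell_{\q}$ is a $\q$-logarithm. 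The heart of the proof is then a Galois-theoretic descent (Proposition~\ref{prop2} and Lemma~\ref{Lemma:telescoperdescentgenus1}): a telescoping relation for $b$ involving both $\partial_s$ and $\Delta_{t,\q}$ forces, via the transcendence of $\ell_{\q}$ over the relevant elliptic function fields, a telescoper involving $\partial_s$ alone. That descent is what replaces your unproven ``faithfulness of specialization'' and is where the non-isotriviality is actually used, in the concrete form of $\ell_{\q}$ being transcendental over $C_{\q}.C_q(\ell_q)$.
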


In \cite{DreyfusHardouinRoquesSingerGenuszero}, the authors proved that for a genus zero kernel curve attached to the models \eqref{G0}, the generating series was $\frac{d}{dx}$-differentially transcendental over $\Q$.
The authors of \cite{BBMR16} proved that the nine nonholomic $\frac{d}{dx}$-differentially algebraic models  of Figure~\ref{figcas} were also $\frac{d}{dt}$-differentially algebraic over $\Q$ by giving an   explicit description of the series in terms of  analytic invariants. In \S \ref{sec54}, we will discuss how the construction of \cite{BBMR16} and  the results of \cite{hardouin2020differentially} should imply that  the second statement of  Theorem~\ref{propintro:gtransimpliesxtrans} is in fact an equivalence. 

\subsection*{Strategy of the proof}

The classification results of Figure \ref{figcas} come from  many approaches: probabilistic methods,  combinatorial classification,  computer algebra and ``Guess and Prove'',  analysis and boundary value problems, and more recently  difference Galois theory and algebraic geometry. The 
analytic approach consists in studying the asymptotic growth of the coefficients of the generating function, or else showing that it has an infinite number of singularities, in order to prove its nonholonomicity. This approach also allows for the study of some important specializations of the complete generating function as for instance $Q(1,1,t)$ the generating function for the number of nearest neighbor walks in the quarter plane (see \cite{MelcMish,MR09}). Though very powerful, these analytic techniques are unable to detect the differentially algebraic generating functions among the nonholonomic ones. For instance, the generating function $\prod_{k=1}^\infty \frac{1}{(1-x^k)}$ counting the number of partitions has an infinite number of singularities, and yet is $\frac{d}{dx}$-differentially algebraic. \par 

In order to detect these more subtle kinds of functional dependencies it is necessary to use new arguments that focus on the functional equation satisfied by the complete generating function.
Indeed, the combinatorial decomposition of a walk into a shorter walk followed by an admissible step translates into a functional equation for the generating function.   Following the ideas of Fayolle, Iasnogorodski and Malyshev  \cite{FIM}, the authors of \cite{KurkRasch} and \cite{DreyfusHardouinRoquesSingerGenuszero} specialized this functional equation  to the so-called \emph{kernel curve} to find a linear discrete equation: a linear $\q$-difference equation in the genus zero case and a shift difference equation in genus one. Difference Galois theory allowed then to characterize the differentially transcendental complete generating function (\cite{DHRS,DreyfusHardouinRoquesSingerGenuszero})  whereas the clever use of Tutte invariants produces explicit differential algebraic relations  for the $9$ nonholomic but differentially algebraic cases (\cite{BBMR16}). 
Unfortunately, all the above methods for proving the differential transcendence are only valid for a fixed value of the parameter $t$ in the field of complex numbers. This allowed 
the authors to consider the kernel curve as a complex algebraic curve but prevented them to study the variations of the parameter $t$.

Our work relies on  an nonarchimedean uniformization of the kernel curve, which we consider as an algebraic curve over $\Q(t)$. 
We use here the formalism of Tate curves over $\Q(t)$ as in   \cite{roquette1970analytic} to show that for both situations, genus one and zero, the differential algebraic properties of the complete generating functions are encoded by the differential algebraic properties of a solution of a rank one  nonhomogeneous linear $\q$-difference equation which  unifies the genus zero and the genus one cases. 
Then, we generalize some Galoisian criterias for $\q$-difference equations of \cite{HS} to  
 prove Theorem \ref{thm:genre0} and Theorem \ref{theo2} below.

\subsection*{Organization of the paper}
The paper is organized as follows. In Section \ref{sec1} we present some reminders and notations for  walks in the quarter plane. In Section \ref{secgenre0} we consider  walks with genus zero kernel curve while  Section \ref{secgenre1}  deals with  the genus one case. Since this paper combines many different fields, nonarchimedian uniformization, combinatorics, and Galois theory, we choose to postpone many technical  intermediate results to the appendices. This should allow the reader to understand the articulation of our proofs in Sections \ref{secgenre0} and~\ref{secgenre1}  in three steps without being lost in  too many  technicalities. These three steps are the uniformization of the kernel and the  construction  of a linear $\q$-difference equation, the Galoisian criteria, and  finally, the resolution of telescoping problems. Appendix \ref{sec:nonarcheestimates} is devoted to the nonarchimedean estimates that we used in the uniformization procedure. Appendix \ref{sec:nonarchimedianpreleminaries} contains some reminders on  special functions on Tate curves and their normal forms. Appendix \ref{sec:differenceGaloistheory} proves  the Galoisian criteria mentioned above.  Finally, Appendix \ref{sec:merofunctiontate} studies the transcendence properties of  special functions on Tate curves which  will be used for the descent of our telescoping equations.
 
\section{The walks in the quadrant}\label{sec1}

The goal of this section is to introduce some basic properties of walks in the quarter plane. In $\S \ref{sec:notationwalk}$, we introduce  the generating function $Q(x,y,t)$ of a walk confined in the quarter plane. In $\S \ref{sec:Kernelcurve}$, we attach to any  walk a \emph{kernel curve}, which is an algebraic curve 
defined over $\Q[t]$. This curve has been intensively studied as an algebraic curve over $\C$ by fixing a morphism from $\Q[t]$ to $\C$. For instance,  \cite{FIM} is concerned
with $t=1$ whereas the papers \cite{DHRS} and \cite{dreyfus2019differential} focus respectively on $t \in \C$ transcendental over $\Q$ and $t \in ]0,1[$. Unfortunately, 
specializing $t$ even generically does not allow to study the $t$-dependencies of the generating function. In this paper, we do not work with a specialization of $t$. This 
forces us to  move away from the archimedean framework of the field of complex numbers  and to consider the kernel curve over a suitable valued field extension
of $\Q(t)$ endowed with the valuation at $0$.

\subsection{The walks}\label{sec:notationwalk}
The cardinal directions  of the plane  $\{
\begin{tikzpicture}[scale=0.3]
;
\draw[thick,->](0,0)--(-1,0);
\end{tikzpicture}
,
\begin{tikzpicture}[scale=0.3]
;
\draw[thick,->](0,0)--(-1,1);
\end{tikzpicture}
 ,
 \begin{tikzpicture}[scale=0.3]
;
\draw[thick,->](0,0)--(0,1);
\end{tikzpicture}
 ,\begin{tikzpicture}[scale=0.3]
;
\draw[thick,->](0,0)--(1,1);
\end{tikzpicture},
\begin{tikzpicture}[scale=0.3]
;
\draw[thick,->](0,0)--(1,0);
\end{tikzpicture}
,
\begin{tikzpicture}[scale=0.3]
;
\draw[thick,->](0,0)--(1,-1);
\end{tikzpicture}
,
\begin{tikzpicture}[scale=0.3]
;
\draw[thick,->](0,0)--(0,-1);
\end{tikzpicture}
,
\begin{tikzpicture}[scale=0.3]
;
\draw[thick,->](0,0)--(-1,-1);
\end{tikzpicture} \}$   are identified with  pairs of integers  $(i,j)\in\{0,\pm 1\}^{2}\backslash\{(0,0)\}$. A walk $\mathcal{W}$ in the quarter plane $\Z_{\geq 0}^{2}$ is a sequence of points $(M_{n})_{n \in \Z_{\geq 0}}$ such that 
\begin{itemize} 
\item it starts at $(0,0)$, that is, $M_0=(0,0)$;
\item for all $n \in \Z_{\geq 0}$, the point $M_n$ belong to the quadrant $ \Z_{\geq 0} \times  \Z_{\geq 0}$;
\item  for all $n \in \Z_{\geq 0}$, the vector $M_{n+1}-M_n$ belongs to a given subset $\mathcal{D}$ of the  set of cardinal directions.
\end{itemize}
Fixing  a family of elements $(d_{i,j})_{(i,j)\in\{0,\pm 1\}^{2}}$ of $\Q\cap [0,1]$ such that $\sum_{i,j} d_{i,j}=1$, one can choose to weight the model of the walk in order to add a probabilistic flavor to our study. For $(i,j)\in\{0,\pm 1\}^{2}\backslash\{(0,0)\}$ (resp. $(0,0)$), the  element   $d_{i,j}$ can be viewed as  the probability for the walk  to go in the direction $(i,j)$ (resp.  to stay  at the same position). In that case,  the $d_{i,j}$ are called the weights and the model is called a weighted model.   \emph{The set of steps} $\mathcal{D}$  of the walk  is  the 
set of cardinal  directions with nonzero weight, that is, 
$$
\mathcal{D}=\{(i,j) \in\{0,\pm 1\}^{2}| d_{i,j} \neq 0 \}.
$$

A  model  is \emph{unweighted} if $d_{0,0}=0$ and if the nonzero $d_{i,j}$'s all have the same value. 
\begin{rmk}
In what follows we will represent model of walks with arrows. For instance, the family of models  represented by 
$$\begin{tikzpicture}[scale=0.6, baseline=(current bounding box.center)]
\draw[thick,->](0,0)--(-1,-1);
\draw[thick,->](0,0)--(1,-1);
\draw[thick,->](0,0)--(1,1);
\draw[thick,->](0,0)--(0,1);
\end{tikzpicture} \hbox{ or } \left\{\begin{tikzpicture}[scale=.6, baseline=(current bounding box.center)]
\draw[thick,->](0,0)--(1,1);
\end{tikzpicture}, \begin{tikzpicture}[scale=.6, baseline=(current bounding box.center)]
\draw[thick,->](0,0)--(1,-1);
\end{tikzpicture}, \begin{tikzpicture}[scale=.6, baseline=(current bounding box.center)]
\draw[thick,->](0,0)--(0,1);
\end{tikzpicture}, \begin{tikzpicture}[scale=.6, baseline=(current bounding box.center)]
\draw[thick,->](0,0)--(-1,-1);
\end{tikzpicture} \right\},$$
correspond to models with $d_{1,1},d_{1,-1},d_{0,1},d_{-1,-1}\neq 0$, $d_{1,0}=d_{0,-1}=d_{-1,1}=d_{-1,0}=0$, and where nothing is assumed on the value of $d_{0,0}$. In the following results, 
 the behavior of the  kernel curve never depends on $d_{0,0}$. This is the reason why, to reduce the amount of notations, we have decided not to mention $d_{0,0}$ in the graphical representation of the model.\end{rmk}

The {\it weight of the walk} is defined to be the product of the weights of its component steps. For any $(i,j)\in \Z_{\geq 0}^{2}$ and any $k\in \Z_{\geq 0}$, we let $q_{i,j,k}$ be the sum of the weights of all walks reaching  the position $(i,j)$ from the initial position $(0,0)$ after $k$ steps.  We introduce the corresponding trivariate generating function
$$
Q(x,y,t):=\displaystyle \sum_{i,j,k\geq 0}q_{i,j,k}x^{i}y^{j}t^{k}.
$$

Note that the generating function is not exactly the same as the one that we defined in the introduction. To recover the latter, we should take  $d_{i,j}\in \{0,1\}$ and $d_{i,j}=1$ if and only if  the corresponding direction belongs to $\mathcal{D}$. Fortunately, the assumption $\sum_{i,j} d_{i,j}=1$ can be relaxed by rescaling the $t$-variable, and the results of the present paper stay valid for the generating function of the introduction since both generating functions have the same nature.
 
\begin{rmk}For simplicity, we assume that the weights $d_{i,j}$ belong to  $\Q$. However, we would like to mention that any of the arguments and statements  below will hold with arbitrary real weights in $[0,1]$. One just needs to  replace the field $\Q$ with the field $\Q(d_{i,j})$. \end{rmk}

The {\it kernel} polynomial of a  weighted model  $(d_{i,j})_{i,j\in \{0,\pm 1\}^{2}}$  is defined by 
\begin{equation}\label{eq:equationforthekernel}
K(x,y,t):=xy (1-t S(x,y))
\end{equation}

where 
\begin{equation}\label{eq:defiAiBi}
\begin{array}{lll}
S(x,y) &=&\sum_{(i,j)\in \{0,\pm 1\}^{2}} d_{i,j}x^i y^j\\
&=& A_{-1}(x) \frac{1}{y} +A_{0}(x)+ A_{1}(x) y\\
&= & B_{-1}(y) \frac{1}{x} +B_{0}(y)+ B_{1}(y) x,
\end{array}
\end{equation}

and $A_{i}(x) \in x^{-1}\Q[x]$, $B_{i}(y) \in y^{-1}\Q[y]$.

By  \cite[Lemma 1.1]{DreyfusHardouinRoquesSingerGenuszero}, see also \cite[Lemma 4]{BMM}, the generating function $Q(x,y,t)$ satisfies the following functional equation:
\begin{equation}\label{eq:fundamentalkernelequationseries}
K(x,y,t)Q(x,y,t)=xy+{F}^{1}(x,t) +{F}^{2}(y,t)+td_{-1,-1} Q(0,0,t),
\end{equation}

where 
$$
{F}^{1}(x,t):= K(x,0,t)Q(x,0,t), \ \mbox{ and } \ {F}^{2}(y,t):= K(0,y,t)Q(0,y,t).$$

\begin{rmk}\label{rem1}
We shall often use the following symmetry argument between $x$ and $y$. Exchanging  $x$ and $y$ in the kernel polynomial amounts to consider the kernel  polynomial  of a weighted model  $\mathcal{D}':=\{ (i,j) \mbox { such that }  (j,i) \in \mathcal{D}\}$ with  weights $d'_{i,j}:=d_{j,i}$.\end{rmk}

\subsection{The kernel curve} \label{sec:Kernelcurve}

The \emph{kernel} polynomial may be seen as a bivariate polynomial in $x,y$ with coefficients in $\Q(t)$. The latter is a  valued field  endowed with the valuation at zero. It  is neither  algebraically closed   nor complete. In order to use the theory of Tate curves, one needs to consider a complete algebraically closed field  extension of $\Q(t)$. The field of Puiseux series with coefficients in $\overline{\Q}$ is algebraically closed but not complete. We may consider the field $C$ of Hahn series or  Malcev-Neumann series with coefficients in  $\overline{\Q}$, and monomials from $\Q$. We recall that  a  Hahn series $f$
is a formal power series $\sum_{ \gamma \in \Q }c_\gamma t^\gamma$ with coefficients $c_\gamma$ in $\overline{\Q}$ and such that the subset $\{\gamma | c_\gamma \neq 0 \}$ is a well ordered subset of $\Q$. The valuation $v_0( f)$ of $f$ is the smallest element of the subset $\{\gamma | c_\gamma \neq 0 \}$. The field $C$ is algebraically closed and complete 
with respect to the valuation at zero, see \cite[Ex. 3.2.23 and p.~151]{AschenbrennerVandenDriesVanDerHoeven}. One can endow 
$C$ with a derivation $\partial_t$  as follows
$$ \partial_t\left(\sum_{ \gamma \in \Q }c_\gamma t^\gamma \right)= \sum_{ \gamma \in \Q }c_\gamma \gamma t^\gamma.$$
Then, $\partial_t$ extends the derivation $t\frac{d}{dt}$ of $\Q(t)$, see \cite[Ex.(2), \S 4.4]{AschenbrennerVandenDriesVanDerHoeven}. 

Let us fix once for all $\alpha \in \R$ such that $0< \alpha <1$. For any $f \in C$, we define the norm of $f$ as $|f|= \alpha^{v_0(f)}$. For any  Hahn series $f$ such that $|f|<1$, we have $|\partial_t (f)| <1$. This is not true when $\partial_t$ is replaced by $\frac{d}{dt}$.

\par We need to discard some degenerate cases. Following \cite{FIM}, we have the following  definition. 
 
 \begin{defi}\label{defi:degenerate}
A weighted model is called {\it {degenerate}} if one of the following holds:
\begin{itemize}
\item $K(x,y,t)$ is reducible as an element of the polynomial ring $C[x,y]$, 
\item $K(x,y,t)$ has $x$-degree less than or equal to $1$,
\item $K(x,y,t)$ has $y$-degree less than or equal to $1$.
\end{itemize}
 \end{defi}

 \begin{rmk}
 In \cite{DreyfusHardouinRoquesSingerGenuszero}, the authors specialize the variable $t$
 as a transcendental complex number. Then, they study the kernel curve as a complex algebraic curve in $\P1(\C) \times \P1(\C)$. In this work, we shall use any algebraic geometric result
 of \cite{DreyfusHardouinRoquesSingerGenuszero} by appealing to Lefschetz Principle:  every true statement  about  an algebraic variety defined  over  $\C$  remains true when $\C$ is replaced by an algebraically closed field of characteristic zero.
 \end{rmk}
The following proposition gives very simple conditions on $\mathcal{D}$ to decide whether  a weighted model is degenerate or not.

 \begin{prop}[Lemma 2.3.2 in  \cite{FIM}] \label{prop:degeneratecases}
A weighted model is {degenerate} if and only if at least one of the following holds:
\begin{enumerate}
\item \label{case1}There exists $i\in \{- 1,1\}$ such that $d_{i,-1}=d_{i,0}=d_{i,1}=0$. This corresponds to walks with steps supported in one of the following configurations
$$\begin{tikzpicture}[scale=.4, baseline=(current bounding box.center)]
\foreach \x in {-1,0,1} \foreach \y in {-1,0,1} \fill(\x,\y) circle[radius=2pt];
\draw[thick,->](0,0)--(0,-1);
\draw[thick,->](0,0)--(1,-1);
\draw[thick,->](0,0)--(1,0);
\draw[thick,->](0,0)--(1,1);
\draw[thick,->](0,0)--(0,1);
\end{tikzpicture}\quad 
\begin{tikzpicture}[scale=.4, baseline=(current bounding box.center)]
\foreach \x in {-1,0,1} \foreach \y in {-1,0,1} \fill(\x,\y) circle[radius=2pt];
\draw[thick,->](0,0)--(0,-1);
\draw[thick,->](0,0)--(-1,-1);
\draw[thick,->](0,0)--(-1,0);
\draw[thick,->](0,0)--(-1,1);
\draw[thick,->](0,0)--(0,1);
\end{tikzpicture}
$$
\item \label{case2} There exists $j\in \{-1, 1\}$ such that $d_{-1,j}=d_{0,j}=d_{1,j}=0$. This corresponds to walks with steps supported in one of the following configurations
$$
\begin{tikzpicture}[scale=.4, baseline=(current bounding box.center)]
\foreach \x in {-1,0,1} \foreach \y in {-1,0,1} \fill(\x,\y) circle[radius=2pt];
\draw[thick,->](0,0)--(-1,0);
\draw[thick,->](0,0)--(-1,-1);
\draw[thick,->](0,0)--(0,-1);
\draw[thick,->](0,0)--(1,-1);
\draw[thick,->](0,0)--(1,0);
\end{tikzpicture}
 \quad
\begin{tikzpicture}[scale=.4, baseline=(current bounding box.center)]
\foreach \x in {-1,0,1} \foreach \y in {-1,0,1} \fill(\x,\y) circle[radius=2pt];
\draw[thick,->](0,0)--(-1,0);
\draw[thick,->](0,0)--(-1,1);
\draw[thick,->](0,0)--(0,1);
\draw[thick,->](0,0)--(1,1);
\draw[thick,->](0,0)--(1,0);
\end{tikzpicture} 
$$
\item \label{case3} All the weights are zero except maybe  $\{d_{1,1},d_{0,0},d_{-1,-1}\}$ or  $\{d_{-1,1},d_{0,0},d_{1,-1}\}$. This corresponds to walks with steps supported in one of the following configurations
$$\begin{tikzpicture}[scale=.4, baseline=(current bounding box.center)]
\foreach \x in {-1,0,1} \foreach \y in {-1,0,1} \fill(\x,\y) circle[radius=2pt];
\draw[thick,->](0,0)--(-1,-1);
\draw[thick,->](0,0)--(1,1);
\end{tikzpicture}
\quad
\begin{tikzpicture}[scale=.4, baseline=(current bounding box.center)]
\foreach \x in {-1,0,1} \foreach \y in {-1,0,1} \fill(\x,\y) circle[radius=2pt];
\draw[thick,->](0,0)--(1,-1);
\draw[thick,->](0,0)--(-1,1);
\end{tikzpicture}
$$
\end{enumerate}
\end{prop}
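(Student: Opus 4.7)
The plan is to prove the equivalence by a case analysis that matches the three degeneracy conditions of Definition \ref{defi:degenerate} with the three families \eqref{case1}--\eqref{case3}. Expanding
$$K(x,y,t)=xy-t\sum_{(i,j)\in\{0,\pm 1\}^{2}}d_{i,j}\,x^{i+1}y^{j+1},$$
and viewing $K$ as a polynomial in $y$, one writes $K=\alpha(x)y^{2}+\beta(x)y+\gamma(x)$ with $\alpha=-t\tilde A_{1}$, $\beta=x-t\tilde A_{0}$, $\gamma=-t\tilde A_{-1}$, where $\tilde A_{j}(x)=d_{-1,j}+d_{0,j}x+d_{1,j}x^{2}$. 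A symmetric expression holds when viewing $K$ as a polynomial in $x$.

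The "if" direction is a direct verification. In case \eqref{case1} with $i=1$, the coefficient of $x^{2}$ in $K$ equals $-t(d_{1,-1}+d_{1,0}y+d_{1,1}y^{2})$ and vanishes, so the $x$-degree of $K$ drops; with $i=-1$, the coefficient of $x^{0}$ vanishes, so $x\mid K$ in $C[x,y]$, giving reducibility. Case \eqref{case2} is symmetric. For case \eqref{case3}, the first configuration reduces $K$ to the quadratic $-td_{1,1}z^{2}+(1-td_{0,0})z-td_{-1,-1}$ in $z=xy$, which factors over the algebraically closed field $C$; the second configuration reduces $K$ to the homogeneous binary form $-td_{1,-1}x^{2}+(1-td_{0,0})xy-td_{-1,1}y^{2}$, which also splits into linear factors over $C$.

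For the converse, assume the model is degenerate. If $\deg_{x}K\leq 1$ the vanishing of the $x^{2}$-coefficient is exactly \eqref{case1} with $i=1$, and analogously for $\deg_{y}K\leq 1$. Otherwise $K$ is reducible in $C[x,y]$ with bidegree $(2,2)$, and by Gauss's lemma applied to $K\in C[x][y]$ either (a) the content $\gcd_{C[x]}(\alpha,\beta,\gamma)$ is non-constant, or (b) the discriminant $\Delta(x)=\beta(x)^{2}-4\alpha(x)\gamma(x)$ is a square in $C[x]$. In (a), any common root $x_{0}\in C$ of $\alpha,\beta,\gamma$ satisfies $\tilde A_{\pm 1}(x_{0})=0$, two polynomial relations over $\Q$, hence $x_{0}\in\Qbar$; substituting into $\beta(x_{0})=0$ gives $x_{0}=t(d_{-1,0}+d_{0,0}x_{0}+d_{1,0}x_{0}^{2})$, and the transcendence of $t$ over $\Qbar$ in $C$ forces $x_{0}=0$, yielding \eqref{case1} with $i=-1$. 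A symmetric argument using $K\in C[y][x]$ handles \eqref{case2} with $j=-1$.

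The main obstacle is subcase (b). Rewriting $\Delta=(x-t\tilde A_{0}(x))^{2}-4t^{2}\tilde A_{1}(x)\tilde A_{-1}(x)$, the identity $\Delta=P(x)^{2}$ with $P\in C[x]$ translates into $(x-t\tilde A_{0}-P)(x-t\tilde A_{0}+P)=4t^{2}\tilde A_{1}\tilde A_{-1}$. Matching coefficients of this identity between polynomials in $x$ of degree at most $4$, and separating monomials by $t$-degree (using that the $d_{i,j}$ are rational while $t$ is transcendental over $\Qbar$ in $C$), produces a tight system of polynomial relations on the $d_{i,j}$. After discarding the branches where $\tilde A_{1}$ or $\tilde A_{-1}$ vanishes identically (which reduce to \eqref{case2}), I expect this system to force every $d_{i,j}$ outside either the diagonal support $\{(1,1),(-1,-1),(0,0)\}$ or the antidiagonal support $\{(1,-1),(-1,1),(0,0)\}$ to vanish, recovering \eqref{case3}. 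The technical heart of the argument is this coefficient-matching bookkeeping needed to rule out every intermediate configuration.
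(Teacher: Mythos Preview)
The paper does not give its own proof of this proposition; it is simply quoted from \cite{FIM}. So there is nothing in the paper to compare against, and your attempt is a genuine independent proof sketch.

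Your ``if'' direction is correct, and your treatment of the degree-drop cases and of subcase (a) (nontrivial content) in the ``only if'' direction is also fine: the argument that a common root $x_0$ of $\tilde A_{1},\tilde A_{-1}$ lies in $\overline{\Q}$, and that $x_0=t\tilde A_0(x_0)$ then forces $x_0=0$, is exactly right in this valued-field setup.

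The gap is subcase (b), and you acknowledge it yourself: ``I expect this system to force\ldots'' is not a proof. Your approach \emph{does} complete, but it takes real work that you have not written down. Concretely: writing $P=p_0+p_1x+p_2x^2$ and matching $\Delta=P^2$ coefficientwise gives $p_0^2=\alpha_0$, $p_2^2=\alpha_4$, whence $p_0,p_2\in t\overline{\Q}$. Reducing modulo the maximal ideal shows $P\equiv \pm x$, so $p_1\equiv 1$. Comparing the $t$-linear parts of $\alpha_1=2p_0p_1$ and $\alpha_3=2p_1p_2$ then yields $d_{-1,1}d_{-1,-1}=0$ and $d_{1,1}d_{1,-1}=0$, and pins down $p_0=-td_{-1,0}$, $p_2=-td_{1,0}$. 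Feeding this back into $\alpha_2=p_1^2+2p_0p_2$ forces $d_{-1,1}d_{1,-1}+d_{0,1}d_{0,-1}+d_{1,1}d_{-1,-1}=0$ whenever $d_{\pm 1,0}\neq 0$, and a short but unavoidable case split on which of $d_{-1,\pm 1}$ vanishes (using (C) and (D) from the $t^2$-terms of $\alpha_1,\alpha_3$) finally collapses everything to one of the two diagonal supports of case~\eqref{case3}. None of this is hard, but it is a page of bookkeeping with several sub-branches, and until it is written out your argument is incomplete.
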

 
 Note that we only discard one dimensional problems as explained in \cite{BMM}. For all the degenerate cases, the generating function $Q(x,y,t)$ is algebraic.\par

\emph{From now on, we shall always assume that the weighted model under consideration is nondegenerate.}

To any weighted model, we attach a curve $E$, called the \emph{kernel curve}, that is defined as  the zero set in $\P1(C)\times \P1(C)$ of the following homogeneous polynomial
$$
\widetilde{K}(x_0,x_1,y_0,y_1,t)= x_0x_1y_0y_1 -t \sum_{i,j=0}^2 d_{i-1,j-1} x_0^{i} x_1^{2-i}y_0^j y_1^{2-j}={x_1^2y_1^2K\left(\frac{x_0}{x_1},\frac{y_0}{y_1},t\right)}. 
$$

 Let us write $\widetilde{K}(x_0,x_1,y_0,y_1,t)=\sum_{i,j=0}^2 A_{i,j}x_0^ix_1^{2-i}y_0^jy_1^{2-j}$ where $A_{i,j}= -t d_{i-1,j-1}$ if ${(i,j) \neq (1,1)}$ and $A_{1,1}= 1-td_{0,0}$.  The partial discriminants of $\widetilde{K}(x_0,x_1,y_0,y_1,t)$ are defined as the discriminants of the second degree homogeneous polynomials $y \mapsto \widetilde{K}(x_0,x_1,y,1,t)$ and  $x \mapsto \widetilde{K}(x,1,y_0,y_1,t)$, respectively, i.e.
$$
 \Delta_x (x_0,x_1)= \left(\sum_{i=0}^2 x_0^i x_1^{2-i} A_{i,1}\right)^2-4\left(\sum_{i=0}^2 x_0^i x_1^{2-i} A_{i,0}\right)\times \left(\sum_{i=0}^2 x_0^i x_1^{2-i} A_{i,2}\right)
$$
and
$$
 \Delta_y (y_0,y_1)= \left(\sum_{j=0}^2 y_0^jy_1^{2-j} A_{1,j}\right)^2-4\left(\sum_{j=0}^2 y_0^j y_1^{2-j} A_{0,j}\right)\times \left(\sum_{j=0}^2 y_0^j y_1^{2-j} A_{2,j}\right).
$$

Introduce
\begin{equation}
\label{eq:expression_D_0}
     \mathfrak{D}(x):=\Delta_x(x,1)=\sum_{j=0}^{4}\alpha_{j}x^{j}\quad \text{and}\quad \mathfrak{E}(y):=\Delta_y(y,1)=\sum_{j=0}^{4}\beta_{j}y^{j},
\end{equation}    
where
\begin{equation}\label{eq:alphaibetai}
\begin{array}{lll}
\alpha_4&=&\big(d_{1,0}^{2}-4d_{1,1}d_{1,-1}\big)t^{2}\\
\alpha_3&=&2t^{2}d_{1,0}d_{0,0}-2td_{1,0}-4t^{2}(d_{0,1}d_{1,-1}+d_{1,1}d_{0,-1})\\
\alpha_2&=&1+t^{2}d_{0,0}^{2}+2t^{2}d_{-1,0}d_{1,0}-4t^{2}(d_{-1,1}d_{1,-1}+d_{0,1}d_{0,-1}+d_{1,1}d_{-1,-1}) -2td_{0,0}\\
\alpha_1&=&
2t^{2}d_{-1,0}d_{0,0}-2td_{-1,0}-4t^{2}(d_{-1,1}d_{0,-1}+d_{0,1}d_{-1,-1})\\
\alpha_0&=&
\big( d_{-1,0}^{2}-4d_{-1,1}d_{-1,-1}\big) t^{2}\\ 
&&\\
\beta_4&=&\big(d_{0,1}^{2}-4d_{1,1}d_{-1,1}\big)t^{2}\\
\beta_3&=&2t^{2}d_{0,1}d_{0,0}-2td_{0,1}-4t^{2}(d_{1,0}d_{-1,1}+d_{1,1}d_{-1,0})\\
\beta_2&=&1+t^{2}d_{0,0}^{2}+2t^{2}d_{0,-1}d_{0,1}-4t^{2}(d_{1,-1}d_{-1,1}+d_{1,0}d_{-1,0}+d_{1,1}d_{-1,-1}) -2td_{0,0}\\
\beta_1&=&
2t^{2}d_{0,-1}d_{0,0}-2td_{0,-1}-4t^{2}(d_{1,-1}d_{-1,0}+d_{1,0}d_{-1,-1})\\
\beta_0&=&
\big( d_{0,-1}^{2}-4d_{1,-1}d_{-1,-1}\big) t^{2}.
\end{array}
\end{equation}

The discriminants  $\Delta_x (x_0,x_1), \Delta_y (y_0,y_1)$ are homogeneous polynomials of degree $4$.  
 Their Eisenstein invariants can be defined as follows:
 \begin{defi}[\S 2.3.5 in \cite{DuistQRT}]
 For any homogeneous polynomial of the form $${f(x_0,x_1)= a_0 x_1^4 +4a_1 x_0x_1^3 +6a_2 x_0^2 x_1^2 +4 a_3 x_0^3x_1 +a_4 x_0^4}\in C[x_0,x_1],$$ we define the Eisenstein invariants of $f(x_0,x_1)$ as \begin{itemize}
\item $D(f)= a_0a_4 +3a_2^2-4a_1a_3$ 
\item $E(f)=a_0a_3^2+a_1^2a_4-a_0a_2a_4-2a_1a_2a_3+a_2^3$
\item $F(f)= 27 E(f)^2-D(f)^3$.
\end{itemize} 
 \end{defi}

 Since $C$ is algebraically closed  of characteristic zero, we can apply \cite[\S 2.4]{DuistQRT} to the kernel curve. The following proposition characterizes the smoothness  of the kernel curve  in terms of the    invariants $F(\Delta_x)$, $F(\Delta_y)$.
 
 \begin{prop}[Proposition 2.4.3 in \cite{DuistQRT} and Proposition 2.1 in \cite {DreyfusHardouinRoquesSingerGenuszero2}]\label{prop:genusofthe Kerneljinvariant}
 	The following statements are equivalent 
 	\begin{itemize}
 	\item The  kernel curve $E$ is smooth, i.e. it has no singular point;
 	\item $F(\Delta_x) \neq 0$;
 	\item $F(\Delta_y) \neq 0$.
	
\end{itemize} 	 
 Furthermore, if   $E$ is smooth then it is an elliptic curve with  $J$-invariant given by the element $J(E)\in C$ such that $$J(E)=12^3 \frac{D(\Delta_y)^3}{-F(\Delta_y)}.$$ 
Otherwise, if $E$ is nondegenerate and singular,  $E$ has a unique singular point and is a genus zero curve.
 \end{prop}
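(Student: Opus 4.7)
My plan is to reduce the smoothness question for the biquadratic curve $E\subset \P1\times \P1$ to the classical discriminant theory of binary quartics, by exploiting the two projections $\pi_x,\pi_y:E\to \P1$. First I would analyze singular points directly from $\widetilde{K}$: since $\widetilde{K}$ has bidegree $(2,2)$, a point $P=(x,y)\in E$ is singular exactly when the partial derivatives with respect to both factors vanish at $P$. Vanishing of the $y$-derivative forces $y$ to be a double root of $\widetilde{K}(x,\,\cdot\,)$, hence $\Delta_x(x)=0$; a standard computation for the double cover $\pi_x$ then translates the vanishing of the $x$-derivative at such a fiber singularity into $x$ being a \emph{multiple} root of $\Delta_x$. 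A symmetric argument applied to $\pi_y$ shows that $y$ must in turn be a multiple root of $\Delta_y$.

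Second, I would invoke classical invariant theory: for a binary quartic $f$, the Eisenstein quantity $F(f)=27\,E(f)^{2}-D(f)^{3}$ is, up to sign, the usual discriminant, so $F(f)=0$ iff $f$ admits a repeated root. Combined with the preceding step, this yields the equivalence of the three smoothness conditions; the equivalence $F(\Delta_x)\neq 0\Leftrightarrow F(\Delta_y)\neq 0$ is then automatic, since the property ``$E$ smooth'' does not distinguish between the two projections.

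Third, if $E$ is smooth, adjunction in $\P1\times \P1$ yields arithmetic genus $(2-1)(2-1)=1$, so $E$ is an elliptic curve. Birationally, $\pi_y$ presents $E$ as the double cover $w^{2}=\Delta_y(z)$, and the classical $j$-invariant of the elliptic curve attached to a binary quartic $f$ is $j=12^{3}D(f)^{3}/(D(f)^{3}-27\,E(f)^{2})=12^{3}D(f)^{3}/(-F(f))$, which reproduces the stated formula $J(E)=12^{3}D(\Delta_y)^{3}/(-F(\Delta_y))$. In the singular nondegenerate case, the same adjunction computation, applied after desingularization, gives geometric genus zero; the number of singularities is then controlled by the $\delta$-invariant drop $p_{a}(E)-p_{g}(E)=1$, which forces a unique singular point once nondegeneracy (Proposition~\ref{prop:degeneratecases}) has been used to exclude reducible configurations that would allow a further drop in genus.

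The main obstacle I anticipate lies in the last paragraph: rigorously translating the vanishing of first-order derivatives of a bidegree $(2,2)$ polynomial into the discriminantal condition $F(\Delta_x)=0$ requires careful bookkeeping on the fiber over a branch point of $\pi_x$, and the uniqueness of the singular point in the singular case rests on the finely tuned interplay between the nondegeneracy hypotheses and the drop in geometric genus. The remaining ingredients—invariant theory of binary quartics, adjunction in $\P1\times \P1$, and the double-cover presentation of an elliptic curve as $w^{2}=f(z)$—are classical, which is why the proposition can be invoked directly from \cite{DuistQRT} and \cite{DreyfusHardouinRoquesSingerGenuszero2} in lieu of a full proof here.
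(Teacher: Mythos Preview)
The paper does not supply its own proof of this proposition; it simply cites \cite[Proposition~2.4.3]{DuistQRT} and \cite[Proposition~2.1]{DreyfusHardouinRoquesSingerGenuszero2}. Your sketch follows precisely the classical line of argument one finds in those references: relate singularities of a bidegree $(2,2)$ curve in $\P1\times\P1$ to repeated roots of the branch quartics via the two projections, identify $F(f)$ with the discriminant of the binary quartic, compute the genus by adjunction, and recover the $J$-invariant from the double cover $w^{2}=\Delta_y(z)$. Nothing in your outline is wrong, and you correctly flag the one place that needs care (the bookkeeping linking vanishing of partials to multiplicity of roots of $\Delta_x$, and the uniqueness of the singular point under the nondegeneracy hypothesis).
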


We define the genus of a weighted model  as the genus of the associated kernel curve $E$.  We recall the results obtained in \cite[Theorem 6.1.1]{FIM} and \cite[Corollary~2.6]{DreyfusHardouinRoquesSingerGenuszero2},  that classify  all the weighted models  attached to a genus zero kernel.
 
 \begin{thm}\label{theo:genuszerocharac} Any nondegenerate  weighted model of genus zero has steps included in one of the following 4 sets of steps:
$$ \begin{tikzpicture}[scale=.4, baseline=(current bounding box.center)]
\foreach \x in {-1,0,1} \foreach \y in {-1,0,1} \fill(\x,\y) circle[radius=2pt];
\draw[thick,->](0,0)--(-1,1);
\draw[thick,->](0,0)--(0,1);
\draw[thick,->](0,0)--(1,1);
\draw[thick,->](0,0)--(1,0);
\draw[thick,->](0,0)--(1,-1);
\end{tikzpicture}\quad 
 \begin{tikzpicture}[scale=.4, baseline=(current bounding box.center)]
\foreach \x in {-1,0,1} \foreach \y in {-1,0,1} \fill(\x,\y) circle[radius=2pt];
\draw[thick,->](0,0)--(1,1);
\draw[thick,->](0,0)--(1,0);
\draw[thick,->](0,0)--(-1,-1);
\draw[thick,->](0,0)--(0,-1);
\draw[thick,->](0,0)--(1,-1);
\end{tikzpicture}\quad\begin{tikzpicture}[scale=.4, baseline=(current bounding box.center)]
\foreach \x in {-1,0,1} \foreach \y in {-1,0,1} \fill(\x,\y) circle[radius=2pt];
\draw[thick,->](0,0)--(-1,1);
\draw[thick,->](0,0)--(1,1);
\draw[thick,->](0,0)--(-1,0);
\draw[thick,->](0,0)--(0,1);
\draw[thick,->](0,0)--(-1,-1);
\end{tikzpicture}\quad\begin{tikzpicture}[scale=.4, baseline=(current bounding box.center)]
\foreach \x in {-1,0,1} \foreach \y in {-1,0,1} \fill(\x,\y) circle[radius=2pt];
\draw[thick,->](0,0)--(-1,1);
\draw[thick,->](0,0)--(-1,0);
\draw[thick,->](0,0)--(-1,-1);
\draw[thick,->](0,0)--(0,-1);
\draw[thick,->](0,0)--(1,-1);
\end{tikzpicture}\quad$$
Otherwise, for any other nondegenerate  weighted model, the kernel curve $E$ is an elliptic curve. 
\end{thm}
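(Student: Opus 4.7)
The plan is to deduce this classification from Proposition \ref{prop:genusofthe Kerneljinvariant}, which reduces the question to understanding when the Eisenstein invariant $F(\Delta_x)$ vanishes identically as an element of $\Q[t][d_{i,j}]$. Since the nondegeneracy assumption excludes both reducibility of the kernel and drops of the bi-degree, we can work directly with $\mathfrak{D}(x) = \Delta_x(x,1)$ as a degree $4$ polynomial whose coefficients are given by \eqref{eq:alphaibetai}.

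First, I would compute $F(\Delta_x) = 27\,E(\Delta_x)^2 - D(\Delta_x)^3$ explicitly, writing $\mathfrak{D}(x)$ in Eisenstein-normalized form and expanding using the $\alpha_i$. This yields $F(\Delta_x)$ as a polynomial in $t$ whose coefficients are polynomials in the weights $d_{i,j}$. Geometrically, $F(\Delta_x) = 0$ means that $\mathfrak{D}(x)$ has a repeated root, so the task is equivalent to classifying those nondegenerate weighted models for which the $x$-discriminant of the kernel has a multiple factor.

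Next, I would perform a case analysis on which factor of $\mathfrak{D}(x)$ is repeated, using the nondegeneracy hypotheses from Proposition \ref{prop:degeneratecases} to control the leading and trailing coefficients. Each possibility forces a linear relation on the weights, and by the $x \leftrightarrow y$ symmetry of Remark \ref{rem1} one can simultaneously impose the analogous condition on $\mathfrak{E}(y) = \Delta_y(y,1)$. Combining these, one finds that some triple of weights at a ``corner'' of the step set must vanish, namely one of
\[
\{d_{-1,-1}, d_{-1,0}, d_{0,-1}\},\quad \{d_{-1,1}, d_{-1,0}, d_{0,1}\},\quad \{d_{1,-1}, d_{1,0}, d_{0,-1}\},\quad \{d_{1,1}, d_{1,0}, d_{0,1}\},
\]
which are precisely the four configurations pictured in the statement. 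For the converse direction, in each of these four cases a direct factorization of $\mathfrak{D}(x)$ (the ``corner'' zero weights produce a common factor between consecutive terms) exhibits a repeated root, hence $F(\Delta_x) = 0$ and the kernel curve is singular; Proposition \ref{prop:genusofthe Kerneljinvariant} then yields genus zero.

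The main obstacle is the explicit polynomial computation of $F(\Delta_x)$ and the subsequent case analysis for its vanishing: the expansion involves many monomials in the eight weights, and one must keep careful track of which coefficients of $t$ vanish and why. Fortunately, this calculation has already been carried out in the literature, in \cite[Theorem 6.1.1]{FIM} and independently in \cite[Corollary 2.6]{DreyfusHardouinRoquesSingerGenuszero2}, and the present statement follows by directly invoking those results. An alternative, more geometric route would be to exhibit a rational parametrization of the kernel curve in each of the four configurations and to rule out rationality in all other nondegenerate cases by a $J$-invariant calculation, but the discriminant-based approach appears the most efficient.
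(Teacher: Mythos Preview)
Your proposal is correct and essentially matches the paper's treatment: the paper does not prove this theorem but simply recalls it as a known result from \cite[Theorem 6.1.1]{FIM} and \cite[Corollary 2.6]{DreyfusHardouinRoquesSingerGenuszero2}, which is exactly what you end up doing. Your additional sketch of the discriminant-based strategy is a reasonable description of how those references proceed, but it goes beyond what the paper itself provides.
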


\begin{rmk}\label{remgenre0}
The walks corresponding to the fourth configuration never enter the quarter-plane.  As described in \cite[Section 2.1]{BMM}, if we consider walks corresponding to the second and third configurations we are in the situation where one of the quarter plane constraints implies the other. In the last three configurations, the generating function is algebraic. So the only interesting  nondegenerate genus zero weighted models have steps included in
$$ \begin{tikzpicture}[scale=.4, baseline=(current bounding box.center)]
\foreach \x in {-1,0,1} \foreach \y in {-1,0,1} \fill(\x,\y) circle[radius=2pt];
\draw[thick,->](0,0)--(-1,1);
\draw[thick,->](0,0)--(0,1);
\draw[thick,->](0,0)--(1,1);
\draw[thick,->](0,0)--(1,0);
\draw[thick,->](0,0)--(1,-1);
\end{tikzpicture} $$
Note that due to Proposition \ref{prop:degeneratecases}, the anti-diagonal steps  have nonzero attached weights.

Moreover, by Theorem \ref{theo:genuszerocharac}, combined with Proposition \ref{prop:degeneratecases}, the nondegenerate weighted models of genus one are the walks where there are no three consecutive  cardinal directions with weight zero. Or equivalently, this corresponds to the situation where the  set of steps is not included in any half plane (See~\eqref{G0} below).
\end{rmk} 
 
Thanks to Theorem \ref{theo:genuszerocharac}, one can  reduce our study to two cases depending on the genus of the kernel curve attached to 
 a nondegenerate weighted model. The following lemma proves that when the kernel curve is of genus one, its  $J$-invariant  has modulus strictly greater than $1$. This property 
 allows us   to use the theory of Tate curves in order to analytically uniformize the kernel curve.

 \begin{lemma}\label{lemma:jinvKernel}
When $E$ is smooth, the invariant $J(E)$ belongs to  $\Q(t)$  and is such that $|J(E)|>1$, where  $|~|$ denotes the norm of $(C, |~|)$.
 \end{lemma}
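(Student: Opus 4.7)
Rationality of $J(E)$ is immediate: the coefficients $\beta_j$ displayed in \eqref{eq:alphaibetai} all lie in $\Q[t]$, hence the Eisenstein invariants $D(\Delta_y), E(\Delta_y), F(\Delta_y)$ are elements of $\Q[t]$ and $J(E)\in\Q(t)$. Since $|f|=\alpha^{v_0(f)}$ with $0<\alpha<1$, the inequality $|J(E)|>1$ is equivalent to $v_0(J(E))<0$, and the plan is to produce this inequality by expanding $D(\Delta_y)$ and $F(\Delta_y)$ modulo $t$.

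The first step is to record the $t$-adic valuations of the coefficients of
\[
\Delta_y(y_0,y_1) = a_0\, y_1^4 + 4 a_1 y_0 y_1^3 + 6 a_2 y_0^2 y_1^2 + 4 a_3 y_0^3 y_1 + a_4 y_0^4.
\]
Inspection of \eqref{eq:alphaibetai} shows that $\beta_0,\beta_4$ are divisible by $t^2$ and $\beta_1,\beta_3$ by $t$, whereas $\beta_2 = 1 - 2t d_{0,0}+O(t^2)$. Consequently $v_0(a_0),v_0(a_4)\geq 2$, $v_0(a_1),v_0(a_3)\geq 1$, and $a_2 \equiv 1/6 \pmod{t}$.

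Plugging these congruences into the Eisenstein invariants, every monomial in $D(\Delta_y) = a_0 a_4 + 3 a_2^2 - 4 a_1 a_3$ other than $3 a_2^2$ has valuation at least $2$, so $D(\Delta_y)\equiv 1/12 \pmod{t}$; a similar accounting for $E(\Delta_y) = a_0 a_3^2 + a_1^2 a_4 - a_0 a_2 a_4 - 2 a_1 a_2 a_3 + a_2^3$ leaves only the monomial $a_2^3$ modulo $t$, giving $E(\Delta_y)\equiv 1/216 \pmod{t}$. The crucial observation, and essentially the only nontrivial content of the proof, is the numerical identity $(1/12)^3 = 27 \cdot (1/216)^2 = 1/1728$, which forces the cancellation of the leading terms of the two parts of
\[
F(\Delta_y) = 27 E(\Delta_y)^2 - D(\Delta_y)^3,
\]
so that $v_0(F(\Delta_y))\geq 1$.

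To conclude, the assumption that $E$ is smooth combined with Proposition \ref{prop:genusofthe Kerneljinvariant} ensures that $F(\Delta_y)\neq 0$, so $v_0(F(\Delta_y))$ is a strictly positive integer. Since $v_0(12^3\, D(\Delta_y)^3)=0$, this gives $v_0(J(E)) = -v_0(F(\Delta_y)) \leq -1$, and therefore $|J(E)| \geq \alpha^{-1} > 1$. The main obstacle is to notice that the constant terms of $D(\Delta_y)^3$ and $27 E(\Delta_y)^2$ cancel; all the other steps are direct computations or invocations of already stated results.
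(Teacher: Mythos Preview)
Your proof is correct and follows essentially the same approach as the paper: both reduce $\Delta_y$ modulo $t$ (where it becomes $y_0^2y_1^2$), compute $D(\Delta_y)\equiv 1/12$ and $E(\Delta_y)\equiv 1/6^3$ at $t=0$, observe the resulting cancellation in $F(\Delta_y)=27E^2-D^3$, and invoke Proposition~\ref{prop:genusofthe Kerneljinvariant} to conclude that $J(E)$ has strictly negative $t$-adic valuation. Your write-up is simply more explicit about the intermediate valuation bookkeeping.
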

  \begin{proof}
At $t=0$, $\Delta_y (y_0,y_1)$ reduces  to $y_{0}^{2}y_{1}^{2}$.  This proves that the reduction of $D(\Delta_y)$ (resp. $E(\Delta_y)$)  at $t=0$ is $\frac{1}{12}$ (resp. $\frac{1}{6^{3}}$). One concludes that  $F(\Delta_y)$ vanishes for $t=0$.
By Proposition~\ref{prop:genusofthe Kerneljinvariant}, $J(E) \in \Q(t)$ has a strictly negative valuation at $t=0$. Thus, $|J(E)|>1$. 
 \end{proof}

\subsection{The automorphism of the walk}\label{sec:autoofthewalks}

Following \cite[Section 3]{BMM} or \cite[Section 3]{KauersYatchak}, we introduce the involutive birational transformations of $\P1(C)\times \P1(C)$ given by 
$$
i_1(x,y) =\left(x, \frac{A_{-1}(x) }{A_{1}(x)y}\right) \text{ and }  i_2(x,y)=\left(\frac{B_{-1}(y)}{B_{1}(y)x},y\right),
$$ 
(see \S \ref{sec:notationwalk} for the significance of the $A_i,B_i$'s).

 They induce two involutive automorphisms 
$ 
\iota_{1}, \iota_{2} : E \dashrightarrow E
$ given by 
$$
\begin{array}{llll}
&\iota_1([x_0: x_1],[y_0:y_1]) &=&\left([x_0: x_1], \left[\dfrac{A_{-1}(\frac{x_{0}}{x_{1}}) }{A_{1}(\frac{x_{0}}{x_{1}})\frac{y_{0}}{y_{1}}}:1\right]\right),\\ \text{ and } & \iota_2([x_0: x_1],[y_0:y_1])&=&\left(\left[\dfrac{B_{-1}(\frac{y_{0}}{y_{1}})}{B_{1}(\frac{y_{0}}{y_{1}})\frac{x_{0}}{x_{1}}}:1\right],[y_0:y_1]\right).
\end{array}
$$

Note that $\iota_{1}$ and $\iota_{2}$ are nothing but the vertical and horizontal  switches of $E$, see Figure \ref{figiota}. That is,  for any $P=(x,y) \in E$, we have 
$$
\{P,\iota_1(P)\} = E \cap (\{x\} \times \P1(C))
\text{ and }
\{P,\iota_2(P)\} = E \cap (\P1(C) \times \{y\}).
$$

\begin{figure}[h]
\begin{center}
\includegraphics[scale=0.7]{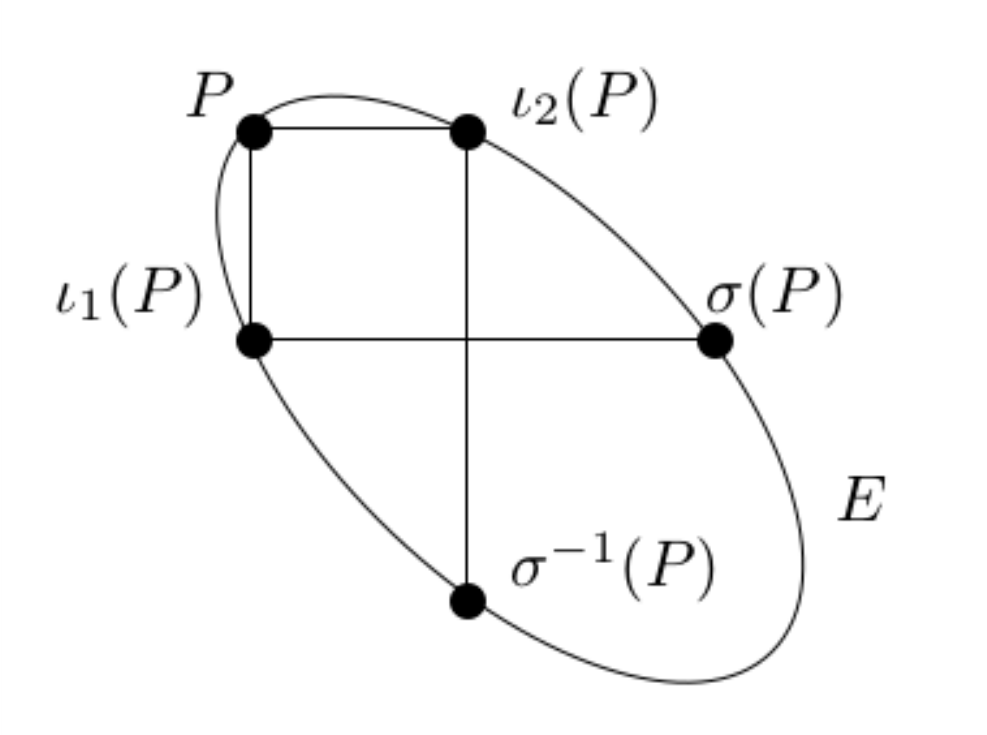}
    


\caption{The maps $\iota_{1},\iota_{2}$ restricted to the kernel curve $E$}\label{figiota}
\end{center}
\end{figure}

The automorphism of the walk $\sigma$ is defined
 by 
$$
\sigma=\iota_2 \circ \iota_1. 
$$ 

The following  holds.

\begin{lemma}[Lemma 3.3 in   \cite{DreyfusHardouinRoquesSingerGenuszero2}]\label{lem:genus1nofixedpoint}
Let $P \in E$. The following statements are equivalent: 
\begin{itemize}
\item $P$ is fixed by $\sigma$;
\item $P$ is fixed by $\iota_1$ and $\iota_2$;
\item $P$ is the only singular point of $E$, and  $E$ is of genus zero. 
\end{itemize}
\end{lemma}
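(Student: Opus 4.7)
The plan is to prove the two equivalences $(2)\Leftrightarrow (1)$ and $(2)\Leftrightarrow (3)$; then $(1)\Leftrightarrow(3)$ follows formally. The first equivalence will be handled by a purely set-theoretic argument exploiting that $\iota_1,\iota_2$ are involutions together with their geometric description as vertical/horizontal switches. The second equivalence will be handled by translating the conditions $\iota_i(P)=P$ into algebraic equations on the coordinates of $P$ using the explicit formulas for $\iota_1,\iota_2$, and then recognizing them as the conditions for $P$ to be a singular point of $E$; Proposition \ref{prop:genusofthe Kerneljinvariant} will then force the genus zero conclusion and uniqueness of such a point.

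For $(2)\Rightarrow (1)$ I would just compute $\sigma(P)=\iota_2(\iota_1(P))=\iota_2(P)=P$. For the converse, I would apply $\iota_2$ to $\iota_2(\iota_1(P))=P$ to obtain $\iota_1(P)=\iota_2(P)$; call this common point $Q$. Since $Q=\iota_1(P)$ lies on the vertical fibre through $P$, it has the same $x$-coordinate as $P$, and since $Q=\iota_2(P)$ lies on the horizontal fibre through $P$, it has the same $y$-coordinate as $P$. Hence $Q=P$, which yields $(2)$.

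For $(2)\Leftrightarrow (3)$, write $P=(x_0,y_0)$ and use the formulas
\[
\iota_1(x,y)=\Bigl(x,\tfrac{A_{-1}(x)}{A_{1}(x)y}\Bigr),\qquad \iota_2(x,y)=\Bigl(\tfrac{B_{-1}(y)}{B_{1}(y)x},y\Bigr).
\]
The condition $\iota_1(P)=P$ rewrites as $y_0^2=A_{-1}(x_0)/A_{1}(x_0)$, which is precisely Vieta's relation stating that $y_0$ is a double root of the quadratic $y\mapsto K(x_0,y,t)$; equivalently $\partial_yK(P)=0$ (given $K(P)=0$). Symmetrically, $\iota_2(P)=P$ is equivalent to $\partial_xK(P)=0$. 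Thus $(2)$ is equivalent to $K(P)=\partial_xK(P)=\partial_yK(P)=0$, i.e., to $P$ being a singular point of $E$. Proposition \ref{prop:genusofthe Kerneljinvariant} then tells us that the existence of a singular point is equivalent to $E$ being of genus zero, and that in this case $E$ has a unique singular point; that unique singular point must therefore be $P$, proving $(2)\Leftrightarrow(3)$.

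The main obstacle I expect is book-keeping around the loci where the birational formulas for $\iota_1,\iota_2$ are not obviously regular, namely when $x_0$, $y_0$, $A_1(x_0)$ or $B_1(y_0)$ vanish, or when $P$ lies on the line at infinity. In these cases one must either pass to the homogeneous form $\widetilde{K}$ or to the dehomogenization centered at the relevant chart of $\PX^1\times\PX^1$, and verify that the Vieta/double-root translation still holds. This is where the non-degeneracy hypothesis of Proposition \ref{prop:degeneratecases} is used: it guarantees that $K$ really has degree two in each variable and prevents the quadratic from collapsing, so the argument above extends by a short case analysis on the chart in which $P$ lies.
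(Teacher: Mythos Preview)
The paper does not give its own proof of this lemma; it simply cites it as Lemma~3.3 of \cite{DreyfusHardouinRoquesSingerGenuszero2}. So there is no in-paper argument to compare against.

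Your proof is correct and is essentially the natural one. The equivalence $(1)\Leftrightarrow(2)$ is exactly the short argument you give: from $\iota_2\iota_1(P)=P$ one gets $\iota_1(P)=\iota_2(P)$, and since this point shares the $x$-coordinate of $P$ (being $\iota_1(P)$) and the $y$-coordinate of $P$ (being $\iota_2(P)$), it equals $P$. For $(2)\Leftrightarrow(3)$, your translation ``$\iota_1(P)=P$ iff $y_0$ is a double root of $K(x_0,\cdot,t)$ iff $\partial_y K(P)=0$'' (and symmetrically for $\iota_2$) is the right idea, and then Proposition~\ref{prop:genusofthe Kerneljinvariant} does give exactly the genus-zero/unique-singular-point conclusion. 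One conceptual way to phrase the affine step, which avoids some of the chart book-keeping you worry about, is: at a \emph{smooth} point of $E$ the tangent direction is well defined, and $\iota_1(P)=P$ (resp.\ $\iota_2(P)=P$) says the vertical (resp.\ horizontal) line is tangent; these cannot both hold at a smooth point, so a common fixed point must be singular. Conversely, any line through a singular point meets $E$ there with multiplicity $\ge 2$, so both vertical and horizontal fibres do, forcing $\iota_1(P)=\iota_2(P)=P$.

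Your caveat about the boundary charts is well placed but routine: the description ``$\{P,\iota_1(P)\}=E\cap(\{x\}\times\PX^1)$'' is intrinsic on $\PX^1\times\PX^1$, and the double-root/singularity translation goes through in any affine chart of the biprojective curve using $\widetilde{K}$. Nondegeneracy (Proposition~\ref{prop:degeneratecases}) is indeed what guarantees each fibre has length~$2$, so $\iota_1,\iota_2$ are genuine involutions and the Vieta argument does not collapse.
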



\section{Generating functions for walks, genus zero case}\label{secgenre0}

 In this section, we fix a  nondegenerate weighted model of genus zero. Following Remark~\ref{remgenre0}, after eliminating duplications of trivial cases and  the interchange of $x$ and $y$, we should focus  on  walks $\cW$  arising  from the following 5 sets of steps:

\begin{equation}\label{G0}
\begin{tikzpicture}[scale=.4, baseline=(current bounding box.center)]
\foreach \x in {-1,0,1} \foreach \y in {-1,0,1} \fill(\x,\y) circle[radius=2pt];
\draw[thick,->](0,0)--(-1,1);
\draw[thick,->](0,0)--(0,1);
\draw[thick,->](0,0)--(1,-1);
\end{tikzpicture}\quad 
\begin{tikzpicture}[scale=.4, baseline=(current bounding box.center)]
\foreach \x in {-1,0,1} \foreach \y in {-1,0,1} \fill(\x,\y) circle[radius=2pt];
\draw[thick,->](0,0)--(-1,1);
\draw[thick,->](0,0)--(1,1);
\draw[thick,->](0,0)--(1,-1);
\end{tikzpicture}\quad \begin{tikzpicture}[scale=.4, baseline=(current bounding box.center)]
\foreach \x in {-1,0,1} \foreach \y in {-1,0,1} \fill(\x,\y) circle[radius=2pt];
\draw[thick,->](0,0)--(-1,1);
\draw[thick,->](0,0)--(0,1);
\draw[thick,->](0,0)--(1,1);
\draw[thick,->](0,0)--(1,-1);
\end{tikzpicture}\quad
\begin{tikzpicture}[scale=.4, baseline=(current bounding box.center)]
\foreach \x in {-1,0,1} \foreach \y in {-1,0,1} \fill(\x,\y) circle[radius=2pt];
\draw[thick,->](0,0)--(-1,1);
\draw[thick,->](0,0)--(0,1);
\draw[thick,->](0,0)--(1,1);
\draw[thick,->](0,0)--(1,0);
\draw[thick,->](0,0)--(1,-1);
\end{tikzpicture}\quad 
\begin{tikzpicture}[scale=.4, baseline=(current bounding box.center)]
\foreach \x in {-1,0,1} \foreach \y in {-1,0,1} \fill(\x,\y) circle[radius=2pt];
\draw[thick,->](0,0)--(-1,1);
\draw[thick,->](0,0)--(0,1);
\draw[thick,->](0,0)--(1,0);
\draw[thick,->](0,0)--(1,-1);
\end{tikzpicture} \tag{G0}
\end{equation} 

A function $f(x,y,t)\in \Q[[x,y,t]]$ is $(\frac{d}{dx},\frac{d}{dt})$-differentially algebraic over $\Q$ if there exists  a nonzero polynomial $P$ with coefficients in $\Q$ such that $  P(f(x,y,t), \frac{d}{dx} f(x,y,t),\frac{d}{dt} f(x,y,t)   , \dots)=0$. The function  $f(x,y,t)$ is $(\frac{d}{dx},\frac{d}{dt})$-differentially transcendental over $\Q$ otherwise. Note that if $f$ is $\frac{d}{dt}$-differentially algebraic over $\Q$ then it is $(\frac{d}{dx},\frac{d}{dt})$-differentially algebraic over $\Q$. We define similarly the notion of   $(\frac{d}{dy},\frac{d}{dt})$-differential algebraicity.

 In this section, we   prove the following theorem: 
\begin{thm}\label{thm:genre0}
For any   weighted model  listed in  \eqref{G0}, the generating function $Q(x,0,t)$ is $(\frac{d}{dx},\frac{d}{dt})$-differentially transcendental over $\Q$.\par 
For any   weighted model  listed in \eqref{G0}, the generating function $Q(0,y,t)$ is $(\frac{d}{dy},\frac{d}{dt})$-differentially transcendental over $\Q$.
\end{thm}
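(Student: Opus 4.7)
The proof follows the three-step strategy announced in the introduction: uniformize the kernel curve nonarchimedeanly, descend the functional equation to a rank-one $q$-difference equation, then apply a bivariate Hardouin--Singer style criterion reducing transcendence to an insoluble telescoping problem. The two statements are related by the symmetry of Remark~\ref{rem1}, so I would only prove differential transcendence of $Q(x,0,t)$.

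\textbf{Step 1 (uniformization).} Since $E$ is of genus zero, it is a singular rational curve over $C$ with a unique singular point fixed by $\iota_1, \iota_2, \sigma$ (Lemma~\ref{lem:genus1nofixedpoint}). I would produce an explicit nonarchimedean uniformization $\phi \colon \mathbb{G}_m(C) \to E \setminus \{\text{sing}\}$, with coordinates $x(z), y(z) \in C(z)$, which conjugates the automorphism $\sigma$ of the walk to the multiplicative shift $\tau_q \colon z \mapsto qz$ of $\mathbb{G}_m(C)$ for some explicit $q \in \Q(t)^\times$. The appendix on nonarchimedean estimates provides the control needed to show $|q| \neq 1$, so $\tau_q$ has no fixed points and the functional setup of Tate-type $q$-difference equations applies. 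Crucially, because $q$ and the uniformizing coefficients lie in $\Q(t)$, the derivation $\partial_t$ on $C$ extends canonically to an action compatible with $\phi$.

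\textbf{Step 2 (descent to a rank-one $q$-difference equation).} Evaluating the fundamental equation~\eqref{eq:fundamentalkernelequationseries} on $E$ kills the left-hand side and yields, on~$E$,
\[
xy + F^1(x,t) + F^2(y,t) + t d_{-1,-1} Q(0,0,t) = 0.
\]
Applying $\iota_1$ (which fixes $x$) and subtracting gives $F^2(y,t) - F^2(\iota_1 y, t)$ as a rational function on $E$; applying $\iota_2$ to the result then produces, after pulling back along $\phi$, a linear equation
\[
f(qz) - f(z) = b(z,t), \qquad b(z,t) \in \Q(t)(z),
\]
where $f(z)$ is a $\phi$-lift of $F^1(x,t)$ (or a simple rational modification thereof). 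Differential transcendence of $Q(x,0,t)$ over $\Q$ in the variables $x$ and $t$ is equivalent to $(\partial_z, \partial_t)$-differential transcendence of $f$ over $C^{\tau_q}$, because $x(z)$ is algebraic over $C(z)$ and $\phi^\ast \partial_x$ is proportional to $\partial_z$ up to an algebraic factor.

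\textbf{Step 3 (bivariate Galoisian criterion and telescoping).} I would invoke the Galoisian criterion proved in the appendix, extending the Hardouin--Singer theorem of \cite{HS} to two commuting derivations $\partial_z$ and $\partial_t$. It states: $f$ is $(\partial_z,\partial_t)$-differentially algebraic over $C^{\tau_q}$ if and only if there exist an integer $n\geq 0$, a nonzero differential polynomial $L$ in the derivations $\partial_z, \partial_t$ with coefficients in $C^{\tau_q}$, and a function $g \in \mathcal{M}$ (meromorphic on the Tate torus $C^\times/q^{\Z}$, in the sense of Appendix~\ref{sec:merofunctiontate}) such that
\[
\tau_q(g) - g \;=\; L(b).
\]
Thus differential transcendence is reduced to showing that for every such $L \neq 0$, the equation above has no meromorphic solution.

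\textbf{Step 4 (the obstruction, the main difficulty).} The hard part is to show this telescoping equation is unsolvable. The strategy, following the genus-zero analysis of \cite{DreyfusHardouinRoquesSingerGenuszero} but performed now over the differential-valued field $(C,\partial_t)$, is to read off the obstruction from the principal parts of $b(z,t)$ at its poles. One computes that $b$ has simple poles at certain explicit points $z_0(t),\ldots,z_r(t) \in C^\times$ lying in distinct $\tau_q$-orbits, with residues that are algebraic functions of $t$ built from the $d_{i,j}$. A telescoping identity $\tau_q(g) - g = L(b)$ forces the sum of residues of $L(b)$ over each $\tau_q$-orbit to vanish; applying $L = \sum c_{ij} \partial_z^i \partial_t^j$ and exploiting the fact that $\partial_t$ acts on the orbit positions $z_k(t)$ in a controlled way (this is where working over $C$ with the Hahn-series derivation $\partial_t$, rather than at a fixed numerical $t$, is indispensable), one derives a nontrivial linear system in the coefficients $c_{ij}$ whose only solution is the trivial one, i.e.\ $L = 0$. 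The obstruction is packaged by the transcendence results of Appendix~\ref{sec:merofunctiontate} on $q$-logarithms and their derivatives on Tate curves, which guarantee that the iterated $\partial_t$-derivatives of the residues remain $\Q$-linearly independent modulo the image of $\tau_q - \mathrm{id}$. Carrying out this analysis case-by-case on the five models \eqref{G0} completes the proof.
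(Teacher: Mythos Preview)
Your outline has the right architecture (uniformize, get a rank-one $\q$-difference equation, apply a Galoisian criterion, rule out telescopers), but it diverges from the paper at two crucial points, one of which is a genuine technical error.

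\textbf{The derivation $\partial_t$ does not commute with $\sigma_\q$.} In Step~3 you invoke a criterion for ``two commuting derivations $\partial_z$ and $\partial_t$'', but since $\q$ depends on $t$, one has $\partial_t\circ\sigma_\q = \partial_t(\q)\,\sigma_\q\circ\partial_s + \sigma_\q\circ\partial_t$, so $\partial_t$ and $\sigma_\q$ do \emph{not} commute. The Galois-theoretic machinery only applies to derivations commuting with the difference operator. The paper fixes this by introducing the twisted derivation $\Delta_{t,\q}=\partial_t(\q)\ell_\q(s)\partial_s+\partial_t$ (Lemma~\ref{lemma:goodderivationgenus1}), which does commute with $\sigma_\q$, and then works with the pair $(\partial_s,\Delta_{t,\q})$ over the enlarged base field $C_\q(s,\ell_\q)$. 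Without this correction your Step~3 criterion is not even well-posed.

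\textbf{The obstruction is handled by descent and specialization, not by a direct bivariate residue analysis.} Your Step~4 proposes to rule out all $(\partial_z,\partial_t)$-telescopers by tracking how $\partial_t$ moves the poles $z_k(t)$ and deriving a linear system in the $c_{ij}$, case-by-case over the five models. The paper does something quite different and much cleaner. Its key lemma (Proposition~\ref{prop2}) shows that the existence of a $(\partial_s,\Delta_{t,\q})$-telescoper forces the existence of a telescoper involving \emph{only} $\partial_s$, with coefficients in $C_\q$; a further linear-disjointness argument descends the coefficients to $C$. Then \cite[Lemma~6.4]{HS} reduces this to an equation $b_1=\sigma_\q(f)-f$ with $f\in C(s)$, which passes to $(\sigma(x)-x)\sigma(y)=\sigma(\tilde f)-\tilde f$ on the function field $C(E)$. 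Finally, one chooses a $\Q$-embedding of the finitely generated coefficient field into $\C$ sending $t$ to a transcendental number, and \emph{quotes} the already-established nonexistence of such $\tilde f$ from \cite[\S3.2]{DreyfusHardouinRoquesSingerGenuszero}. No case-by-case residue bookkeeping with $\partial_t$ is needed, and no new obstruction computation is performed in this paper. The descent from two derivations to one is precisely what makes the $t$-variable problem tractable by reduction to the known $x$-variable result; your proposal misses this mechanism entirely.
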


 Theorem  \ref{thm:genre0} implies the $\frac{d}{dt}$-differential transcendence of the complete generating function.
\begin{cor}\label{cor3}
For any  weighted model  listed in \eqref{G0}, the generating function $Q(x,y,t)$ is $(\frac{d}{dx},\frac{d}{dt})$ and  $(\frac{d}{dy},\frac{d}{dt})$-differentially transcendental over $\Q$. Therefore, $Q(x,y,t)$ is $\frac{d}{dt}$-differentially transcendental over $\Q$.
\end{cor}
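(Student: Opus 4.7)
The plan is to deduce Corollary \ref{cor3} directly from Theorem \ref{thm:genre0} by two essentially formal observations: differential algebraicity is preserved under substituting a value for a variable on which the derivations do not act, and $\frac{d}{dt}$-differential algebraicity is a special case of $(\frac{d}{dx},\frac{d}{dt})$-differential algebraicity. No further use of the functional equation or Galois theoretic machinery is needed here; all the substance is already in Theorem~\ref{thm:genre0}.

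First I would argue the specialization step. Assume, towards a contradiction, that $Q(x,y,t)$ is $(\frac{d}{dx},\frac{d}{dt})$-differentially algebraic over $\Q$. Then there is a nonzero polynomial $P \in \Q[\{X_{i,j}\}]$ in finitely many indeterminates such that
\[
P\bigl(\ldots,\tfrac{\partial^{i+j}}{\partial x^{i}\partial t^{j}} Q(x,y,t),\ldots\bigr)=0
\]
as an element of $\Q[[x,y,t]]$. Since the derivations $\frac{d}{dx}$ and $\frac{d}{dt}$ commute with the substitution $y=0$, applying this substitution yields
\[
P\bigl(\ldots,\tfrac{\partial^{i+j}}{\partial x^{i}\partial t^{j}} Q(x,0,t),\ldots\bigr)=0
\]
in $\Q[[x,t]]$. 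The polynomial $P$ has coefficients in $\Q$ and is unaffected by the specialization, so it remains a nonzero polynomial witnessing the $(\frac{d}{dx},\frac{d}{dt})$-differential algebraicity of $Q(x,0,t)$ over $\Q$. This contradicts the first assertion of Theorem~\ref{thm:genre0}. The symmetric argument, setting $x=0$ and invoking the second assertion of Theorem~\ref{thm:genre0}, gives the $(\frac{d}{dy},\frac{d}{dt})$-differential transcendence of $Q(x,y,t)$.

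For the final claim, I would observe that any polynomial relation of the form $P(Q,\partial_t Q,\partial_t^{2}Q,\ldots)=0$ with $P\in\Q[X_0,\ldots,X_n]$ nonzero is automatically a polynomial relation between the mixed derivatives $\partial_x^{i}\partial_t^{j} Q$ (simply using only the $i=0$ indeterminates). Hence $\frac{d}{dt}$-differential algebraicity over $\Q$ implies $(\frac{d}{dx},\frac{d}{dt})$-differential algebraicity over $\Q$. The contrapositive, combined with what was just proved, shows that $Q(x,y,t)$ is $\frac{d}{dt}$-differentially transcendental over $\Q$.

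Because the argument is this short, there is no real obstacle; all of the analytic and Galoisian difficulty has already been absorbed into Theorem~\ref{thm:genre0}. The only minor point to be careful about is that one should take the polynomial $P$ with coefficients in $\Q$ (as is permitted by the definition, see Remark~\ref{rmk:CdifftransQdifftrans}) rather than in $\Q(x,y,t)$, so that specialization at $y=0$ or $x=0$ cannot destroy the relation by making all coefficients vanish.
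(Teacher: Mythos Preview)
Your proof is correct and follows essentially the same approach as the paper: assume differential algebraicity, specialize at $y=0$ (resp.\ $x=0$), and contradict Theorem~\ref{thm:genre0}. You are in fact slightly more explicit than the paper in justifying why the specialized relation remains nontrivial (because the coefficients of $P$ lie in $\Q$, invoking Remark~\ref{rmk:CdifftransQdifftrans}); the paper leaves this implicit.
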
 

\begin{proof}[Proof of Corollary \ref{cor3}]
Suppose to the contrary  that $Q(x,y,t)$ is $(\frac{d}{dx},\frac{d}{dt})$-algebraic over $\Q$. Let $P$  be a nonzero polynomial with coefficients in $\Q$ such that $  P(Q(x,y,t), \frac{d}{dx} Q(x,y,t),\frac{d}{dt} Q(x,y,t)   , \dots)=0$. Specializing  at $y=0$ this  relation and noting that $\frac{d ^i}{dx^i}\frac{d^j}{dt^j}(Q(x,0,t))$ is the specialization of $\frac{d ^i}{dx^i}\frac{d^j}{dt^j}(Q(x,y,t))$, one finds  a nontrivial 
differential algebraic relations for $Q(x,0,t)$ in the derivatives $ \frac{d}{dx}$ and $\frac{d}{dt}$. This contradicts   Theorem \ref{thm:genre0}. The proof for the $(\frac{d}{dy},\frac{d}{dt})$-differential transcendence is similar.
\end{proof}

As detailed in the introduction, our proof has three major steps: 
\begin{itemize}
\item[Step $1$:] we attach to the incomplete generating functions $Q(x,0,t)$ and $Q(0,y,t)$  some  auxiliary functions which share the same differential behavior than the generating series but
satisfy simple $\q$-difference equations. This is done via the uniformization of the kernel curve (see \S \ref{sec41} and \S \ref{sec:uniformizationgenuszero}).
\item[Step $2$:] we apply difference Galois theory to the $\q$-difference equations satisfied by the auxiliary functions in order to relate the differential algebraicity of the incomplete generating functions to the existence of  \emph{telescoping relations}. These telescoping relations are of the form \eqref{eq9} below.
\item[Step $3$:] we prove that there is no such telescoping relation. This allows us to conclude that the generating series is  $\frac{d}{dt}$-differentially transcendental over $\Q$  (see \S \ref{sec43}).
\end{itemize}

\subsection{Uniformization of the kernel curve}\label{sec41}
With the notation of $\S\ref{sec1}$,  especially \eqref{eq:alphaibetai},  any  weighted model   listed in \eqref{G0}  satisfies $\alpha_0=\alpha_1=\beta_0=\beta_1=0$. Moreover, since the  weighted model  is nondegenerate,  one finds that the  product $d_{1,-1}d_{-1,1}$ is nonzero. Furthermore, $$-1+d_{0,0}t\pm \sqrt{(1-d_{0,0}t)^{2}-4d_{1,-1}d_{-1,1}t^{2}} \neq 0.$$

The uniformization of the kernel curve  of a weighted model  listed in \eqref{G0} is given by the following proposition.
\begin{prop}[Propositions 1.5 in \cite{DreyfusHardouinRoquesSingerGenuszero}]\label{prop:unifgenre0}
Let us consider a weighted model  listed in \eqref{G0} and let $E$ be its kernel curve. There exist $\lambda\in C^{*}$ and a parametrization $\phi :\P1(C) \rightarrow E$ with $$\phi(s)=(x(s),y(s))=
\left(\dfrac{4\a_{2}}{\sqrt{\a_{3}^{2}-4\a_{2}\a_{4}}( s +\frac{1}{s}) -2\a_{3}}, 
\dfrac{4\b_{2}}{\sqrt{\b_{3}^{2}-4\b_{2}\b_{4}}( \frac{s}{\lambda}+\frac{\lambda}{s}) -2\b_{3}}\right),$$ such that
\begin{itemize}
\item 
$\phi: \P1(C) \setminus \{ 0, \infty\} \rightarrow E \setminus \{(0,0) \}$ is a bijection and $\phi^{-1}((0,0))=\{0,\infty\}$;
\item The automorphisms $\iota_1,\iota_2,\sigma$ of $E$ induce  automorphisms $\iup_{1},\iup_{2},\sigma_\q$ of $\P1 (C)$ {\it via} $\phi$ that satisfy $
\iup_1(s)=\frac{1}{s}$, ${\iup_2 (s)= \frac{\q}{s}
}$, $\sigma_{\q} (s)=\q s$, with $\lambda^{2}=\q\in \{ \widetilde{\q},\widetilde{\q}^{-1}\}$ and $$ 
\widetilde{\q}=\dfrac{-1+d_{0,0}t-\sqrt{(1-d_{0,0}t)^{2}-4d_{1,-1}d_{-1,1}t^{2}}}{-1+d_{0,0}t+\sqrt{(1-d_{0,0}t)^{2}-4d_{1,-1}d_{-1,1}t^{2}}} \in C^{*}.
$$ Thus,  we have the commutative diagrams 
 $$
\xymatrix{
    E  \ar@{->}[r]^{\iota_k} & E  \\
    \mathbb{P}^{1}(C) \ar@{->}[u]^\phi \ar@{->}[r]_{\iup_k} & \mathbb{P}^{1}(C) \ar@{->}[u]_\phi 
  }
  \text{ and }
  \xymatrix{
    E  \ar@{->}[r]^{\sigma} & E  \\
    \mathbb{P}^{1}(C) \ar@{->}[u]^\phi \ar@{->}[r]_{\sigma_{\q}} & \mathbb{P}^{1}(C) \ar@{->}[u]_\phi 
  }
$$
\end{itemize}

\end{prop}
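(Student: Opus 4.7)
The plan is to construct the parametrization $\phi$ directly from the genus zero structure of $E$ and then verify the compatibility with the automorphisms $\iota_1,\iota_2,\sigma$. Since the weighted model belongs to \eqref{G0}, the coefficients in \eqref{eq:alphaibetai} satisfy $\alpha_0=\alpha_1=\beta_0=\beta_1=0$, so the discriminants factor as $\mathfrak{D}(x)=x^2(\alpha_4 x^2+\alpha_3 x+\alpha_2)$ and $\mathfrak{E}(y)=y^2(\beta_4 y^2+\beta_3 y+\beta_2)$. Combined with Proposition \ref{prop:genusofthe Kerneljinvariant} and Lemma \ref{lem:genus1nofixedpoint}, this shows that $E$ has genus zero with unique singular point $(0,0)$, and its normalization is $\mathbb{P}^{1}(C)$.

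Next, I would build $x(s)$ from the double cover $E \to \mathbb{P}^{1}_x$. This cover is ramified at the roots of $\alpha_4 X^2+\alpha_3 X+\alpha_2$, whose discriminant $\alpha_3^2-4\alpha_2\alpha_4$ is nonzero because the model is nondegenerate. Setting $T=s+\tfrac{1}{s}$ realizes $\mathbb{P}^{1}(C)$ as the double cover of $\mathbb{P}^{1}_T$ branched at $T=\pm 2$, and the M\"obius substitution $x=4\alpha_2/(\sqrt{\alpha_3^2-4\alpha_2\alpha_4}\,T-2\alpha_3)$ sends these two branch values to the two ramification points of $E\to \mathbb{P}^{1}_x$, producing the stated formula for $x(s)$. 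A symmetric construction on the $y$-side yields a parametrization via $s/\lambda+\lambda/s$ for a scalar $\lambda\in C^*$ yet to be pinned down. Imposing $\widetilde{K}(x(s),1,y(s),1,t)\equiv 0$ as an identity in $s$ produces, after clearing denominators, a quadratic relation for $\lambda^2$ whose two roots are $\widetilde{\q}$ and $\widetilde{\q}^{-1}$, giving the formula for $\widetilde{\q}$ in terms of $d_{0,0}$, $d_{1,-1}$ and $d_{-1,1}$. Since $C$ is algebraically closed, such a $\lambda$ exists in $C^*$.

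The automorphisms then lift by inspection. Because $x(s)$ depends on $s$ only through $s+\tfrac{1}{s}$, it is invariant under $s\mapsto 1/s$, which forces the vertical-switch involution $\iota_1$ to lift to $\iup_1(s)=1/s$. Likewise $y(s)$ depends on $s$ only through $s/\lambda+\lambda/s$, hence is invariant under $s\mapsto \q/s$ with $\q=\lambda^2$, so $\iota_2$ lifts to $\iup_2(s)=\q/s$. Composing gives $\sigma_\q(s)=\q s$. For the bijectivity claim, one checks that $x(s)$ and $y(s)$ vanish precisely at $s\in\{0,\infty\}$, so $\phi^{-1}((0,0))=\{0,\infty\}$, and on the complement $\phi$ is a birational morphism from a smooth rational curve onto the smooth locus of $E$, hence an isomorphism.

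The main obstacle will be the algebraic identity $\widetilde{K}(x(s),1,y(s),1,t)\equiv 0$ in step two. Substituting the explicit formulas produces a Laurent expression in $s$ whose numerator must vanish identically; the vanishing hinges on the relations $\alpha_0=\alpha_1=\beta_0=\beta_1=0$ characterizing \eqref{G0} and produces the quadratic equation on $\lambda^2$ that determines $\widetilde{\q}$. Careful bookkeeping of the square roots $\sqrt{\alpha_3^2-4\alpha_2\alpha_4}$ and $\sqrt{\beta_3^2-4\beta_2\beta_4}$, coordinated with the choice of branch for $\lambda$, is required in order to recover the exact expression of $\widetilde{\q}$ stated in the proposition.
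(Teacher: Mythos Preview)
The paper does not prove this proposition at all; it is quoted verbatim from \cite{DreyfusHardouinRoquesSingerGenuszero} (Proposition~1.5 there) and used as a black box, so there is no argument in the present paper to compare against. Your sketch is the natural construction of a rational uniformization of a nodal plane conic-type curve and is essentially what the cited reference carries out: parametrize the $x$-cover by $s+1/s$, the $y$-cover by $s/\lambda+\lambda/s$, and fix $\lambda^2$ by forcing the kernel equation to hold identically. The steps you list are sound; in particular, the degree count (the composite $s\mapsto x(s)$ has degree $2$, and the projection $E\to\mathbb{P}^1_x$ has generic degree $2$, so $\phi$ is birational onto the normalization) makes your bijectivity claim rigorous once you observe that $\alpha_2,\beta_2$ are units in $C$, hence $x(s),y(s)$ vanish exactly at $s\in\{0,\infty\}$. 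The only genuine work left implicit, as you yourself flag, is the algebraic identity that pins down $\lambda^2\in\{\widetilde{\q},\widetilde{\q}^{-1}\}$; this is a direct (if tedious) computation using $\alpha_0=\alpha_1=\beta_0=\beta_1=0$ and the explicit form of $K(x,y,t)$ for the \eqref{G0} models.
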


The following estimate on the norm of $\widetilde{\q}$ holds:

\begin{lemma}\label{lemma:boundsonnorm}
We have  $|\widetilde{\q}| >1$.
\end{lemma}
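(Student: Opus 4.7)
The plan is to compute the $t$-adic valuation $v_0(\widetilde{\q})$ and show it is strictly negative, which by definition of the norm $|f|=\alpha^{v_0(f)}$ with $0<\alpha<1$ is equivalent to $|\widetilde{\q}|>1$. This is just a careful Taylor expansion of the square root appearing in the formula.

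First, I would fix the branch of the square root $R:=\sqrt{(1-d_{0,0}t)^{2}-4d_{1,-1}d_{-1,1}t^{2}}$ by requiring $R(0)=1$; since $R^{2}=1-2d_{0,0}t+(d_{0,0}^{2}-4d_{1,-1}d_{-1,1})t^{2}$ is a unit in the valuation ring at $t=0$, the binomial series provides such $R$ in $C$. Expanding $\sqrt{1+u}=1+\tfrac{u}{2}-\tfrac{u^{2}}{8}+O(u^{3})$ with $u=-2d_{0,0}t+(d_{0,0}^{2}-4d_{1,-1}d_{-1,1})t^{2}$ and keeping track of the $t^{2}$ term gives
$$R=1-d_{0,0}t-2d_{1,-1}d_{-1,1}t^{2}+O(t^{3}).$$
Substituting into the numerator and denominator of $\widetilde{\q}$:
$$-1+d_{0,0}t-R=-2+2d_{0,0}t+2d_{1,-1}d_{-1,1}t^{2}+O(t^{3}),\qquad -1+d_{0,0}t+R=-2d_{1,-1}d_{-1,1}t^{2}+O(t^{3}).$$
The numerator has valuation $0$, and the crucial cancellation makes the denominator have valuation exactly $2$, because $d_{1,-1}d_{-1,1}\neq 0$ by nondegeneracy (as already observed in the paragraph preceding Proposition~\ref{prop:unifgenre0}).

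Hence $v_{0}(\widetilde{\q})=0-2=-2<0$, so $|\widetilde{\q}|=\alpha^{-2}>1$, which is the claim. There is no real obstacle here; the only thing to verify is that the second-order term in the expansion of $R$ does not accidentally cancel the $-1+1$ cancellation in the denominator, and this follows directly from the nondegeneracy hypothesis $d_{1,-1}d_{-1,1}\neq 0$.
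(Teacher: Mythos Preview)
Your proof is correct and follows essentially the same approach as the paper: the paper simply states that one considers the Puiseux expansion of $\widetilde{\q}$ and observes its valuation is negative, while you carry out this expansion explicitly and find $v_0(\widetilde{\q})=-2$.
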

\begin{proof}
We consider the expansion as a Puiseux series of $\widetilde{\q}$. It is then easily seen that its valuation is negative, which gives  $|\widetilde{\q}|>1$.
\end{proof}

\subsection{Meromorphic continuation of the generating functions}\label{sec:uniformizationgenuszero}

In this paragraph,  we combine the functional equation \eqref{eq:fundamentalkernelequationseries} with the uniformization of the kernel curve obtained 
above to meromorphically continue the generating function. \par 
We define the norm of an element $b=[b_0:b_1] \in \P1(C)$ as follows: if $b_1 \neq 0$, we set $|b|=|\frac{b_0}{b_1}|$ and $|[1:0]|=\infty$ by convention.
Since  $|t|<1$, 
 the generating function $Q(x,y,t)$  as well as $F^{1}(x,t),F^{2}(y,t)$ converge for  any  $(x,y) \in   \P1(C) \times \P1(C)$ such that 
$|x|$ and $|y|$ are smaller than or equal to $1$. On that domain, they satisfy
\begin{equation}\label{eq:funceqgenuszero}
K(x,y,t)Q(x,y,t)=xy+{F}^{1}(x,t) +{F}^{2}(y,t)+td_{-1,-1} Q(0,0,t).
\end{equation}

We claim that there exist two  positive real numbers $c_0,c_{\infty}$ such that $\phi$ maps the disks ${U_0=\{ s \in \P1(C) | |s|< c_0\}}$ and $U_\infty=\{ s \in \P1(C) | |s| > c_{\infty} \}$  into   the domain $\cU$ defined by  $\{ 
(x,y) \in E \mbox{ such that } |x| \leq 1 \mbox{ and }|y|\leq 1 \}$. Indeed,
 the $\a_i$ and $\b_i$ are of norm smaller than or equal to $1$ and $|\a_2|=1$ (see \eqref{eq:alphaibetai}). Thus, if  ${|s|< min(1, |\sqrt{\a_{3}^{2}-4\a_{2}\a_{4}}|)}$, then 
$$|x(s)|=\left|\dfrac{4\a_{2}s}{\sqrt{\a_{3}^{2}-4\a_{2}\a_{4}}( s^2 +1) -2\a_{3}s}\right|=\frac{|4\alpha_2 s |}{|\sqrt{\a_{3}^{2}-4\a_{2}\a_{4}}| } < 1.$$
An analogous reasoning  for $y(s)$ shows that when $|s|$ is sufficiently small,  we find  ${|x(s)|,|y(s)| \leq 1}$. Similarly, one can  prove that, when $|s|$ is sufficiently big, one has  ${|x(s)|,|y(s)| \leq 1}$.   This proves our claim.

We set $\breve{F}^{1}(s)=F^{1}(x(s),t)$ and $\breve{F}^{2}(s)=F^{2}(y(s),t)$. Based on  the above, these functions are  well  defined on $U_0 \cup U_\infty$. Evaluating \eqref{eq:funceqgenuszero} for $(x,y)=(x(s),y(s))$, one finds
\begin{equation}\label{eq:specializationfuncequkernelgenus0}
0=x(s)y(s)+\breve{F}^{1}(s) +\breve{F}^{2}(s)+td_{-1,-1} Q(0,0,t).
\end{equation}

The following lemma shows that one can use the above equation to meromorphically continue
the functions $\breve{F}^{i}(s)$  so that they satisfy a $\q$-difference equation.

\begin{lemma}\label{lemma:Analyticcontinuationandfuncequ}
For $i=1,2$, the restriction of the function $\breve{F}^i(s)$ to $U_{0}$ can be continued to a meromorphic function $\widetilde{F}^i(s)$ on $C$ such that
$$
\widetilde{F}^1(\q s)-\widetilde{F}^1(s)= b_1=(x(\q s) -x( s))y(\q s) 
$$
and 
$$
\widetilde{F}^2(\q s)-\widetilde{F}^2(s)=b_2=(y( \q s) -y(s))x(s).
$$

\end{lemma}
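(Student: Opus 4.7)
The plan is to derive the $\q$-difference equations on the initial domain $U_0\cup U_\infty$ by combining the specialized kernel equation \eqref{eq:specializationfuncequkernelgenus0} with the symmetries of the uniformization, then to propagate $\breve F^i$ meromorphically along the $\sigma_{\q}$-orbit of $U_0$, which (assuming $|\q|>1$, the other case being symmetric) covers all of $C^*$.

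The key input is the symmetry coming from Proposition~\ref{prop:unifgenre0}. Since $F^1(x,t)$ depends only on $x$ and $\iup_1(s)=1/s$ preserves the $x$-coordinate, we have $x(1/s)=x(s)$ and hence $\breve F^1(1/s)=\breve F^1(s)$ on $U_0\cup U_\infty$; symmetrically, $\breve F^2(\q/s)=\breve F^2(s)$ and $y(\q/s)=y(s)$. Substituting $s\mapsto 1/s$ in the latter pair yields the equivalent forms $\breve F^2(\q s)=\breve F^2(1/s)$ and $y(\q s)=y(1/s)$.

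I would then evaluate \eqref{eq:specializationfuncequkernelgenus0} at $s$ and at $1/s$, both of which lie in $U_0\cup U_\infty$ when $s$ does, and subtract. Using $\breve F^1(1/s)=\breve F^1(s)$ and $x(1/s)=x(s)$, this gives
\[
\breve F^2(s)-\breve F^2(1/s)=x(s)\bigl(y(1/s)-y(s)\bigr).
\]
Substituting $\breve F^2(1/s)=\breve F^2(\q s)$ and $y(1/s)=y(\q s)$ then produces the claimed $\q$-difference identity for $\breve F^2$ on $U_0\cup U_\infty$. The identity for $\breve F^1$ is obtained symmetrically by evaluating \eqref{eq:specializationfuncequkernelgenus0} at $s$ and at $\q/s$.

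The final step is the meromorphic continuation. The right-hand sides $b_i(s)$ are rational in $s$, hence meromorphic on $C$. Since $|\q|>1$ by Lemma~\ref{lemma:boundsonnorm}, for every $s^\star\in C^*$ there exists $n\in\N$ with $\q^{-n}s^\star\in U_0$, and one sets
\[
\widetilde F^i(s^\star):=\breve F^i(\q^{-n}s^\star)+\sum_{k=0}^{n-1}b_i(\q^{k-n}s^\star).
\]
At $s^\star=0\in U_0$ the sum is empty and this reduces to $\breve F^i(0)$, consistent with the difference equation since $b_i(0)=0$ at the fixed point of $\sigma_{\q}$. The main obstacle is independence of $\widetilde F^i(s^\star)$ from the choice of $n$: it follows from the inclusion $\sigma_{\q}^{-1}(U_0)\subset U_0$ (since $|\q|>1$ and $U_0$ is a disc centered at $0$), which ensures that for any two admissible values $n<n'$ the intermediate iterates $\q^{-n'}s^\star,\dots,\q^{-n}s^\star$ all lie in $U_0$, where the $\q$-difference identity holds between already-defined values of $\breve F^i$; a telescoping argument then shows the two candidate definitions agree. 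The resulting $\widetilde F^i$ is meromorphic on all of $C$, its poles arising solely from the accumulated rational contributions of $b_i$ along the $\sigma_{\q}$-orbit.
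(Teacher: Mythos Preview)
Your approach is essentially the same as the paper's: derive the $\q$-difference relation on a neighborhood of $0$ by combining the specialized kernel identity with the invariances $x\circ\iup_1=x$ and $y\circ\iup_2=y$, then propagate along the $\sigma_\q$-orbit using $|\q|\neq 1$. The paper phrases the continuation step more tersely (``use \eqref{eq:funceqonopenset} to meromorphically continue''), whereas you spell out the telescoping formula and the well-definedness check via $\sigma_\q^{-1}(U_0)\subset U_0$; both are fine. One small remark: your claim that ``$s$ and $1/s$ both lie in $U_0\cup U_\infty$ when $s$ does'' tacitly requires $c_0c_\infty\le 1$, which the paper secures by an explicit ``without loss of generality'' shrinking of $U_0$; you should note that the same adjustment is available to you.
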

\begin{proof}
We just give a sketch of a proof  since  the arguments are the exact analogue in our ultrametric context  of those  employed in \cite[\S 2.1]{DreyfusHardouinRoquesSingerGenuszero}.
Since $\iup_1 (s)=\frac{1}{s}$ and $\iup_2(s)=\frac{\q}{s}$, we can assume  without  loss of generality that $\iup_1(U_0) \subset U_\infty$ and $\iup_2(U_\infty) \subset U_0$. 
Then one can evaluate  \eqref{eq:specializationfuncequkernelgenus0} at any $s \in U_0 $. We obtain 
$$
0=x(s)y(s)+\breve{F}^{1}(s) +\breve{F}^{2}(s)+td_{-1,-1} Q(0,0,t). 
$$
Evaluating \eqref{eq:specializationfuncequkernelgenus0} at $\iup_1(s) \in U_\infty$, we find 
$$
0=x(\iup_1( s))y(\iup_1( s))+\breve{F}^{1}(\iup_1( s)) +\breve{F}^{2}(\iup_1(s))+td_{-1,-1} Q(0,0,t).$$
Using the invariance of $x(s)$ (resp. $y(s)$) with respect to $\iup_1$ (resp. $\iup_2$), the second equation is 
$$
0=x(s)y(\q s)+\breve{F}^{1}(s) +\breve{F}^{2}(\q s)+td_{-1,-1} Q(0,0,t).$$
Subtracting this last equation to the first, we find that, for any $s \in U_0$, we have  
\begin{equation}\label{eq:funceqonopenset}
\breve{F}^{2}(\q s)-\breve{F}^2(s)= (y(\q s) -y(s))x(s).
\end{equation} By Lemma \ref{lemma:boundsonnorm}, the norm of $\widetilde{\q}$
is strictly greater than one  and therefore  the norm of $|\q|$ is distinct from $1$. This allows us 
to use \eqref{eq:funceqonopenset} to meromorphically continue $\breve{F}^{2}$  to  $C$
so that it satisfies \eqref{eq:funceqonopenset} everywhere. The proof for $\breve{F}^{1}$ is similar.
\end{proof}
Note that,  for $i=1,2$,   the function $\widetilde{F}^i(s)$   does not coincide  a priori with $\breve{F}^i(s)$ in the neighborhood of infinity. 


\subsection{Differential transcendence in the genus zero case}\label{sec43}

We recall that any holomorphic function $f$ on $C^*$ can be represented as an everywhere convergent Laurent series with coefficients in $C$, see \cite[Theorem 2.1, Chapter 5]{lang2013complex}. Moreover any nonzero meromorphic function on $C^*$ can be written as the quotient of two holomorphic functions on $C^*$ with no common zeros.  We denote by $\cM er(C^*)$ the field of meromorphic functions over $C^*$ and  by $\s_\q$ the $\q$-difference operator that maps a meromorphic function $g(s)$ onto $g(\q s)$. Finally, let  $C_\q$ be the 
the field  formed by the  meromorphic functions over $C^*$  fixed by $\s_\q$.\par 
We now define the $\q$-logarithm. If $|\q|>1$,  the Jacobi Theta function is the meromorphic function defined by 
${\theta_{\q}(s)=\sum_{n\in\Z}{\q}^{-n(n+1)/2}s^n\in \cM er(C^*)}$. It satisfies the the $\q$-difference equation
$$
\theta_{\q}(\q s)=s\theta_\q(s).
$$
Its logarithmic derivative $\ell_{\q}(s)=\frac{\partial_s(\theta_{\q})}{\theta_{\q}}\in \cM er(C^*)$ satisfies
 $\ell_\q(\q s)=\ell_\q( s)+1$. If $|\q|<1$ then the meromorphic function  $-\ell_{1/\q}$ is solution of $\s_\q (-\ell_{1/\q})=-\ell_{1/\q}+1$.  Abusing the  notation, we still denote by $\ell_\q$ the  function $-\ell_{1/\q}$  when $|\q|<1$.  \par 
Since we want to use the $\q$-difference equations of Lemma \ref{lemma:Analyticcontinuationandfuncequ} as a constraint
   for the form of the differential algebraic relations satisfied by the functions  $\widetilde{F}^i(s)$, we need to consider
   derivations that are compatible with $\sigma_\q$ in the sense that they commute with $\sigma_\q$. This is not the case for the derivation $\partial_t=t\frac{d}{d t}$. By Lemma~\ref{lemma:goodderivationgenus1}, the derivations $\partial_s=s \frac{d}{ds}$ and $\Delta_{t,\q}=\partial_t(\q)\ell_\q(s)\partial_s +\partial_t$ commute
with $\s_\q$. The  following lemma relates the  differential transcendence of the incomplete generating functions  $Q(x,0,t)$ and $Q(0,y,t)$ 
to the differential transcendence	of the auxiliary functions $\widetilde{F}^i (s)$. We refer to Definition~\ref{defi:diffalgtwoderivations} for the notion of $\left(\partial_s,\Delta_{t,\q}\right)$-differential algebraicity over a field.

\begin{lemma}\label{lem3}
If  the generating function  $Q(x,0,t)$ is $\left(\frac{d}{dx},\frac{d}{dt}\right)$-differentially algebraic over $\Q$, then $\widetilde{F}^1(s)$ is $\left(\partial_s,\Delta_{t,\q}\right)$-differentially algebraic over $\widetilde{K}=C_{\q}(s, \ell_{\q}(s))$.\par  If  the generating function is $Q(0,y,t)$ is $\left(\frac{d}{dy},\frac{d}{dt}\right)$-differentially algebraic over $\Q$, then $\widetilde{F}^2(s)$ is $\left(\partial_s,\Delta_{t,\q}\right)$-differentially algebraic over $\widetilde{K}=C_{\q}(s, \ell_{\q}(s))$.
\end{lemma}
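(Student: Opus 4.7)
By the $x \leftrightarrow y$ symmetry (Remark \ref{rem1}) it suffices to handle the first statement. Since $K(x,0,t)\in\Q[x,t]$, the hypothesis that $Q(x,0,t)$ is $\left(\frac{d}{dx},\frac{d}{dt}\right)$-differentially algebraic over $\Q$ is equivalent to the same property for $F^{1}(x,t) = K(x,0,t)Q(x,0,t)$; hence there exist finitely many iterated $\left(\frac{d}{dx},\frac{d}{dt}\right)$-derivatives of $F^{1}$ over which every further derivative becomes algebraic when adjoined to $\Q(x,t)$. The plan is to transport this finite-transcendence-degree property to $\widetilde{F}^{1}(s)$ under $\left(\partial_{s},\Delta_{t,\q}\right)$ over $\widetilde{K}$ via the uniformization $x = x(s,t)$ of Proposition \ref{prop:unifgenre0}.

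The main observation enabling this transport is that
\[
x(s,t) \;=\; \frac{4\alpha_{2}}{\sqrt{\alpha_{3}^{2}-4\alpha_{2}\alpha_{4}}\,(s+1/s) - 2\alpha_{3}} \;\in\; C(s),
\]
since the $\alpha_{i}$ belong to $\Q[t] \subset C$ and $C$ is algebraically closed. Consequently every iterated $\partial_{s},\partial_{t}$-derivative of $x(s,t)$ lies in $C(s)$. The elements of $C$ being $\sigma_{\q}$-invariant, $C \subset C_{\q}$, and since $s,\ell_{\q}(s) \in \widetilde{K}$ we obtain $C(s) \subset \widetilde{K}$. In particular $t$, $x(s,t)$, and all their iterated $\partial_{s},\partial_{t}$-derivatives lie in $\widetilde{K}$, which is itself a differential field for $\partial_{s}$ and $\Delta_{t,\q}$ by Lemma \ref{lemma:goodderivationgenus1} together with $\ell_{\q}(s) \in \widetilde{K}$.

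On $U_{0}$ we have $\widetilde{F}^{1}(s) = F^{1}(x(s,t),t)$ by Lemma \ref{lemma:Analyticcontinuationandfuncequ}. A routine chain-rule computation expresses every iterated $\partial_{s},\Delta_{t,\q}$-derivative of $\widetilde{F}^{1}$ as a polynomial with coefficients in $\widetilde{K}$ in finitely many evaluations of the form $(D_{x}^{a}D_{t}^{b}F^{1})(x(s,t),t)$; the occurrences of $\ell_{\q}(s)$ produced by $\Delta_{t,\q}$ are absorbed into the base $\widetilde{K}$. The hypothesis on $F^{1}$ forces these evaluations to generate a field of finite transcendence degree over $\Q(x(s,t),t) \subset \widetilde{K}$, so the differential field generated by $\widetilde{F}^{1}$ over $\widetilde{K}$ has finite transcendence degree over $\widetilde{K}$ on $U_{0}$. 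The resulting nontrivial polynomial differential relation extends from $U_{0}$ to all of $C^{*}$ by uniqueness of meromorphic continuation. The only real subtlety is the bookkeeping in the chain rule, namely tracking the $\ell_{\q}(s)$-terms produced by $\Delta_{t,\q}$ and confirming that they stay inside $\widetilde{K}$ rather than generating fresh transcendentals.
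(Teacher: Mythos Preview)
Your proposal is correct and follows essentially the same approach as the paper: reduce to $F^{1}$, observe that $x(s)$ and all its $\partial_s,\partial_t$-derivatives lie in $\widetilde{K}$, and use the chain rule to transport the differential-algebraic relation. The only cosmetic difference is direction: the paper expresses $(\partial_t^n\delta_x^m F^1)(x(s),t)$ as a $\widetilde{K}$-combination of $\Delta_{t,\q}^j\partial_s^i\widetilde{F}^1(s)$ with an explicit triangular leading term, whereas you go the other way and invoke finite transcendence degree---both arguments are equivalent here.
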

\begin{proof}
The statement being symmetrical in $x$ and $y$, we  prove it only for  $Q(x,0,t)$.
Assume  that the generating function is $Q(x,0,t)$ is $\left(\frac{d}{dx},\frac{d}{dt}\right)$-differentially algebraic over $\Q$.  Since $F^1(x,t)$ is the product of $Q(x,0,t)$ by the polynomial $K(x,0,t)\in \Q[x,t]$, the function $F^1(x,t)$  is $\left(\frac{d}{dx},\frac{d}{dt}\right)$-differentially algebraic over $\Q$. It is therefore $\left(\frac{d}{dx},\partial_{t}\right)$-differentially algebraic over $\Q(t)$, and finally $\left(\frac{d}{dx},\partial_{t}\right)$-differentially algebraic over $\Q$, since $t$ is $\partial_{t}$-differentially algebraic over $\Q$.  Remember that  $\widetilde{F}^1(s)$ coincides with  $F^1(x(s),t)$ for $s \in U_0$ where $x(s)$ is defined thanks to  Proposition~\ref{prop:unifgenre0}. Therefore, we need  to understand the relations between the $x$ and $t$ derivatives of $F^1(x,t)$ and the derivatives of 
$F^1(x(s),t)$  with respect to $\partial_s$ and $\Delta_{t,\q}$.

 Let us study these relations for an arbitrary  bivariate function  $G(x,t)$ which   converges on $|x|,|y| \leq 1$. Denote   by  $\delta_x$  the derivation $\frac{d}{dx}$ and  by $\widetilde{G}(s)= G(x(s),t)$.  From the equality $(\partial_s \widetilde{G}(s))=\partial_{s}(x(s))(\delta_x G)(x(s),t)$, we conclude  that 
$$\partial_t( \widetilde{G}(s))=(\partial_t G)(x(s),t)+\partial_t(x(s))(\delta_x G)(x(s),t)=
(\partial_t G)(x(s),t)+  c \partial_s(\widetilde{G}(s)),$$ where $c =\frac{\partial_t(x(s))}{\partial_s(x(s))} $. The element $c$ belongs to $\widetilde{K}$ because $x(s) \in \widetilde{K}$ and $\widetilde{K}$ is stable by $\partial_s, \Delta_{t,\q}$ and  thereby by $\partial_t= \Delta_{t,\q}- \partial_t(\q)\ell_{\q}(s)\partial_s$, see  Lemma~\ref{lemma:fielddefinitiongenus1}. An easy induction shows that 
\begin{equation}\label{eq:derivitpartialt}
(\partial_t^n G)(x(s),t)= \partial_t^n(\widetilde{G}(s)) + \sum_{i \leq n ,j <n} b_{i,j} \partial_t^j\partial_s^i(\widetilde{G}(s)),
\end{equation}
where the $b_{i,j}$'s belong to  $\widetilde{K}$. By Lemma \ref{lemma:goodderivationgenus1}, we have $\partial_s \Delta_{t,\q}-\Delta_{t,\q}\partial_s=f\partial_s$,  where  ${f=\partial_t(\q) \partial_s(\ell_\q)}\in\widetilde{K}$.
Combining \eqref{eq:derivitpartialt} with  $\partial_t= \Delta_{t,\q}- \partial_t(\q)\ell_{\q}(s)\partial_s$, we find that 
\begin{equation}\label{eq:derivpartialtfonctiondeltatq}
(\partial_t^n G)(x(s),t)= \Delta_{t,\q}^n(\widetilde{G}(s)) + \sum_{i \leq 2n,j <n} d_{i,j} \Delta_{t,\q}^j\partial_s^i(\widetilde{G}(s)),
\end{equation}
for some  $d_{i,j}$'s  in $\widetilde{K}$.
Moreover, an easy induction shows that, for any $m \in \N^*$, we have  
\begin{equation}\label{eq:xderivationsderivation}
(\delta^m_x G)(x(s),t)= \frac{1}{\partial_s(x(s))^m}\partial_s^m(\widetilde{G}(s)) + \sum_{i=1}^{m-1} a_i \partial_s^i(\widetilde{G}(s)),
\end{equation}
where $a_i \in \widetilde{K}$.
Applying \eqref{eq:derivpartialtfonctiondeltatq} with $G$ replaced by $\delta^m_x G$, we find that  for every $m,n\in \N$,
$$
(\partial_t^n \delta^{m}_x G)(x(s),t)=\Delta_{t,\q}^n((\delta^m_x G) (x(s),t)  ) + \sum_{i \leq 2n,j <n} d_{i,j} \Delta_{t,\q}^j\partial_s^i((\delta^m_x G )(x(s),t) ). 
$$
Combining this equation with \eqref{eq:xderivationsderivation}, we conclude that
$$
(\partial_t^n \delta^{m}_x G)(x(s),t)=
 \frac{1}{\partial_s(x(s))^m} \Delta_{t,\q}^n \partial_s^m (\widetilde{G}(s)) + \sum_{i \leq 2n +m,j <n} r_{i,j} \Delta_{t,\q}^j\partial_s^i(\widetilde{G}(s)),
$$
where the $r_{i,j}$'s are elements of  $\widetilde{K}$.

 Applying the computations above to $G =F^1(x,t)$, we find that  any nontrivial polynomial equation in the derivatives $\delta^{m}_x\partial_t^n F^1(x,t)$ over $\Q$ yields to a nontrivial polynomial equation over $\widetilde{K}$ between the derivatives  $ \Delta_{t,\q}^j\partial_s^i (\widetilde{F}^1(s))$. \end{proof}

Thus, we have reduced the proof of Theorem \ref{thm:genre0} to the following proposition:

\begin{prop}\label{prop:Galoiscriteriagenus0}
The functions $\widetilde{F}^1(s)$  and $\widetilde{F}^2(s)$ are $\left(\partial_s,\Delta_{t,\q}\right)$-differentially transcendental over $\widetilde{K}$.
\end{prop}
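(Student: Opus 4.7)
The plan is to reduce Proposition~\ref{prop:Galoiscriteriagenus0} to a telescoping (descent) problem via a Galoisian criterion for rank-one inhomogeneous $\q$-difference equations, and then to rule out the telescoping by analysing the poles of $b_i$ along $\sigma_\q$-orbits. I focus on $\widetilde{F}^1$; the argument for $\widetilde{F}^2$ is symmetric (Remark~\ref{rem1}).

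\medskip

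\textbf{Step 1 (Galoisian reduction).} By the generalisation of the Hardouin--Singer criterion proved in Appendix~\ref{sec:differenceGaloistheory}, since $\widetilde{F}^1$ satisfies the rank-one inhomogeneous equation $\sigma_\q(\widetilde{F}^1)-\widetilde{F}^1=b_1$ with $b_1\in\widetilde{K}$, the function $\widetilde{F}^1$ is $(\partial_s,\Delta_{t,\q})$-differentially algebraic over $\widetilde{K}$ if and only if there exist a nonzero linear differential operator $L$ in the two commuting derivations $\partial_s,\Delta_{t,\q}$, with coefficients in $C_\q$, and an element $g\in\widetilde{K}$ such that
\begin{equation}\label{eq9plan}
\sigma_\q(g)-g \;=\; L(b_1).
\end{equation}
It therefore suffices to prove that no such $L$ and $g$ exist.

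\medskip

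\textbf{Step 2 (Structure of $b_1$ and its derivatives).} From Proposition~\ref{prop:unifgenre0}, both $x(s)$ and $y(s)$ are rational functions of $s$ of degree two with coefficients in $\mathbb{Q}(t)$, whose denominators factor into linear factors over $C$. Hence $b_1=(x(\q s)-x(s))y(\q s)$ is a rational function of $s$ with poles located at an explicit finite set $\Sigma\subset C^{*}$ determined by the roots of those quadratics and their translates by $\q$. Any operator $L\in C_\q\{\partial_s,\Delta_{t,\q}\}$ applied to $b_1$ produces an element of $\widetilde{K}$ whose poles in the variable $s$ (viewed over $C_\q(s,\ell_\q(s))$) lie in the same finite set $\Sigma$; the derivations may only increase the orders of those poles, and $\Delta_{t,\q}$ may introduce factors of $\ell_\q$, but it does not move the poles. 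In particular, the $s$-poles of $L(b_1)$ are contained in the union of finitely many $\sigma_\q$-orbits.

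\medskip

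\textbf{Step 3 (Ruling out the telescoping).} Suppose \eqref{eq9plan} holds. Write $g=g_0+g_1\ell_\q(s)+\cdots+g_r\ell_\q(s)^r$ with $g_j\in C_\q(s)$. Using the relation $\sigma_\q(\ell_\q)=\ell_\q+1$ and expanding $\sigma_\q(g)-g$ as a polynomial in $\ell_\q$, one obtains a triangular system relating the $g_j$ to the $\ell_\q$-coefficients of $L(b_1)$. The top coefficient forces $\sigma_\q(g_r)=g_r$, so $g_r\in C_\q$, and successive coefficients reduce the existence of $g$ to a finite list of first-order telescoping problems for rational functions of $s$ over $C_\q$. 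For each such problem, one uses the classical \emph{residue obstruction} for $\sigma_\q$: the sum of residues of a rational function along a single $\sigma_\q$-orbit is a $\sigma_\q$-coboundary iff that sum equals zero, and the order of pole at any point $s_0$ of an orbit is a coboundary only if the orders along the full orbit balance appropriately. Since the poles of $b_1$ come from the explicit quadratic denominators of $x$ and $y$, and since $|\q|>1$ (Lemma~\ref{lemma:boundsonnorm}) so that the $\sigma_\q$-orbits are infinite, one identifies a specific orbit of $\Sigma$ on which the residue sum of $L(b_1)$ is a nonzero differential polynomial in $\partial_t(\q)$ and the $\a_i,\b_i$. The key point is that this residue sum depends $C_\q$-linearly on the highest-order coefficients of $L$ (after normalisation), so it cannot vanish for any nonzero $L$, contradicting \eqref{eq9plan}.

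\medskip

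\textbf{Main obstacle.} The difficulty lies in Step~3: because $\Delta_{t,\q}=\partial_t(\q)\ell_\q(s)\partial_s+\partial_t$ intertwines $s$ and $t$ via $\ell_\q(s)$, a naïve $s$-residue computation is not enough---one must work inside $\widetilde{K}=C_\q(s,\ell_\q(s))$, keep track of the $\ell_\q$-degree of $g$, and simultaneously control the $\partial_t$-behaviour of the coefficients. Ensuring that the residue of $L(b_1)$ along a well-chosen $\sigma_\q$-orbit is a \emph{nonzero} function of the formal variables carrying the operator $L$ is where the transcendence results of Appendix~\ref{sec:merofunctiontate} on $\theta_\q,\ell_\q$ enter, and where the explicit formulas \eqref{eq:alphaibetai} for $\a_i,\b_i$ are used to preclude accidental cancellations among the five models of \eqref{G0}.
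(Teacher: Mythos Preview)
Your proposal has the right opening move (the Galoisian reduction to a telescoping identity) but then leaves the main work undone: Step~3 never actually establishes the non-vanishing of any residue sum, and your ``Main obstacle'' paragraph is an explicit admission that the crucial computation is missing. As written this is a plan, not a proof.

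More importantly, the paper's argument after the Galoisian step is structurally different from what you attempt. The $\ell_\q$-expansion you sketch at the start of Step~3 is carried out in the paper (inside the proof of Proposition~\ref{prop2}), but its purpose there is to \emph{eliminate} $\Delta_{t,\q}$ entirely: one emerges with a telescoper involving only $\partial_s$, with coefficients $d_k\in C_\q$ and $h\in C_\q(s)$. The paper then performs two further descents that you do not anticipate. First, since $b_1\in C(s)$, a linear-disjointness argument (taking a $C$-basis of $C(s)$, which is also a $C_\q$-basis of $C_\q(s)$) brings the coefficients down from $C_\q$ to $C$ and forces $g\in C(s)$. Second, \cite[Lemma~6.4]{HS} upgrades this $\partial_s$-telescoper to a \emph{zeroth-order} relation $b_1=\sigma_\q(f)-f+c$ with $f\in C(s)$ and $c\in C$; meromorphy of $\widetilde{F}^1$ at $s=0$ forces $c=0$.

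At that point the paper does \emph{not} compute residues along $\sigma_\q$-orbits. Instead it pulls the identity $b_1=\sigma_\q(f)-f$ back through $\phi^*$ to an equation $(\sigma(x)-x)\sigma(y)=\sigma(\tilde f)-\tilde f$ on the kernel curve $E$ over $C$, embeds the finitely generated coefficient field into $\C$ by sending $t$ to a transcendental complex number, and invokes the already-proved impossibility of such an equation in \cite[\S3.2]{DreyfusHardouinRoquesSingerGenuszero}. So the final obstruction is imported from the archimedean theory rather than recomputed nonarchimedean\-ally. Your direct residue approach might in principle succeed, but you have not carried it out, and the paper's detour through $\C$ sidesteps exactly the difficulty you flag.
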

\begin{proof}

Suppose to the contrary that  $\widetilde{F}^1(s)$ is $\left(\partial_s,\Delta_{t,\q}\right)$-differentially algebraic over $\widetilde{K}$. By Lemma~\ref{lemma:Analyticcontinuationandfuncequ}, the meromorphic function $\widetilde{F}^{1}(s)$ satisfies
$\widetilde{F}^1(\q s)-\widetilde{F}^1(s)= b_1=(x( \q s) -x(s))y(\q s)$ with $b_1 \in C(s)\subset C_\q(s)$. We now apply difference Galois theory to this $\q$-difference equation. More precisely, 
 by   Proposition~\ref{prop2}  and Corollary \ref{cor1} with ${K=C_\q(s)}$,  there exist  $m \in \N$, $d_0,\dots,d_m \in C_{\q}$ not all zero and $h \in C_{\q}(s)$ such that 
\begin{equation}\label{eq9}
d_0 b_1+d_1\partial_s(b_1)+\dots +d_m \partial_s^m(b_1)=\s_{\q}(h)-h.
\end{equation}
Let   $(e_\beta)_{\beta \in B}$ be  a $C$-basis of $C(s)$. Then,  $(e_\beta)_{\beta \in B}$ is   a $C_\q$-basis of $C_\q(s)$ by \cite[Lemma~1.1.6]{wibmerthesis}.  Now, decompose  the $d_k$'s and $h$ over $(e_\beta)_{\beta \in B}$. Since $b_{1}\in C(s)$, it is easily seen that \eqref{eq9} amounts into a collection of polynomial equations  with coefficients in $C$ that should satisfy the coefficients of the $d_k$'s and $h$ with respect to the basis $(e_\beta)_{\beta \in B}$.
Since this collection  of polynomial equations has a nonzero solution in $C_\q$, we can conclude   that it has a nonzero solution in $C$ because  $C$ is algebraically closed. 
Therefore,
there exists $c_k \in C$ not all zero and $g \in C(s)$ such that 
$$
\sum_{k} c_{k}  \partial_s^{k}(b_1) = \s_{\q}(g)-g. 
$$ 
By \cite[Lemma 6.4]{HS}  there exist $f \in C(s)$ and $c\in C$, such that $$\widetilde{F}^1(\q s)-\widetilde{F}^1(s)=b_1=\s_{\q} (f)-f +c.$$ Since $\widetilde{F}^1$ is meromorphic at $s=0$, we conclude that  $c$ must be equal to zero. Finally, we have shown that there exist $f \in C(s)$ such that
\begin{equation}\label{eq:telescopgenrezero}
b_1=\s_\q(f)-f.
\end{equation}

By duality, the morphism $\phi :\P1 \rightarrow E$ gives rise to a field isomorphism $\phi^*$ from  the field $C(E)=C(x,y)$\footnote{Here  $x$ and $y$ denote the coordinate functions on the curve $E$. } of rational functions on  $E$ and the field $C(s)$  of rational functions  on $\P1$.
Moreover, one has $\sigma_\q \phi^*= \phi^* \sigma^*$, where $\sigma^*$ is the action induced by the automorphism of the walk on $C(E)$. Then, it is easily seen that the equation \eqref{eq:telescopgenrezero} is equivalent to 
\begin{equation}\label{eq:telscopequationfunctionfieldgenuszero}
(\sigma(x)-x)\sigma(y) = \sigma(\tilde{f})-\tilde{f},
\end{equation}
where $\tilde{f} \in C(x,y)$ is the rational function corresponding to $f$ via $\phi^*$.
The coefficients of $\tilde{f}$ as a rational function over $E$ belong to a finitely generated extension $F$ of $\Q(t)$. 

There exists a $\Q$-embedding $\psi$ of $F$ into $\C$  that maps $t$ onto a transcendental complex number. 
Since $\sigma$ and $E$ are defined over $\Q(t)$, we   apply $\psi$ to  \eqref{eq:telscopequationfunctionfieldgenuszero} and  we find 
$$(\overline{\sigma}(x)-x)\overline{\sigma}(y)= \overline{\sigma} (\overline{f})-\overline{f},$$
where $\overline{f} $ belongs to  $\C(\overline{E})$ the field of rational functions on the complex algebraic curve $\overline{E}$ defined by the kernel polynomial $K(x,y,\psi(t))$ and  where  $\overline{\sigma}$ is the automorphism of $\C(\overline{E})$ induced by the automorphism of the walk corresponding to $\overline{E}$.  In \cite[\S 3.2]{DreyfusHardouinRoquesSingerGenuszero}, the authors  proved that there is no such equation. This concludes the proof by contradiction.
\end{proof}


 \section{Generating functions of walks, genus one case}\label{secgenre1}

In this section we consider the situation where the kernel curve  $E$ is an elliptic curve.  By Remark \ref{remgenre0}, this corresponds to the case where the set of steps is not included in an  half plane.  Unlike  the genus zero cases of \eqref{G0}, the group of the walk might be finite for genus one walks. For unweighted walks of genus one  with  finite group, it was proved in \cite{BMM,BostanKauersTheCompleteGenerating} that the series was  holonomic with respect to the three variables. More recently, the authors of  \cite{dreyfus2019differential}  studied weighted walks of genus one  with  finite group. They proved that the uniformization of the generating series was a product of zeta functions and elliptic functions over curve isogeneous  to the kernel curve. This allowed them to conclude that the generating series was holonomic with respect to the variables $x$ and $y$. Their description should also allow to conclude to the $\frac{d}{dt}$-differential algebraicity of the series but in that case the question of the holonomy with respect to the variable $t$ is still open.  \\
In this section, we shall focus on the weighted walks of genus one with infinite group and we will prove analogously to the genus zero case that the $(\frac{d}{dx},\frac{d}{dt})$-differential algebraicity of the series implies its $\frac{d}{dx}$-differential algebraicity. This result combined to \cite{BBMR16} shows that, for unweighted walks of genus one with infinite group, the series is $\frac{d}{dx}$-differentiallly algebraic if and only if it is $\frac{d}{dt}$-differentially algebraic (see Corollary \ref{cor:diffalgxequivalentdiffalgtgenusone} below).\\
Our strategy  follows the basic lines of   the one employed in the genus zero situation. However, the  uniformization procedure  in  the genus one case is more delicate and  differs from previous works such as \cite{FIM,KurkRasch, dreyfus2019differential} which relied on the uniformization of elliptic curves over $\C$ by a fundamental parallelogram of periods. Over a  nonarchimedean field $C$, there might be a lack 
of nontrivial lattices. One has to consider  multiplicative analogues, that is, discrete subgroups of $C^*$ of the form  $q^\Z$. Then, rigid analytic geometry gives a geometric meaning to the quotient $C^*/q^\Z$. This geometric quotient is    called  a Tate curve (see \cite{roquette1970analytic} for more details). For simplicity of exposition, we will not give here many details on this nonarchimedean geometry
The multiplicative uniformization of the kernel curve allows us as in \S \ref{sec:uniformizationgenuszero} to   attach to the incomplete generating functions $Q(x,0,t)$ and $Q(0,y,t)$ some meromorphic functions $\widetilde{F}^i(s)$ satisfying 
$$  \widetilde{F}^i(\q s) -\widetilde{F}^i(s)=b_i(s),$$
for some $\q \in C^*$  and  $b_i(s) \in C_q$, the field of $q$-periodic meromorphic functions over $C^*$. This 
process detailed in \S \ref{sec:uniformizationgenus1}, \ref{sec52} and \ref{sec53} has many advantages. Though technical, it is much more simple than the uniformization
by a fundamental parallelogram of periods since we only have to deal with one generator
of the fundamental group of the elliptic curve, precisely the loop around the origin in $C^*$. Moreover, it gives a unified framework to study the genus zero and one case, namely, the Galois theory of $\q$-difference equations. This is the content of \S \ref{sec54} 
where we apply the Galoisian  criteria of Appendix~\ref{sec:differenceGaloistheory}
to translate the differential algebraicity of the generating function in terms of the existence of  a telescoper. 

\subsection{Uniformization  of the kernel curve}\label{sec:uniformizationgenus1}

Let us fix a  weighted model of genus one. By Lemma \ref{lemma:jinvKernel}, the norm  of the $J$-invariant $J(E)$ of  the kernel curve is such that $|J(E)|>1$. By Proposition~\ref{prop:Tateuniformization}, there exists $q \in C$ such that $0<|q|<1$ and $J(E)=J(E_q)=\frac{1}{|q|}$, where $E_q$ is the elliptic curve attached to the Tate curve $C^*/ q^\Z$ (see Proposition~\ref{prop:Tatecurve}, Lemmas~\ref{lemma:tatetoweierstrass}, and~\ref{lem2}). The curve $E_q$ can be analytically uniformized by $C^*$  thanks to special functions, which have their origins in the theory of Jacobi $q$-theta functions (see Proposition \ref{prop:Tatecurve} below). Finally, since $E$ and $E_q$ have the same $J$-invariant, there exists an algebraic isomorphism between these two elliptic curves.  In order to describe the uniformization 
of the kernel curve $E$, one needs to explicit this algebraic isomorphism. This is not completely obvious since $E_q$ is given by its Tate normal form in $\bold{P}^2$, i.e. by an equation of the form
$$Y^2+ XY =X^3 +BX +\widetilde{C}.$$

Therefore, many  intermediate technical  results are  postponed    to the appendix \ref{sec:nonarchimedianpreleminaries}. The following proposition describes the multiplicative uniformization of an elliptic curve given by a Tate normal form.

Following \cite[Page 28]{roquette1970analytic}, we   set  $s_k =\sum_{n >0}\frac{n^k q^n}{1-q^n} \in C$  for $k \geq 1$.
\begin{prop}\label{prop:Tatecurve}
 The  series
\begin{itemize}
\item $X(s)=\sum_{n \in \Z} \frac{q^n s}{(1-q^n s)^2} -2s_1$; 
\item  $Y(s)=\sum_{n \in \Z} \frac{(q^{n}s)^2}{(1-q^n s)^3} +s_1$;
\end{itemize}
are $q$-periodic meromorphic functions over $C^*$. Furthermore $X(s)=X(1/s)$, and $X(s)$ has  a pole of order $2$ at any element of the form $q^\Z$. Moreover, the analytic map  
$$
\begin{array}{llll}
\pi: &C^* &\rightarrow &\bold{P}^2(C),\\
& s& \mapsto& [X(s):Y(s):1]\end{array}$$
is onto and his image is $E_q$, the elliptic curve defined by the following Tate normal form
\begin{equation}\label{eq:tatenormalform}
Y^2+XY= X^3 +BX +\widetilde{C},
\end{equation}
where $B=-5s_3$ and $\widetilde{C} =-\frac{1}{12}(5s_3 +7s_5)$. Moreover, $\pi(s_1)=\pi(s_2)$ if and only if $s_1 \in s_2 q^\Z$.
\end{prop}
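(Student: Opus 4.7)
The plan is to follow the standard construction of a Tate curve \cite{roquette1970analytic}, checking each claim by direct manipulation of the defining series and relying on the non-archimedean estimates of Appendix \ref{sec:nonarcheestimates} for convergence. The statement is essentially classical, so my goal is really to organize the verification in a way that is tailored to the normal form that will later match our kernel curve.

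First I would prove convergence and meromorphy. For $n\to +\infty$ the summand $\frac{q^n s}{(1-q^n s)^2}$ has norm $O(|q|^n)$ away from its poles, so converges uniformly on compacts avoiding $q^{-\N} s^{-1}$. For $n\to -\infty$ the algebraic identity $\frac{q^n s}{(1-q^n s)^2}=\frac{q^{-n}/s}{(1-q^{-n}/s)^2}$ converts the tail into an analogous sum in the variable $1/s$, and the same estimate applies. Hence both series define meromorphic functions on $C^*$ whose poles lie in $q^\Z$. The $q$-periodicity $X(qs)=X(s)$, $Y(qs)=Y(s)$ follows by the index shift $n\mapsto n+1$. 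The symmetry $X(s)=X(1/s)$ comes from the same identity above followed by re-indexing $n\mapsto -n$. At $s=1$ only the $n=0$ term of $X$ is singular, contributing a double pole, and the general pole pattern at $q^\Z$ follows by $q$-periodicity.

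The main step is verifying the algebraic relation. I would introduce the auxiliary function $F(s):=Y(s)^2+X(s)Y(s)-X(s)^3-BX(s)-\widetilde{C}$ with $B=-5s_3$ and $\widetilde{C}=-\frac{1}{12}(5s_3+7s_5)$. By construction $F$ is $q$-periodic and its only potential poles lie in $q^\Z$. Expanding each term of $F$ as a Laurent series around $s=1$, the principal parts cancel telescopically; the particular values of $B$ and $\widetilde{C}$ are exactly those needed to kill the remaining subleading contributions, which after rearrangement are precisely the Eisenstein-type sums $s_3$ and $s_5$. Thus $F$ extends to a $q$-periodic holomorphic function on $C^*$, i.e.\ a holomorphic function on the Tate curve. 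By a non-archimedean maximum principle on the fundamental annulus $\{|q|<|s|\leq 1\}$, $F$ is constant, and matching the constant term at $s=1$ (using $s_1$ as a normalization) shows $F\equiv 0$. I expect this explicit matching of Laurent coefficients to be the main obstacle; it is a patient but straightforward bookkeeping exercise that the formulas for $s_k$ were designed to make succeed.

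Finally, for surjectivity and the fiber description, the symmetry $X(s)=X(1/s)$ and the fact that $X$ has a single double pole on a fundamental domain show that the induced map $\overline{X}:C^*/q^\Z\to\bold{P}^1(C)$ has degree $2$ with fibers equal to the orbits of the involution $s\mapsto 1/s$ on $C^*/q^\Z$. In particular $\overline{X}$ is onto. Given $P\in E_q$, any preimage $s$ of the $X$-coordinate of $P$ under $\overline{X}$ yields $\pi(s)=P$ or $\pi(1/s)=P$, because both lie on $E_q$ above the same $X$-value; this proves surjectivity. For the kernel, note that the algebraic equation forces $Y(s)+Y(1/s)=-X(s)$, so $Y$ distinguishes the two elements of a generic fiber of $\overline{X}$; if $\pi(s_1)=\pi(s_2)$ then first $s_1\equiv s_2^{\pm 1}\pmod{q^\Z}$, and then the $Y$-coordinate rules out the case $s_1\equiv 1/s_2$ unless $s_2\equiv 1/s_2\pmod{q^\Z}$, in which case $s_1\in s_2 q^\Z$ tautologically.
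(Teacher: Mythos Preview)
Your proposal is correct in outline and is essentially the standard verification of the Tate uniformization; the convergence argument, the $q$-periodicity and the symmetry $X(s)=X(1/s)$ via the identity $\frac{q^n s}{(1-q^n s)^2}=\frac{q^{-n}s^{-1}}{(1-q^{-n}s^{-1})^2}$, the Laurent-expansion cancellation showing $F\equiv 0$, and the degree-$2$ analysis of $\overline{X}$ together with $Y(s)+Y(1/s)=-X(s)$ are all the right ingredients.

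That said, you should be aware that the paper does not prove this proposition at all: its proof consists of a single citation to \cite[Theorem 5.1.4, Corollary 5.1.5, and Theorem 5.1.10]{FresnelvanderPUt}. So there is nothing to compare beyond noting that your sketch is precisely the kind of argument those references carry out. If anything, your write-up is more informative than what the paper offers; the only place where a reader might want a bit more detail is the final injectivity step, where you should make explicit that $Y(s_2)=Y(1/s_2)$ combined with $Y(s_2)+Y(1/s_2)=-X(s_2)$ forces $2Y(s_2)+X(s_2)=0$, i.e.\ $s_2$ lies over a ramification point of the $x$-projection, and at such points the two preimages $s_2$ and $1/s_2$ already coincide modulo $q^\Z$.
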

\begin{proof}
This is \cite[Theorem 5.1.4, Corollary 5.1.5, and Theorem 5.1.10]{FresnelvanderPUt}.
\end{proof}
In the notation of Section \ref{sec:Kernelcurve},  set $\mathfrak{D}(x):=\Delta_x(x,1)$.  Let us write the kernel polynomial $${K\left(x,y,t\right)=\widetilde{A}_{0}(x)+\widetilde{A}_{1}(x)y+\widetilde{A}_{2}(x)y^{2}=\widetilde{B}_{0}(y)+\widetilde{B}_{1}(y)x+\widetilde{B}_{2}(y)x^{2}}$$ with ${\widetilde{A}_{i}(x)\in C[x]}$ and $\widetilde{B}_{i}(y)\in C[y]$. For $i\geq 1$, let $ \mathfrak{D}^{(i)}(x)$ denote the $i$-th derivative 
 with respect to $x$ of $\mathfrak{D}(x)$.
The analytic uniformization of the kernel curve is given by the following proposition.   
 \begin{thm}\label{cor:jinvariant}
There exists a root  $a $ of $\mathfrak{D}(x)$ in $C$  such that $|a|, |\mathfrak{D}^{(2)}(a)-2|,|\mathfrak{D}^{(i)}(a)| <1$ for $i=3,4$,  $|q|^{1/2}<|\mathfrak{D}^{(1)}(a)|<1$. For any such $a$, there exists   $u\in C^*$ with $|u|=1$ such that the  map $\phi$ given by 
$$
\begin{array}{llll}
\phi: & C^* & \rightarrow & E, \\
 & s &  \mapsto &  (\overline{x}(s),\overline{y}(s)),
\end{array} $$
is surjective where
 \begin{eqnarray}\label{eq:parametragex}
 \overline{x}(s)&=&a +\frac{\mathfrak{D}^{(1)}(a)}{ u^2X(s)+ \frac{u^2}{12}  -\frac{\mathfrak{D}^{(2)}(a)}{6}}\\
 \nonumber 
 \overline{y}(s)&=&\frac{\frac{\mathfrak{D}^{(1)}(a)\left( 2u^{3}Y(s) +u^{3}X(s)\right)}{2\left(u^2 X(s)+\frac{u^2}{12}-\frac{\mathfrak{D}^{(1)}(a)}{6}\right)^2}- \widetilde{A}_{1}\left( a +\frac{\mathfrak{D}^{(1)}(a)}{u^2 X(s)+\frac{u^2}{12}-\frac{\mathfrak{D}^{(2)}(a)}{6}}\right)    }{2 \widetilde{A}_{2}\left(a +\frac{\mathfrak{D}^{(1)}(a)}{u^2 X(s)+\frac{u^2}{12}-\frac{\mathfrak{D}^{(2)}(a)}{6}}\right)}. 
 \end{eqnarray} 
 \end{thm}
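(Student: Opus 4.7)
The goal is, given the kernel curve $E$ in its planar quartic form, to write down an explicit isomorphism with the Tate normal form $E_{q}$ from Proposition~\ref{prop:Tatecurve} and then compose with the analytic uniformization $\pi$ of $E_{q}$. I would carry this out in four steps.

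\smallskip

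\noindent\textbf{Step 1 (finding $a$).} I would study the Newton polygon of $\mathfrak{D}(x)=\sum_{j=0}^{4}\alpha_{j}x^{j}$ with respect to the valuation $v_{0}$ on $C$. Inspection of the formulas in \eqref{eq:alphaibetai} shows that $\alpha_{2}=1+O(t)$ has valuation $0$, while each $\alpha_{j}$ for $j\neq 2$ has a global factor of $t$ (or $t^{2}$), so valuation $\geq 1$. Consequently the Newton polygon consists of two slopes of absolute value $1$, and $\mathfrak{D}(x)$ has exactly two roots of positive valuation and two roots of negative valuation in $C$. Pick any $a$ of positive valuation, so $|a|<1$. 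From the Taylor expansion of $\mathfrak{D}^{(k)}$ and the valuation estimates on the $\alpha_{j}$, I read off $|\mathfrak{D}^{(1)}(a)|<1$, $|\mathfrak{D}^{(2)}(a)-2\alpha_{2}|<1$ (hence $|\mathfrak{D}^{(2)}(a)-2|<1$), and $|\mathfrak{D}^{(i)}(a)|<1$ for $i=3,4$. The lower bound $|q|^{1/2}<|\mathfrak{D}^{(1)}(a)|$ is the subtle part: one must use that $|q|=1/|J(E)|$ by Proposition~\ref{prop:Tateuniformization} together with the explicit formula $J(E)=12^{3}D(\Delta_{y})^{3}/(-F(\Delta_{y}))$ to compare $v_{0}(q)$ with $2\,v_{0}(\mathfrak{D}^{(1)}(a))$. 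In concrete terms, the discriminant of $\mathfrak{D}$ controls both quantities and yields the required inequality.

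\smallskip

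\noindent\textbf{Step 2 (Weierstrass form of $E$).} Since $a$ is a root of the $y$-discriminant, the point $P_{a}:=(a,-\widetilde{A}_{1}(a)/(2\widetilde{A}_{2}(a)))$ is a Weierstrass point of $E$. I would then perform the classical change of variables that sends $P_{a}$ to the point at infinity: set $\xi=x-a$, $\eta=2\widetilde{A}_{2}(x)y+\widetilde{A}_{1}(x)$, so that the kernel equation reads
\[
\eta^{2}=\mathfrak{D}(a+\xi)=\mathfrak{D}^{(1)}(a)\,\xi+\tfrac{\mathfrak{D}^{(2)}(a)}{2}\xi^{2}+\tfrac{\mathfrak{D}^{(3)}(a)}{6}\xi^{3}+\tfrac{\mathfrak{D}^{(4)}(a)}{24}\xi^{4}.
\]
Substituting $\xi=\mathfrak{D}^{(1)}(a)/V$ and $\eta=\mathfrak{D}^{(1)}(a)^{2}W/V^{2}$ and clearing denominators produces a cubic Weierstrass equation
\[
W^{2}=V^{3}+\tfrac{\mathfrak{D}^{(2)}(a)}{2}V^{2}+\tfrac{\mathfrak{D}^{(3)}(a)\mathfrak{D}^{(1)}(a)}{6}V+\tfrac{\mathfrak{D}^{(4)}(a)\mathfrak{D}^{(1)}(a)^{2}}{24}.
\]
This is the sought Weierstrass model of $E$, written in the variables $(V,W)$.

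\smallskip

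\noindent\textbf{Step 3 (matching the Tate normal form).} Starting from the above Weierstrass equation, I would then pass to the Tate normal form \eqref{eq:tatenormalform}. Completing the square via $W\mapsto Y+V/2$ kills the cross term $XY$ only after shifting $V$ so that the $V^{2}$-coefficient becomes $-\tfrac{1}{4}$; writing $V=u^{2}X+c$ with $c=\tfrac{u^{2}}{12}-\tfrac{\mathfrak{D}^{(2)}(a)}{6}$ and choosing the scale $u\in C^{*}$ so that the leading coefficient becomes $1$, a direct calculation transforms the Weierstrass equation into $Y^{2}+XY=X^{3}+BX+\widetilde{C}$ for some $B,\widetilde{C}\in C$. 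The estimates from Step~1 ($|\mathfrak{D}^{(2)}(a)-2|<1$, $|\mathfrak{D}^{(i)}(a)|<1$ for $i=3,4$, and the lower bound on $|\mathfrak{D}^{(1)}(a)|$) guarantee that the resulting $B,\widetilde{C}$ have the correct valuations to be those attached by Proposition~\ref{prop:Tatecurve} to the same $q$, and that one can arrange $|u|=1$. In particular $E$ is then identified with $E_{q}$ via an explicit rational isomorphism in $(X,Y)$.

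\smallskip

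\noindent\textbf{Step 4 (assembling $\phi$).} Composing the surjective analytic uniformization $s\mapsto (X(s),Y(s))$ of $E_{q}$ from Proposition~\ref{prop:Tatecurve} with the inverse of the algebraic isomorphism $E\cong E_{q}$ obtained in Steps~2--3 yields a surjective analytic map $\phi:C^{*}\to E$. Unwinding the chain of substitutions $(X,Y)\to(V,W)\to(\xi,\eta)\to(x,y)$ gives exactly the formulas for $\overline{x}(s)$ and $\overline{y}(s)$ stated in the theorem, with $u$ the scaling factor fixed in Step~3. Surjectivity of $\phi$ is immediate from surjectivity of $\pi$.

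\smallskip

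\noindent\textbf{Main obstacle.} Steps~2 and~4 are essentially bookkeeping once the right substitutions are identified. The real difficulty lies in Step~1, specifically in proving the sharp inequality $|q|^{1/2}<|\mathfrak{D}^{(1)}(a)|$: this requires pinning down $v_{0}(q)$ from the $J$-invariant formula (Tate's theorem gives $v_{0}(q)=-v_{0}(J(E))$) and comparing it explicitly to $2\,v_{0}(\mathfrak{D}^{(1)}(a))$ via the factorization of the discriminant of $\mathfrak{D}$ in terms of the small roots. This valuation matching is also what lets us take $|u|=1$ in Step~3; without it, the Weierstrass model produced in Step~2 would not be integrally related to the Tate normal form attached to $q$.
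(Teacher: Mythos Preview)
Your overall architecture---pass from $E$ to a Weierstrass model via a chosen root $a$ of $\mathfrak{D}$, then to the Tate form $E_q$, then compose with the analytic uniformization $\pi$---is exactly the paper's route, and Steps~2 and~4 match the paper's Proposition~\ref{prop:uniformizationnondegeneratecase} and Lemma~\ref{lemma:tatetoweierstrass} up to normalization. Two points deserve sharpening.

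\textbf{Step~3: the role of the $J$-invariant.} You write that the valuation estimates ``guarantee that the resulting $B,\widetilde{C}$ have the correct valuations to be those attached \dots\ to the same $q$''. Correct valuations alone do not force an isomorphism with $E_q$; the reason the isomorphism exists is that $q$ was \emph{chosen} (Proposition~\ref{prop:Tateuniformization}) so that $J(E_q)=J(E)$. The paper makes this explicit: it puts both $E$ and $E_q$ in Weierstrass form ($y^2=4x^3-g_2x-g_3$ and $y^2=4x^3-h_2x-h_3$), and then Lemma~\ref{lemma:uniquenessweierstrassequation} gives the isomorphism $(x,y)\mapsto(u^2x,u^3y)$ with $h_i=g_i/u^{2i}$. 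The estimate $|u|=1$ then follows from $|\Delta_q|=|\Delta_1|$, which in turn comes from $|J(E)|=|J(E_q)|$ together with $|g_2|=|h_2|=1$.

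\textbf{The lower bound $|q|^{1/2}<|\mathfrak{D}^{(1)}(a)|$.} Your suggestion to go through $F(\Delta_y)$ mixes the $x$- and $y$-discriminants and would require an identity you do not establish. The paper's argument (Lemma~\ref{lem2}) is more direct: expand $\Delta_1=g_2^3-27g_3^2$ using the formulas \eqref{eqn:ginvariant} for $g_2,g_3$ in terms of $\mathfrak{D}^{(i)}(a)$. The leading $\mathfrak{D}^{(2)}(a)^6$ contributions cancel exactly, and every surviving monomial carries a factor $\mathfrak{D}^{(1)}(a)^2$ multiplied by terms of norm $<1$. Hence $|q|=|\Delta_1|<|\mathfrak{D}^{(1)}(a)|^2$. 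This is the clean mechanism, and it simultaneously delivers $|u|=1$ via $\Delta_q=\Delta_1/u^{12}$.
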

\begin{proof}
Lemma \ref{lemma:goodroot} and Lemma \ref{lem2} guaranty the existence of $a$.
The element $a$ allows us to write down the isomorphism between the kernel curve $E$ and one of its Weierstrass normal form $E_1$. More precisely, by Proposition~\ref{prop:uniformizationnondegeneratecase},  the application  $w_E$
$$
\begin{array}{llll}
& E_1 & \rightarrow & E \subset \P1 (C) \times \P1 (C) \\
 & [x_1:y_1:1] &  \mapsto &  (\overline{x},\overline{y})
\end{array} $$
where $$\overline{x}=a +\frac{\mathfrak{D}^{(1)}(a)}{x_1-\frac{\mathfrak{D}^{(2)}(a)}{6}} \hbox{ and }\overline{y}=\frac{\frac{\mathfrak{D}^{(1)}(a) y_1}{2(x_1-\frac{\mathfrak{D}^{(1)}(a)}{6})^2}- \widetilde{A}_{1}\left( a +\frac{\mathfrak{D}^{(1)}(a)}{x_1 -\frac{\mathfrak{D}^{(2)}(a)}{6}}\right)    }{ 2\widetilde{A}_{2}\left(a +\frac{\mathfrak{D}^{(1)}(a)}{x_1-\frac{\mathfrak{D}^{(2)}(a)}{6}}\right)},$$ is an isomorphism  between the elliptic curve $E_1 \subset \bold{P}^2(C)$ given by the  equation ${y_1^2=4x_1^3 -g_2x_1-g_3}$ and the kernel curve $E$. Now, it remains to explicit the isomorphism between $E_q$ and one of its Weierstrass normal form $\widetilde{E}_1$. By Lemma \ref{lemma:tatetoweierstrass},  the application 
$\begin{array}{llll}w_{T}:& E_q & \rightarrow &\widetilde{E}_1 ,\\ &  \hbox{}   [X:Y:1]& \mapsto &[X +\frac{1}{2}: 2Y +X:1]\end{array}$
induces  an isomorphism between  $E_q$ and the curve $\widetilde{E}_1$ given by $y^2=4x^3-h_2x-h_3$. Since $E$ and $E_q$ have the same $J$-invariants and are therefore isomorphic, the same holds for their Weierstrass normal forms.  Thus,   there exists $u \in C^*$
such that $\begin{array}{llll}\psi:& \widetilde{E}_1& \rightarrow &E_1,\\ & [x:y:1] &\mapsto &[u^2 x:u^3 y:1]\end{array}$ induces an isomorphism of elliptic curves (see Lemma~\ref{lemma:uniquenessweierstrassequation}). To conclude, we set $\phi= w_E \circ \psi \circ w_T \circ \pi$ where  $\pi$ is  the uniformization of $E_q$ by $C^*$ given in  Proposition  \ref{prop:Tatecurve}. The norm estimate on $u$ is Lemma \ref{lem2}.\end{proof}

\begin{rmk}\label{rmk:parametrizationcurvegenus1}
\begin{itemize}

\item Note that by construction $\phi(s_1)=\phi(s_2)$ if and only if if $s_1 \in s_2 q^\Z$ (see Proposition \ref{prop:Tatecurve}).
\item Via $\phi$, the field of rational functions over $E$ can be identified with  field of $q$-periodic meromorphic functions over $C^*$.
\item The conditions on $a$ are crucial to guaranty the meromorphic continuation of the generating function (see the proof of Lemma \ref{lem1}).
\item The symmetry  arguments between $x$ and $y$ of Remark \ref{rem1} can be pushed further and one can   construct another  uniformization of $E$ as follows. Denoting by $\mathfrak{E}(y)$ the polynomial $\Delta_y(y,1)$. One can prove that there exist a root  $b \in C^*$ of $\mathfrak{E}$ such that $|b|, |\mathfrak{E}^{(2)}(b)-2|,|\mathfrak{E}^{(i)}(b)| <1$ for $i=3,4$ and $|q|^{1/2}<|\mathfrak{E}^{(1)}(b)|<1$ and 
 $v\in C^*$ with $|v|=1$ such that the analytic  map $\psi$  given by 
$$
\begin{array}{llll}
\psi: & C^* & \rightarrow & E, \\
 & s &  \mapsto &  (\overline{x}(s),\overline{y}(s)),
\end{array} $$
is surjective with 
$ \overline{y}(s)=b +\frac{\mathfrak{E}^{(1)}(b)}{ v^2X(s)+ \frac{v^2}{12}  -\frac{\mathfrak{E}^{(2)}(b)}{6}}$ (see \cite[(2.16)]{dreyfus2019differential} for similar arguments).
\end{itemize}
\end{rmk}

\subsection{The group of the walk}\label{sec52}

The following proposition gives an explicit form for the  automorphisms of $C^*$  induced via $\phi$ by  the automorphisms $\sigma,\iota_{1},\iota_{2}$ of $E$.   
\begin{prop}\label{prop1} 
There exists  $\q$ in $C^{*}$ such that the automorphism of $C^*$ defined by  $\s_\q : s\mapsto \q s$ induces via $\phi$ the automorphism $\sigma$, that is $\sigma \circ\phi= \phi \circ \sigma_\q$.
Similarly, the involutions $\iup_{1},\iup_{2}$  of $C^*$,  that are defined by $\iup_{1}(s)=1/s$ and $\iup_{2}(s)=\q /s$, induce via $\phi$ the automorphisms  $\iota_{1},\iota_{2}$.
\end{prop}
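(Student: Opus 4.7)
The plan is to transport the three automorphisms of $E$ to automorphisms of $C^{*}/q^{\Z}$ via $\phi$ and identify each of them by invoking the classification of holomorphic involutions of an elliptic curve. By the second bullet of Remark~\ref{rmk:parametrizationcurvegenus1}, $\phi$ factors through a bijection $C^{*}/q^{\Z}\to E$, so every biregular automorphism of $E$ admits a unique $q^{\Z}$-equivariant lift to $C^{*}$. Since $J(E)\in\Q(t)$ is nonconstant (Lemma~\ref{lemma:jinvKernel}), and hence distinct from the special invariants $0$ and $1728$, the biregular automorphism group of the Tate curve $E_{q}$ is generated by the translations $s\mapsto cs$ and the inversion $s\mapsto 1/s$; the involutions among them are the maps $s\mapsto c/s$ (which always possess fixed points) and the translations $s\mapsto cs$ with $c\notin q^{\Z}$ and $c^{2}\in q^{\Z}$ (which have no fixed points).

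I first identify $\iup_{1}$. Proposition~\ref{prop:Tatecurve} gives $X(s)=X(1/s)$, and substituting this into \eqref{eq:parametragex} yields $\overline{x}(s)=\overline{x}(1/s)$. Hence $\phi(s)$ and $\phi(1/s)$ have the same $x$-coordinate, so they either coincide or are exchanged by $\iota_{1}$. Equality would force $s^{2}\in q^{\Z}$, which only holds on a discrete subset of $C^{*}$; on its complement we have $\iota_{1}(\phi(s))=\phi(1/s)$, and by analytic continuation this identity extends to all of $C^{*}$. Therefore $\iota_{1}$ is induced via $\phi$ by $\iup_{1}(s)=1/s$.

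Next, $\iota_{2}$ is an involution of $E$ with nonempty fixed locus: its fixed points lift the branch points of the $y$-projection, i.e.\ the roots of $\mathfrak{E}$ in $C$, which exist for any nondegenerate weighted model. By the classification recalled above, the $q^{\Z}$-equivariant lift of $\iota_{2}$ to $C^{*}$ must therefore be of the form $\iup_{2}(s)=\q/s$ for some $\q\in C^{*}$. Composing, $\sigma=\iota_{2}\circ\iota_{1}$ is induced by $\sigma_{\q}(s)=\iup_{2}(\iup_{1}(s))=\iup_{2}(1/s)=\q s$, which proves the three claims simultaneously.

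The main delicate point is to rule out the alternative that $\iup_{2}$ is a fixed-point-free translation $s\mapsto cs$ with $c^{2}\in q^{\Z}$: this amounts to exhibiting fixed points of $\iota_{2}$ on $E$, which in turn reduces to showing that $\mathfrak{E}$ has a root in $C$. This is the only nonformal ingredient of the argument and follows from the explicit shape of $\mathfrak{E}$ in \eqref{eq:alphaibetai} together with the nondegeneracy hypothesis.
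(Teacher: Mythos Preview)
Your proof is correct and follows a route that differs in its ordering from the paper's argument. The paper first treats $\sigma$: citing \cite[Proposition 2.5.2]{DuistQRT}, it observes that $\sigma$ is addition by a fixed point of $E$, and since $\phi$ is a group homomorphism from $(C^{*},\cdot)$ to the Mordell--Weil group of $E$ (as the composition of $\pi$ with isomorphisms of elliptic curves), this forces $\sigma$ to lift to a dilation $s\mapsto\q s$. It then identifies $\iup_{1}$ by a fixed-point argument at $s=1$ (noting that $\phi(1)=(a,\cdot)$ with $\mathfrak{D}(a)=0$, so $\phi(1)$ is fixed by $\iota_{1}$, whence $\iup_{1}(1)\in q^{\Z}$; since $\iota_{1}\neq\mathrm{id}$ the lift cannot be a translation and can be normalized to $s\mapsto 1/s$). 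Finally $\iup_{2}=\sigma_{\q}\circ\iup_{1}$. You reverse the order: you pin down $\iup_{1}$ first via the explicit symmetry $X(s)=X(1/s)$ from Proposition~\ref{prop:Tatecurve}, then classify the possible lifts of the involution $\iota_{2}$ and use the existence of fixed points to rule out the half-period translations, and finally recover $\sigma_{\q}$ by composition. Your approach avoids the external citation for $\sigma$ at the cost of invoking the classification of automorphisms of $E_{q}$; the paper's approach is more structural but needs the group-morphism property of $\phi$.

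Two minor remarks. First, the lift of an automorphism of $E$ to $C^{*}$ is not unique but only unique modulo $q^{\Z}$; this does not affect your argument since you only need existence of a lift of the required shape, but the word ``unique'' should be dropped. Second, your ``delicate point'' is in fact immediate: $C$ is algebraically closed and $|\beta_{2}|=1$ by \eqref{eq:alphaibetai}, so $\mathfrak{E}$ is a nonzero polynomial and has roots in $C$ (Lemma~\ref{lemma:goodroot} even furnishes one with $|b|<1$); each such root gives a fixed point of $\iota_{2}$ on the smooth curve $E$. The appeal to $J(E)\neq 0,1728$ is harmless but not needed here: any analytic automorphism of $C^{*}$ is of the form $s\mapsto ls^{\pm 1}$, and every such map is $q^{\Z}$-equivariant, so the description of $\mathrm{Aut}(C^{*}/q^{\Z})$ you use holds without restriction on $J$.
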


\begin{proof}
By \cite[Proposition 2.5.2]{DuistQRT}, the automorphism $\s$ corresponds to the addition by a prescribed point $\Omega$ of $E$. Let  $\pi: C^* \rightarrow E_q$ be the  surjective map  defined in Proposition \ref{prop:Tatecurve}.  By \cite[Exercise 5.1.9]{FresnelvanderPUt}, the map $\pi$ is  a group isomorphism between the multiplicative group $(C^*,*)$  and the Mordell-Weil group of $E_q$ \footnote{ This is the group whose underlying set is the set of points of $E_q$ and whose group law is given by the addition on the elliptic curve $E$.}. Moreover, since $E_q$ and $E$ are elliptic curves, any isomorphism between	$E_q$ and $E$ is a group morphism between their respective Mordell-Weil groups.  This proves that $ \phi $ is a group morphism.  Then, there exists $\q \in C^*$ 
 such that $\sigma \circ\phi= \phi \circ \sigma_\q$. Since $\phi$ is $q$-invariant, the element $\q$ is determined modulo $q^\Z$ (see Remark \ref{rmk:parametrizationcurvegenus1}). This proves the first statement.\\
Let us denote by   $\iup_{1},\iup_{2}$ some automorphisms of $ C^*$, obtained by   pulling back to $ C^*$ via $\phi$ the automorphisms $\iota_1,\iota_2$ of $E$. The automorphisms    $\iup_{1},\iup_{2}$ are uniquely determined up to multiplication by some power of $q$. The automorphisms of $C^*$ are of the form $s\mapsto l s^{\pm 1}$ with $l \in C^*$. Note that $\overline{x}(q^{\Z})=a$, and $(a, \frac{-B(a)}{2A(a)}) \in E$ is fixed by $\iota_1$. Indeed, by construction $\mathfrak{D}(a)=0$. This proves that  $\iup_1(1)$ belongs to $q^{\Z}$. Since $\iota_1$ is not the identity, we can modify $\iup_1$ by a suitable power of $q$ to get 
 $\iup_1 (s)=1/s$. The expression of $\iup_2$ follows with $\sigma=\iota_2 \circ \iota_1$.
\end{proof}
\begin{rmk}\label{rmk:choiceforqandsymmetry}
\begin{itemize}
\item The choice of the element $\q$ is unique up to multiplication by $q^\Z$.  Since $|q| \neq 1$, we can choose $\q$ such that  $|q|^{1/2}\leq |\q|<|q|^{-1/2}$. 

\item Pursuing the symmetry arguments of  Remark \ref{rmk:parametrizationcurvegenus1}, we easily note that  Proposition~\ref{prop1} has a straightforward analogue when one replaces $\phi$ by $\psi$ and one exchanges $\iup_1$ and $\iup_2$.
\end{itemize}
\end{rmk}

The proof of the following lemma is straightforward.
\begin{lemma}\label{lemma:qqmultindinfite order}The automorphism $\sigma$ has infinite order if and only if 
$\q$ and $q$  are \emph{multiplicatively independent}\footnote{Note that multiplicatively independent is sometimes  replaced in the literature  by noncommensurable  (see  \cite[\S 6]{roquette1970analytic}).}, that is,  there is  no $(r,l) \in \Z^2 \setminus (0,0)$ such that $q^r=\q^l$.
\end{lemma}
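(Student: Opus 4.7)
The plan is to unwind both sides through the uniformization $\phi: C^* \to E$ from Proposition~\ref{prop:Tatecurve} and Theorem~\ref{cor:jinvariant}, using that $\phi$ identifies points precisely up to multiplication by $q^\Z$ (Remark~\ref{rmk:parametrizationcurvegenus1}) and intertwines $\sigma$ with $\sigma_\q: s \mapsto \q s$ (Proposition~\ref{prop1}).

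First I would translate the condition that $\sigma$ has finite order into a multiplicative condition on $\q$ and $q$. Saying $\sigma^n = \mathrm{id}_E$ for some $n \geq 1$ is, via the surjection $\phi$ and the intertwining relation $\sigma \circ \phi = \phi \circ \sigma_\q$, equivalent to $\phi(\q^n s) = \phi(s)$ for every $s \in C^*$. By the kernel description of $\phi$, this holds iff $\q^n s \in s q^\Z$ for all $s$, i.e.\ iff $\q^n \in q^\Z$. Hence $\sigma$ has finite order iff there exist integers $n \geq 1$ and $m \in \Z$ with $\q^n = q^m$, which is a nontrivial multiplicative relation between $\q$ and $q$.

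Conversely, I would argue that any nontrivial relation $q^r = \q^l$ with $(r,l) \in \Z^2 \setminus \{(0,0)\}$ already forces $l \neq 0$: if $l = 0$ then $q^r = 1$ with $r \neq 0$, but $|q|<1$ prevents $q$ from being a root of unity, a contradiction. So $l \neq 0$, and then $\sigma_\q^{\,l}$ acts on $C^*$ as multiplication by $\q^l = q^r$, which descends to the identity on $E$ through the $q^\Z$-periodicity of $\phi$; hence $\sigma^l = \mathrm{id}_E$, and $\sigma$ has finite order.

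Combining the two directions gives the equivalence. There is no serious obstacle here: the only subtlety is ruling out the degenerate case $l=0$ in the definition of multiplicative dependence, which is handled by the non-archimedean fact $|q|<1$ from Proposition~\ref{prop:Tateuniformization} together with Lemma~\ref{lemma:jinvKernel}.
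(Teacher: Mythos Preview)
Your proof is correct and is exactly the straightforward argument the paper has in mind; indeed, the paper omits the proof entirely, stating only that it is ``straightforward.'' Your handling of the degenerate case $l=0$ via $|q|<1$ is the only point requiring any care, and you address it correctly.
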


\subsection{Meromorphic continuation}\label{sec53}

In this section, we prove that the functions $$
{{F}^{1}(x,t):= K(x,0,t)Q(x,0,t)}, \hbox{ and }\quad {F}^{2}(y,t):= K(0,y,t)Q(0,y,t),$$ can be meromorphically  continued  to $C^*$. We follow some of the ideas initiated in \cite{FIM}. We note that, since $|t|<1$, the series ${F}^{1}(x,t)$ and ${F}^{2}(y,t)$ converge on the affinoid subset  ${U=\{(x,y) \in E \subset \P1 (C)\times \P1 (C)||x|\leq 1, |y|\leq 1 \}}$ of $E$. With Lemma~\ref{lemma:nonemptyopenset}, $U$ is not empty. For $(x,y) \in U$, we have 
$$
0=xy+F^{1}(x,t) +F^{2}(y,t)+td_{-1,-1} Q(0,0,t).
$$
Set $U_{x}=\{(x,y) \in E \subset \P1 (C)\times \P1 (C)||x|\leq 1\}$.  Note that $F^{1}(x,t)$ is analytic on $U_{x}$. We continue $F^{2}(y,t)$
 on $U_x$ by setting 
  $$
F^{2}(y,t)= -xy -F^{1}(x,t) -td_{-1,-1} Q(0,0,t).
$$

Composing  $F^i(x,t)$ with the  surjective map $$
\begin{array}{llll}
\phi: & C^* & \rightarrow & E \\
 & s &  \mapsto &  (\overline{x}(s),\overline{y}(s)),
\end{array}$$ we   define the functions  $\breve{F}^{1}(s)=F^{1}(\overline{x}(s),t)$ and $\breve{F}^{2}(s)=F^{2}(\overline{y}(s),t)$ for any $s$ in the set $$\mathcal{U}_{x}:= \phi^{-1}(U_{x})\cap \{s\in C^*| |s|\in [|q|^{1/2},|q|^{-1/2}[\}.$$

The goal of the following lemma is to prove that $\mathcal{U}_x$ is an annulus 
 whose size is large enough in order to continue the functions $\breve{F}^1, \breve{F}^2$, to the whole $C^*$ (see Figure \ref{figxs}).

\begin{lemma}\label{lem1}
Let $|s|\in [|q|^{1/2},|q|^{-1/2}[$. The following statements  hold:
\begin{itemize}
\item if $|s|\in ]|\mathfrak{D}^{(1)}(a)|,|\mathfrak{D}^{(1)}(a)|^{-1}[$, then $|\overline{x}(s)|< 1$;
\item if $|s|=|\mathfrak{D}^{(1)}(a)|^{\pm 1}$, then $|\overline{x}(s)|= 1$;
\item otherwise $|\overline{x}(s)|> 1$.
\end{itemize}
In conclusion,  $\mathcal{U}_x =[|\mathfrak{D}^{(1)}(a)|,|\mathfrak{D}^{(1)}(a)|^{-1}]$.
\end{lemma}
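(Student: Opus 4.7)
The plan is to analyze
$$D(s):=u^{2}X(s)+\tfrac{u^{2}-2\mathfrak{D}^{(2)}(a)}{12},$$
since the parametrization \eqref{eq:parametragex} rewrites as $\overline{x}(s)-a=\mathfrak{D}^{(1)}(a)/D(s)$, and to use $|a|<1$ to translate the three prescribed cases for $|\overline{x}(s)|$ into the three cases $|D(s)|>|\mathfrak{D}^{(1)}(a)|$, $|D(s)|=|\mathfrak{D}^{(1)}(a)|$, $|D(s)|<|\mathfrak{D}^{(1)}(a)|$.

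First, I would estimate $X(s)$ on the annulus $|s|\in[|q|^{1/2},|q|^{-1/2}[$ via its defining series. Using $|s_{k}|\leq|q|$ and the fact that, for $n\neq 0$, $|q^{n}s|$ stays uniformly away from $1$ on this annulus, a term-by-term analysis yields $X(s)=\frac{s}{(1-s)^{2}}+R(s)$ with $|R(s)|\leq|q|^{1/2}$. The identity $X(s)=X(1/s)$ also gives $D(s)=D(1/s)$, so one may restrict attention to $|s|\leq 1$. The second ingredient is the arithmetic bound
$$\left|\tfrac{u^{2}-2\mathfrak{D}^{(2)}(a)}{12}\right|<|\mathfrak{D}^{(1)}(a)|,$$
to be extracted from the compatibility of the Weierstrass forms of $E$ and $E_{q}$ built in Theorem~\ref{cor:jinvariant} together with the nonarchimedean estimates of Appendix~\ref{sec:nonarcheestimates} applied to the hypotheses on $a$.

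Granting these two inputs, a direct case split finishes the job. For $|s|\in(|q|^{1/2},1)$, $|s/(1-s)^{2}|=|s|$ dominates both $|R(s)|$ and the constant, so $|D(s)|=|s|$. For $|s|=1$ one has $|X(s)|\geq 1$ (whether $|1-s|=1$ or $|1-s|<1$), hence $|D(s)|=|X(s)|\geq 1>|\mathfrak{D}^{(1)}(a)|$. For $|s|=|q|^{1/2}$ every term is bounded by $\max(|q|^{1/2},|c'|)<|\mathfrak{D}^{(1)}(a)|$. The outer annulus $|s|\in(1,|q|^{-1/2}[$ is handled by the symmetry $|D(s)|=|D(1/s)|=1/|s|$. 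Comparing the resulting $|D(s)|$ with $|\mathfrak{D}^{(1)}(a)|$ in each subcase gives the three bullets, and the description of $\mathcal{U}_{x}$ follows. The main obstacle is the constant estimate: without control beyond $|u|=1$, cancellations between $u^{2}X(s)$ and the constant term could push $|D(s)|$ below $|\mathfrak{D}^{(1)}(a)|$ on parts of the would-be good annulus and break the statement; once this bound is in hand, every remaining step is a direct nonarchimedean computation from the explicit series for $X$.
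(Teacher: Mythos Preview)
Your approach is essentially the same as the paper's: the two ingredients you identify are precisely Lemma~\ref{Xs} (the norm of $X(s)$ on the fundamental annulus) and Lemma~\ref{lem:normestimateu2mathfrakD} (the bound $|\tfrac{u^{2}}{12}-\tfrac{\mathfrak{D}^{(2)}(a)}{6}|<|\mathfrak{D}^{(1)}(a)|$), and the case split together with the symmetry $\overline{x}(s)=\overline{x}(1/s)$ mirrors the paper's proof exactly. One harmless imprecision: your claim that $|D(s)|=|s|$ on the whole interval $(|q|^{1/2},1)$ is slightly too strong, since for $|s|<|\mathfrak{D}^{(1)}(a)|$ the constant term need not be dominated by $|s|$ --- but since both $|u^{2}X(s)|=|s|$ and the constant are $<|\mathfrak{D}^{(1)}(a)|$ there, you still get $|D(s)|<|\mathfrak{D}^{(1)}(a)|$, which is all that is needed (and is how the paper handles that sub-case).
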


\begin{figure}
\begin{center}
\begin{tikzpicture}
\draw (0,0) [dashed] circle (0.5 cm);
\draw (0,0)  [dashed] circle (2cm);
\draw (0,0)  circle (0.3 cm);
\draw (0,0)  circle (3cm);
\put(11,0){\line(1,0){100}}
\put(120,0){{$|\overline{x}(s)|> 1$}}
\put(-20,65){{$|\overline{x}(s)|> 1$}}
\put(-20,30){{$|\overline{x}(s)|< 1$}}
\end{tikzpicture}
\caption{The plain circles correspond to $|s|=|q|^{\pm 1/2}$. The dashed circles correspond to $|\overline{x}(s)|= 1$.}\label{figxs}
\end{center}
\end{figure}
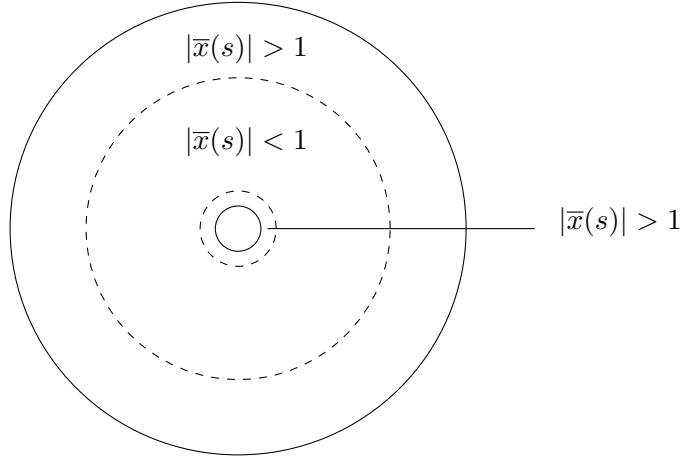
\begin{proof}
From the definition of $X(s)$, we have $X(s)=X(1/s)$ so that $\overline{x}(s)=\overline{x}(1/s)$. Using this symmetry, we just have to  prove Lemma \ref{lem1} for $|s|\in [|q|^{1/2},1]$.
We have 
 \begin{equation} \label{eq:normestimatexs}
|\overline{x}(s)| =\left|a +\frac{\mathfrak{D}^{(1)}(a)}{ u^2X(s)+ \frac{u^2}{12}  -\frac{\mathfrak{D}^{(2)}(a)}{6}}\right|\leq \max\left(|a|,\left|\frac{\mathfrak{D}^{(1)}(a)}{ u^2X(s)+ \frac{u^2}{12}  -\frac{\mathfrak{D}^{(2)}(a)}{6}}\right|\right),\end{equation} 

with equality if $|a| \neq \left|\frac{\mathfrak{D}^{(1)}(a)}{ u^2X(s)+ \frac{u^2}{12}  -\frac{\mathfrak{D}^{(2)}(a)}{6}}\right|$.
Remember  that  $|u|=1$, $|a|<1$, and  ${|q|^{1/2}<|\mathfrak{D}^{(1)}(a)|<1}$, see Theorem \ref{cor:jinvariant}.
Let us first assume that $|s|\in [|\mathfrak{D}^{(1)}(a)|,1[$. By Lemma~\ref{Xs}, $|u^{2}X(s)|=|s|$  and by Lemma~\ref{lem:normestimateu2mathfrakD},
  $|\frac{u^2}{12}  -\frac{\mathfrak{D}^{(2)}(a)}{6}|<|\mathfrak{D}^{(1)}(a)|$. Therefore   
 $$\left|\frac{\mathfrak{D}^{(1)}(a)}{ u^2X(s)+ \frac{u^2}{12}  -\frac{\mathfrak{D}^{(2)}(a)}{6}}\right|=\left|\frac{\mathfrak{D}^{(1)}(a)}{s}\right|.$$
Combining this equality with \eqref{eq:normestimatexs} and $|a|<1$, we find that $|\overline{x}(s)| <1$ if ${|s|\in ]|\mathfrak{D}^{(1)}(a)|,1[}$,  and $|\overline{x}(s)| =1$ if $|s|=|\mathfrak{D}^{(1)}(a)|$.  \par 
 Assume now that $|s|=1$. By construction,  $|\overline{x}(1)|=|a|<1$. So let us assume that $s\neq 1$. Since $|\frac{u^2}{12}  -\frac{\mathfrak{D}^{(2)}(a)}{6}|<|\mathfrak{D}^{(1)}(a)|<1$  and $ |u^{2}X(s)| \geq 1$ by Lemma \ref{Xs}, we find 
$$
\left| \frac{\mathfrak{D}^{(1)}(a)}{ u^2X(s)+ \frac{u^2}{12}  -\frac{\mathfrak{D}^{(2)}(a)}{6}} \right| =\left|\frac{\mathfrak{D}^{(1)}(a)}{u^2X(s)}\right| \leq |\mathfrak{D}^{(1)}(a)|<1
.$$ 

This concludes the proof of the first two points. \par 
Assume that $|s|\in ]|q|^{1/2},|\mathfrak{D}^{(1)}(a)|[$. By Lemma \ref{Xs}, $|u^{2}X(s)|=|X(s)|=|s|$. Since $$\left|\frac{u^2}{12}  -\frac{\mathfrak{D}^{(2)}(a)}{6}\right|<|\mathfrak{D}^{(1)}(a)|<1,$$ we find that $|u^{2}X(s)+\frac{u^2}{12}  -\frac{\mathfrak{D}^{(2)}}{6}|<|\mathfrak{D}^{(1)}(a)|$ and therefore, $|\overline{x}(s)|> 1$. If we have $|s|=|q|^{1/2}<|\mathfrak{D}^{(1)}(a) |$ then Lemma~\ref{Xs} implies  that ${|u^{2}X(s)|=|X(s)|\leq |s|<|\mathfrak{D}^{(1)}(a)|}$. Since  ${|\frac{u^2}{12}  -\frac{\mathfrak{D}^{(2)}(a)}{6}|<|\mathfrak{D}^{(1)}(a)|} $, we deduce that ${|u^{2}X(s)+\frac{u^2}{12}  -\frac{\mathfrak{D}^{(2)}(a)}{6}|<|\mathfrak{D}^{(1)}(a)|}$
 and therefore, $|\overline{x}(s)|> 1$. This concludes the proof.
\end{proof}

\begin{rmk}\label{rmk:symargcontinuation}
By symmetry between $x$ and $y$, one could have define $U_{y}=\{(x,y) \in E \subset \P1 (C)\times \P1 (C)||y|\leq 1\}$ and continue $F^{1}(x,t)$
 on $U_y$ by setting 
  $$
F^{1}(x,t)= -xy -F^{2}(y,t) -td_{-1,-1} Q(0,0,t).
$$  Then, the composition of the  $F^i$ with the surjective map $\psi$ defined in Remark \ref{rmk:parametrizationcurvegenus1} yields to functions $\breve{F}^{i}$ that are defined on
$\mathcal{U}_y:= \psi^{-1}(U_{y})\cap \{s\in C^*| |s|\in [|q|^{1/2},|q|^{-1/2}[\}$. The analogue of Lemma \ref{lem1} is as follows. For   $|s|\in [|q|^{1/2},|q|^{-1/2}[$, the following statements  hold:
\begin{itemize}
\item   if $|s|\in ]|\mathfrak{E}^{(1)}(b)|,|\mathfrak{E}^{(1)}(b)|^{-1}[$, then $|\overline{y}(s)|< 1$;
\item if $|s|=|\mathfrak{E}^{(1)}(b)|^{\pm 1}$ then $|\overline{y}(s)|= 1$;
\item otherwise $|\overline{y}(s)|> 1$.
\end{itemize}
\end{rmk}

By Proposition \ref{prop1}, the automorphism of the walk corresponds to 
the $\q$-dilatation on $C^*$. The following lemma shows that one can cover $C^*$ either  with the $\q$-orbit of the  set $\mathcal{U}_x$ or with  the $\q$-orbit of $\mathcal{U}_y$.

\begin{lemma}\label{lem5}
The following statement  hold:
\begin{itemize}
\item $|\q|\neq 1$;
\item moreover, up to replace $\q$ by some convenient $q^\Z$-multiple, the following hold: \begin{itemize}
\item if  either $d_{-1,1}= 0$ or $d_{1,-1}\neq 0$, then, 
$$\displaystyle \bigcup_{\ell\in \Z}\s_\q^{\ell} (\mathcal{U}_{x})=C^{*};$$
\item if either $d_{-1,1}\neq 0$ or $d_{1,-1}= 0$ then, 
$$\displaystyle \bigcup_{\ell\in \Z}\s_\q^{\ell} (\mathcal{U}_{y})=C^{*}.$$
\end{itemize}
\end{itemize}
\end{lemma}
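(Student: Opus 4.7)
\emph{Reformulation of the covering condition.} Let $v := -\log_\alpha|\cdot|$ denote the additive valuation on $C$. By Lemma~\ref{lem1}, the set $\mathcal{U}_x$ is the annulus $\{s\in C^* : v(s)\in[-\rho_x,\rho_x]\}$, where $\rho_x := v(\mathfrak{D}^{(1)}(a)) \in (0,v(q)/2)$, the upper bound coming from $|q|^{1/2}<|\mathfrak{D}^{(1)}(a)|$ (Theorem~\ref{cor:jinvariant}). The image $\sigma_\q^\ell(\mathcal{U}_x)$ is the translated annulus $v(s)\in[\ell\,v(\q)-\rho_x,\,\ell\,v(\q)+\rho_x]$, so $\bigcup_\ell \sigma_\q^\ell(\mathcal{U}_x) = C^*$ is equivalent to $|v(\q)|\le 2\rho_x$. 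Since $\q$ is only defined modulo $q^\Z$ (Remark~\ref{rmk:choiceforqandsymmetry}), replacing $\q$ by a suitable $q^\Z$-multiple, this reduces to
\[
\min_{m\in\Z}\,|v(\q) + m\,v(q)| \;\le\; 2\,v(\mathfrak{D}^{(1)}(a)),
\]
and $|\q|\neq 1$ amounts to saying $v(\q)\notin v(q)\Z$ for every representative. The analogous statement with $\rho_y := v(\mathfrak{E}^{(1)}(b))$ holds for $\mathcal{U}_y$ thanks to Remark~\ref{rmk:symargcontinuation}.

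\emph{Explicit form of $v(\q)$.} Since $\phi$ is a group morphism (see the proof of Proposition~\ref{prop1}), its neutral element must be fixed by the inversion $\iota_1(s)=1/s$, which forces $\phi(1)$ to be a fixed point of $\iota_1$ on $E$. By Lemma~\ref{lem:genus1nofixedpoint}, this fixed point is a branch point of the $y$-projection; combining with the explicit expression of $\overline{x}$ in Theorem~\ref{cor:jinvariant} (and the fact that $X(1)=\infty$), one finds $\phi(1) = (a,\,y_a)$ where $y_a = -\widetilde{A}_1(a)/(2\widetilde{A}_2(a))$. From $\sigma = \iota_2\circ\iota_1$ and $\sigma\phi=\phi\sigma_\q$, we deduce $\phi(\q) = \iota_2(a,y_a) = \bigl(\widetilde{B}_0(y_a)/(a\,\widetilde{B}_2(y_a)),\,y_a\bigr)$. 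Equating this $x$-coordinate to the parametrization of Theorem~\ref{cor:jinvariant} yields
\[
u^2 X(\q) \;+\; \frac{u^2}{12} \;-\; \frac{\mathfrak{D}^{(2)}(a)}{6} \;=\; \frac{\mathfrak{D}^{(1)}(a)}{\widetilde{B}_0(y_a)/(a\,\widetilde{B}_2(y_a)) - a},
\]
and by Lemma~\ref{Xs} the valuation $v(X(\q))$ determines $v(\q)$ (modulo $v(q)$), giving an explicit $t$-adic expression for $v(\q)$ in terms of the weights $d_{i,j}$.

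\emph{Case analysis.} From~\eqref{eq:defiAiBi}, $\widetilde{B}_2(y) = -t\bigl(d_{1,-1}+d_{1,0}y+d_{1,1}y^2\bigr)$ and $\widetilde{B}_0(y) = -t\bigl(d_{-1,-1}+d_{-1,0}y+d_{-1,1}y^2\bigr)$, so the vanishing of $d_{-1,1}$ or $d_{1,-1}$ alters the dominant $t$-adic term of the right-hand side above; symmetrically, the Newton polygons of $\mathfrak{D}$ and $\mathfrak{E}$ coming from~\eqref{eq:alphaibetai} are controlled by the same coefficients (through $\alpha_0,\alpha_4$ and $\beta_0,\beta_4$). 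A finite case analysis over the four configurations $(d_{1,-1},d_{-1,1})\in\{0,\neq 0\}^2$ (always nondegenerate by Proposition~\ref{prop:degeneratecases}) shows that the computed $v(\q)$ never lies in $v(q)\Z$, so $|\q|\neq 1$; moreover, for an appropriate representative, the inequality $|v(\q)|\le 2\rho_x$ holds precisely when $d_{-1,1}=0$ or $d_{1,-1}\neq 0$, while the dual estimate $|v(\q)|\le 2\rho_y$ holds precisely when $d_{-1,1}\neq 0$ or $d_{1,-1}=0$, so the two configurations together exhaust all cases.

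\emph{Main obstacle.} The principal difficulty is the bookkeeping in Step~2: one must track simultaneously the twist factor $u$ (entering quadratically in $\overline{x}$ and cubically in $\overline{y}$), the leading $t$-adic orders of the partial derivatives $\mathfrak{D}^{(i)}(a)$ and $\mathfrak{E}^{(i)}(b)$, and the possible cancellations in the denominator $\widetilde{B}_0(y_a)/(a\widetilde{B}_2(y_a)) - a$. The estimates collected in Appendix~\ref{sec:nonarcheestimates} (and Lemma~\ref{Xs}) provide the inputs; once these are in hand, Step~3 becomes a routine, though case-heavy, verification.
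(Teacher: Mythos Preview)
Your reformulation of the covering condition as $|v(\q)|\le 2\rho_x$ is correct and is essentially what has to be shown, but the proposal is a strategy outline rather than a proof: the decisive step --- the ``finite case analysis'' you announce in Step~3 --- is never carried out, and you yourself flag the bookkeeping it requires as the main obstacle. In particular, your plan to recover $v(\q)$ from $v(X(\q))$ via Lemma~\ref{Xs} is delicate: that lemma only tells you $|X(s)|=|s|$ on the open annulus $|q|^{1/2}<|s|<1$, gives a one-sided bound on the inner and outer boundaries, and is silent about how $|X(s)|$ distinguishes $s$ from $1/s$; so the ``inversion'' you invoke is not immediate and would itself require a case split you have not written down. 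Until that is done, both $|\q|\ne 1$ and the covering remain unproved.

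The paper's argument sidesteps the computation of $v(\q)$ entirely. For $|\q|\ne 1$, it fixes the representative with $|q|^{1/2}\le|\q|<|q|^{-1/2}$, notes that $\overline{x}(\q)=a'$ where $(a',b)=\iota_2(a,b)$, and proves $|a'|\ge 1$ by a short case split on the valuation of $\widetilde{A}_{-1}/\widetilde{A}_1$ at $0$ (combined with Lemma~\ref{lem:normestimatecompositionfraction}); since Lemma~\ref{lem1} forces $|\overline{x}(s)|<1$ whenever $|s|=1$, this gives $|\q|\ne 1$ directly. For the covering, the paper does not estimate $v(\q)$ at all: it invokes Lemma~\ref{lem:intersectionsigmaunitdisknonempty}, which produces a point $(a_0,b_0)\in E$ with $|a_0|=1$ whose $\sigma$-image has $x$-coordinate of norm $\le 1$. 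Pulling this back through $\phi$ places $s_0$ on the boundary of $\mathcal{U}_x$ and $\q s_0$ inside $\mathcal{U}_x$ (or a $q$-translate of it), so $\mathcal{U}_x\cap\sigma_\q(\mathcal{U}_x)\ne\varnothing$ after possibly multiplying $\q$ by $q^{\pm 1}$; together with $|\q|\ne 1$ this forces the union of $\sigma_\q$-translates to be all of $C^*$. The hypothesis ``$d_{-1,1}=0$ or $d_{1,-1}\ne 0$'' enters only through Lemma~\ref{lem:intersectionsigmaunitdisknonempty}, where it controls the valuation of $B_{-1}/B_1$ at $0$ or $\infty$. This geometric route is both shorter and avoids the cancellations you were worried about.
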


\begin{proof}
Let us first prove that  $|\q|\neq 1$. By Remark \ref{rmk:choiceforqandsymmetry}, one can choose $\q$ so that we have
${|q|^{1/2} \leq |\q| < |q|^{-1/2}}$.
 By construction, $\overline{x}(1)=a$. Let $b\in \P1 (C)$ such that $(a,b)\in E$. Since $\iota_{1}(a,b)=(a,b)$ we have $\iota_{2}(a,b)\neq (a,b)$ by  Lemma \ref{lem:genus1nofixedpoint}. So let $a' \in \P1 (C)$ distinct from $a$ such that $\sigma (a,b)=(a',b)$. Then, $\overline{x}(\q)=a'$. By Lemma~\ref{lem1}, $|\overline{x}(s)|<1$ for $|s|=1$. Thus,  it suffices to prove that  $|\overline{x}(\q)|=|a'|\geq  1$ to conclude that $|\q| \neq 1$.\par 
Remember that  $K\left(x,y,t\right)=\widetilde{A}_{-1}(x)+\widetilde{A}_{0}(x)y+\widetilde{A}_{1}(x)y^{2}=\widetilde{B}_{-1}(y)+\widetilde{B}_{0}(y)x+\widetilde{B}_{1}(y)x^{2}$ with $\widetilde{A}_{i}(x)\in C[x]$ and $\widetilde{B}_{i}(y)\in C[y]$.
With $\iota_{1}(a,b)= (a,b)$ and the formulas in $\S \ref{sec:autoofthewalks}$, one finds that 
$$
b^2 = \frac{A_{-1}(a) }{A_{1}(a)}=\frac{\widetilde{A}_{-1}(a) }{\widetilde{A}_{1}(a)}.
$$ 

Let $\nu$ be the valuation at  $X=0$   of $\frac{\widetilde{A}_{-1} (X)}{\widetilde{A}_{1}(X)} $. Lemma \ref{lem:normestimatecompositionfraction} with $|a|<1$ gives $|b|^2=|a|^\nu$. Note that $\widetilde{A}_{1}$ and $\widetilde{A}_{-1}$ are polynomial of degree at most two  in $X$, so the integer  $\nu$ belongs to $\{-2,-1,0,1,2\}$. We have
\begin{equation}\label{eq:a'functiona}
a' = \frac{\widetilde{B}_{-1}(b) }{\widetilde{B}_{1}(b)a}.
\end{equation}
We will prove that $|a'|\geq1$ with a case by case study of the values of $\nu$.

Remember that 
\begin{equation}\label{eq:expressionwidetildeAiBi}
\begin{array}{lll}
\widetilde{A}_{-1}&=&d_{-1,-1}+d_{0,-1}x+d_{1,-1}x^{2} \\
\widetilde{A}_{1}&=&d_{-1,1}+d_{0,1}x+d_{1,1}x^{2} \\
\widetilde{B}_{-1}&=&d_{-1,-1}+d_{-1,0}y+d_{-1,1}y^{2} \\
\widetilde{B}_{1}&=&d_{1,-1}+d_{1,0}y+d_{1,1}y^{2}.
\end{array}
\end{equation}

\textbf{Case $\nu \geq 1$.} Then, $|b|=|a|^{\nu/2} <1$. Combining \eqref{eq:a'functiona} and Lemma \ref{lem:normestimatecompositionfraction}, we find
$|a||a'|=|b|^l$ where $l$ is the valuation at $X=0$ of $\frac{\widetilde{B}_{-1} (X)}{\widetilde{B}_{1}(X)}$. This gives $|a'|=|a|^{l\nu/2 -1}$. Since $l$ belongs to $\{ -2,\dots,2\}$ and $\nu$ is in $ \{1,2\}$, we get  $-3 \leq l \nu/2 -1 \leq 1$. If $l\nu/2 -1$ equals $1$ then $\nu$ must be equal to $2$  and by \eqref{eq:expressionwidetildeAiBi}, we must have $d_{-1,-1}=d_{0,-1}=0$ and $d_{-1,1} \neq 0$. By Remark \ref{remgenre0}, we must have $d_{-1,0}d_{1,-1} \neq 0$ so that $l=1$ and $l\nu/2-1=0$. A contradiction. Then, $l\nu/2 -1 \leq 0$ and 
$|a'| \geq 1$.

\textbf{Case $\nu=0$.} Then, $|b|=1$. With Lemma \ref{lemma:nonemptyopenset} and $|a|<1$, we obtain $|a'|>1$. \par 
\textbf{Case $\nu \leq -1$.} Then $|b |= |a|^{\nu/2}>1$. Combining \eqref{eq:a'functiona} and Lemma \ref{lem:normestimatecompositionfraction}, we find
${|a'|=|a|^{l\nu/2 -1}}$ where $l \in \{-2,\dots,2\}$ is  the degree in $X$ of $\frac{\widetilde{B}_{-1} (X)}{\widetilde{B}_{1}(X)}$.  Since $l$ belongs to $\{ -2,\dots,2\}$ and $\nu$ is in $\{-1,-2\}$, we get  $1 \geq l\nu/2 -1 \geq -3$. If $l \nu/2 -1=1$ then $\nu =-2$   and by \eqref{eq:expressionwidetildeAiBi}, we must have $d_{-1,1}=d_{0,1}=0$ and $d_{-1,-1} \neq 0$. By Remark \ref{remgenre0}, we must have $d_{-1,0}d_{1,1} \neq 0$ so that $l=-1$ and $l\nu/2-1=0$. A contradiction. Then, $l\nu/2 -1 \leq 0$ and 
$|a'| \geq 1$.\\ \par

Assume that either $d_{-1,1}= 0$ or $d_{1,-1}\neq 0$ and let us prove that
$$\displaystyle \bigcup_{\ell\in \Z}\s_\q^{\ell} (\mathcal{U}_{x})=C^{*}.$$
By Lemma \ref{lem:intersectionsigmaunitdisknonempty}, there exists  $(a_0,b_0) \in E$ such that $|a_0|=1$ and $\s(a_0,b_0)=(a_1,b_1)$ with $|a_1| \leq 1$.
By Lemma \ref{lem1}, there exists  $s_0\in C^*$ with  $|s_0|=|\mathfrak{D}^{(1)}(a)|^{\pm 1}$  such that 
$\overline{x}(s_0)=a_0$. Since $|q|^{1/2}\leq |\q|<|q|^{-1/2}$ and $|q|^{1/2}<|\mathfrak{D}^{(1)}(a)|<1$, we find that   ${|q|<|\q s_0| < |q|^{-1/2}}$. Since $|\overline{x}(\q s_0)|=|a_1| \leq 1$, we conclude using  Lemma \ref{lem1} that 
\begin{itemize}
\item either $|\q s_0 | \in  \cU_x$. This proves that  $$\cU_x \cap \sigma_\q(\cU_x)=[|\mathfrak{D}^{(1)}(a)|, |\mathfrak{D}^{(1)}(a)|^{-1}]\cap \sigma_{\q}([|\mathfrak{D}^{(1)}(a)|, |\mathfrak{D}^{(1)}(a)|^{-1}])\neq \varnothing .$$ Since $|\q| \neq 1$, we deduce that
$$\displaystyle \bigcup_{\ell\in \Z}\s_\q^{\ell} (\mathcal{U}_{x})=C^{*}.$$
\item or $|\q s_0 | \in [|q||\mathfrak{D}^{(1)}(a)|, |q||\mathfrak{D}^{(1)}(a)|^{-1}]$. Replacing $\q$ by $\q/q$ allows to conclude.
\item or $|\q s_0 | \in [|q|^{-1}|\mathfrak{D}^{(1)}(a)|, |q|^{-1}|\mathfrak{D}^{(1)}(a)|^{-1}]$. Replacing $\q$ by $q\q$ allows to conclude.
\end{itemize}
The proof for  $\mathcal{U}_{y}$ is obtained by a symmetry argument using   Lemma \ref{lem:intersectionsigmaunitdisknonempty} and  Remark~\ref{rmk:symargcontinuation}.
\end{proof}

According to Lemma \ref{lem5}, we define some auxiliary functions    as follows 
\begin{itemize}
\item  if  $d_{-1,1}= 0$, we define, for $i=1,2$,  the function  $\widetilde{F}^{i}(s)$ on $ \mathcal{U}_x$ as  $F^i(\phi(s),t)$; 
\item if  $d_{-1,1}\neq 0$,   the function  $\widetilde{F}^{i}(s)$ is defined on  $\mathcal{U}_y$  as  $F^i(\psi(s),t)$.
\end{itemize}
A priori the auxiliary functions $\widetilde{F}^{1}(s), \widetilde{F}^{2}(s)$ are defined on $\cU_x $ if  $d_{-1,1}= 0$ and on $\cU_y$ otherwise.   Theorem \ref{thm:funceqgenus1} below shows that one can meromorphically 
continue the functions $\widetilde{F}^i(s)$ on $C^*$ so that they satisfy 
some  nonhomogeneous rank $1$ linear $\q$-difference equations.

\begin{thm}\label{thm:funceqgenus1}
 The auxiliary functions $\widetilde{F}^{1}(s), \widetilde{F}^{2}(s)$ can be continued meromorphically  on $C^*$ so that they satisfy 
$$
\widetilde{F}^1(\q s)-\widetilde{F}^1(s)= b_1 
$$
and $$
\widetilde{F}^2(\q s)-\widetilde{F}^2(s)=b_2,
$$
where  $b_1=(x( \q s) -x( s))y(\q s)$ and $b_2 = (y(\q s) -y( s))x(s)$  are two $q$-periodic meromorphic functions over $C^*$. 

\end{thm}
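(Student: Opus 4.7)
The plan is to adapt the argument of Lemma \ref{lemma:Analyticcontinuationandfuncequ} from the genus zero case to the multiplicative (Tate) uniformization $\phi$ of the genus one kernel curve. I will describe the case $d_{-1,1}=0$ in which the auxiliary functions are initially defined on $\cU_x$ via $\phi$; the case $d_{-1,1}\neq 0$ is identical by the symmetry between $x$ and $y$, using $\psi$ and $\cU_y$ (cf.\ Remarks \ref{rem1} and \ref{rmk:symargcontinuation}). First I would specialize the fundamental functional equation \eqref{eq:fundamentalkernelequationseries} along the kernel curve, pulled back to $C^*$ via $\phi$: on $\cU_x$ we have $|\overline{x}(s)|\leq 1$ by Lemma \ref{lem1}, so that $\widetilde{F}^1(s)=F^1(\overline{x}(s),t)$ is analytic, while $\widetilde{F}^2(s)$ is defined by the rearrangement of the functional equation continuing $F^2$ from $U$ to $U_x$. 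Specialization gives the basic identity
\begin{equation}\label{eq:basicid}
0 = \overline{x}(s)\overline{y}(s) + \widetilde{F}^1(s) + \widetilde{F}^2(s) + td_{-1,-1}Q(0,0,t), \qquad s\in \cU_x.
\end{equation}

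Next I would use the two involutions $\iup_1(s)=1/s$ and $\iup_2(s)=\q/s$ from Proposition \ref{prop1}. Since $\iota_1$ preserves $x$ and $\iota_2$ preserves $y$ on $E$, one has $\overline{x}(1/s)=\overline{x}(s)$ and $\overline{y}(\q/s)=\overline{y}(s)$, hence $\widetilde{F}^1$ (resp.\ $\widetilde{F}^2$) is $\iup_1$-invariant (resp.\ $\iup_2$-invariant). Because $\cU_x$ is inversion-symmetric by Lemma \ref{lem1}, evaluating \eqref{eq:basicid} at $1/s\in \cU_x$ and subtracting from \eqref{eq:basicid} eliminates $\widetilde{F}^1$ and yields
\[
\widetilde{F}^2(1/s) - \widetilde{F}^2(s) = \overline{x}(s)\bigl(\overline{y}(s) - \overline{y}(1/s)\bigr).
\]
The $\iup_2$-invariance of $\widetilde{F}^2$ and $\overline{y}$, applied after the shift $s\mapsto \q s$, gives $\widetilde{F}^2(1/s)=\widetilde{F}^2(\q s)$ and $\overline{y}(1/s)=\overline{y}(\q s)$, producing the required identity $\widetilde{F}^2(\q s)-\widetilde{F}^2(s) = b_2(s)$ on $\cU_x$. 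An analogous evaluation of \eqref{eq:basicid} at $\iup_2(s)=\q/s$ (which does lie in $\q\cdot\cU_x$, so the identity must first be interpreted where the functions are defined) eliminates $\widetilde{F}^2$ and, after using the $\iup_1$-invariance of $\widetilde{F}^1$ and a shift, produces $\widetilde{F}^1(\q s)-\widetilde{F}^1(s)=b_1(s)$.

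Finally I would use these relations themselves to propagate $\widetilde{F}^1$ and $\widetilde{F}^2$ to all of $C^*$. Because $|\q|\neq 1$ by Lemma \ref{lem5}, the iterated formula
\[
\widetilde{F}^i(\q^{\ell} s) := \widetilde{F}^i(s) + \sum_{k=0}^{\ell-1} b_i(\q^{k} s) \qquad (\ell\in \Z)
\]
meromorphically extends $\widetilde{F}^i$ from $\cU_x$ to $\bigcup_{\ell\in \Z}\sigma_\q^\ell(\cU_x)=C^*$ (where the covering of $C^*$ is ensured, up to replacing $\q$ by a $q^\Z$-multiple, by Lemma \ref{lem5}); by construction the extension automatically satisfies the advertised $\q$-difference equation on the whole of $C^*$. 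The $q$-periodicity of $b_1$ and $b_2$ is inherited from the $q$-periodicity of the Tate coordinates $X(s),Y(s)$ in Proposition \ref{prop:Tatecurve}, through the explicit formulas \eqref{eq:parametragex} for $\overline{x},\overline{y}$. The main technical obstacle will be bookkeeping of the domains: one must verify that the initial identities hold not just locally but on a set large enough ($\cU_x$ being inversion-symmetric and having nontrivial intersection with $\q\cdot\cU_x$) so that subtraction of the identity at $s$ and at $\iup_k(s)$ is legitimate, and that the resulting relations then make the iterative extension unambiguous on the overlaps $\sigma_\q^\ell(\cU_x)\cap\sigma_\q^{\ell'}(\cU_x)$. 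Once this is checked, the theorem follows.
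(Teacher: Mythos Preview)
Your proposal is correct and follows precisely the approach the paper intends: the paper's own proof consists of a single sentence stating that the argument is ``completely similar to the proof of Lemma~\ref{lemma:Analyticcontinuationandfuncequ}'' together with the covering statement of Lemma~\ref{lem5}, and your sketch reproduces exactly that template --- specialize the kernel equation on $\cU_x$, exploit the invariances $\overline{x}\circ\iup_1=\overline{x}$ and $\overline{y}\circ\iup_2=\overline{y}$ to eliminate one of the $\widetilde{F}^i$ by subtraction, and then propagate by $\sigma_\q$-iteration using $\bigcup_\ell \sigma_\q^\ell(\cU_x)=C^*$. The one point you rightly flag as the ``main technical obstacle'' deserves a brief comment: on $\cU_x$ the function $\widetilde{F}^2$ is defined through the rearranged kernel identity rather than as the pure composition $F^2(\overline{y}(s),t)$, so its $\iup_2$-invariance is not automatic; it holds a priori only on the subset where $|\overline{y}(s)|\le 1$ (where $\widetilde{F}^2$ coincides with the original series), and one then uses the identity principle on the connected annulus $\cU_x\cap\sigma_\q(\cU_x)$ to carry the resulting $\q$-difference relation to the whole overlap before iterating --- this is the bookkeeping you anticipate.
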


\begin{proof}
The proof is completely similar to the proof of Lemma \ref{lemma:Analyticcontinuationandfuncequ} and relies on the fact that either the $\q$-orbit of $\mathcal{U}_x$ or the $\q$-orbit of  $\mathcal{U}_y$ covers $C^*$.
\end{proof}
Note that by Remark \ref{rmk:parametrizationcurvegenus1}, the coefficients $b_1,b_2$ of the $\q$-difference can be identified with rational functions on the algebraic curve $E$.

\subsection{Differential transcendence}\label{sec54}

The strategy to study the differential transcendence of generating functions  of nondegenerate weighted models of   genus one with infinite group is similar to the one employed in \S \ref{secgenre0}.  One first relate the differential behavior  of the incomplete generating functions to the differential algebraic properties of their associated auxiliary functions. Then,    one applies to these auxiliary functions the Galois theory of $\q$-difference equations. 
However, since the coefficients of the $\q$-difference equations satisfied by the auxiliary functions  are no longer rational but elliptic, the Galoisian criteria as well as the descent method to obtain some ``simple  telescopers'' are quite technical and postponed to Appendix  \ref{sec:differenceGaloistheory}. Theorem \ref{theo1} below gives a  first  criteria to guaranty the differential transcendence of the incomplete generating function.

\begin{thm}\label{theo1}

Assume that the weighted model is nondegenerate, of  genus one, and that the group of the walk is infinite. If 
$Q(x,0,t)$ is $\left(\frac{d}{dx},\frac{d}{dt}\right)$-differentially algebraic over $\Q$ then there exist $c_0, \dots,c_n \in C$ not all zero and $h \in C_q$ such that 
\begin{equation}\label{eq:telescopergenus1}
c_0 b_1+c_1 \partial_s(b_1)+ \dots +c_n \partial_s^n (b_1) =\s_\q(h)-h.
\end{equation}
\end{thm}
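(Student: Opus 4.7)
The plan is to mirror the three-step structure of the genus zero case (Lemma \ref{lem3} and Proposition \ref{prop:Galoiscriteriagenus0}), replacing the rational function field $C(s)$ with the elliptic function field $C_q$ and using the Galoisian criterion established in Appendix \ref{sec:differenceGaloistheory}. Concretely, I would (i) transfer the $(\frac{d}{dx},\frac{d}{dt})$-differential algebraicity of $Q(x,0,t)$ into $(\partial_s,\Delta_{t,\q})$-differential algebraicity of the auxiliary function $\widetilde{F}^1(s)$; (ii) apply the Galoisian criterion to the rank one inhomogeneous $\q$-difference equation satisfied by $\widetilde{F}^1$ to produce a telescoping relation; and (iii) descend the coefficients of that relation from the $\sigma_\q$-invariants of $C_q$ down to $C$, using the hypothesis that the group of the walk is infinite.

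For step (i), I would fix a differential field $\widetilde{K}$ containing $\overline{x}(s)$ (or $\overline{y}(s)$, depending on whether $d_{-1,1}$ vanishes or not, according to the definition of $\widetilde{F}^1$ right before Theorem \ref{thm:funceqgenus1}) together with $\ell_\q(s)$, and stable under both $\partial_s$ and $\Delta_{t,\q}=\partial_t(\q)\ell_\q(s)\partial_s+\partial_t$; both derivations commute with $\sigma_\q$. The chain-rule argument of Lemma \ref{lem3} then goes through verbatim, replacing $x(s)$ by $\overline{x}(s)$ (or $\overline{y}(s)$): any polynomial relation with $\Q$-coefficients between mixed derivatives of $F^1(x,t)=K(x,0,t)Q(x,0,t)$ pulls back to a polynomial relation over $\widetilde{K}$ between the derivatives $\Delta_{t,\q}^{j}\partial_s^{i}\widetilde{F}^1(s)$, and hence $\widetilde{F}^1$ is $(\partial_s,\Delta_{t,\q})$-differentially algebraic over $\widetilde{K}$.

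For steps (ii) and (iii), I would feed the functional equation $\sigma_\q(\widetilde{F}^1)-\widetilde{F}^1=b_1$ from Theorem \ref{thm:funceqgenus1}, with $b_1\in C_q$, into the elliptic analogue of Proposition \ref{prop2} and Corollary \ref{cor1} from Appendix \ref{sec:differenceGaloistheory}. This produces $d_0,\ldots,d_n$, lying in the field of $\sigma_\q$-invariants of $C_q$ and not all zero, together with $h'\in C_q$, satisfying $\sum_i d_i\,\partial_s^i(b_1)=\sigma_\q(h')-h'$. By Lemma \ref{lemma:qqmultindinfite order}, the infinite-group hypothesis is equivalent to $\q$ and $q$ being multiplicatively independent, and in this nonarchimedean setting a meromorphic function on $C^*$ that is both $q$-periodic and $\sigma_\q$-invariant must be constant. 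Hence the $d_i$ already lie in $C$, and one takes $c_i=d_i$ and $h=h'$, yielding \eqref{eq:telescopergenus1}.

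The main obstacle is step (iii). In the genus zero case, $C_\q(s)$ is purely transcendental over $C_\q$ and one simply decomposes along a $C$-basis of $C(s)$ to pass from $C_\q$-coefficients to $C$-coefficients. Here $C_q$ is a field of elliptic functions over $C$ with no comparable basis compatible with $\sigma_\q$, so the descent must be done intrinsically, via the claim that the $\langle\sigma_q,\sigma_\q\rangle$-bi-invariants of $C_q$ reduce to $C$ under the infinite-group hypothesis. This invariance statement, together with the extension of the Galoisian criteria of \cite{HS} to the elliptic coefficient setting, is precisely what is deferred to Appendix \ref{sec:differenceGaloistheory}, and is the technical heart of the argument.
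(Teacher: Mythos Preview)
Your three-step skeleton matches the paper's argument, and step (i) is essentially correct: once you require $\widetilde{K}$ to contain $\overline{x}(s)\in C_q$ and to be stable under $\Delta_{t,\q}$, closure forces $\ell_q$ into $\widetilde{K}$ (since $\partial_t(\overline{x}(s))\in C_q(\ell_q)$ by Lemma~\ref{lemma:fielddefinitiongenus1}), and the paper indeed takes $\widetilde{K}=C_\q.C_q(\ell_\q,\ell_q)$.

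The gap is in how you describe the output of step (ii) and hence what step (iii) must accomplish. Proposition~\ref{prop2} together with Corollary~\ref{cor1} is applied with base field $K=C_\q.C_q(\ell_q)$, whose $\sigma_\q$-constants are $C_\q$. It therefore produces coefficients $d_i\in C_\q$ and a telescoper $g\in C_\q.C_q(\ell_q)$, \emph{not} $d_i\in C_q^{\sigma_\q}$ and $h'\in C_q$ as you write. The $d_i$ have, a priori, no reason to be $q$-periodic, so the observation that $C_q\cap C_\q=C$ (Lemma~\ref{lemma:tqconstantCq}) does not by itself place them in $C$.

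The actual descent is Lemma~\ref{Lemma:telescoperdescentgenus1}, and its mechanism is different from what you sketch. One first strips off $\ell_\q$ and then $\ell_q$ from $g$ to land in $C_\q.C_q$; then one takes a telescoper of \emph{minimal} order with $d_i\in C_\q$, normalizes the top coefficient to $1$, and applies $\sigma_q$ to the whole relation. Because $b_1\in C_q$, the left-hand side is $\sigma_q$-invariant, so subtracting produces a strictly lower-order telescoper unless every $d_i$ is $\sigma_q$-fixed. Minimality then forces $d_i\in C_\q\cap C_q=C$, and a final application of Lemma~\ref{lemma:descentcocycles} pushes $g$ down to $C_q$. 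Your last paragraph correctly flags the descent as the technical heart, but the bi-invariance statement you invoke is the \emph{conclusion} of this minimality argument, not a substitute for it.
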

A symmetrical result holds for $Q(0,y,t)$ replacing $b_1$ by $b_2$.
\begin{proof}
Since the group of the walk is of infinite order, the 	automorphism $\sigma$ is of infinite order. Therefore
by Lemma \ref{lemma:qqmultindinfite order} the elements $\q$ and $q$ defined in  Proposition \ref{prop1} are multiplicatively independent. 
Assume that  $Q(x,0,t)$ is $\left(\frac{d}{dx},\frac{d}{dt}\right)$-differentially algebraic over $\Q$. Let $\widetilde{F}^{1}(s)$  be the auxiliary  function defined above.

 We  denote by $C_{\q}.C_{q}$  the compositum of  the  fields $C_q$ and $C_\q$ inside the field of meromorphic functions over $C^*$.
 We claim that  $\widetilde{F}^{1}(s)$
is $\left(\partial_s,\Delta_{t,\q}\right)$-differentially algebraic over $C_{\q}.C_{q}(\ell_\q,\ell_q)$.  Let us prove this claim when $d_{-1,1}=0$,  the proof when $d_{-1,1}\neq 0$ being similar.  Reasoning as in Lemma~\ref{lem3}, one can show that, for $n,m \in \N$, one has 
$$
(\partial_t^n \partial_x^{m} F^1)(\overline{x}(s),t)=
 \frac{1}{\partial_s(\overline{x}(s))^m} \Delta_{t,q}^n \partial_s^m (\widetilde{F}^1(s)) + \sum_{i \leq 2n +m,j <n} r_{i,j} \Delta_{t,q}^j\partial_s^i(\widetilde{F}^1(s)),
$$
where $r_{i,j} \in C_\q(\ell_\q)(\overline{x}(s),\partial_s^l\partial_t^k(\overline{x}(s)),\dots)$.  By construction, $\overline{x}(s)$ is in $ C_q$ so that  Lemma~\ref{lemma:fielddefinitiongenus1} implies  that $ \partial_s^l\partial_t^k(\overline{x}(s)) \in C_q(\ell_q)$ for any positive integers  $k,l$. Then,  the field  $C_\q(\ell_\q)(\overline{x}(s),\partial_s^l\partial_t^k(\overline{x}(s)),\dots)$ generated by $\overline{x}$
and its derivatives with respect to $\partial_s$ and $\partial_t$ is contained in $C_{\q}.C_{q}(\ell_\q,\ell_q)$. Thus,
any nontrivial polynomial relation between the $x$-$t$-derivatives of $Q(x,0,t)$ yields to a 
nontrivial polynomial relation between the derivatives of $\widetilde{F}^{1}(s)$ with respect to $\partial_s$ and $\Delta_{t,\q}$
over $C_{\q}.C_{q}(\ell_\q,\ell_q)$. This proves the claim. 

By Theorem \ref{thm:funceqgenus1}, the function $\widetilde{F}^{1}(s)$ satisfies $\widetilde{F}^{1}(\q s)- \widetilde{F}^{1}(s) =b_1(s)$ with $b_1(s) \in C_q \subset C_{\q}.C_{q}(\ell_\q,\ell_q)$. Since $ \widetilde{F}^{1}(s)$ is $\left(\partial_s,\Delta_{t,\q}\right)$-differentially algebraic over $C_{\q}.C_{q}(\ell_\q,\ell_q)$, Proposition~\ref{prop2} and 
Corollary \ref{cor1} imply that 
there exist $m \in \N$ and $d_0,\dots,d_m \in C_{\q}$ not all zero and ${g \in C_{\q}.C_{q}(\ell_{q})}$ such that 
$$
d_0 b_1+d_1\partial_s(b_1)+\dots +d_m \partial_s^m(b_1)=\s_{\q}(g)-g .
$$
Since $b_1$ is in $C_q$, Lemma \ref{Lemma:telescoperdescentgenus1} allows to perform a descent on the coefficients of the telescoping relation above. Thus, there exist $c_0, \dots,c_n \in C$ not all zero and $h \in C_q$ such that 
$$
c_0 b_1 +c_1\partial_s( b_1)+ \dots +c_n \partial_s^n (b_1) =\s_\q(h)-h .
$$
This  concludes the proof. The symmetry argument between $x$ and $y$ gives the proof for $Q(0,y,t)$.
\end{proof}
Theorem \ref{theo1} has an easy corollary concerning  the differential transcendence of the complete generating function for  weighted models of genus one with infinite group.

\begin{thm}\label{theo2}
For any nondegenerate  weighted model of genus one with  infinite group, the following statements are equivalent:
\begin{enumerate}
\item the series $ Q(x,0,t)$ is $\left(\frac{d}{dx},\frac{d}{d t}\right)$-differentially algebraic over $\Q$;

\item the series $ Q(x,0,t)$ is $\frac{d}{dx}$-differentially algebraic over $\C$.
\end{enumerate}

\end{thm}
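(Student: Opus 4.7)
The implication $(2) \Rightarrow (1)$ is the easier direction. A nonzero polynomial relation in $Q(x,0,t)$ and its $\frac{d}{dx}$-derivatives with coefficients in $\C$ is, a fortiori, a nonzero polynomial relation involving the full collection of $(\frac{d}{dx},\frac{d}{dt})$-derivatives (the $\frac{d}{dt}$-derivatives simply do not appear). A standard descent argument in the spirit of Remark~\ref{rmk:CdifftransQdifftrans}, adapted to two commuting derivations via a $\Q$-basis decomposition of the coefficients, allows us to replace $\C$ by $\Q$ without affecting the non-vanishing of the polynomial.

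The implication $(1) \Rightarrow (2)$ is the main content, and the plan is to feed Theorem~\ref{theo1} into the converse direction of the Galoisian criterion already established in the complex-analytic setting. First I would apply Theorem~\ref{theo1} to the hypothesis: there exist $c_0,\dots,c_n\in C$ not all zero and $h\in C_q$ such that
\begin{equation*}
c_0 b_1+c_1 \partial_s(b_1)+\dots +c_n \partial_s^n (b_1) =\sigma_\q(h)-h.
\end{equation*}
By Remark~\ref{rmk:parametrizationcurvegenus1}, the uniformization $\phi$ identifies $C_q$ with the function field $C(E)$, so that $b_1$ corresponds to the rational function $(\sigma(x)-x)\sigma(y)\in C(E)$ and $h$ to a specific element of $C(E)$. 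After clearing the algebraic dependence, the $c_i$ and the coefficients of $h$ can be taken in a finitely generated extension $F$ of $\Q(t)$, and the telescoping relation becomes an identity between rational functions on the kernel curve $E$ defined over $F$.

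Next I would choose a $\Q$-embedding $\psi : F \hookrightarrow \C$ sending $t$ to a transcendental complex number $\tau$, exactly as in the proof of Proposition~\ref{prop:Galoiscriteriagenus0}. This specializes the nonarchimedean identity to a telescoping identity on the complex elliptic kernel curve $\overline{E}$ of the model evaluated at $t=\tau$, relative to the complex automorphism of the walk $\overline{\sigma}$. I would then invoke the Galois-theoretic equivalence established in \cite{DHRS} (the converse direction of the telescoper obstruction for an elliptic kernel curve at a transcendental complex $t$): the existence of such a complex telescoper is equivalent to $Q(x,0,\tau)$ being $\frac{d}{dx}$-differentially algebraic over $\C$. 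Hence $Q(x,0,\tau)$ is $\frac{d}{dx}$-differentially algebraic over $\C$ for generic transcendental $\tau$.

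The final and most delicate step is to assemble these specialized relations into a single polynomial relation over $\C(x,t)$ (or $\C$) witnessing $\frac{d}{dx}$-differential algebraicity of the two-variable series $Q(x,0,t)$. The key point is that the order $n$ of the telescoper, the degree bounds on $h\in C(E)$, and the algebraic operations carried out in the Galois-theoretic reconstruction of a differential polynomial from a telescoper (via Proposition~\ref{prop2}, Corollary~\ref{cor1}, and the effective bounds in \cite{DHRS}) are all independent of the specialization $\tau$. Therefore the specialized differential polynomials form an algebraic family over $F$, and a standard interpolation/descent argument produces a single differential polynomial over $\C(x,t)$ that vanishes on $Q(x,0,t)$. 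The main obstacle is precisely this uniform control: one must trace through the proofs of the appendix's Galois criteria to verify that the degree and order bounds on the differential polynomial depend only on data inherited from the telescoping relation of Theorem~\ref{theo1}, and not on the particular $\tau$ chosen.
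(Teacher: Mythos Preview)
Your route for $(1)\Rightarrow(2)$ takes an unnecessary detour through specialization to $\C$, and the ``assembly'' step at the end is a genuine gap. The paper never leaves the nonarchimedean setting.

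Here is the step you are missing. The converse of the Galoisian criterion---that a telescoper forces differential algebraicity---is a purely algebraic fact valid over any base field, so there is no reason to invoke \cite{DHRS} over $\C$. Concretely: once Theorem~\ref{theo1} gives $\sum_{i} c_i\,\partial_s^{i}(b_1)=\sigma_{\q}(h)-h$ with $c_i\in C$ and $h\in C_q$, apply the operator $\sum_i c_i\,\partial_s^{i}$ directly to the functional equation $\sigma_{\q}(\widetilde{F}^1)-\widetilde{F}^1=b_1$. Since the $c_i$ are $\sigma_{\q}$-constants and $\partial_s$ commutes with $\sigma_{\q}$, this yields
\[
\sigma_{\q}\!\left(\sum_i c_i\,\partial_s^{i}\widetilde{F}^1-h\right)=\sum_i c_i\,\partial_s^{i}\widetilde{F}^1-h,
\]
so $g:=\sum_i c_i\,\partial_s^{i}\widetilde{F}^1-h$ lies in $\cM er(C^*)^{\sigma_{\q}}=C_{\q}$. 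As $C_{\q}$ has transcendence degree one over $C$, the element $g$ is $\partial_s$-differentially algebraic over $C$, and substituting back one finds that $\widetilde{F}^1$ is $\partial_s$-differentially algebraic over $C_q$. Translating $\partial_s$ to $\frac{d}{dx}$ as in Lemma~\ref{lem3}, and using that $C_q=C(\overline{x},\overline{y})$ is algebraic over $C(\overline{x})$, one concludes that $F^1(x,t)$ is $\frac{d}{dx}$-differentially algebraic over $C(x)$, hence over $\Q$ by Remark~\ref{rmk:CdifftransQdifftrans}. No specialization, no assembly.

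By contrast, your final step is not justified: even with uniform bounds on order and degree, knowing that for each transcendental $\tau$ there exists \emph{some} $P_\tau\in\C[X_0,\dots,X_N]$ annihilating $Q(x,0,\tau)$ does not produce a single $P$ over $\C$ annihilating the bivariate series $Q(x,0,t)$, since the kernel of the evaluation map may rotate with $\tau$.
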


\begin{rmk}\label{rem2}
An analogous result holds for $Q(0,y,t)$ replacing the derivation $\frac{d}{dx}$ by $\frac{d}{dy}$.
\end{rmk}
\begin{proof}

Since the group is infinite, the 	automorphism $\sigma$ is of infinite order. Therefore
by Lemma~\ref{lemma:qqmultindinfite order} the elements $\q$ and $q$ defined in  Proposition \ref{prop1} are multiplicatively independent. \par 
 Assume that $(1)$ holds. By Theorem  \ref{theo1}, there exist $c_0, \dots,c_n \in C$ not all zero and $h \in C_q$ such that 
\begin{equation}\label{eq:telescopergenus1proof}
c_0b_1+c_1 \partial_s (b_1)+ \dots +c_n \partial_s^n (b_1) =\s_\q(h)-h.
\end{equation}
Combining \eqref{eq:telescopergenus1proof} with the functional equation satisfied by $\widetilde{F}^1(s)$ and using the commutativity of $\s_\q$ and $\partial_s$, one finds that 
\begin{equation}\label{eq:teklescoperimpliesdiffalgrelations}
\s_\q\left[ c_0 \widetilde{F}^1(s) +\dots + c_n\partial_s^n( \widetilde{F}^1(s))-h \right]= c_0 \widetilde{F}^1(s) +\dots + c_n\partial_s^n( \widetilde{F}^1(s))-h.
\end{equation}
Since $\widetilde{F}^1$ and $h$ are meromorphic over $C^*$,   there exists $g \in C_\q$ such that 
$$c_0 \widetilde{F}^1(s) +\dots + c_n\partial_s^n( \widetilde{F}^1(s))-h =g.$$
Therefore, 
$\widetilde{F}^1(s)$ is $\partial_s$-differentially algebraic over $C_q$. Reasoning as in Lemma \ref{lem3}, one finds a nontrivial algebraic relation with coefficients in $C_q$ between the  first $n$-th derivatives of  $ F^1 $ with respect to $\partial_x$ evaluated in $(\overline{x}(s),t)$. Any  element of $C_q=C(\overline{x}(s),\overline{y}(s))$ is algebraic over $C(\overline{x}(s))$. Therefore, the 
first $n$-th derivatives of  $ F^1 $ with respect to $\partial_x$ evaluated in $(\overline{x}(s),t)$ are still algebraically dependent over  $C(\overline{x}(s))$. We conclude that   $F^1(x,t)=K(x,0,t)Q(x,0,t)$ is $\frac{d}{dx}$-differentially algebraic over $C(x)$ and therefore over $\Q$ by Remark \ref{rmk:CdifftransQdifftrans}.  This proves that  $(1) \Rightarrow(2)$. Statement  $(2)$ implies obviously  $(1)$. 
\end{proof}

A corollary of Theorem \ref{theo1} is that the $\frac{d}{dt}$-differential algebraicity of the series implies the $\frac{d}{dx}$-algebraicity of the series of the series. One of the major breakthrough of \cite{BBMR16} is to show that for unweighted walks, the series was $\frac{d}{dx}$-differentially algebraic over $\Q$ if and only if the models was decoupled, that is, there exist $f,g \in \Q(t)(X)$ such that 
\begin{equation}\label{eq:decoupled}
xy=f(x)+g(y) \mbox{ modulo } K(x,y,t).
\end{equation} 
The authors  of \cite{BBMR16} used   boundary value problems and the notion of  analytic invariants to deduce from \eqref{eq:decoupled}  a closed form of the generating series allowing them to conclude that the series was also $\frac{d}{dt}$-algebraic (see \cite[\S 6.4]{BBMR16}). Combining our result to \cite{BBMR16}, one finds the following corollary

\begin{cor}\label{cor:diffalgxequivalentdiffalgtgenusone}
If the walk is unweighted of genus one with infinite group, the following statement are equivalent
\begin{itemize}
\item the generating series is $\frac{d}{dx}$-differentially algebraic over $\Q$;
\item the generating series is $\frac{d}{dt}$-differentially algebraic over $\Q$;
\end{itemize}

\end{cor}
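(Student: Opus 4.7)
The plan is to assemble the corollary from two independent inputs: Theorem~\ref{theo2} (together with its symmetric $y$-analogue from Remark~\ref{rem2}) for one direction, and the Tutte-invariant construction of \cite{BBMR16} for the other. In both directions the bridge between the incomplete generating functions $Q(x,0,t)$, $Q(0,y,t)$ and the full series $Q(x,y,t)$ will be the kernel functional equation~\eqref{eq:fundamentalkernelequationseries}
\[
K(x,y,t)\,Q(x,y,t) = xy + F^{1}(x,t) + F^{2}(y,t) + t d_{-1,-1} Q(0,0,t).
\]

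First I would handle the implication ``$\tfrac{d}{dt}$-differentially algebraic $\Rightarrow$ $\tfrac{d}{dx}$-differentially algebraic''. Starting from a $\tfrac{d}{dt}$-differential polynomial relation for $Q(x,y,t)$ with coefficients in $\Q$ (which is legitimate by Remark~\ref{rmk:CdifftransQdifftrans}), specializing $y=0$ yields a nontrivial $\tfrac{d}{dt}$-differential algebraic relation for $Q(x,0,t)$; in particular $Q(x,0,t)$ is trivially $\left(\tfrac{d}{dx},\tfrac{d}{dt}\right)$-differentially algebraic over $\Q$. Theorem~\ref{theo2} upgrades this to $\tfrac{d}{dx}$-differential algebraicity over $\C$, which descends back to $\Q$ by Remark~\ref{rmk:CdifftransQdifftrans}. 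The symmetric argument, swapping $x$ and $y$ via Remark~\ref{rem1} and invoking Remark~\ref{rem2}, yields that $Q(0,y,t)$ is $\tfrac{d}{dy}$-differentially algebraic over $\Q$. In the functional equation above, $F^{1}(x,t) = K(x,0,t)Q(x,0,t)$ is then $\tfrac{d}{dx}$-differentially algebraic, while $F^{2}(y,t)$ and $t d_{-1,-1} Q(0,0,t)$ are trivially so since they do not depend on $x$. Hence the right-hand side is $\tfrac{d}{dx}$-differentially algebraic, and dividing by the nonzero polynomial $K(x,y,t)$ preserves this property, so $Q(x,y,t)$ is $\tfrac{d}{dx}$-differentially algebraic over $\Q$.

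For the converse, I would appeal to the analytic construction of \cite[\S 6.4]{BBMR16}. Within the class under consideration (unweighted, genus one, infinite group), that paper isolates the nine nonholonomic $\tfrac{d}{dx}$-differentially algebraic models as precisely the models satisfying the decoupling condition~\eqref{eq:decoupled}, and produces for each of them, via boundary-value problems and Tutte invariants, explicit closed-form expressions for $F^{1}(x,t)$ and $F^{2}(y,t)$ as finite rational combinations of meromorphic invariants on the kernel curve that are $\tfrac{d}{dt}$-differentially algebraic over $\Q$. Consequently, $Q(x,0,t)$ and $Q(0,y,t)$ are $\tfrac{d}{dt}$-differentially algebraic over $\Q$, and running the same functional-equation argument as above propagates this to $Q(x,y,t)$.

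The genuine technical content of the corollary is entirely absorbed into the two external inputs: Theorem~\ref{theo2} of the present paper for the first direction, and the closed-form Tutte-invariant formulas of \cite{BBMR16} for the second. The remaining internal steps---the $\C$-to-$\Q$ descent and the propagation of differential algebraicity through $K(x,y,t)$---are bookkeeping; the only mildly delicate point is to ensure that the chosen differential polynomial relations have $\Q$-coefficients so that specialization at $x=0$ or $y=0$ does not become vacuous, which is guaranteed by Remark~\ref{rmk:CdifftransQdifftrans}. Thus I expect no substantive obstacle beyond the careful invocation of \cite{BBMR16} for the nine decoupled models.
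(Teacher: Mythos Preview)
Your proposal is correct and follows essentially the same approach as the paper: one direction via Theorem~\ref{theo2} (plus its $y$-analogue), the other via the decoupling characterization and Tutte-invariant closed forms of \cite{BBMR16}. The paper in fact gives no separate proof of the corollary beyond the paragraph preceding it, so your write-up simply fills in the bookkeeping steps (specialization at $y=0$ and $x=0$, propagation through the kernel equation, $\C$-to-$\Q$ descent via Remark~\ref{rmk:CdifftransQdifftrans}) that the paper leaves implicit.
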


In a recent publication  \cite{hardouin2020differentially}, M.F.Singer and the second author generalized the results of \cite{BBMR16} and proved that a weighted model of genus one with infinite group was decoupled if and only if the series was $\frac{d}{dx}$-differentially algebraic. There is no doubt that 
the arguments of \cite{BBMR16} proving that if a model is decoupled then the series is $\frac{d}{dt}$-algebraic over $\Q$ will hold in a weighted situation. Combined to Theorem \ref{theo2}, this  will prove that  Corollary \ref{cor:diffalgxequivalentdiffalgtgenusone} is also true for weighted walks.

\begin{appendix}


\section{Nonarchimedean estimates}\label{sec:nonarcheestimates}
In this section, we give some nonarchimedean estimates, which will be crucial to uniformize the kernel curve.
\subsection{Discriminants of the kernel equation}
Lemma \ref{lemma:goodroot} relates the genus of the kernel curve to the simplicity of the roots of the discriminant 
of the kernel polynomial. It also ensures the existence of a root with convenient norm estimates. Let us remind, see \eqref{eq:expression_D_0}, that we have defined $\mathfrak{D}(x):=\Delta_x(x,1)$, where  $\Delta_x(x_0,x_1)$ is the discriminants of the second degree homogeneous polynomials $y \mapsto \widetilde{K}(x_0,x_1,y,1,t)$.
     
\begin{lemma}\label{lemma:goodroot}
For any nondegenerate weighted model  of genus one, the following holds:
 \begin{itemize}
 \item  all the roots of $\Delta_x(x_0,x_1)$ in $\P1(C)$ are simple;
 \item the discriminant $\mathfrak{D}(x):=\Delta_x (x,1)$ has a root $a \in C$ such that $|a|<1$, $|\mathfrak{D}^{(2)}(a) -2|<1$, and $|\mathfrak{D}^{(1)}(a)|,|\mathfrak{D}^{(3)}(a)|,|\mathfrak{D}^{(4)}(a)|<1$ where $ \mathfrak{D}^{(i)}$ denote the $i$-th derivative 
 with respect to $x$ of $\mathfrak{D}(x)$.
 \end{itemize}
 A symmetric statement holds for $\Delta_y(y_0,y_1)$ by replacing $\mathfrak{D}$ by $\mathfrak{E}$.
 \end{lemma}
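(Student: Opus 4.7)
The plan is to prove the two assertions in turn: the first by invoking the $F$-invariant characterization of smoothness already recalled in Proposition \ref{prop:genusofthe Kerneljinvariant}, and the second by a Newton-polygon analysis of $\mathfrak{D}(x)$ at the place $t=0$.

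For the first point, since the model is of genus one, the kernel curve $E$ is smooth, so Proposition \ref{prop:genusofthe Kerneljinvariant} gives $F(\Delta_x)\neq 0$. Now $F(\Delta_x)$ is, up to a nonzero rational constant, the discriminant of the binary quartic $\Delta_x(x_0,x_1)$, so its nonvanishing is exactly the statement that all roots of $\Delta_x$ in $\mathbb{P}^1(C)$ are simple.

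For the second point, I will read off the $t$-adic valuations directly from \eqref{eq:alphaibetai}: one finds $v_0(\alpha_0),v_0(\alpha_4)\geq 2$, $v_0(\alpha_1),v_0(\alpha_3)\geq 1$, and $v_0(\alpha_2-1)\geq 1$ (so $v_0(\alpha_2)=0$). Hence $(2,0)$ lies on the Newton polygon of $\mathfrak{D}$, and every segment to the left of $(2,0)$ has strictly negative slope, producing roots of strictly positive valuation. The first part rules out $x=0$ being a multiple root of $\mathfrak{D}$, so in particular it is not the case that $\alpha_0=\alpha_1=0$; thus the Newton polygon really does give at least one root $a\in C$ with $v_0(a)\geq 1$, i.e.\ $|a|<1$.

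It then remains to verify the norm estimates. Writing
\begin{align*}
\mathfrak{D}^{(4)}(a)&=24\alpha_4,\\
\mathfrak{D}^{(3)}(a)&=6\alpha_3+24\alpha_4 a,\\
\mathfrak{D}^{(2)}(a)&=2\alpha_2+6\alpha_3 a+12\alpha_4 a^2,\\
\mathfrak{D}^{(1)}(a)&=\alpha_1+2\alpha_2 a+3\alpha_3 a^2+4\alpha_4 a^3,
\end{align*}
the ultrametric inequality together with $|a|<1$, $|\alpha_j|<1$ for $j\in\{0,1,3,4\}$, and $|\alpha_2-1|<1$ immediately yields $|\mathfrak{D}^{(4)}(a)|,|\mathfrak{D}^{(3)}(a)|,|\mathfrak{D}^{(1)}(a)|<1$ and $|\mathfrak{D}^{(2)}(a)-2|<1$. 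The symmetric statement for $\mathfrak{E}(y)$ follows by interchanging the roles of $x$ and $y$ (Remark \ref{rem1}). The computation itself is mechanical; the only step that requires any care is using the first assertion to exclude the degenerate Newton-polygon configuration $\alpha_0=\alpha_1=0$, which would otherwise leave open the possibility that $\mathfrak{D}$ had no root of positive valuation at all.
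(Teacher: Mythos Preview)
Your proof is correct. The derivative estimates at the end are identical to the paper's, but the two earlier steps differ in interesting ways.

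For the first assertion, the paper simply cites an external reference, while you argue internally from Proposition~\ref{prop:genusofthe Kerneljinvariant} together with the classical identification of $-F$ with the discriminant of the binary quartic (in Duistermaat's normalization $D=I$, $E=-J$, so $F=27J^2-I^3$). Your route is more self-contained given what the paper already states.

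For the existence of a root with $|a|<1$, the paper argues by contradiction: assuming all roots have norm $\geq 1$, it splits into the cases $\alpha_0=0$, $\alpha_4\neq 0$, and $\alpha_4=0$, and in the latter two cases compares the norm of the product of the roots with that of $\alpha_2/\alpha_4$ (respectively $\alpha_2/\alpha_3$) to reach a contradiction. Your Newton-polygon argument is cleaner and more direct: the valuations $v_0(\alpha_0),v_0(\alpha_4)\geq 2$, $v_0(\alpha_1),v_0(\alpha_3)\geq 1$, $v_0(\alpha_2)=0$ force the vertex $(2,0)$ and strictly negative slopes to its left, immediately giving (at least) two roots of positive valuation whenever the polygon extends left of $j=2$; the only obstruction, $\alpha_0=\alpha_1=0$, is ruled out by simplicity of the roots. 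Both approaches work; yours avoids the case split and makes the role of the first assertion transparent.
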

 \begin{proof}
The first assertion is \cite[Proposition 2.1]{DreyfusHardouinRoquesSingerGenuszero2}. First, let us prove the existence of a root $a \in C$ of $\mathfrak{D}(x)$ such that $|a|<1$.  Suppose to the contrary that all the roots of $\mathfrak{D}(x)$ have a norm greater than or equal to $1$.  If $\alpha_0 $ is zero  then  zero is a root: a contradiction. Thus, we can assume that $\alpha_0$ is nonzero.
 
 Let us first assume that $\alpha_4 \neq 0$.  The product of  the roots of $\mathfrak{D}(x)$ equals $$\frac{\alpha_0}{\alpha_4}=\frac{t^2(d_{-1,0}^2 -4d_{-1,-1}d_{-1,1})}{t^2(d_{1,0}^2 -4d_{1,-1}d_{1,1})}.$$
Then  we conclude that $|\frac{\alpha_0}{\alpha_4}|=1$  so that each of the roots must have norm  $1$. Then, considering the symmetric functions of the roots of $\mathfrak{D}(x)$, we conclude that, for any $i=0,\dots, 3$, the element $\frac{\alpha_i}{\alpha_4}$ should have  norm smaller than or equal to $1$.  Since $$\frac{\alpha_2}{\alpha_4}=\frac{-4d_{-1, -1}d_{1, 1}t^2-4d_{0, -1}d_{0, 1}t^2-4d_{1, -1}d_{-1, 1}t^2+2d_{-1, 0}d_{1, 0}t^2+d_{0, 0}^2t^2-2td_{0, 0}+1}{t^2(d_{1,0}^2 -4d_{1,-1}d_{1,1})},$$
 has norm strictly greater than $1$, we find  a  contradiction.\par 
 Assume now that $\alpha_4 = 0$. Since the roots of $\Delta_x(x_0,x_1)$ in $\P1(C)$ are simple, the coefficient  $\alpha_3$ is nonzero.  The product of the roots of $\mathfrak{D}(x)$ equals $$-\frac{\alpha_0}{\alpha_3}=\frac{-t^2(d_{-1,0}^2 -4d_{-1,-1}d_{-1,1})}{2t^{2}d_{1,0}d_{0,0}-2td_{1,0}-4t^{2}(d_{0,1}d_{1,-1}+d_{1,1}d_{0,-1})}.$$
Then, it is clear that  $|\frac{\alpha_0}{\alpha_3}|\leq 1$  and that each of the roots has norm $1$.  Thus, the  symmetric function $\frac{\alpha_2}{\alpha_3}$
should also  have norm   smaller than  or equal to $1$.
But
$$-\frac{\alpha_2}{\alpha_3}=\frac{-4d_{-1, -1}d_{1, 1}t^2-4d_{0, -1}d_{0, 1}t^2-4d_{1, -1}d_{-1, 1}t^2+2d_{-1, 0}d_{1, 0}t^2+d_{0, 0}^2t^2-2td_{0, 0}+1}{2t^{2}d_{1,0}d_{0,0}-2td_{1,0}-4t^{2}(d_{0,1}d_{1,-1}+d_{1,1}d_{0,-1})},$$ has  norm strictly bigger than $1$. We find a contradiction again. \par 
 
Let  $a$ be a root of $\mathfrak{D}(x)$ in $C$ with $|a|<1$.  Since $a,\alpha_1,\alpha_{3},\alpha_{4}$ have norm smaller than $1$, $|\alpha_2 -1| <1$, and 
\begin{itemize}
\item $\mathfrak{D}^{(1)}(a)=\alpha_1 +2\alpha_2a +3 \alpha_3 a^2 +4 \alpha_4a^3$;
\item  $\mathfrak{D}^{(2)}(a)=2 \alpha_2+ 6 \alpha_3 a +12 \alpha_4 a^2$;
\item $\mathfrak{D}^{(3)}(a)=6 \alpha_3 +24 \alpha_4 a$;
\item $\mathfrak{D}^{(4)}(a)=24\alpha_4 $,
\end{itemize}
 we have $|\mathfrak{D}^{(2)}(a)-2|<1$, and $|\mathfrak{D}^{(1)}(a)|,|\mathfrak{D}^{(3)}(a)|,|\mathfrak{D}^{(4)}(a)|<1$.
The statement for $\Delta_y(y_0,y_1)$ is symmetrical and we omit its proof.
 \end{proof}
\subsection{Automorphisms of the  walk on the domain of convergence}
In this section, we study the action of the group of the walk on the product of the unit disks in $\bold{P}^1(C) \times \bold{P}^1(C) $. This product is the  fundamental  domain of convergence of the generating function.

We need a preliminary lemma that  explains how one can compute the norm of 
the values of a rational function.
\begin{lemma}\label{lem:normestimatecompositionfraction}
Let $f \in C(X)$ be a nonzero rational function and  let  ${a \in \P1(C)}$. Let $\nu$ (resp. $d$) be the valuation at $X=0$ (resp. $ \infty$) of $f$ with the convention that $\nu=+\infty$, $d=-\infty$ if $f=0$. The following statements  hold:
\begin{itemize}
\item if $|a| <1$, then $|f(a)| =|a|^\nu$;
\item if $|a|>1$, then $|f(a)|=|a|^d$.
\end{itemize}
\end{lemma}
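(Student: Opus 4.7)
The plan is a Newton polygon / ultrametric dominance argument. The first step is to reduce to the case of polynomials: writing $f = p/q$ with $p, q \in C[X]$ coprime, multiplicativity of the absolute value gives $|f(a)| = |p(a)|/|q(a)|$, while the valuations decompose as $\nu(f) = \nu(p) - \nu(q)$ and $d(f) = d(p) - d(q)$. Hence it suffices to prove the two statements for $f$ a polynomial.

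For a polynomial $p(X) = \sum_{i=\nu}^{d} c_i X^i$ with $c_\nu, c_d \neq 0$, the ultrametric triangle inequality gives $|p(a)| \leq \max_{\nu \leq i \leq d} |c_i|\,|a|^i$, with equality whenever the maximum is attained at a unique index. When $|a| < 1$, the factors $|a|^i$ are strictly decreasing in $i$, so under the normalization $|c_i| \leq |c_\nu| = 1$ the term $c_\nu a^\nu$ strictly dominates all others and we obtain $|p(a)| = |a|^\nu$. A symmetric argument for $|a| > 1$, in which the factors $|a|^i$ are strictly increasing in $i$ and the top-degree term $c_d a^d$ strictly dominates, yields $|p(a)| = |a|^d$.

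The real content is thus only the ultrametric dominance step, and there are no serious obstacles once the problem has been reduced to polynomials. The one subtlety to keep in mind is the required normalization $|c_\nu| = |c_d| = 1$ of the extreme coefficients. In the setting of the paper this hypothesis is automatic: the rational functions to which the lemma is applied (such as $\widetilde{A}_{-1}(X)/\widetilde{A}_1(X)$ and $\widetilde{B}_{-1}(X)/\widetilde{B}_1(X)$ in the proof of Lemma \ref{lem5}) have coefficients among the weights $d_{i,j}$, which are nonzero rational numbers embedded in the Hahn series field $C$ as constants supported at exponent $0$, and therefore have norm $\alpha^0 = 1$. Consequently the dominance of the extreme term is automatic, and the lemma follows.
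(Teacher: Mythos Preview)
Your proof is correct and follows essentially the same route as the paper: write $f$ as a quotient of polynomials and use ultrametric dominance of the lowest-order (resp.\ highest-order) term when $|a|<1$ (resp.\ $|a|>1$). The paper's argument is in fact terser and jumps directly from $\frac{|\sum c_i a^i|}{|\sum d_j a^j|}$ to $|a|^{\nu_1-\nu_2}$ without isolating the normalization issue.

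You are right to flag the hidden hypothesis: as literally stated, the lemma fails for general $f\in C(X)$ (e.g.\ $f(X)=t$ has $\nu=0$ but $|f(a)|=|t|\neq 1$). The paper's proof silently assumes that the extreme coefficients $c_{\nu_1},d_{\nu_2}$ (and $c_{r_1},d_{r_2}$ for the second case) have norm~$1$. Your observation that every application of the lemma in the paper is to fractions with coefficients among the $d_{i,j}\in\Q^*$, hence of norm~$1$ in $C$, is exactly what rescues the statement in context. So your write-up is not only correct but slightly more precise than the original.
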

\begin{proof}
Let us prove the first case, the second being completely symmetrical.  Let us write $f(X)$ as  $\frac{\sum_{i=\nu_1}^{r_1} c_i X^i}{\sum_{j=\nu_2}^{r_2} d_j X^j}$ with $c_{\nu_1}d_{\nu_2}  \neq0$. If $k>l$, we note that  $|a^k| <|a^l|$.  Then $$|f(a)|=\frac{|\sum_{i=\nu_1}^{r_1} c_i a^i|}{|\sum_{j=\nu_2}^{r_2} d_j a^j|}=|a|^{\nu_1-\nu_2}=|a|^{\nu}.$$ 
\end{proof}

The following lemma explains how  the fundamental involutions permute the interior and the exterior of the fundamental domain  of convergence. 

\begin{lemma}\label{lemma:nonemptyopenset}
For any nondegenerate weighted model, the  following statements  hold: 
\begin{enumerate}
\item  for any $a \in C$ with $|a|=1$, there exist $b_{\pm} \in \P1(C)$ with $|b_{-}|<1$, and $|b_{+}|>1$, such that  $K(a,b_{\pm},t)=0$;
\item for any $b \in C$ with $|b|=1$, there exist $a_{\pm} \in \P1(C)$ with $|a_{-}|<1$, and $|a_{+}|>1$, such that  $K(a_{\pm},b,t)=0$.
\end{enumerate}
\end{lemma}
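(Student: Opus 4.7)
The plan is to exploit the nonarchimedean nature of the valued field $C$ together with the explicit form of the kernel polynomial. Writing $K(x,y,t)=\widetilde{A}_{2}(x)y^{2}+\widetilde{A}_{1}(x)y+\widetilde{A}_{0}(x)$, the statement amounts to proving that for $a\in C$ with $|a|=1$, the two roots $b_{\pm}\in\P1(C)$ of the quadratic $K(a,y,t)=0$ satisfy $|b_{-}|<1$ and $|b_{+}|>1$. From the definition $K(x,y,t)=xy-t\sum_{m,n\in\{0,1,2\}}d_{m-1,n-1}x^{m}y^{n}$ one reads off
\[
\widetilde{A}_{0}(x)=-t(d_{-1,-1}+d_{0,-1}x+d_{1,-1}x^{2}),\quad \widetilde{A}_{2}(x)=-t(d_{-1,1}+d_{0,1}x+d_{1,1}x^{2}),
\]
\[
\widetilde{A}_{1}(x)=x-t(d_{-1,0}+d_{0,0}x+d_{1,0}x^{2}).
\]

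First I would establish the crucial norm estimates on the coefficients: since $d_{i,j}\in\Q\cap[0,1]$ has $|d_{i,j}|\le 1$ and $|t|<1$, the ultrametric inequality gives $|\widetilde{A}_{0}(a)|<1$ and $|\widetilde{A}_{2}(a)|<1$ for $|a|=1$. Moreover, because $|a|=1$ strictly exceeds $|t(d_{-1,0}+d_{0,0}a+d_{1,0}a^{2})|<1$, the ultrametric equality case yields $|\widetilde{A}_{1}(a)|=|a|=1$.

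Next I would separate the roots. If $\widetilde{A}_{2}(a)\neq 0$, the Newton polygon of $\widetilde{A}_{2}(a)y^{2}+\widetilde{A}_{1}(a)y+\widetilde{A}_{0}(a)$ has vertices with valuations $v(\widetilde{A}_{0}(a))>0$, $v(\widetilde{A}_{1}(a))=0$, $v(\widetilde{A}_{2}(a))>0$; the two slopes $-v(\widetilde{A}_{0}(a))<0$ and $v(\widetilde{A}_{2}(a))>0$ yield roots of valuations $v(\widetilde{A}_{0}(a))>0$ and $-v(\widetilde{A}_{2}(a))<0$, i.e. one root with $|b_{-}|=|\widetilde{A}_{0}(a)|<1$ and one with $|b_{+}|=|\widetilde{A}_{2}(a)|^{-1}>1$. (Alternatively, Vieta gives $|b_{+}+b_{-}|=|\widetilde{A}_{2}(a)|^{-1}>1$, which forces $|b_{+}|\neq|b_{-}|$ and then pins their values via $|b_{+}b_{-}|=|\widetilde{A}_{0}(a)/\widetilde{A}_{2}(a)|$.) The degenerate case $\widetilde{A}_{2}(a)=0$ is handled directly: the quadratic collapses to a linear equation in $y$ with the unique finite root $-\widetilde{A}_{0}(a)/\widetilde{A}_{1}(a)$ of norm $|\widetilde{A}_{0}(a)|<1$, and we take $b_{+}=[1:0]\in\P1(C)$ as the root at infinity, with $|b_{+}|=\infty$.

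Finally, the second assertion follows by the $x\leftrightarrow y$ symmetry of Remark~\ref{rem1}, since the analogous estimates $|\widetilde{B}_{0}(b)|,|\widetilde{B}_{2}(b)|<1$ and $|\widetilde{B}_{1}(b)|=1$ hold for $|b|=1$ by the same reasoning applied to $\widetilde{B}_{j}(y)$. I do not expect any real obstacle: the only subtlety is checking that the ``gap'' between $|\widetilde{A}_{1}(a)|=1$ and $\max(|\widetilde{A}_{0}(a)|,|\widetilde{A}_{2}(a)|)<1$ is strict, which is precisely where the hypothesis $|t|<1$ is used, and handling the boundary case $\widetilde{A}_{2}(a)=0$ by passing to $\P1(C)$.
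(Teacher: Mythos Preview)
Your proof is correct and rests on the same coefficient estimates as the paper's: with $|a|=1$ one has $|\widetilde{A}_{1}(a)|=1$ while $|\widetilde{A}_{0}(a)|,|\widetilde{A}_{2}(a)|\le |t|<1$. The difference is only in how the root norms are extracted from these estimates. You invoke the Newton polygon of the quadratic, whose middle vertex $(1,0)$ lies strictly below the endpoints, giving two distinct slopes and hence one root of positive valuation and one of negative valuation; the paper instead argues by hand, first excluding $|b|=1$ via the ultrametric inequality, then using Vieta's relation $b_{-}b_{+}=\widetilde{A}_{0}(a)/\widetilde{A}_{2}(a)$ together with a short contradiction to rule out both roots lying on the same side of the unit circle. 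Your packaging is cleaner and avoids the case split; the paper's argument is more elementary in that it does not name the Newton polygon, but it is doing the same computation underneath. One minor point: your Newton polygon sentence tacitly assumes $\widetilde{A}_{0}(a)\neq 0$, but the conclusion $|b_{-}|=|\widetilde{A}_{0}(a)|<1$ remains valid when $\widetilde{A}_{0}(a)=0$ (then $b_{-}=0$), so no harm is done.
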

\begin{proof}
See \cite[Section 1.3]{dreyfus2019differential} for a similar result in the situation where $C$ is replaced by $\C$.

The statements are symmetrical, so we only prove  the first one. Since $C$ is algebraically closed and   the model  is nondegenerate, Proposition \ref{prop:degeneratecases} implies that $K(x,y,t)$ is of degree $2$ in $y$. Then,  for any $a \in C$,  there are two elements $b_{\pm}\in \P1(C)$ such that 
$K(a,b_{\pm},t)=0$. let $a \in C$ with  $|a|=1$. We write \begin{equation}\label{eq:kernelaffine}
K(a,y,t) =t \alpha + \beta y + t\gamma y^2
\end{equation}

 where \begin{itemize}
\item $\alpha=-\sum_{i=-1}^{1} d_{i,-1}a^{i+1}$;
\item $\beta =a - t\sum _{i=-1}^{1} d_{i,0}a^{i+1}$;
\item $\gamma=- \sum_{i=-1}^{1} d_{i,1} a^{i+1}$.
\end{itemize}

Since $|a|=1$, we find $|\beta|=1$, $|\alpha |,|\gamma| \leq 1$. First let us prove that there is no point $(a_0,b_0) \in E$ such that $|a_0|=|b_0|=1$. Indeed, suppose to the contrary that $|a_0|=|b_0|=1$ and ${K(a_0,b_0,t)=0}$. Then, $|\beta|=|a_0|=1$ and  $|\gamma|,|\alpha | \leq 1$ so that the equality 
$|\beta b_0|= |t(\alpha + \gamma b_0^2)|$ implies
$|b_0| <1$. We find a contradiction. From the equation $K(a,b,t)=0$, we deduce that
\begin{equation}\label{pt1}
\hbox{ if }|b |<1,  \hbox{ then }|t \alpha| =|\beta b + t \gamma b^2|=|\beta b|\hbox{ which gives }|b|=|t \alpha|;
\end{equation}
\begin{equation}\label{pt2}
\hbox{ if }|b| >1,  \hbox{ then }|\frac{1}{b}| <1 \hbox{ and we find }|t \gamma|=\left|\frac{t\alpha }{b^{2}}+\frac{\beta}{b}\right|=\left|\frac{\beta}{b}\right|=\left|\frac{1}{b}\right|.
\end{equation}

Using $K(a,b_{\pm},t)=0$, we find 
\begin{equation}\label{eq:productroots}
b_{-}b_{+}=\frac{\alpha}{\gamma},
\end{equation}
with the convention that $b_{+}$  is $[1:0]$ if $\gamma=0$. If $\gamma=0$ then $b_{-}=\frac{-t \alpha}{\beta}$ has  norm smaller than $1$, which  concludes the proof in that case. Assume now  that $\gamma\neq 0$.
 Since $|b_{+}|$ and $ |b_{-}|$ cannot have norm $1$, we just need to discard the cases ``$|b_{+}| <1 $ and $|b_{-}| <1$'' or  ``$|b_{+}| >1 $ and $|b_{-}| >1$''. If $\alpha =0$, then one of the root  is zero, say  $b_{-}=0$, and $|b_{+}|=\frac{|\beta|}{|t \gamma| }>1$, which concludes the proof in that case. If  $\alpha \neq 0$ then one can  suppose to the contrary that $|b_{+}| <1 $ and $|b_{-}| <1$.  From  \eqref{pt1}, we obtain  $|b_{+}|=|b_{-}|=|t \alpha|$, which gives
 $$|b_{+}b_{-}|=|t \alpha|^2= \frac{|\alpha |}{|\gamma|}.$$
Then,
$|t^2 \alpha | =\frac{1}{|\gamma | } \geq 1$, which contradicts  $|t^2 \alpha |<1$.   Suppose to the contrary that $|b_{+}| >1 $ and $|b_{-}| >1$. By \eqref{pt2}, $|b_{+}|=|b_{-}|= \frac{1}{|t \gamma|}$ which gives
 $$|b_{+}b_{-}|= \frac{1}{|t \gamma|^2}= \frac{|\alpha |}{|\gamma|}.$$
Thus, $|t^2 \alpha | =\frac{1}{|\gamma | } \geq 1$,  and once again, we find a  contradiction.
\end{proof}
 Lemma \ref{lem:intersectionsigmaunitdisknonempty} explains how the  the intersection of the  fundamental domain of convergence of the generating function and its image by $\sigma$  is nonempty. This result is  therefore crucial in order  to continue the generating function to the whole  $C^*$.
\begin{lemma}\label{lem:intersectionsigmaunitdisknonempty}
For any nondegenerate weighted model, the  following statements hold:
\begin{itemize}
\item if $d_{-1,1}=0$ or $d_{1,-1} \neq 0$ there exists $(a,b) \in E$ with $|a|=1$ such that $\sigma(a,b)=(a',b')$ with $|a'| \leq 1$;
\item  if $d_{-1,1}\neq 0$ or  $d_{1,-1} =0$ there exists $(a,b) \in E$ with $|b|=1$ such that $\sigma(a,b)=(a',b')$ with $|b'| \leq 1$.
\end{itemize}

\end{lemma}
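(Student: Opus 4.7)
The plan is to prove both bullets by an explicit computation of $\sigma$ at a carefully chosen point of $E$; they are obtained from each other by exchanging the roles of $x$ and $y$, so I describe the first bullet in detail. Pick $a\in C$ with $|a|=1$ generic enough so that Lemma \ref{lemma:nonemptyopenset} provides two distinct roots $b_{-},b_{+}\in C$ of $K(a,\cdot,t)=0$ with $|b_{-}|<1<|b_{+}|$, both finite and nonzero; the bad values of $a$ are finitely many (zeros of the discriminant $\mathfrak{D}$ and of the polynomials $\alpha,\gamma$ from the proof of Lemma \ref{lemma:nonemptyopenset}). The involution $\iota_{1}$ swaps the two $y$-roots, so for $\epsilon\in\{-,+\}$ one has $\sigma(a,b_{\epsilon}) = \iota_{2}(a,b_{-\epsilon}) = (a',b_{-\epsilon})$, and the product formula for the two $x$-roots of $K(\cdot,b_{-\epsilon},t)$ yields
\[
a\cdot a' \ =\ \frac{\widetilde{B}_{-1}(b_{-\epsilon})}{\widetilde{B}_{1}(b_{-\epsilon})}\ =\ \frac{P(b_{-\epsilon})}{Q(b_{-\epsilon})},
\]
where $P(y)=d_{-1,-1}+d_{-1,0}y+d_{-1,1}y^{2}$ and $Q(y)=d_{1,-1}+d_{1,0}y+d_{1,1}y^{2}$ (the $-t$ factors cancel). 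Since $|a|=1$, this gives $|a'|=|P(b_{-\epsilon})|/|Q(b_{-\epsilon})|$.

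Because the $d_{i,j}$ are rationals of norm $1$ when nonzero, and $P,Q\not\equiv 0$ by Proposition \ref{prop:degeneratecases} (case 1), Lemma \ref{lem:normestimatecompositionfraction} produces two options: \emph{Option A}, taking $b=b_{-}$ so that one evaluates at $b_{+}$ with $|b_{+}|>1$, gives $|a'| = |b_{+}|^{\deg P - \deg Q}$; and \emph{Option B}, taking $b=b_{+}$ so that one evaluates at $b_{-}$ with $|b_{-}|<1$, gives $|a'| = |b_{-}|^{v_{0}(P) - v_{0}(Q)}$, where $v_{0}$ and $\deg$ denote the valuation at $0$ and the degree in $y$. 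If $d_{1,-1}\neq 0$, then $v_{0}(Q)=0\leq v_{0}(P)$, and Option B forces $|a'|\leq 1$. Otherwise the hypothesis gives $d_{-1,1}=0$, so $\deg P\leq 1$; since nondegeneracy forbids $d_{1,-1}=d_{1,0}=d_{1,1}=0$, one has $\deg Q\geq 1$, and Option A yields $|a'|\leq |b_{+}|^{1-\deg Q}\leq 1$.

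The second bullet is handled by the mirror argument: starting from $(a_{\epsilon},b)$ with $|b|=1$ and $|a_{-}|<1<|a_{+}|$, one finds
\[
b'\ =\ \frac{\widetilde{A}_{-1}(a_{\epsilon})}{\widetilde{A}_{1}(a_{\epsilon})\,b}, \qquad |b'|\ =\ \frac{|\tilde P(a_{\epsilon})|}{|\tilde Q(a_{\epsilon})|},
\]
with $\tilde P(x)=d_{-1,-1}+d_{0,-1}x+d_{1,-1}x^{2}$ and $\tilde Q(x)=d_{-1,1}+d_{0,1}x+d_{1,1}x^{2}$. If $d_{-1,1}\neq 0$, the analog of Option B (take $a_{-}$) gives $|b'|\leq 1$; otherwise the hypothesis forces $d_{1,-1}=0$, so $\deg\tilde P\leq 1$, and since nondegeneracy (case 2) forbids $d_{-1,1}=d_{0,1}=d_{1,1}=0$, one has $\deg\tilde Q\geq 1$, whence the analog of Option A (take $a_{+}$) gives $|b'|\leq 1$. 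The main obstacle is that neither of the two options is uniformly available: depending on which corner steps have zero weight, the natural reduction through $b_{-}$ or $b_{+}$ (resp.\ $a_{-}$ or $a_{+}$) can land strictly outside the closed unit disc, and it is only the precise two-way form of the hypothesis that guarantees at least one choice of $\epsilon$ succeeds.
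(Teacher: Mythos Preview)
Your proof is correct and follows essentially the same approach as the paper's own proof: both compute $\sigma$ explicitly via the product-of-roots formulas for $\iota_1,\iota_2$, then estimate $|a'|$ (resp.\ $|b'|$) using Lemma~\ref{lem:normestimatecompositionfraction}, splitting into cases according to which branch of the hypothesis holds. The paper works with the Laurent polynomials $B_{\pm 1}(y)=\sum_{j=-1}^{1}d_{\pm 1,j}y^{j}$ rather than your polynomials $P,Q$ (a harmless reindexing shifting the valuation and degree by one) and does not explicitly flag the need for a generic choice of $a$, but the arguments otherwise coincide.
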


\begin{proof}
Using  the symmetry between $x$ and $y$ mentioned in Remark \ref{rem1}, we  only  prove the first statement of Lemma \ref{lem:intersectionsigmaunitdisknonempty}. 

Let $a \in \P1(C)$ such that $|a|=1$. By Lemma \ref{lemma:nonemptyopenset}, there exist $b_{+}\in \P1(C)$ with $|b_{+}|>1$ and $b_{-}\in C$ with $|b_{-}|<1$ such that $(a,b_{\pm}) \in E$. Let $B_i$ as in \eqref{eq:defiAiBi} and note that by Proposition \ref{prop:degeneratecases}, $B_1$ is not identically zero.
Let $\nu$ (resp. $d$) be the valuation at $0$ (resp. $\infty$) of the rational fraction $\frac{B_{-1}(y)}{B_1(y)}=\frac{ \sum_{j=-1}^1 d_{-1,j}y^j }{\sum_{j=-1}^1 d_{1,j}y^j } \in C(y)$. We claim that either $\nu \geq 0$ or $d \leq 0$.  If $d_{1,-1}\neq 0$ then $\nu \geq 0$. If $d_{-1,1} =0$ then either $d \leq 0$ or $d =1$. In the latter situation, we must have $d_{1,1}=d_{1,0}=0$ and $d_{-1,0}\neq 0$. Since the model is nondegenerate, we must have $d_{1,-1}\neq 0$ by Proposition \ref{prop:degeneratecases}. In that case, $\nu \geq 0$. This proves the claim.

Let $a_{+},a_{-} \in \P1(C)$ such that $\iota_2 (a,b_{+})= (a_{+}, b_{+})$ and  $\iota_2 (a,b_{-})= (a_{-}, b_{-})$. This gives 
\begin{equation}\label{eq:relationa+ab+}
a_+  = \frac{B_{-1}(b_+)}{B_1(b_+)a} \mbox{ and }    a_-  =\frac{B_{-1}(b_{-})}{B_1(b_-)a}.
\end{equation}

Since $\sigma(a,b_{-})= (a_+,b_+)$ (resp. $\sigma(a,b_{+})= (a_{-},b_{-})$),    it is enough to prove that either $a_{+}$ or $a_{-}$ has norm smaller or equal  to $1$.  If $d \leq 0$, we combine \eqref{eq:relationa+ab+}, Lemma \ref{lem:normestimatecompositionfraction} and $|b+|>1$ to find  $|a a_{+}|=|a_+|=|b_+|^d \leq 1$. If $\nu \geq 0$, we combine \eqref{eq:relationa+ab+}, Lemma \ref{lem:normestimatecompositionfraction} and  $|b_-|<1$ to find  $|a a_{-}|=|a_-|=|b_-|^\nu \leq 1$. This ends the proof.
\end{proof}
\section{Tate curves and their normal forms}\label{sec:nonarchimedianpreleminaries}
Let $(C,|~|)$ be a complete nonarchimedean algebraically closed valued field of zero characteristic and let $q \in C$ such that $0<|q| <1$.  In this section, we recall 
some of the basic properties of elliptic curves over nonarchimedean fields. The period  lattice is here replaced by a discrete  multiplicative group of the form $q^\Z$. Then, the quotient of $\C$ by a  period lattice is replace by the so called \emph{Tate curve}, which  corresponds to the naive quotient of the multiplicative group $C^*$ by $q^\Z$. 
However, in the nonarchimedean context, only elliptic curves with $J$-invariant of norm greater than equal to one can be analytically uniformized by Tate curves (see Proposition \ref{prop:Tateuniformization}). The analytic geometry behind is the rigid analytic geometry as developed in \cite{FresnelvanderPUt}. We will not introduce this theory here   but  we just recall briefly the algebraic geometrical and special functions aspects of Tate curves.

\subsection{Special functions on a Tate curve}\label{sec:merofunc}
 
 We recall that any holomorphic function $f$ on $C^*$ can be represented by an everywhere convergent Laurent series $\sum_{n \in \Z} a_n s^n$ with $a_n \in C$. Moreover any nonzero meromorphic function on $C^*$ can be written as $\frac{g}{h}$ such that the holomorphic functions $g$ and $h$ have no common zeros.  We shall denote by $\cM er(C^*)$ the field of meromorphic functions over $C^*$.

\begin{rmk}
 If $k$ is a complete nonarchimedian sub-valued field of $C$ and $q$ belongs to  $ k$, every result quoted above still holds over $k$. 
\end{rmk}
The analytification of the  elliptic curve $E_q$  is isomorphic to the Tate curve, which  is the rigid analytic space corresponding to the naive quotient of $C^*/q^\Z$. The curve $E_q$ is therefore a``canonical'' elliptic curve. A natural question is "Given an elliptic curve $E$  defined over $C$, is there a $q$ such that $E$ is isomorphic to $E_q$?" The answer is positive under certain assumption on the $J$-invariant $J(E)$ of $E$. 

\begin{prop}[Theorem 5.1.18 in \cite{FresnelvanderPUt}] \label{prop:Tateuniformization}
Let $E $ be an elliptic curve  over $C$  such that $|J(E)|>1$. Then, there exists $q \in C$ such that $0<|q|<1$ and  $E$ is isomorphic to the elliptic curve  $E_q$.
\end{prop}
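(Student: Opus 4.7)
The plan is to reduce the statement to an invertibility property of the $J$-invariant viewed as a function of the Tate parameter $q$. Since the proposition is classical (it is the Tate uniformization theorem for non-archimedean elliptic curves), I would follow the standard route through formal power series inversion, which matches naturally the formalism we already rely on in Proposition~\ref{prop:Tatecurve}.

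First I would make explicit the $J$-invariant of $E_q$ as a function of $q$. Starting from the Tate normal form $Y^2 + XY = X^3 + BX + \widetilde{C}$ with $B = -5s_3$ and $\widetilde{C} = -\frac{1}{12}(5s_3 + 7s_5)$, where $s_k = \sum_{n>0} n^k q^n/(1-q^n)$, a direct (but tedious) computation yields an expansion $J(E_q) = q^{-1} + 744 + 196884\,q + \cdots$, i.e.\ a Laurent series in $q$ whose initial term is $q^{-1}$ and whose remaining coefficients are integers. In particular $v_0(J(E_q)) = -1$, so $|J(E_q)| = |q|^{-1} > 1$ whenever $0 < |q| < 1$. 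This already justifies the norm computation invoked in Lemma~\ref{lemma:jinvKernel}.

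Next I would invert this series. Setting $j = J(E_q)$ and $u = 1/j$, the relation $u = q\,(1 + 744\,q + \cdots)^{-1}$ takes the form $u = q + c_2 q^2 + c_3 q^3 + \cdots$ with $c_i \in \Z$. By the formal (non-archimedean) inverse function theorem—equivalently, by iteratively solving for the coefficients—there exists a unique power series $q(u) = u + e_2 u^2 + e_3 u^3 + \cdots \in \Z[[u]]$ satisfying $q(u(q)) = q$. Since all $e_i$ are integers, the series $q(u)$ converges on the open unit disk $\{|u|<1\}$ of $C$. Thus, for any $J_0 \in C$ with $|J_0| > 1$, i.e.\ $|1/J_0| < 1$, the value $q_0 := q(1/J_0) \in C$ satisfies $0 < |q_0| < 1$ and $J(E_{q_0}) = J_0$.

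Applying this to $J_0 := J(E)$ produces $q \in C$ with $0<|q|<1$ and $J(E_q) = J(E)$. To finish, I would invoke the classical fact that over an algebraically closed field of characteristic zero, the $J$-invariant is a complete isomorphism invariant of elliptic curves: two such curves with the same $J$-invariant are isomorphic (argued by putting both in short Weierstrass form and comparing the standard admissible change of variables). Hence $E \cong E_q$, proving the proposition. The main obstacle, as usual, is a clean justification of the power series inversion over $C$: once one checks that the coefficients of $J(E_q)$ as a function of $q$ actually have non-negative valuation (equivalently, are integers), the non-archimedean Newton/implicit function theorem gives the inverse immediately, and the rest is algebra.
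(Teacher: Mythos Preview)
The paper does not prove this proposition; it is quoted verbatim from \cite[Theorem~5.1.18]{FresnelvanderPUt} without argument. Your sketch is the standard route to Tate uniformization and is correct: the key points---that $J(E_q) = q^{-1} + 744 + \cdots$ has integer coefficients, that this series can therefore be inverted on the open unit disk of any non-archimedean field in which integers have norm at most $1$ (which holds for the field $C$ of Hahn series used here, since constants have valuation zero), and that the $J$-invariant classifies elliptic curves up to isomorphism over an algebraically closed field of characteristic zero---are all valid and assemble into a complete proof.
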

Remind that we  have defined  $s_k =\sum_{n >0}\frac{n^k q^n}{1-q^n} \in C$  for $k \geq 1$, and 
$$X(s)=\sum_{n \in \Z} \frac{q^n s}{(1-q^n s)^2} -2s_1,\quad Y(s)=\sum_{n \in \Z} \frac{(q^{n}s)^2}{(1-q^n s)^3} +s_1.$$
 They are  $q$-periodic  meromorphic  functions over $C^*$.
By Proposition \ref{prop:Tatecurve}, the field $C_q$  of   $q$-periodic  meromorphic functions over $C^*$   coincides with the field generated over $C$ by $X(s)$ and $Y(s)$. \par 
 Since we need to understand what is the pullback of the fundamental domain of convergence of the generating function via this uniformization, we prove some basic properties on the norm of  $X(s)$. Remind that $X(s)=X(1/s)$ and $X(qs)=X(s)$. Thus it suffices to study $|X(s)|$ for $|q|^{1/2}\leq |s|\leq 1$.   The following study follows the arguments of \cite[\S V.4]{Silvermanadavancedtopic}.

\begin{lemma}\label{Xs}
Let  $s\in C^{*}$. The following holds:
\begin{itemize}
\item If $|q|^{1/2}<|s|<1$, then $|X(s)|= |s|$;
\item If $|s|=1$,  then $|X(s)|\geq 1$;
\item If $|s|=|q|^{1/2}$, then $|X(s)|\leq  |s|$.
\end{itemize}
\end{lemma}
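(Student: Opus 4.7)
The plan is to prove the bound by splitting the defining series
\[
X(s)=\sum_{n\in\Z}\frac{q^n s}{(1-q^n s)^2}-2s_1
\]
into the three pieces corresponding to $n=0$, $n\geq 1$ and $n\leq -1$, and estimating each piece with the ultrametric inequality. First I will observe that for $n\leq-1$, writing $m=-n\geq 1$, one has the useful rewriting
\[
\frac{q^n s}{(1-q^n s)^2}=\frac{q^m s}{(q^m-s)^2},
\]
which makes the norm easy to compute whenever $|s|\neq|q|^m$. I will also record the elementary estimate $|s_1|\leq|q|$, which follows at once from $|n|\leq 1$ and $|1-q^n|=1$ for $n\geq 1$ in the series $s_1=\sum_{n\geq 1}\frac{nq^n}{1-q^n}$; hence $|2s_1|\leq|q|$.

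Next I will apply these estimates range by range. For $n\geq 1$ and $|s|\leq 1$, one has $|q^n s|<1$ so $|1-q^n s|=1$ and the $n$-th term has norm $|q|^n|s|$. For $n\leq -1$, provided $|s|>|q|^m$, one has $|q^m-s|=|s|$ and the term has norm $|q|^m/|s|$. The $n=0$ term is $s/(1-s)^2$.

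The three cases of the lemma then drop out:
\begin{itemize}
\item If $|q|^{1/2}<|s|<1$, then $|s|<1$ gives $|1-s|=1$, so the $n=0$ term has norm exactly $|s|$; the $n\geq 1$ terms are bounded by $|q|\cdot|s|<|s|$; and $|s|^2>|q|$ forces $|q|^m/|s|\leq|q|/|s|<|s|$ for each $m\geq 1$. The $n=0$ contribution therefore strictly dominates and $|X(s)+2s_1|=|s|$; since $|2s_1|\leq|q|<|s|$, we conclude $|X(s)|=|s|$.
\item If $|s|=1$, the terms with $n\neq 0$ and the constant $2s_1$ all have norm at most $|q|<1$. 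The $n=0$ term has norm $1/|1-s|^2\geq 1$. If $|1-s|=1$ all non-zero-indexed contributions are strictly smaller and $|X(s)|=1$; if $|1-s|<1$ the $n=0$ term strictly dominates and $|X(s)|=1/|1-s|^2>1$. In either case $|X(s)|\geq 1$.
\item If $|s|=|q|^{1/2}$, then $|s|<1$ still gives $|1-s|=1$, so the $n=0$ term has norm $|s|$. For $n=-1$ (i.e.\ $m=1$) the term has norm $|q|/|s|=|s|$, while for $m\geq 2$ the norm $|q|^m/|s|=|q|^{m-1/2}$ is strictly smaller than $|s|$, and the $n\geq 1$ terms are bounded by $|q||s|<|s|$. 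Two contributions are of size $|s|$ and may partially cancel, so only the inequality $|X(s)+2s_1|\leq|s|$ survives; combined with $|2s_1|\leq|q|<|s|$ this yields $|X(s)|\leq|s|$.
\end{itemize}

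The proof is essentially a bookkeeping exercise with the ultrametric norm, so there is no serious obstacle; the only point requiring mild care is the boundary case $|s|=|q|^{1/2}$, where the $n=0$ and $n=-1$ contributions have equal norm and one therefore only gets an inequality rather than an equality.
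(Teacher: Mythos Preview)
Your proof is correct and follows essentially the same approach as the paper's: isolate the $n=0$ term $s/(1-s)^2$, show that all other terms (including the constant $2s_1$) have strictly smaller norm in the first two cases, and observe that in the boundary case $|s|=|q|^{1/2}$ exactly one additional term ($n=-1$) ties with the $n=0$ term, so only an inequality survives. The only cosmetic difference is bookkeeping for the negative-index terms: the paper pairs $n$ with $-n$ and writes those contributions as $\frac{q^n s^{-1}}{(1-q^n s^{-1})^2}$, whereas you use the equivalent form $\frac{q^m s}{(q^m-s)^2}$; the resulting norm estimates are identical.
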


\begin{proof}
Since $X(s)$ has a pole in $s=1$ we may further assume that $s\neq 1$. Let us rewrite  $X(s)$:
$$X(s)=\frac{s}{(1-s)^2}+\sum_{n>0} \frac{q^n s}{(1-q^n s)^2}+\frac{q^n s^{-1}}{(1-q^n s^{-1})^2}-2 \frac{q^n }{1-q^n}. $$
This means that we have 
\begin{equation}\label{eq:majorationnormXs}
|X(s)| \leq \max \left( \left|\frac{s}{(1-s)^2}\right|, \left| \sum_{n>0} \frac{q^n s}{(1-q^n s)^2}+\frac{q^n s^{-1}}{(1-q^n s^{-1})^2}-2 \frac{q^n }{1-q^n}\right|\right),
\end{equation}
with equality when $ |\frac{s}{(1-s)^2}| \neq  | \sum_{n>0} \frac{q^n s}{(1-q^n s)^2}+\frac{q^n s^{-1}}{(1-q^n s^{-1})^2}-2 \frac{q^n }{1-q^n}|$. 
Let us consider ${s\in C^{*}\setminus \{1\}}$ with $|q|^{1/2}\leq |s|\leq 1$.
Using $|q|<1$ we find that 
$|q^n s|\leq |qs|<1$  for every $n\geq 1$. This shows that the norm of $q^n s$ is strictly smaller than $1$. Then, $\left|\frac{q^n s}{(1-q^n s)^{2}}\right|=|q^n s|<|s|$. On the other hand, 
$|q^n |\leq |q|<|s|$ and 
$|\frac{q^n }{1-q^n}|<|s|$.
Finally, when ${|q|^{1/2}< |s|}$, we have  ${|q^n s^{-1}|\leq |qs^{-1}|<|qq^{-1/2}|<|s|}$ and therefore $\left|\frac{q^n s^{-1}}{(1-q^n s^{-1})^{2}}\right|=|q^n s^{-1}|<|s|$. 
This proves  that,  for any $s \in \P1(C)$ such that $|q|^{1/2}< |s|\leq 1$, we have

\begin{equation}\label{eq1}
\left|\sum_{n>0} \frac{q^n s}{(1-q^n s)^2}+\frac{q^n s^{-1}}{(1-q^n s^{-1})^2}-2 \frac{q^n }{1-q^n} \right|<|s|.
\end{equation}
When, $|q|^{1/2}=|s|$ and $n\geq 2$, we have  ${|q^n s^{-1}|\leq |q^{2}s^{-1}|=|q^{2}q^{-1/2}|<|s|}$, and therefore  ${\left|\frac{q^n s^{-1}}{(1-q^n s^{-1})^{2}}\right|=|q^n s^{-1}|<|s|}$.  Moreover, if   $|q|^{1/2}= |s|$ then ${|q s^{-1}| =|qq^{-1/2}|=|s|}$. Therefore $\left|\frac{q s^{-1}}{(1-q s^{-1})^{2}}\right|=|q s^{-1}|=|s|$.  We conclude that 
\begin{equation}\label{eq1b}
 \quad \left|\sum_{n>0} \frac{q^n s}{(1-q^n s)^2}+\frac{q^n s^{-1}}{(1-q^n s^{-1})^2}-2 \frac{q^n }{1-q^n} \right|=|s|.
\end{equation}
It remains to  consider  the term $\frac{s}{(1-s)^2}$.  If  $|s| <1$ then   we have ${\left|\frac{s}{(1-s)^2}\right|= |s|}$. Combining with \eqref{eq:majorationnormXs}, \eqref{eq1} and \eqref{eq1b} respectively, we obtain the result when $|q|^{1/2}<|s|<1$ and $|q|^{1/2}=|s|<1$ respectively.
If  $|s|=1$ and $s \neq 1$ then $|1-s|\leq 1$. Thus, $\left|\frac{s}{(1-s)^2}\right|\geq |s|=1$, which, combined with \eqref{eq:majorationnormXs} and \eqref{eq1} concludes the proof.\end{proof}

\subsection{Tate and Weierstrass normal forms}
In \cite{dreyfus2019differential}, the authors generalize the results  of \cite{KurkRasch} and attach a Weierstrass normal form to the kernel curve. The following proposition proves that, with some care, their result passes to a nonarchimedean framework.

Let us consider a nondegenerate weighted  model of genus one and let us write its kernel polynomial as follows: $K\left(x,y,t\right)=\widetilde{A}_{0}(x)+\widetilde{A}_{1}(x)y+\widetilde{A}_{2}(x)y^{2}=\widetilde{B}_{0}(y)+\widetilde{B}_{1}(y)x+\widetilde{B}_{2}(y)x^{2}$ with ${\widetilde{A}_{i}(x)\in C[x]}$ and $\widetilde{B}_{i}(y)\in C[y]$. The following proposition gives a Weierstrass normal form for the kernel curve.
\begin{prop}\label{prop:uniformizationnondegeneratecase}
 Let $a \in C$ be  as in Lemma \ref{lemma:goodroot}. Let $E_1$ be the elliptic curve defined by the Weierstrass equation
 \begin{equation}
 y_1^2=4x_1^3 -g_2x_1-g_3,
 \end{equation}
 with \begin{eqnarray}\label{eqn:ginvariant}
  g_2&=&\frac{\mathfrak{D}^{(2)}(a)^2}{3}-2\frac{\mathfrak{D}^{(1)}(a)\mathfrak{D}^{(3)}(a)}{3}  \\\nonumber
 g_3&=&-\frac{\mathfrak{D}^{(2)}(a)^3}{27}+\frac{\mathfrak{D}^{(1)}(a)\mathfrak{D}^{(2)}(a)\mathfrak{D}^{(3)}(a)}{9} -\frac{\mathfrak{D}^{(1)}(a)^2\mathfrak{D}^{(4)}(a)}{6}.
\end{eqnarray}  
Then, the rational map 
$$
\begin{array}{llll}
& E_1 & \rightarrow & E \subset \P1 (C) \times \P1 (C) \\
 & [x_1:y_1:1] &  \mapsto &  (\overline{x},\overline{y})
\end{array} $$
where $$\overline{x}=a +\frac{\mathfrak{D}^{(1)}(a)}{x_1-\frac{\mathfrak{D}^{(2)}(a)}{6}} \hbox{ and }\overline{y}=\frac{\frac{\mathfrak{D}^{(1)}(a) y_1}{2(x_1-\frac{\mathfrak{D}^{(1)}(a)}{6})^2}- \widetilde{A}_{1}\left( a +\frac{\mathfrak{D}^{(1)}(a)}{x_1 -\frac{\mathfrak{D}^{(2)}(a)}{6}}\right)    }{ 2\widetilde{A}_{2}\left(a +\frac{\mathfrak{D}^{(1)}(a)}{x_1-\frac{\mathfrak{D}^{(2)}(a)}{6}}\right)},$$ is an isomorphism of elliptic curves that sends the point $ \mathcal{O}=[1:0:0]$  in $E_1$ to the point $\left(a, \dfrac{-\widetilde{A}_{1}(a)}{2\widetilde{A}_{2}(a)}\right) \in E$.
 \end{prop}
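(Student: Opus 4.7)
The plan is to reduce the kernel curve to the announced Weierstrass form by composing three classical substitutions, each justified by a short computation. First, viewing $K(x,y,t)=\widetilde{A}_0(x)+\widetilde{A}_1(x)y+\widetilde{A}_2(x)y^2$ as a quadratic in $y$ gives the birational map between $E$ and the double cover $C\colon z^2=\mathfrak{D}(x)$ of $\mathbb{P}^1(C)$ via
$$z=2\widetilde{A}_2(x)y+\widetilde{A}_1(x),\qquad y=\frac{z-\widetilde{A}_1(x)}{2\widetilde{A}_2(x)}.$$
By Lemma~\ref{lemma:goodroot}, the roots of $\mathfrak{D}$ are simple, so $C$ has a smooth projective model of genus one.

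Second, I would exploit the root $a\in C$ of $\mathfrak{D}$ provided by Lemma~\ref{lemma:goodroot}: since $\mathfrak{D}^{(1)}(a)\neq 0$, the change of variables $x=a+1/u$ together with $w=u^2z$ transforms the quartic relation into a cubic one. Using the Taylor expansion of $\mathfrak{D}$ at $a$ (with $\mathfrak{D}(a)=0$) and multiplying by $u^4$ yields
$$w^2=\mathfrak{D}^{(1)}(a)\,u^3+\tfrac{\mathfrak{D}^{(2)}(a)}{2}u^2+\tfrac{\mathfrak{D}^{(3)}(a)}{6}u+\tfrac{\mathfrak{D}^{(4)}(a)}{24}.$$
Third, I would put this in Weierstrass form with the affine change
$$x_1=\mathfrak{D}^{(1)}(a)\,u+\tfrac{\mathfrak{D}^{(2)}(a)}{6},\qquad y_1=2\mathfrak{D}^{(1)}(a)\,w.$$
Expanding $4(x_1-\mathfrak{D}^{(2)}(a)/6)^3$ and collecting coefficients is a direct, if slightly tedious, verification which produces $y_1^2=4x_1^3-g_2x_1-g_3$ with $g_2,g_3$ exactly as in \eqref{eqn:ginvariant}.

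Composing the three substitutions $[x_1:y_1:1]\mapsto(u,w)\mapsto(x,z)\mapsto(\overline{x},\overline{y})$ produces precisely the rational map of the statement: from the first pair of substitutions, $\overline{x}=a+1/u=a+\mathfrak{D}^{(1)}(a)/(x_1-\mathfrak{D}^{(2)}(a)/6)$, and from $z=w/u^2=\mathfrak{D}^{(1)}(a)y_1/\bigl(2(x_1-\mathfrak{D}^{(2)}(a)/6)^2\bigr)$ together with $y=(z-\widetilde{A}_1(\overline{x}))/(2\widetilde{A}_2(\overline{x}))$ one reads off the announced formula for $\overline{y}$. Since every dominant rational map between smooth projective curves extends to a morphism and is birational by construction, this morphism is an isomorphism; as we work in characteristic zero, sending an elliptic curve identity to a prescribed point makes it an isomorphism of elliptic curves. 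For the image of $\mathcal{O}=[1:0:0]$, the limit $x_1\to\infty$ forces $\overline{x}\to a$; since $\mathfrak{D}(a)=0$ the two branches of $E$ above $x=a$ coincide at $y=-\widetilde{A}_1(a)/(2\widetilde{A}_2(a))$, which is therefore the image of $\mathcal{O}$.

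The only genuine subtlety is to see that the formula for $\overline{y}$ extends to the finitely many $(x_1,y_1)\in E_1$ at which $\widetilde{A}_2(\overline{x})$ vanishes and at the point at infinity: this is automatic by the properness and smoothness of both curves, but one should check by hand that the formulas for $(\overline{x},\overline{y})$ do define a point of $E\subset \mathbb{P}^1(C)\times\mathbb{P}^1(C)$ (allowing $\infty$ coordinates) at these special values, which amounts to verifying that both $\overline{x}$ and $\overline{y}$ have orders of vanishing compatible with the defining equation $K(\overline{x},\overline{y},t)=0$. This bookkeeping at the poles is where minor care is needed; the rest of the argument is purely formal manipulation.
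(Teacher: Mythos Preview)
Your argument is correct: the three substitutions you describe are precisely the classical reduction of a quartic double cover $z^2=\mathfrak{D}(x)$ to Weierstrass form via a simple root $a$, and your computations of $g_2,g_3$ and of the composite map check out (incidentally, they confirm that the denominator $(x_1-\mathfrak{D}^{(1)}(a)/6)^2$ appearing in the statement of $\overline{y}$ is a typo for $(x_1-\mathfrak{D}^{(2)}(a)/6)^2$). The paper does not actually give a proof here but simply refers to \cite[Proposition~18]{dreyfus2019differential}, noting only that since $|a|<1$ the root is finite and one avoids the degenerate configuration treated separately there; your explicit chain of substitutions is exactly the content of that cited result, so you are supplying the argument the paper outsources.
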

 \begin{proof}
This is the same proof as in \cite[Proposition 18]{dreyfus2019differential}. Note that there is only one configuration here since we have chosen a root of the discriminant $|a|<1$ which can not be infinity.
 \end{proof}
 
We recall that the $J$-invariant  $J(E_1)$ of the elliptic curve $E_1$  given in a  Weierstrass form $ y_1^{2}=4x_1^{3}-g_2 x_1 -g_3$ equals to $J(E_{1})=12^{3}\frac{g_2^{3}}{g_2^{3}-27g_3^{2}}$.  For a weighted model of genus one, the $J$-invariant $J(E)$ of the  kernel curve  has modulus strictly greater than $1$ by  Lemma \ref{lemma:jinvKernel}.  Since $J(E)=J(E_1)$,
 Proposition \ref{prop:Tateuniformization}  shows that  there exists $q\in C^*$  such that $0<|q|<1$ and $E_1$ is isomorphic to $E_q$. In order to explicit this isomorphism, we 
need  to understand how one passes  from  to a Tate normal form to a Weierstrass normal form. This is the content of the following lemmas.

\begin{lemma}\label{lemma:tatetoweierstrass}[\S 6, Page 29 in \cite{roquette1970analytic}] 
In the notation of Proposition \ref{prop:Tatecurve}, the  change of variable $X=x- \frac{1}{12}$ and ${Y=\frac{1}{2}(y-x + \frac{1}{12})}$ maps the Tate equation
$$ Y^2 +XY =X^3 +BX +\widetilde{C}$$
onto the Weierstrass equation 
$$y^2= 4x^3 -h_2 x -h_3,$$
where $h_2= \frac{1}{12}+20s_3$ and $h_3=\frac{-1}{6^3}+ \frac{7}{3}s_5$. \end{lemma}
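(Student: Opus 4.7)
The plan is to verify this lemma by a direct substitution, since the statement is purely a coordinate change between two cubic forms. I would first invert the proposed substitution: from $X=x-\frac{1}{12}$ and $Y=\frac{1}{2}(y-x+\frac{1}{12})$ one gets $x=X+\frac{1}{12}$ and $y=2Y+X$. Plugging these into the Weierstrass equation $y^{2}=4x^{3}-h_{2}x-h_{3}$ will produce (after expanding $(2Y+X)^{2}$ on the left and $4(X+\tfrac{1}{12})^{3}$ on the right and dividing by $4$) an identity of the shape
\begin{equation*}
Y^{2}+XY=X^{3}+\Bigl(\tfrac{1}{48}-\tfrac{h_{2}}{4}\Bigr)X+\Bigl(\tfrac{1}{1728}-\tfrac{h_{2}}{48}-\tfrac{h_{3}}{4}\Bigr).
\end{equation*}
Comparing with the Tate normal form $Y^{2}+XY=X^{3}+BX+\widetilde{C}$ of Proposition~\ref{prop:Tatecurve}, the $X$-coefficient forces $B=\tfrac{1}{48}-\tfrac{h_{2}}{4}$ and the constant coefficient forces $\widetilde{C}=\tfrac{1}{1728}-\tfrac{h_{2}}{48}-\tfrac{h_{3}}{4}$.

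Next I would solve these two linear relations for $h_{2}$ and $h_{3}$ using the explicit values $B=-5s_{3}$ and $\widetilde{C}=-\tfrac{1}{12}(5s_{3}+7s_{5})$. The first equation yields immediately $h_{2}=\tfrac{1}{12}-4B=\tfrac{1}{12}+20s_{3}$, as claimed. Substituting this $h_{2}$ into the second equation, the $s_{3}$ contributions cancel (the $-\tfrac{20s_{3}}{48}$ from $-\tfrac{h_{2}}{48}$ matches the $\tfrac{5s_{3}}{12}$ coming from $-\widetilde{C}$), leaving only the pure numerical part plus the $s_{5}$-term; a short arithmetic simplification gives $h_{3}=-\tfrac{1}{6^{3}}+\tfrac{7}{3}s_{5}$.

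There is no real obstacle here, only bookkeeping: the computation is completely elementary, and the only mildly delicate point is the cancellation of the $s_{3}$-contributions in the constant term, which is what makes the formula for $h_{3}$ depend only on $s_{5}$ (and a pure constant). No analytic or nonarchimedean input is needed; the equality is an identity in the polynomial ring $C[X,Y]$ after the coordinate change, so it is valid for the rigid-analytic Tate curve as well.
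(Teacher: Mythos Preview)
Your verification is correct: the direct substitution you outline produces exactly the stated identification of $h_2$ and $h_3$, and your intermediate identity $Y^{2}+XY=X^{3}+(\tfrac{1}{48}-\tfrac{h_{2}}{4})X+(\tfrac{1}{1728}-\tfrac{h_{2}}{48}-\tfrac{h_{3}}{4})$ is right. The paper itself does not give a proof of this lemma at all --- it simply quotes the result from Roquette --- so your elementary computation is precisely what is needed to justify the statement and is the natural way to do so.
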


 As detailed above, the elliptic curves $E_1$ and $E_q$ are isomorphic. The following lemma gives the form of an explicit isomorphism between theses two curves. 
\begin{lemma}\label{lemma:uniquenessweierstrassequation}
Let $y^2= 4x^3 -h_2 x -h_3$ be the Weierstrass normal form   (resp. $ Y^2 +XY =X^3 +BX +\widetilde{C}$  its Tate normal form )    of $E_q$  as  in   Lemma \ref{lemma:tatetoweierstrass} and let 
$ y_1^2= 4x_1^3 -g_2 x_1-g_3$ be the Weierstrass normal form of $E_1$ as in  Proposition \ref{prop:uniformizationnondegeneratecase}.

There exists $u \in C^*$ such that the following map
$$\begin{array}{lll} E_q& \rightarrow &E_1, \\
(X,Y) &\mapsto& (u^2(X+\frac{1}{12}), u^3(2Y +X))\end{array}$$ is an isomorphism of elliptic curves.   Moreover, the following holds
 \begin{itemize}
 \item $h_2=\frac{g_2}{u^4}$ and $h_3=\frac{g_3}{u^6}$;
 \item $\Delta_{q}=\frac{\Delta_1}{u^{12}}$ where $\Delta_{1}$ and $\Delta_q$ denote the discriminants 
 of the Weierstrass equations of $E_{1}$ and $E_q$ respectively.
 \end{itemize}
\end{lemma}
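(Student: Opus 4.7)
The plan is to reduce the statement to the classical fact that two elliptic curves in Weierstrass form with equal $J$-invariant are isomorphic via a scaling transformation $(x,y) \mapsto (u^2 x, u^3 y)$, and then compose this scaling with the Tate-to-Weierstrass change of variable of Lemma~\ref{lemma:tatetoweierstrass}. Note first that the formula $(X,Y) \mapsto (u^2(X+\tfrac{1}{12}), u^3(2Y+X))$ factors as follows: the inner change of variable $(X,Y) \mapsto (x,y) := (X+\tfrac{1}{12}, 2Y+X)$ is exactly the inverse of the substitution of Lemma~\ref{lemma:tatetoweierstrass}, so it puts $E_q$ in the Weierstrass form $y^2=4x^3-h_2 x-h_3$; and the outer map $(x,y)\mapsto(u^2 x, u^3 y)$ is the candidate isomorphism to $E_1$. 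The existence of $u$ is therefore the only substantive issue.

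First I would substitute $x_1 = u^2 x$, $y_1 = u^3 y$ into $y_1^2 = 4x_1^3 - g_2 x_1 - g_3$. After dividing by $u^6$, this becomes $y^2 = 4x^3 - (g_2/u^4)x - (g_3/u^6)$, which coincides with $y^2 = 4x^3 - h_2 x - h_3$ if and only if
\begin{equation}\label{eq:uconstraints}
h_2 = g_2/u^4 \quad\text{and}\quad h_3 = g_3/u^6.
\end{equation}
So the task reduces to producing $u \in C^*$ satisfying~\eqref{eq:uconstraints}. The equality $J(E_q) = J(E_1) = J(E)$, written out via $J = 12^3 g_2^3/(g_2^3 - 27 g_3^2)$ and the analogous formula in $h_i$, amounts to the compatibility condition $g_2^3 h_3^2 = h_2^3 g_3^2$. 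Moreover, Lemma~\ref{lemma:jinvKernel} gives $|J(E)|>1$, in particular $J(E)\neq 0$, which forces $g_2 \neq 0$ and $h_2 \neq 0$.

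Pick any square root $u \in C^*$ of a square root of $g_2/h_2$, so that $u^4 = g_2/h_2$. Then $u^{12} = (g_2/h_2)^3 = (g_3/h_3)^2$ by the $J$-invariant identity (if $h_3 \neq 0$; if $h_3 = 0$, then the identity forces $g_3 = 0$ too, and any such $u$ works). Hence $u^6 = \pm g_3/h_3$, and by replacing $u$ with $iu$ if necessary (which flips the sign of $u^6$ but preserves $u^4$) we achieve $u^6 = g_3/h_3$, establishing~\eqref{eq:uconstraints}. The composed map $(X,Y) \mapsto (u^2(X+\tfrac{1}{12}), u^3(2Y+X))$ is then the desired algebraic isomorphism $E_q \to E_1$.

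The two remaining assertions now follow immediately: the identities $h_2 = g_2/u^4$ and $h_3 = g_3/u^6$ are precisely~\eqref{eq:uconstraints}, and the discriminant statement follows from $\Delta_q = h_2^3 - 27 h_3^2 = (g_2/u^4)^3 - 27(g_3/u^6)^2 = u^{-12}(g_2^3 - 27 g_3^2) = \Delta_1/u^{12}$. The main (minor) obstacle is really just the sign-choice for $u^6$ and the handling of the edge case $J = 1728$ (i.e.~$h_3 = 0$), both of which are dispatched by the square-root flexibility above, so no truly hard step is expected.
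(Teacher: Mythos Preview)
Your proof is correct and follows essentially the same architecture as the paper's: both reduce the question to the existence of a scaling $(x,y)\mapsto(u^2x,u^3y)$ between the two Weierstrass forms, and both derive the identities $h_2=g_2/u^4$, $h_3=g_3/u^6$ and $\Delta_q=\Delta_1/u^{12}$ by direct substitution.

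The one genuine difference is in how the existence of $u$ is justified. The paper appeals to \cite[Proposition~3.1, Chapter~III]{silverman2009arithmetic}: since $E_1$ and $E_q$ are already known to be isomorphic (they share the $J$-invariant of $E$), any isomorphism between them must be of the form $x_1=u^2x+\alpha$, $y_1=u^3y+\beta u^2x+\gamma$, and the special shape of the Weierstrass normal forms forces $\alpha=\beta=\gamma=0$. You instead construct $u$ by hand from the relation $g_2^3h_3^2=h_2^3g_3^2$ (a rewriting of $J(E_1)=J(E_q)$), choosing a fourth root of $g_2/h_2$ and adjusting its sign via $u\mapsto iu$ to match the cubic coefficient. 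Your route is slightly more elementary and self-contained (no external citation needed), while the paper's is shorter and automatically handles all edge cases at once; both are perfectly adequate here.
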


\begin{proof}
 From \cite[Proposition 3.1, Chapter III]{silverman2009arithmetic}, we deduce that  any isomorphism between the  elliptic curves $E_ 1$ and $E_q$   is given by 
$x_1=u^2x+\alpha$ and $y_1=u^3 y+\beta u^{2}x+\gamma$ with $u \in C^*$, $\alpha,\beta,\gamma\in C$. Since both equations are in Weierstrass normal form, we necessarily have $\alpha=\beta=\gamma=0$. This  proves the first point. From $ y_1^2= 4x_1^3 -g_2 x_1-g_3$, we substitute $x_1,y_1$ by $x,y$ to find
$$u^6  y^2= 4  u^6 x^3 -g_2 u^2 x-g_3.$$
Dividing the both sides by $u^6$ we find $h_2=\frac{g_2}{u^4}$ and $h_3=\frac{g_3}{u^6}$. The assertion on the discriminants follows from  $\Delta_{q}=
h_2^3-27h_3^2$ and $\Delta_1=
g_2^3-27g_3^2$. 
\end{proof}

The  lemma below gives some  precise estimate  for the norms of  $\Delta_q=h_{2}^{3}-27h_{3}^{2}$ and ${\Delta_1 =g_{2}^{3}-27g_{3}^{2}}$,  the discriminants of the elliptic curves $E_q ,E_1$, and the element  $u$ defined in Lemma~\ref{lemma:uniquenessweierstrassequation}.

 \begin{lemma}\label{lem2}
 The following statement  hold:
 \begin{itemize}
 \item $|\Delta_{q}|=|q|$,  with $|h_2 -\frac{1}{12}|=|q|$ and $|h_3 -(-\frac{1}{6^{3}})| =|q|$;
 \item $|\Delta_1|=|q|$  with  $|g_2 -\frac{4}{3}|<1$,  $|g_3-(-\frac{8}{27})|<1$;
 \item $|u|=1$;
 \item $|\mathfrak{D}^{(1)}(a)|\in ]\, |q|^{1/2},1[$.
 \end{itemize}
 \end{lemma}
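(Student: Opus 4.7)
The plan is to verify the four assertions in turn, using the explicit formulas for $h_2, h_3$ (Lemma \ref{lemma:tatetoweierstrass}) and $g_2, g_3$ (Proposition \ref{prop:uniformizationnondegeneratecase}), together with the norm estimates of Lemma \ref{lemma:goodroot}. The starting point is $|s_k| = |q|$ for every $k \geq 1$: since $|n| = |1 - q^n| = 1$ for $n \geq 1$ and $|q^n| \leq |q|^2$ for $n \geq 2$, the series $s_k = \sum_{n \geq 1} \tfrac{n^k q^n}{1-q^n}$ is dominated by its $n=1$ term and thus has norm $|q|$. This immediately gives $|h_2 - 1/12| = |20\, s_3| = |q|$ and $|h_3 + 1/6^3| = |(7/3)\, s_5| = |q|$, and hence $|h_2| = |h_3| = 1$.

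For $|\Delta_q| = |q|$, I would expand $\Delta_q = h_2^3 - 27 h_3^2$ to first order in $q$. The constant terms of $h_2^3$ and $27 h_3^2$ are both equal to $1/1728$ and hence cancel, while the $q$-linear coefficients work out to $5/12$ and $-7/12$ respectively, producing $\Delta_q = q + O(q^2)$.

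Next, writing $\mathfrak{D}^{(2)}(a) = 2 + \delta$ with $|\delta| < 1$ and using $|\mathfrak{D}^{(i)}(a)| < 1$ for $i \in \{1, 3, 4\}$, a direct expansion of \eqref{eqn:ginvariant} gives $|g_2 - 4/3| < 1$ and $|g_3 + 8/27| < 1$; in particular $|g_2| = 1$. Combining $J(E_1) = J(E_q)$ (since $E_1 \cong E \cong E_q$) with the standard expressions $J(E_1) = 12^3 g_2^3/\Delta_1$ and $J(E_q) = 12^3 h_2^3/\Delta_q$ and taking norms yields $|\Delta_1| = |q|$. The identity $\Delta_1 = u^{12} \Delta_q$ from Lemma \ref{lemma:uniquenessweierstrassequation} then forces $|u|^{12} = 1$, i.e.~$|u| = 1$.

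The only delicate point is the lower bound $|\mathfrak{D}^{(1)}(a)| > |q|^{1/2}$, the upper bound being already part of Lemma \ref{lemma:goodroot}. Setting $p = \mathfrak{D}^{(1)}(a)$, $r = \mathfrak{D}^{(2)}(a)$, $s = \mathfrak{D}^{(3)}(a)$, $w = \mathfrak{D}^{(4)}(a)$, substituting \eqref{eqn:ginvariant} into $\Delta_1 = g_2^3 - 27 g_3^2$ produces, after a tedious expansion in which all terms of degree $0$ in $p$ cancel, the algebraic identity
\begin{equation*}
108\, \Delta_1 \;=\; p^{2} \bigl( 12 r^2 s^2 - 36 r^3 w - 32 p s^3 + 108 p r s w - 81 p^2 w^2 \bigr).
\end{equation*}
Verifying this cancellation is the main technical obstacle of the proof. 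Once it is in hand, the ultrametric inequality (using $|r| = 1$ and $|p|, |s|, |w| < 1$) implies that each summand in the parenthesized factor has norm strictly less than $1$, so the whole factor has norm $< 1$. Since $|108| = 1$ and $|\Delta_1| = |q|$, we conclude $|p|^2 > |q|$, as required.
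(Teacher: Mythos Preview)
Your proof is correct and follows essentially the same route as the paper's. The only notable difference is that you derive $|s_k|=|q|$ and $|\Delta_q|=|q|$ directly from the series expansions (a nice self-contained touch), whereas the paper simply cites \cite[Pages 29--30]{roquette1970analytic}; for the fourth point your identity $108\,\Delta_1=p^2(\,\cdots\,)$ is exactly the paper's expanded formula for $\Delta_1$ multiplied through by $108$ and factored, and the ultrametric estimate is identical.
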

 
\begin{proof}
 Following \cite[Pages 29-30]{roquette1970analytic}, we  find $|\Delta_{q}|=|q|, |s_3|=|q|=|s_5|$.  Combining  the latter norm estimates with  Lemma \ref{lemma:tatetoweierstrass}, we find $|h_2 -\frac{1}{12}|=|q|$ and $|h_3 -(-\frac{1}{6^{3}})|=|q|$. \\
Let us prove the second point.  It follows from \eqref{eq:alphaibetai} that $|1-\a_2|<1$ and $|\a_i|<1$ for ${i=0,1,3,4}$. By Lemma~\ref{lemma:goodroot}, $|\mathfrak{D}^{(1)}(a)|,|\mathfrak{D}^{(3)}(a)|,|\mathfrak{D}^{(4)}(a)|<1, |\mathfrak{D}^{(2)}(a) -2|<1$. Combining these norm estimates with  \eqref{eqn:ginvariant}, we find  $|g_2 -\frac{4}{3}|<1$,  $|g_3-(-\frac{8}{27})|<1$. Since $|J(E_1)|=|J(E_q)|=|\frac{12^3g_2}{\Delta_1}|= |\frac{12^3h_2}{\Delta_{q}}|$ and $|g_2|=|h_2|=1$, we find $|\Delta_{q}|=|\Delta_1|=|q|$. By Lemma  \ref{lemma:uniquenessweierstrassequation},  $\Delta_{q}=\frac{\Delta_1}{u^{12}}$, and then $|u|=1$.
\\
Let us prove the last point. Let us expand $\Delta_1 =g_{2}^{3}-27g_{3}^{2}$ with the expression of $g_{2},g_{3}$ given in \eqref{eqn:ginvariant}:
$$\begin{array}{lll}
\Delta_1 &=&\left(\frac{\mathfrak{D}^{(2)}(a)^2}{3}-2\frac{\mathfrak{D}^{(1)}(a)\mathfrak{D}^{(3)}(a)}{3}\right)^{3}-27\left(\frac{-\mathfrak{D}^{(2)}(a)^3}{27}+\frac{\mathfrak{D}^{(1)}(a)\mathfrak{D}^{(2)}(a)\mathfrak{D}^{(3)}(a)}{9}-\frac{\mathfrak{D}^{(1)}(a)^2\mathfrak{D}^{(4)}(a)}{6}\right)^{2}\\
&=&\frac{\mathfrak{D}^{(2)}(a)^6}{27}-\frac{2\mathfrak{D}^{(1)}(a)\mathfrak{D}^{(2)}(a)^{4}\mathfrak{D}^{(3)}(a)}{9}+\frac{4\mathfrak{D}^{(1)}(a)^{2}\mathfrak{D}^{(2)}(a)^{2}\mathfrak{D}^{(3)}(a)^{2}}{9}-\frac{8\mathfrak{D}^{(1)}(a)^{3}\mathfrak{D}^{(3)}(a)^{3}}{27}  \\
&&-\frac{\mathfrak{D}^{(2)}(a)^{6}}{27}-\frac{\mathfrak{D}^{(1)}(a)^{2}\mathfrak{D}^{(2)}(a)^{2}\mathfrak{D}^{(3)}(a)^{2}}{3}-\frac{3\mathfrak{D}^{(1)}(a)^{4}\mathfrak{D}^{(4)}(a)^{2}}{4}+\frac{2\mathfrak{D}^{(1)}(a)\mathfrak{D}^{(2)}(a)^{4}\mathfrak{D}^{(3)}(a)}{9}\\
&&-\frac{\mathfrak{D}^{(1)}(a)^{2}\mathfrak{D}^{(2)}(a)^{3}\mathfrak{D}^{(4)}(a)}{3}+\mathfrak{D}^{(1)}(a)^{3}\mathfrak{D}^{(2)}(a)\mathfrak{D}^{(3)}(a)\mathfrak{D}^{(4)}(a)\\
&=& \frac{\mathfrak{D}^{(1)}(a)^{2}\mathfrak{D}^{(2)}(a)^{2}\mathfrak{D}^{(3)}(a)^{2}}{9}-\frac{8\mathfrak{D}^{(1)}(a)^{3}\mathfrak{D}^{(3)}(a)^{3}}{27}-\frac{3\mathfrak{D}^{(1)}(a)^{4}\mathfrak{D}^{(4)}(a)^{2}}{4}\\
&&-\frac{\mathfrak{D}^{(1)}(a)^{2}\mathfrak{D}^{(2)}(a)^{3}\mathfrak{D}^{(4)}(a)}{3}+\mathfrak{D}^{(1)}(a)^{3}\mathfrak{D}^{(2)}(a)\mathfrak{D}^{(3)}(a)\mathfrak{D}^{(4)}(a).
\end{array} $$
Since $|\mathfrak{D}^{(1)}(a)|,|\mathfrak{D}^{(3)}(a)|,|\mathfrak{D}^{(4)}(a)|<1, |\mathfrak{D}^{(2)} -2|<1$ , the previous expression is a sum of terms that are all strictly smaller in norm than $|\mathfrak{D}^{(1)}(a)|^2$. This proves that  $|\Delta_1| =|q| < |\mathfrak{D}^{(1)}(a)|^2$.\\
\end{proof}

The following estimate will be required to uniformize the generating function.
\begin{lemma}\label{lem:normestimateu2mathfrakD}
In the notation of Theorem \ref{cor:jinvariant}, we have  $|\frac{u^2}{12}  -\frac{\mathfrak{D}^{(2)}(a)}{6}|<|\mathfrak{D}^{(1)}(a)|$.
\end{lemma}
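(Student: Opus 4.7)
The plan is to factor $u^4 - 4(\mathfrak{D}^{(2)}(a))^2 = (u^2 - 2\mathfrak{D}^{(2)}(a))(u^2 + 2\mathfrak{D}^{(2)}(a))$, bound the left--hand side using the explicit expression of $g_2$ in Proposition \ref{prop:uniformizationnondegeneratecase} together with the Tate/Weierstrass normalization $g_2 = u^4 h_2$, and then show that the ``wrong'' factor $u^2 + 2\mathfrak{D}^{(2)}(a)$ is a unit so that all the smallness is forced into the factor we care about. Throughout, I will use that our residue characteristic is zero, so that every nonzero rational integer has norm $1$.

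First I would pin down the sign of $u^2$. The relations $u^4 = g_2/h_2$ and $u^6 = g_3/h_3$ from Lemma \ref{lemma:uniquenessweierstrassequation}, combined with the norm estimates of Lemma \ref{lem2} (namely $|g_2-\tfrac{4}{3}|<1$, $|g_3+\tfrac{8}{27}|<1$ and $|h_2-\tfrac{1}{12}|=|h_3+\tfrac{1}{216}|=|q|$), yield $|u^4-16|<1$ and $|u^6-64|<1$; e.g.\ $u^4-16 = 12\bigl((g_2-\tfrac{4}{3}) - u^4(h_2-\tfrac{1}{12})\bigr)$. The Bezout identity $16(z-4)=-z(z^2-16)+(z^3-64)$ in $\mathbb{Q}[z]$, evaluated at $z=u^2$, together with $|u^2|=|16|=1$, then gives $|u^2-4|<1$. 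This is the crucial selection step and the main subtle point: from $u^4\approx 16$ alone one could only infer $u^2\approx\pm 4$, and the cube relation is essential to rule out $u^2\approx-4$ (since $(-4)^3=-64\not\approx 64$).

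Next, from $g_2=u^4h_2$ and the formula $3g_2=\mathfrak{D}^{(2)}(a)^2-2\mathfrak{D}^{(1)}(a)\mathfrak{D}^{(3)}(a)$ of Proposition \ref{prop:uniformizationnondegeneratecase}, I obtain the exact identity
\begin{equation*}
u^4 - 4\mathfrak{D}^{(2)}(a)^2 \;=\; -12\,u^4\!\left(h_2-\tfrac{1}{12}\right) \;-\; 8\,\mathfrak{D}^{(1)}(a)\mathfrak{D}^{(3)}(a).
\end{equation*}
The right--hand side has norm at most $\max(|q|,|\mathfrak{D}^{(1)}(a)||\mathfrak{D}^{(3)}(a)|)$. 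Since $|\mathfrak{D}^{(3)}(a)|<1$ by Lemma \ref{lemma:goodroot} and the last assertion of Lemma \ref{lem2} gives $|q|<|\mathfrak{D}^{(1)}(a)|^2<|\mathfrak{D}^{(1)}(a)|$, the right--hand side has norm strictly less than $|\mathfrak{D}^{(1)}(a)|$.

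Finally, combining $|u^2-4|<1$ from Step~1 with $|\mathfrak{D}^{(2)}(a)-2|<1$ from Lemma \ref{lemma:goodroot}, the ultrametric inequality applied to $u^2+2\mathfrak{D}^{(2)}(a)-8 = (u^2-4)+2(\mathfrak{D}^{(2)}(a)-2)$ yields $|u^2+2\mathfrak{D}^{(2)}(a)|=|8|=1$. Factoring the left--hand side of the displayed identity as $(u^2-2\mathfrak{D}^{(2)}(a))(u^2+2\mathfrak{D}^{(2)}(a))$ and dividing by the unit $u^2+2\mathfrak{D}^{(2)}(a)$ then gives $|u^2-2\mathfrak{D}^{(2)}(a)|<|\mathfrak{D}^{(1)}(a)|$, and since $|12|=1$ this is exactly the desired inequality.
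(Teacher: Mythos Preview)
Your proof is correct and takes a genuinely different route from the paper's. The paper exploits the exact identity $u^{2}=u^{6}/u^{4}=\dfrac{g_{3}h_{2}}{g_{2}h_{3}}$ directly, writes
\[
\frac{u^{2}}{12}-\frac{\mathfrak{D}^{(2)}(a)}{6}
=\Bigl(\frac{u^{2}}{12}+\frac{3g_{3}}{2g_{2}}\Bigr)-\Bigl(\frac{3g_{3}}{2g_{2}}+\frac{\mathfrak{D}^{(2)}(a)}{6}\Bigr),
\]
and bounds each bracket separately: the second is $<|\mathfrak{D}^{(1)}(a)|$ from the expansion $g_{3}/g_{2}=-\mathfrak{D}^{(2)}(a)/9+\mathfrak{D}^{(1)}(a)\omega''$, and the first is $\le|q|$ via the miraculous cancellation $h_{2}+18h_{3}=(h_{2}-\tfrac{1}{12})+18(h_{3}+\tfrac{1}{216})$. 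This approach never needs to isolate the sign of $u^{2}$; the ratio $u^{6}/u^{4}$ carries it automatically. Your approach instead works only with $g_{2}=u^{4}h_{2}$ to control $u^{4}-4\mathfrak{D}^{(2)}(a)^{2}$, and then invokes the additional relation $u^{6}=g_{3}/h_{3}$ through the Bezout identity $16(z-4)=-z(z^{2}-16)+(z^{3}-64)$ to pick out $u^{2}\approx 4$ rather than $-4$ and hence certify that $u^{2}+2\mathfrak{D}^{(2)}(a)$ is a unit. Both arguments ultimately use the same information (both normalisations $g_{2}=u^{4}h_{2}$ and $g_{3}=u^{6}h_{3}$ are essential), but the paper's packaging via the quotient is slightly shorter, while yours makes the ``sign selection'' step pleasantly explicit and transparent.
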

\begin{proof}
Using \eqref{eqn:ginvariant} and the norm estimate on the $\mathfrak{D}^{(i)}(a)$'s, we get
\begin{equation}\label{eqn:normestimateginvariant}
g_2 =\frac{\mathfrak{D}^{(2)}(a)^2}{3} + \mathfrak{D}^{(1)}(a) \omega, \quad g_3 =\frac{-\mathfrak{D}^{(2)}(a)^3}{27} + \mathfrak{D}^{(1)}(a) \omega',
\end{equation}
where $|\omega|, |\omega'|<1$.  This proves that 
$$
\frac{g_3}{g_2} =\frac{-\mathfrak{D}^{(2)}(a)}{9} +\mathfrak{D}^{(1)}(a) \omega''
$$
with $|\omega''|<1$. Then, we find
$$\left| \frac{u^2}{12} - \frac{\mathfrak{D}^{(2)}(a)}{6}\right|= \left| \frac{u^2}{12} + \frac{3g_3}{2g_2} -\frac{3g_3}{2g_2} -  \frac{\mathfrak{D}^{(2)}(a)}{6}\right| \leq \max\left(\left|\frac{u^2}{12}+\frac{3g_3}{2g_2}\right|, \left|\frac{3}{2} \mathfrak{D}^{(1)}(a) \omega''\right|\right).$$
Finally, with  the norm estimate of Lemma \ref{lem2}, it is sufficient to show that $ |\frac{u^2}{12}+\frac{3g_3}{2g_2}|\leq |q|$.
By Lemma \ref{lemma:uniquenessweierstrassequation}, we have $\frac{u^2}{12}=\frac{g_3h_2}{12g_2h_3}$. By Lemma \ref{lem2}, $|h_2 -\frac{1}{12}|=|q|$ and $|h_3 -(-\frac{1}{6^{3}})|=|q|$. Then, by Lemma \ref{lem2} again, we find 
\begin{multline*}
\left|\frac{u^2}{12}+\frac{3g_3}{2g_2}\right|=\left|\frac{g_3h_2}{12g_2h_3}+\frac{3g_3}{2g_2}\right|=\left|\frac{g_3}{g_2}\right| \left|\frac{h_2}{12 h_3}+\frac{3}{2}\right|=\left|\frac{h_2+18h_3}{12 h_3}\right|\\
=|h_2+18h_3 |=\left|\left(h_2-\frac{1}{12}\right)+18\left(h_3 -\left(-\frac{1}{6^{3}}\right)\right) \right|\leq \max  \left(\left|h_2-\frac{1}{12}\right|,\left|h_3 -\left(-\frac{1}{6^{3}}\right)\right|\right)\leq |q| .
\end{multline*} 

\end{proof}

\section{Difference Galois theory}\label{sec:differenceGaloistheory}
In this section, we establish some criteria to guaranty the transcendence of functions satisfying a difference  equation of order $1$. This criteria is based on the Galois theory of difference fields as developed in \cite{VdPS97} but generalizes some of the existing results in the literature, for instance the assumption  that the field of constants is   algebraically closed
 (see for instance Theorem \ref{thm:abstractdifftransGaloiscriteria}).

The  algebraic framework of this section is difference algebra and more precisely the notion of difference fields.
A difference field is a pair $(K,\s)$ where $K$ is a field and $\s$ is  an automorphism of $K$. The  field
$\s$-constants $K^\s$ of $(K,\s)$ is formed by the elements $f \in K$ such that $\s(f)=f$. An extension $(K,\s_K) \subset (L,\s_L)$ of difference fields is a field 
extension $K\subset L$ such that $\s_L$ coincides with $\s_K$ on $K$. If there is no confusion, we shall denote by $\s$ the automorphism $\s_K$ and $\s_L$. For a complete introduction on difference algebra, we shall refer to \cite{Cohndifferencealg}.\par 

\subsection{Rank one difference equations}

In this section, we focus  on  rank one difference equations.
\begin{lemma}\label{lemma:algoverdifferenceconstants}
Let $(K,\s) \subset (L,\s)$  be an extension of difference fields such that $L^\s=K^\s$. Let $x\in L$. The following statements are equivalent
\begin{enumerate}
\item $x$ is algebraic over $K^\s$;
\item there exists $r \in \N^*$ such that $\s^r(x)=x$.
\end{enumerate}
\end{lemma}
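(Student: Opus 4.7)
The plan is to prove both implications directly from the definitions, using only that $\sigma$ is an automorphism and fixes $K^\sigma$ pointwise.

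For $(2) \Rightarrow (1)$: Suppose $\sigma^r(x) = x$ for some $r \in \mathbb{N}^*$. Then the set $\{x, \sigma(x), \ldots, \sigma^{r-1}(x)\}$ is stable under $\sigma$, since $\sigma$ permutes it cyclically. Consequently, the elementary symmetric functions of these $r$ elements are fixed by $\sigma$, so they lie in $L^\sigma = K^\sigma$. The polynomial
\[
P(T) = \prod_{i=0}^{r-1}\bigl(T - \sigma^i(x)\bigr) \in K^\sigma[T]
\]
is then a nonzero polynomial over $K^\sigma$ vanishing at $x$, whence $x$ is algebraic over $K^\sigma$.

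For $(1) \Rightarrow (2)$: Suppose $x$ is algebraic over $K^\sigma$, and let $P(T) \in K^\sigma[T]$ be a nonzero polynomial with $P(x) = 0$. Since $\sigma$ acts as the identity on $K^\sigma$, applying $\sigma$ to the relation $P(x) = 0$ yields $P(\sigma(x)) = 0$, and by induction $P(\sigma^i(x)) = 0$ for every $i \in \mathbb{N}$. As $P$ has only finitely many roots in $L$, the orbit $\{\sigma^i(x) : i \in \mathbb{N}\}$ is finite. Therefore there exist integers $0 \leq i < j$ with $\sigma^i(x) = \sigma^j(x)$; applying the automorphism $\sigma^{-i}$ gives $x = \sigma^{j-i}(x)$ with $r := j-i \in \mathbb{N}^*$.

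The argument is essentially mechanical; there is no real obstacle. The only point worth underlining is that both directions rely on the hypothesis $L^\sigma = K^\sigma$: in $(2) \Rightarrow (1)$ this is what places the symmetric functions inside $K^\sigma$ (rather than just $L^\sigma$), and in $(1) \Rightarrow (2)$ it is what allows $\sigma$ to be applied to the defining relation of $x$ over $K^\sigma$ without altering the coefficients of $P$.
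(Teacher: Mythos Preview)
Your proof is correct and essentially identical to the paper's: one direction forms the product polynomial $\prod_{i=0}^{r-1}(T-\sigma^i(x))$ and observes its coefficients are $\sigma$-fixed, the other uses that the orbit of $x$ under $\sigma$ lies in the finite root set of a polynomial over $K^\sigma$. One small correction to your closing commentary: the hypothesis $L^\sigma = K^\sigma$ is needed only in $(2)\Rightarrow(1)$; in $(1)\Rightarrow(2)$, $\sigma$ fixes the coefficients of $P$ simply because they lie in $K^\sigma$, which is by definition the $\sigma$-fixed subfield of $K$.
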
 

\begin{proof}
Assume that $x$ is algebraic over $K^\s$. Then, $\s$ induces a permutation on the set of roots of the minimal polynomial of $x$ over $K^\s$. Thus, there exists $r \in \N^*$ such that $\s^r(x)=x$. Conversely, if there exists $r \in \N^*$ such that $\s^r(x)=x$, the polynomial $P(X)=\prod_{i=0}^{r-1} (X-\s^i(x)) \in L[X]$ is fixed by $\s$ and thereby $P(X) \in L^\s[X]=K^\s[X ]$. Since $P(x)=0$, we have proved that $x$ is algebraic over $K^\s$.
\end{proof}

\begin{lemma}\label{lem:transccriteriaforlog}
Let $(K,\s) \subset (L,\s)$  be an extension of difference fields such that $L^\s=K^\s$. Let $f \in L$ and $0\neq c \in K$, such that 
$\s(f)=f +c$. The following statements are equivalent
\begin{enumerate}
\item $ f \in K$;
\item $f$ is algebraic over $K$;
\item There exists  $\alpha \in K$ such that $\s(\alpha)=\alpha +c$.
\end{enumerate}
Moreover, let  $\overline{K}$ be the algebraic closure of $K$  endowed  with a structure of $\s$-field extension of $K$.  For all $\alpha \in \overline{K}, i \in \Z$ we denote by $\alpha_i$ the element of $\overline{K}$ such that $\sigma^{i}( f-\alpha)=f-\alpha_i$.  If $f$ is transcendental over $K$ then for $i,j \in \Z$ such that  $i \neq j$, the elements $\alpha_j$ and $\alpha_i$ are distinct.
\end{lemma}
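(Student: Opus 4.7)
The plan is to prove the equivalence of $(1)$, $(2)$, $(3)$ via the implications $(1)\Rightarrow(2)$ (trivial), $(1)\Rightarrow(3)$ (take $\alpha=f$), $(3)\Rightarrow(1)$ (from $\sigma(f-\alpha)=f-\alpha$ and the hypothesis $L^\sigma=K^\sigma$, conclude $f-\alpha\in K^\sigma\subset K$), and $(2)\Rightarrow(3)$, the last being the only nontrivial step. The ``Moreover'' will then follow by reducing the equation $\alpha_i=\alpha_j$ to an instance of this equivalence at the level of $\sigma^r$, combined with Lemma~\ref{lemma:algoverdifferenceconstants}.

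For $(2)\Rightarrow(3)$, I would let $P(X)=X^n+a_{n-1}X^{n-1}+\cdots+a_0\in K[X]$ be the monic minimal polynomial of $f$ over $K$. Applying $\sigma$ to $P(f)=0$ and using $\sigma(f)=f+c$, one sees that $P^\sigma(X)$ and $P(X-c)$ are both monic polynomials of degree $n$ in $K[X]$ admitting $f+c$ as a root. Since $\sigma$ is an automorphism of $K$, $P^\sigma$ is the minimal polynomial of $\sigma(f)$ over $K$, forcing $P^\sigma(X)=P(X-c)$. Extracting the coefficient of $X^{n-1}$ yields $\sigma(a_{n-1})=a_{n-1}-nc$, so $\alpha:=-a_{n-1}/n\in K$ satisfies $\sigma(\alpha)=\alpha+c$.

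For the Moreover, I argue by contradiction: suppose $f$ is transcendental over $K$ and $\alpha_i=\alpha_j$ for some $i>j$, and set $r:=i-j>0$. Unwinding the definition of $\alpha_\ell$ yields $\sigma^r(f-\alpha)=f-\alpha$, hence
\begin{equation*}
\sum_{k=0}^{r-1}\sigma^k(c)=\sigma^r(f)-f=\sigma^r(\alpha)-\alpha=:\tilde c\in K.
\end{equation*}
If $\tilde c=0$, then $\sigma^r(f)=f$ and Lemma~\ref{lemma:algoverdifferenceconstants} forces $f$ to be algebraic over $K^\sigma\subset K$, a contradiction. If $\tilde c\neq 0$, then $\alpha\in\overline K$ is algebraic over $K$ and solves $\sigma^r(\alpha)=\alpha+\tilde c$; applying the implication $(2)\Rightarrow(3)$ proven above to the difference field $(K,\sigma^r)$---crucially, its proof is purely algebraic in the coefficients of the minimal polynomial and transports verbatim to any automorphism of $K$---produces $\beta\in K$ with $\sigma^r(\beta)=\beta+\tilde c$. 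Then $\sigma^r(f-\beta)=f-\beta$, and a second application of Lemma~\ref{lemma:algoverdifferenceconstants} makes $f-\beta$, hence $f$, algebraic over $K^\sigma\subset K$, again contradicting the transcendence of $f$. The main delicate point is precisely this transfer of $(2)\Rightarrow(3)$ from $\sigma$ to $\sigma^r$, which is what enables the descent from a solution in $\overline K$ to a solution in $K$.
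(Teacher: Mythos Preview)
Your proof is correct and follows essentially the same approach as the paper: the trace argument on the minimal polynomial for $(2)\Rightarrow(3)$, and for the ``Moreover'' part the same trace argument redeployed at the level of $\sigma^r$ together with Lemma~\ref{lemma:algoverdifferenceconstants}. The only cosmetic differences are that the paper redoes the trace computation inline (applied to $\beta=\sigma^j(\alpha)$ rather than to $\alpha$ itself) instead of invoking $(2)\Rightarrow(3)$ as a black box for $(K,\sigma^r)$, and does not split off the case $\tilde c=0$ separately.
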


\begin{proof}
Let us prove the first part of the proposition. The first statement implies trivially the second one. Assume that $f$ is algebraic over $K$ and let $P(X)=X^n +a_{n-1}X^{n-1}+\dots a_0 \in K[X]$ be its minimal polynomial over $K$. Note that $n \neq 0$. Using $\s(f)-f=c$ and $P(f)=0$, we find that 
$\s(P(f))-P(f)=0=( nc +\s(a_{n-1})-a_{n-1}) f^{n-1} +b_{n-2}f^{n-2}+\dots+b_0$ with $b_i \in K$ for $i=0,\dots,n-2$. By minimality of $P(X)$, we find that $ \s(a_{n-1})-a_{n-1}=-nc$ with $a_{n-1} \in K$. Then, $\s(\alpha)- \alpha=c$ with $\alpha=\frac{a_{n-1}}{-n} \in K$. We have shown that the second statement implies the third. Finally, assume that there exists $\alpha \in K$ such that $\s(\alpha)=\alpha +c$. With $\s(f)-f=c$, we find that $\s(\alpha-f)=\alpha -f$. This gives that $\alpha -f \in L^\s=K^\s$ and the element $f$ belongs to $K$. 

Now, let us assume that $f$ is transcendental over $K$. Suppose to the contrary that there exist $\alpha \in \overline{K}$ and $i >j \in \Z$ such that  $$\alpha_i=(\s^i(\alpha) -c - \sigma(c) -\dots-\sigma^{i-1}(c))= \alpha_j=(\s^j(\alpha) -c - \sigma(c) -\dots-\sigma^{j-1}(c)).$$ 
The latter equality gives $\s^r(\beta)- \beta= \gamma$ where $r=i-j >0$, $\beta =\s^j(\alpha)$ and $\gamma =\s^{i-1}(c) +\dots +\s^{j}(c)$. Since $\alpha$ is algebraic over $K$, the same holds for $ \beta$. Let ${P(X)=X^n +a_{n-1}X^{n-1}+\dots +a_0 \in K[X] \setminus K}$ be the  minimal polynomial of $\beta$ over $K$. Using the fact that $\s^r(\beta)-\beta=\gamma$ and the minimality of $P$, we conclude, as above, that $\s^r(a_{n-1})-a_{n-1}=-n\gamma$, that is $\s^r(\tilde{\beta})-\tilde{\beta}=\gamma$ where $\tilde{\beta}=\frac{a_{n-1}}{-n} \in K$. Combining this equality with 
$\s^r(\s^j(f))-\s^j(f)=\gamma$, we find that $\tilde{\beta}- \s^j(f) \in L$ is fixed by $\s^r$. By Lemma~\ref{lemma:algoverdifferenceconstants}, this means that $\tilde{\beta}- \s^j(f) $ is algebraic over $K^\s$, which yields to $f$ algebraic over $K$. We find a contradiction. 
\end{proof}

\begin{lemma}\label{lem:transclogpolynomialinlogcocycle}
Let $(K,\s) \subset (L,\s)$  be an extension of difference fields such that $L^\s=K^\s$. Let $f \in L$ and $0\neq c \in K$, such that 
$\s(f)=f +c$.  Assume that $f$ is transcendental over $K$. If there exists $g \in K(f)$ such that
 $\s(g)-g \in K[f]$, then  $g \in K[f]$.
\end{lemma}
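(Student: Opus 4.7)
The plan is to extend everything to $\overline{K}(f)$ and perform an analysis of principal parts at poles. First I would fix an extension of $\sigma$ to $\overline{K}$, and then extend to $\overline{K}(f)$ by setting $\sigma(f) = f + c$, so that $(K(f),\sigma) \subset (\overline{K}(f),\sigma)$ is an extension of difference fields. Since $f$ is transcendental over $K$, it is transcendental over $\overline{K}$ as well, so $\overline{K}(f)$ is a field of rational functions in one variable. Writing $g = P/Q \in K(f) \subset \overline{K}(f)$ in lowest terms with $Q$ monic, the assertion $g \in K[f]$ reduces to showing that $g$ has no finite poles in $\overline{K}(f)$.

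The next step would be to track how the principal part of $g$ at a pole transforms under $\sigma$. If $g$ has a principal part $\sum_{k=1}^{n} c_k/(f-\beta)^k$ at a point $\beta \in \overline{K}$ (with $c_n \neq 0$, so $\beta$ is a pole of multiplicity $n$), then applying $\sigma$ to numerator and denominator and using $\sigma(f)-\sigma(\beta) = f - (\sigma(\beta)-c)$ shows that $\sigma(g)$ has a principal part $\sum_{k=1}^{n} \sigma(c_k)/(f-\tilde\sigma(\beta))^k$ at the point $\tilde\sigma(\beta) := \sigma(\beta)-c$. Thus if $\alpha \in \overline{K}$ is a finite point, the principal part of $\sigma(g)-g$ at $\alpha$ is the difference of the principal part of $\sigma(g)$ at $\alpha$ (coming from $\tilde\sigma^{-1}(\alpha)$ being a pole of $g$) and the principal part of $g$ at $\alpha$. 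Since $\sigma(g)-g \in K[f]$ has no finite poles, these two principal parts must coincide for every $\alpha$; in particular, matching the top coefficient, if $\alpha$ is a pole of $g$ of multiplicity $n_\alpha$, then $\tilde\sigma^{-1}(\alpha)$ must also be a pole of $g$ of multiplicity at least $n_\alpha$.

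The conclusion is then immediate from the second part of the previous Lemma (\ref{lem:transccriteriaforlog}), which states that the elements $\tilde\sigma^i(\alpha) = \alpha_i$ are pairwise distinct for $i \in \Z$ precisely because $f$ is transcendental over $K$. Iterating the relation "$\alpha$ pole of $g$ $\Rightarrow$ $\tilde\sigma^{-1}(\alpha)$ pole of $g$" starting from any pole $\alpha_0$ of $g$ would produce an infinite sequence $\{\tilde\sigma^{-n}(\alpha_0)\}_{n \geq 0}$ of pairwise distinct finite poles of $g$, contradicting $g \in K(f) \subset \overline K(f)$.

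The main subtlety to handle carefully is the bookkeeping of multiplicities and the possibility that a pole of $\sigma(g)$ might partially cancel a pole of $g$ of different multiplicity; but matching the highest-order term of the principal part (using that $\sigma$ is injective, so $\sigma(c_n)\neq 0$) already forces $\tilde\sigma^{-1}(\alpha)$ into the pole set, which is enough for the infinite-descent contradiction. No homogeneous/inhomogeneous subtleties arise here: the hypothesis $c \neq 0$ is only needed implicitly to apply the transcendence lemma, and $\sigma(g)-g \in K[f]$ is used only through the vanishing of finite poles, not through its precise polynomial form.
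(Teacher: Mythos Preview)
Your proof is correct and follows essentially the same approach as the paper: pass to the algebraic closure $\overline{K}$, analyze the partial fraction decomposition of $g$, and use Lemma~\ref{lem:transccriteriaforlog} to ensure the $\tilde\sigma$-orbit of any pole consists of distinct points, forcing a contradiction with $\sigma(g)-g\in K[f]$. The only cosmetic difference is in how the contradiction is extracted: you iterate $\tilde\sigma^{-1}$ to produce infinitely many poles, whereas the paper fixes the maximal pole order $R$, takes the largest $N$ with $\sigma^N\!\big(\tfrac{1}{(f-\alpha)^R}\big)$ present in $g$, and observes that $\sigma^{N+1}\!\big(\tfrac{1}{(f-\alpha)^R}\big)$ then survives uncancelled in $\sigma(g)-g$.
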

\begin{proof}
Let $\overline{K}$ be an algebraic closure of $K$, endowed with a structure of $\s$-field extension of $K$.  Since  $f$ is transcendental over $K$, we can  write  a partial fraction decomposition of $g\in \overline{K}(f)$. Let  $R$ be  the  largest  integer such that there exists $\alpha \in \overline{K}$  so that the element $\frac{1}{(f -\alpha)^R}$ appears in the partial fraction decomposition of $g$.  Suppose to the contrary that $R >0$ and let  $\alpha \in \overline{K}$ such that $\frac{1}{(f -\alpha)^R}$ appears in the partial fraction decomposition of $g$. We deduce from  Lemma~\ref{lem:transccriteriaforlog}  applied to $K$ and $f$, that the elements  $ \{\alpha_i, i\in \Z\}$  are all distinct. Then, there exists $N$, the largest  integer such that $ \s^N(\frac{1}{(f -\alpha)^R})$ appears in the partial fraction decomposition of $g$. The element $ \s^{N+1}(\frac{1}{(f -\alpha)^R})$ appears in the partial fraction decomposition of $\s(g)$. This proves that   $ \s^{N+1}(\frac{1}{(f -\alpha)^R})  $ appears in the partial fraction decomposition of $\s (g)-g$. A contradiction with  $\s(g)-g \in K[f]$. This proves that $g \in K[f]$.
\end{proof}

\subsection{Differential transcendence criteria}\label{sec31}

In this section, a $(\s,\partial,\Delta)$-field $K$ is a difference field $(K,\s)$ endowed with two   derivations $\partial,\Delta$ commuting with $\s$ such that $\partial\Delta-\Delta \partial =c_K\partial$ with $c_K \in K^\s$. We assume that $\partial$ is nontrivial on $K$, that is, it is not the zero derivation. The element $c_K$ has to be considered as part of the data of the notion of $(\s,\partial,\Delta)$-field. An extension of $(\s,\partial,\Delta)$-fields is an  inclusion of two $(\s,\partial,\Delta)$-fields  $(K,\s_K,\partial_K,\Delta_K ) \subset (L,\s_L,\partial_L,\Delta_L )$ such that
\begin{itemize}
\item $K\subset L$ is a field extension;
\item $\s_K,\partial_K,\Delta_K$ are the restrictions of $\s_L,\partial_L,\Delta_L$ to $K$;
\item $c_K=c_L$. 
\end{itemize}  
If there is no confusion, we shall omit the subscripts $~_K,~_L$. If $\s$ is the identity, we  shall speak of  $(\partial,\Delta)$-fields, $(\partial,\Delta)$-fields extension for short.

\begin{ex}
As proved in \S \ref{sec:merofunctiontate}, the following fields  are $(\sigma,\partial,\Delta)$-fields, that correspond respectively to the framework of the genus zero and genus one kernel curve. Remind that $\s_\q$ denote the automorphism of $\cM er(C^*)$ defined by $f(s)\mapsto f(\q s)$ and  $C_\q$ denote the field of meromorphic functions fixed by $\s_\q$.
 In the two examples, we have $\Delta_{\q,t}=\partial_t(\q)\ell_\q(s)\partial_s +\partial_t$ where $\ell_\q$ is the so called $\q$-logarithm. That is,    an element of $\cM er(C^*)$  satisfying $\s_\q (\ell_\q)=\ell_\q+1$, and $c_{K}=\partial_t(\q) \partial_s(\ell_\q) \in C_\q$.
 \begin{itemize}
\item Let $\q\in C^*$ with $|\q|\neq 1$. Then, the inclusion 
 $$(C_{\q}(s,\ell_{\q}),\sigma_{\q},\partial_s,\Delta_{t,\q} ) \subset (\cM er(C^*),\sigma_{\q},\partial_s,\Delta_{t,\q} )$$
 is an extension of $(\sigma,\partial,\Delta)$-fields.
\item  Let $\q$ and $q$ two elements of $C^*$ such that $|q|,|\q|\neq 1$, that are \emph{multiplicatively independent}, that is,  there are no $r,l \in \Z^{2} \setminus (0,0)$ such that $q^r=\q^l$. Since ${C_\q\subset \mathcal{M}er(C^{*})}$ and  $C_{q}\subset \mathcal{M}er(C^{*})$, we 
consider $C_{\q}.C_{q}\subset \mathcal{M}er(C^{*})$,   the field  compositum  of  $C_{\q}$ and $C_{q}$ inside $ \mathcal{M}er(C^{*})$. Then, the inclusion
$$(C_{\q}.C_{q}(\ell_{\q},\ell_{q}),\sigma_{\q},\partial_s,\Delta_{t,\q} ) \subset (\cM er(C^*),\sigma_{\q},\partial_s,\Delta_{t,\q} )$$
 is an extension of $(\sigma,\partial,\Delta)$-fields.
\end{itemize}
\end{ex}

\begin{defi}\label{defi:diffalgtwoderivations}
Let $(K,\partial,\Delta) \subset (L,\partial,\Delta)$. An element $f \in L$ is said to be $(\partial,\Delta)$-differentially algebraic over $K$ if there exists $N\in \N$, such that the elements 
\begin{itemize}
\item  $ \partial^i (f)$ for $i \leq N$ are algebraically dependent over $K$ if  $\Delta$ is a $K$-multiple of $\partial$; 
\item  $\partial^i\Delta^j(f)$ for $i,j\leq N$ are algebraically dependent over $K$ otherwise.\end{itemize}
 Otherwise, we will say that $f$ is  $(\partial,\Delta)$-transcendental over $K$.
\end{defi}

\begin{rmk}
Note that since $ \partial\Delta-\Delta \partial=c\partial$ with $c \in K^{\sigma}\subset K$, the $(\partial, \Delta)$-field extension of $K$ generated by some element $f \in L$ coincides with the field extension of $K$ generated by the set  
$\{ \partial^i\Delta^j(f), \mbox{ for } i,j \in \N \}$.
\end{rmk}

Let us make a remark concerning the field of definition of the coefficients of the differential polynomials. 

\begin{rmk}\label{rmk:CdifftransQdifftrans}
Let $(K,\partial,\Delta) \subset (K',\partial,\Delta)\subset (L,\partial,\Delta)$ and assume that $K'$ is a field generated over $K$ by elements that are $(\partial,\Delta)$-differentially algebraic  over $K$.
By \cite[Proposition 8, Page 101]{kolchin1973differential}, $f\in L$  is $(\partial,\Delta)$-differentially transcendental over $K$ if and only if it is  $(\partial,\Delta)$-differentially transcendental over $K'$.
\end{rmk}

The following lemma will be crucial in many arguments: 
\begin{lemma}\label{lem:lineardisjonctionoverconstant}
If $K \subset M$ is a $\s$-field extension  such that $M^{\sigma}=K$ and $K \subset L$ is a $\s$-field extension with $L^\s=L$. Then $M$ and $L$ are linearly disjoint over $K$.
\end{lemma}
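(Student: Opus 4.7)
The plan is to argue by contradiction via the classical ``minimal relation'' technique that underlies many linear-disjointness proofs in difference Galois theory. Recall that linear disjointness of $M$ and $L$ over $K$ amounts to showing that any finite family of elements of $L$ which is $K$-linearly independent remains $M$-linearly independent. So I would assume, for contradiction, the existence of elements $\ell_1,\dots,\ell_n \in L$ that are $K$-linearly independent but $M$-linearly dependent, and choose such a family with $n$ minimal.

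Next, I would pick a nontrivial relation $m_1\ell_1 + \cdots + m_n\ell_n = 0$ with $m_i \in M$. By minimality, every $m_i$ is nonzero, so after dividing through by $m_n$ I may assume $m_n = 1$. The key move is to apply $\sigma$ to this relation: since $L^\sigma = L$, every $\ell_i$ is fixed by $\sigma$, and $\sigma(m_n) = \sigma(1) = 1$. Thus $\sigma(m_1)\ell_1 + \cdots + \sigma(m_{n-1})\ell_{n-1} + \ell_n = 0$. Subtracting the original relation yields
\[
(\sigma(m_1) - m_1)\ell_1 + \cdots + (\sigma(m_{n-1}) - m_{n-1})\ell_{n-1} = 0,
\]
a relation involving only $n-1$ of the $\ell_i$'s.

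By the minimality of $n$, this shorter relation must be trivial, so $\sigma(m_i) = m_i$ for every $i \leq n-1$. Hence each $m_i$ lies in $M^\sigma$, which by hypothesis equals $K$. But then $m_1\ell_1 + \cdots + m_{n-1}\ell_{n-1} + \ell_n = 0$ is a nontrivial $K$-linear relation among $\ell_1,\dots,\ell_n$, contradicting their $K$-linear independence. This contradiction forces the initial assumption to fail and establishes linear disjointness.

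I do not expect any genuine obstacle here: the argument is structurally identical to Wibmer's proof of the analogous fact over difference fields (compare \cite[Lemma~1.1.6]{wibmerthesis}, which is cited in the paper for an essentially equivalent statement). The only point requiring slight care is the normalization step $m_n = 1$, which is what allows $\sigma$ to produce an honest shortening of the relation; without it one only gets a relation in which the coefficient of $\ell_n$ may also be nonzero after subtraction, losing the induction.
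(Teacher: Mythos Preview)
Your proof is correct and follows essentially the same approach as the paper's own proof: both take a minimal nontrivial $M$-linear relation among $K$-independent elements of $L$, normalize one coefficient to $1$, apply $\sigma$ and subtract to obtain a shorter relation, and then use minimality together with $M^\sigma = K$ to force all coefficients into $K$, yielding the contradiction. The only cosmetic difference is that the paper normalizes the first coefficient rather than the last.
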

\begin{proof}
Let $c_1,\dots,c_r \in L$ be $K$-linearly independent elements,  that become dependent 
over $M$. Up to a permutation of the $c_i$'s,  a minimal linear relation among these elements over $M$ has the following form
\begin{equation}\label{eq:minimalliaison}
c_1 +\sum_{i=2}^r \lambda_i c_i =0,
\end{equation}  
with $\lambda_i \in M$ for $i=2,\dots,r$. Computing $\s(\eqref{eq:minimalliaison})-\eqref{eq:minimalliaison}$, we find
$$
\sum_{i=2}^r (\s(\lambda_i)-\lambda_i) c_i =0.
$$ 
By minimality, $\s(\lambda_i)=\lambda_i$ and $\lambda_i \in M^{\s}=K$. By $K$-linear independence of the $c_i$, we find that $\lambda_i=0$ for $i=2,\dots,r$ and then $c_1=0$. A contradiction.
\end{proof}

The following statement, whose proof is due to Michael Singer, is a  version of an old theorem of Ostrowski \cite{Ost46, Kol68} and  its proof follows the lines of   the proof of \cite[Proposition~3.6]{DHRS}. In this last paper,  it was assumed that $K^\s$ is algebraically closed, which is not the case in  this article.  One could use the powerful scheme-theoretic tools  developed in  \cite{OvchinnikovWibmer} to prove the result in our more general setting.  Instead we will argue in a more elementary way to reduce  Theorem~\ref{thm:abstractdifftransGaloiscriteria} to the case where $K^\s$ is algebraically closed. 
\begin{thm}\label{thm:abstractdifftransGaloiscriteria} Let $(K,\s,\partial, \Delta) $ be a  $(\s,\partial, \Delta)$-field such that 
$K^\s$ is relatively algebraically closed in $K$, that is there are no proper algebraic extension of $K^\s$ inside $K$. Let $(L,\s,\partial, \Delta)$ be a $(\s,\partial, \Delta)$-ring extension of $(K,\s,\partial, \Delta) $.  Let $f \in L$ and $b \in K$ such that $\s(f)=f+b$. If $f$ is $(\partial,\Delta)$-differentially algebraic over $K$ then there exist $\ell_{1},\ell_{2}\in \N$,  $c_{i,j} \in K^\s$ not all zero and $g \in K$ such that 
\begin{equation}\label{eq:generaltelescoper}
\sum_{\substack{
0\leq i\leq \ell_{1},\\ 0\leq j\leq \ell_{2}}}c_{i,j} \partial^i\Delta^j(b)= \s(g)-g.
\end{equation}  
Furthermore, we may take $\ell_2=0$ in the case where $\partial$ and $\Delta$ are $K$-linearly dependent.  We call \eqref{eq:generaltelescoper} a telescoping relation for $b$.
\end{thm}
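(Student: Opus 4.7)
The plan is to reduce to the case where $K^\sigma$ is algebraically closed, in which a telescoping statement follows from parameterized difference Galois theory as in \cite{HS} and \cite[Proposition~3.6]{DHRS}, and then to descend the resulting relation to $K$ by exploiting that $K^\sigma$ is relatively algebraically closed in $K$, via Lemma~\ref{lem:lineardisjonctionoverconstant}.

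First, I would construct a $(\sigma,\partial,\Delta)$-field extension $\tilde K \supset K$ with algebraically closed $\sigma$-constants. Let $\tilde C$ be an algebraic closure of $K^\sigma$, extend $\partial$ and $\Delta$ uniquely to $\tilde C$ (derivations extend uniquely along algebraic extensions), and declare $\sigma$ to be the identity on $\tilde C$. By Lemma~\ref{lem:lineardisjonctionoverconstant}, applied with $M=K$ (whose $\sigma$-constants are $K^\sigma$ by assumption) and the trivial $\sigma$-field $\tilde C$, the fields $K$ and $\tilde C$ are linearly disjoint over $K^\sigma$. Writing $\tilde C$ as a direct limit of finite algebraic extensions of $K^\sigma$ then forces $\tilde K := K \otimes_{K^\sigma} \tilde C$ to be a field with the natural $(\sigma,\partial,\Delta)$-structure and with $\tilde K^\sigma = \tilde C$. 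One embeds $L$ and $\tilde K$ into a common $(\sigma,\partial,\Delta)$-overring; the $(\partial,\Delta)$-differential algebraicity of $f$ over $K$ persists over $\tilde K$ thanks to Remark~\ref{rmk:CdifftransQdifftrans}.

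Next, I would apply the algebraically closed case. Iterating $\partial^i\Delta^j$ on $\sigma(f)=f+b$ and using the commutations $\sigma\partial=\partial\sigma$ and $\sigma\Delta=\Delta\sigma$ yields $\sigma(\partial^i\Delta^j f) = \partial^i\Delta^j f + \partial^i\Delta^j(b)$, so each iterate satisfies a rank-one difference equation with inhomogeneity in $K$. The hypothesis that $f$ is $(\partial,\Delta)$-differentially algebraic forces algebraic dependence among finitely many such iterates over $\tilde K$, and a parameterized Ostrowski-type statement for a $(\sigma,\partial,\Delta)$-field with algebraically closed $\sigma$-constants (as in \cite{HS} and \cite[Proposition~3.6]{DHRS}, adapted to the twisted commutation $\partial\Delta-\Delta\partial=c_K\partial$, which only enlarges the box of exponents one must consider) produces $\tilde c_{i,j}\in\tilde C$ not all zero and $\tilde g \in \tilde K$ with
$$\sum_{i\le \ell_1,\, j\le \ell_2} \tilde c_{i,j}\,\partial^i\Delta^j(b) \;=\; \sigma(\tilde g)-\tilde g.$$
When $\partial$ and $\Delta$ are $K$-linearly dependent, only $\partial$ is effectively present and one may take $\ell_2=0$.

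Finally, I would descend this identity. Fix a $K^\sigma$-basis $\{\theta_\beta\}$ of $\tilde C$; by the linear disjointness above, every element of $\tilde K = K\otimes_{K^\sigma}\tilde C$ has a unique expansion $\sum_\beta k_\beta \theta_\beta$ with $k_\beta \in K$. Writing $\tilde c_{i,j}=\sum_\beta \mu_{i,j,\beta}\,\theta_\beta$ with $\mu_{i,j,\beta}\in K^\sigma$ and $\tilde g=\sum_\beta g_\beta\,\theta_\beta$ with $g_\beta\in K$, and using that $\sigma$ fixes each $\theta_\beta$ while the $\theta_\beta$ are $K$-linearly independent, one equates the coefficients of $\theta_\beta$ on both sides to obtain, for every $\beta$,
$$\sum_{i,j} \mu_{i,j,\beta}\,\partial^i\Delta^j(b) \;=\; \sigma(g_\beta)-g_\beta.$$
Some $\tilde c_{i,j}$ is nonzero, so some $\mu_{i,j,\beta}$ is nonzero, yielding the required telescoping relation over $K$. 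The main obstacle will be the third step, namely producing the algebraically closed-constant parameterized Ostrowski statement in the $(\sigma,\partial,\Delta)$-setting with twisted commutator, since the cited references primarily address commuting derivations; once that is in hand, the construction of $\tilde K$ and the descent are essentially formal consequences of Lemma~\ref{lem:lineardisjonctionoverconstant}.
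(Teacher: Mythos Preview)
Your three-step architecture (base change to algebraically closed $\sigma$-constants, apply an Ostrowski-type result, descend along a $K^\sigma$-basis of $\tilde C$) is exactly the paper's strategy, and your descent step is identical to theirs. However, the obstacle you flag in your middle step is a phantom, and the paper's route around it is worth knowing.

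You reach for a \emph{parameterized} Ostrowski statement over $\tilde K$ with derivations $\partial,\Delta$ and worry about adapting it to the twisted commutator $\partial\Delta-\Delta\partial=c_K\partial$. The paper never does this. Instead, once one has finitely many elements $z_r=\partial^{i_r}\Delta^{j_r}(f)$ that are algebraically dependent over $K$ and satisfy $\sigma(z_r)-z_r=b_r:=\partial^{i_r}\Delta^{j_r}(b)\in K$, one forgets the derivations entirely and applies the \emph{unparameterized} additive Ostrowski lemma (\cite[Proposition~A.9]{DHRS}, quoted here as Lemma~\ref{lemma:ost}) to this finite system. That lemma requires algebraically closed $\sigma$-constants and a Picard-Vessiot ring, nothing more; the commutation relation between $\partial$ and $\Delta$ is irrelevant at this stage because the $b_r$ are already elements of $K$. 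This is packaged as Lemma~\ref{lemma:relativclosedconstant}, which is then invoked in one line to prove the theorem.

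A second point: your ``embed $L$ and $\tilde K$ into a common $(\sigma,\partial,\Delta)$-overring'' is vague, and in fact the derivations need not extend. The paper works in $\tilde L=L\otimes_{K^\sigma}\mathbf k$ as a $\sigma$-ring only, forms $S=\tilde K[z_0,\dots,z_n]\subset\tilde L$, and quotients by a maximal $\sigma$-ideal to obtain a genuine Picard-Vessiot ring $R$ over $\tilde K$; the images of the $z_r$ in $R$ remain algebraically dependent, so Lemma~\ref{lemma:ost} applies. This concrete construction is what makes the middle step go through cleanly, with no appeal to parameterized theory.
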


The proof of this result depends on results from the Galois theory of linear difference equations and we will refer to \cite[Appendix A]{DHRS} and the references given there for relevant facts from this theory. Let $(K, \sigma)$ be a difference field and consider the system of difference equations 
\begin{eqnarray} \label{eq:addsys}
\sigma(y_0) - y_0 = b_0,  \ldots , \sigma(y_n) - y_n = b_n ,\  \mbox{ with } \ b_0, \ldots , b_n  \in K.
\end{eqnarray}
Let us see \eqref{eq:addsys} as a system $\sigma (Y)=AY$, where $A\in \GL_{2(n+1)}(K)$ is a diagonal bloc matrix $A=\mathrm{Diag}(A_{0},\dots,A_{n})$ with $A_{i}=\begin{pmatrix}
1 & b_{i} \\ 
0& 1
\end{pmatrix}$ which correspond to the equation ${\sigma(y_i) - y_i = b_i}$. A Picard-Vessiot extension for $\sigma (Y)=AY$ is a difference ring extension $(R,\sigma)$ of $(K,\sigma)$ such that: 
\begin{itemize}
\item there exists $U\in \GL_{2(n+1)}(R)$ such that $\sigma (U)=AU$;
\item $R$ is generated as a $K$-algebra by the entries of $U$ and $\det(U)^{-1}$;
\item $R$ is a simple difference ring, that is, the $\sigma$-ideals of $R$ are $\{0\}$ and $R$.
\end{itemize}
 We will need the following result.
 
 \begin{lemma}[Proposition A.9 in \cite{DHRS}] \label{lemma:ost} Assume that $(K,\s)$ is a difference field with $K^\s$ algebraically closed. Let $R$ be a Picard-Vessiot extension  for the system \eqref{eq:addsys} and $z_0, \ldots , z_n \in R$ be solutions of this system.  If $z_0, \ldots , z_n $ are algebraically dependent over $K$, then there exist $c_i \in K^\s$, not all zero,  and $g \in K$ such that 
\[c_0 b_0 + \ldots + c_n b_n = \s(g) - g.\]  \end{lemma}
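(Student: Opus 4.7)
The plan is to reduce to the case handled by Lemma \ref{lemma:ost}, where the field of $\s$-constants is algebraically closed, by an explicit base change, and then to descend the resulting telescoping relation back to $K^\s$-coefficients.

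First I would set up the bookkeeping. Put $a_{i,j}=\partial^i\Delta^j(b)\in K$ and $z_{i,j}=\partial^i\Delta^j(f)\in L$ for $0\le i\le \ell_1,\ 0\le j\le \ell_2$ (restricting to $j=0$ when $\Delta$ is a $K$-multiple of $\partial$, which handles the ``moreover'' clause with no change of argument). Since $\s$ commutes with $\partial$ and $\Delta$, applying $\partial^i\Delta^j$ to $\s(f)-f=b$ yields $\s(z_{i,j})-z_{i,j}=a_{i,j}$. The assumption that $f$ is $(\partial,\Delta)$-differentially algebraic over $K$ lets us fix $\ell_1,\ell_2$ so that the finite family $(z_{i,j})$ is algebraically dependent over $K$. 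The target is then to exhibit a nonzero $K^\s$-linear combination of the $a_{i,j}$ of the form $\s(g)-g$ with $g\in K$.

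Next I would enlarge the ground field to make the $\s$-constants algebraically closed. Since we are in characteristic zero, $K^\s$ being relatively algebraically closed in $K$ means that $K/K^\s$ is a regular extension, so for every finite subextension $F/K^\s$ of an algebraic closure $\overline{K^\s}$, the ring $K\otimes_{K^\s}F$ is a field. Hence $\widetilde{K}:=K\otimes_{K^\s}\overline{K^\s}$, which is the direct limit of such fields, is itself a field. Extend $\s,\partial,\Delta$ to $\widetilde{K}$ by letting $\s$ act trivially and $\partial,\Delta$ act as zero on $\overline{K^\s}$; decomposing along any $K^\s$-basis of $\overline{K^\s}$ shows $\widetilde{K}^\s=\overline{K^\s}$. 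Similarly embed $L$ into a $(\s,\partial,\Delta)$-ring extension $\widetilde{L}$ containing $\widetilde{K}$.

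Now I would invoke Picard-Vessiot theory over $\widetilde{K}$. Let $R$ be a Picard-Vessiot ring over $\widetilde{K}$ for the block-diagonal system $\s(y_{i,j})-y_{i,j}=a_{i,j}$, with its tautological solutions $u_{i,j}\in R$; by construction $R^\s=\overline{K^\s}$ is algebraically closed. In a common $\s$-ring extension of $R$ and $\widetilde{L}$, each difference $z_{i,j}-u_{i,j}$ is $\s$-invariant, so any nontrivial algebraic relation over $K$ (hence over $\widetilde{K}$) among the $z_{i,j}$ produces one among the $u_{i,j}$ over $\widetilde{K}$. Lemma \ref{lemma:ost} then furnishes constants $\tilde c_{i,j}\in\overline{K^\s}$, not all zero, and $\tilde g\in\widetilde{K}$ with $\sum \tilde c_{i,j}\,a_{i,j}=\s(\tilde g)-\tilde g$.

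Finally, I would descend. Choose a finite subextension $F\subset\overline{K^\s}$ containing all the $\tilde c_{i,j}$ and large enough that $\tilde g\in K\otimes_{K^\s}F$, and fix a $K^\s$-basis $e_1,\dots,e_n$ of $F$, which is simultaneously a $K$-basis of $K\otimes_{K^\s}F$. Expanding $\tilde c_{i,j}=\sum_k c_{i,j,k}e_k$ with $c_{i,j,k}\in K^\s$ and $\tilde g=\sum_k g_k e_k$ with $g_k\in K$ and identifying coefficients on $\{e_k\}$ gives, for each $k$,
\[
\sum_{i,j} c_{i,j,k}\,a_{i,j}=\s(g_k)-g_k,
\]
and at least one $k$ yields a nonzero tuple $(c_{i,j,k})$, producing the required telescoping relation.

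The main obstacle is the Picard-Vessiot step: one must justify that algebraic dependence of the ``external'' solutions $z_{i,j}\in \widetilde L$ over $\widetilde K$ truly transfers to algebraic dependence of the ``internal'' solutions $u_{i,j}\in R$ over $\widetilde K$. This needs a common $\s$-extension of $R$ and $\widetilde L$ in which $R^\s=\overline{K^\s}$ is preserved, so that the constants $z_{i,j}-u_{i,j}$ do not introduce spurious relations; Lemma \ref{lem:lineardisjonctionoverconstant} is the right tool to control this.
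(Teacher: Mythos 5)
Your proposal does not prove the statement it was asked to prove. The target here is Lemma \ref{lemma:ost} itself: for a difference field $(K,\s)$ with $K^\s$ \emph{algebraically closed}, a Picard--Vessiot ring $R$ for the system \eqref{eq:addsys}, and solutions $z_0,\dots,z_n\in R$ that are algebraically dependent over $K$, one must produce $c_i\in K^\s$ not all zero and $g\in K$ with $c_0b_0+\dots+c_nb_n=\s(g)-g$. Your text instead proves the downstream result (essentially Lemma \ref{lemma:relativclosedconstant} combined with Theorem \ref{thm:abstractdifftransGaloiscriteria}): you start from ``$f$ is $(\partial,\Delta)$-differentially algebraic over $K$'' with $K^\s$ only \emph{relatively} algebraically closed, perform the base change $K\otimes_{K^\s}\overline{K^\s}$, and at the key moment you write ``Lemma \ref{lemma:ost} then furnishes constants $\tilde c_{i,j}$\dots''. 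As a proof of Lemma \ref{lemma:ost} this is circular: the lemma is invoked, not established. Nothing in your argument addresses the actual content of the lemma, which is a statement of difference Galois theory with no derivations and no relative-closure hypothesis in sight. (For the record, the paper does not prove it either; it is quoted from \cite{DHRS}, Proposition A.9, and a genuine proof would run through the Galois group of the unipotent system: the group is a closed subgroup of the vector group $\mathbb{G}_a^{n+1}$ over the algebraically closed constants, algebraic dependence of the $z_i$ over $K$ forces it to be proper, proper closed subgroups of a vector group in characteristic zero lie in a hyperplane $\sum c_iX_i=0$, and the Galois correspondence for the Picard--Vessiot ring then gives $\sum c_iz_i=g\in K$, whence $\sum c_ib_i=\s(g)-g$.)

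That said, the argument you did write is essentially the paper's own proof of Lemma \ref{lemma:relativclosedconstant} and Theorem \ref{thm:abstractdifftransGaloiscriteria}: the tensor-up to $\widetilde K=K\otimes_{K^\s}\overline{K^\s}$, the passage to a Picard--Vessiot ring by killing a maximal difference ideal, the application of Lemma \ref{lemma:ost}, and the descent of the telescoper by expanding along a $K^\s$-basis of the constants (justified by Lemma \ref{lem:lineardisjonctionoverconstant}) all match the paper. One technical remark on that part: the paper avoids your ``common $\s$-extension of $R$ and $\widetilde L$'' worry by building the Picard--Vessiot ring directly as a quotient of $\widetilde K[z_{i,j}]\subset\widetilde L$ by a maximal difference ideal, so the algebraic dependence of the $z_{i,j}$ is transported to the images $u_{i,j}$ simply by applying the quotient map, with no joint embedding needed. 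But none of this repairs the central defect: the statement you were asked to prove remains unproved.
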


Before proving Theorem \ref{thm:abstractdifftransGaloiscriteria}, we give a slight generalization of Lemma \ref{lemma:ost}.
\begin{lemma}\label{lemma:relativclosedconstant}
Let $(K,\s)$ be  a difference field with $K^\s$ relatively  algebraically closed in $K$ and let $b_0,\dots, b_n$ be some elements in $K$. Let $(L,\s)$ be a $\s$-ring extension of $(K,\s)$. Let  $z_0, \ldots , z_n \in L$ be solutions of  $\sigma(z_i)-z_i=b_i$.  If $z_0, \ldots , z_n $ are algebraically dependent over $K$, then there exist $c_i \in K^\s$, not all zero,  and $g \in K$ such that 
\[c_0 b_0 + \ldots + c_n b_n = \s(g) - g.\]
\end{lemma}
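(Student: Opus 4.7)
The strategy is to reduce Lemma \ref{lemma:relativclosedconstant} to Lemma \ref{lemma:ost} by first enlarging the field of $\s$-constants to its algebraic closure, applying the known statement there, and then descending the resulting telescoping relation back to $K$. Let $\overline{K^\s}$ be an algebraic closure of $K^\s$. Since $K^\s$ is relatively algebraically closed in $K$, the fields $K$ and $\overline{K^\s}$ are linearly disjoint over $K^\s$, so $K\otimes_{K^\s}\overline{K^\s}$ is an integral domain. Let $\tilde K$ be its field of fractions, endowed with the unique structure of $\s$-field for which $\s$ restricts to $\s$ on $K$ and to the identity on $\overline{K^\s}$. A direct computation on a $K^\s$-basis of $\overline{K^\s}$ then shows that $\tilde K^\s=\overline{K^\s}$, which is algebraically closed.

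The plan is next to construct a Picard-Vessiot ring for the system $\s(y_i)-y_i=b_i$ over $\tilde K$ in which the solutions remain algebraically dependent. Consider the $\s$-$\tilde K$-algebra $\mathfrak R=\tilde K[Z_0,\dots,Z_n]$ with $\s(Z_i)=Z_i+b_i$. The $\s$-homomorphism $\mathfrak R\to L\otimes_K\tilde K$ defined by $Z_i\mapsto z_i\otimes 1$ has kernel a $\s$-ideal $I$, and by flatness of $\tilde K$ over $K$ together with the assumed algebraic dependence of the $z_i$ over $K$, this ideal $I$ is nonzero. Let $J\supset I$ be any maximal $\s$-ideal of $\mathfrak R$, and set $\tilde R:=\mathfrak R/J$. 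Since $\tilde K^\s$ is algebraically closed, $\tilde R$ is a Picard-Vessiot ring for the system (every $\s$-simple $\s$-$\tilde K$-quotient of the universal solution algebra has this property in our setting), and the images $\tilde z_i$ of the $Z_i$ in $\tilde R$ are solutions that satisfy every polynomial relation in $I\subset J$; in particular they are algebraically dependent over $\tilde K$. Applying Lemma \ref{lemma:ost} to $\tilde R$ produces elements $\tilde c_i\in\overline{K^\s}$ not all zero and $\tilde g\in\tilde K$ with
\[
\sum_{i=0}^n \tilde c_i\, b_i=\s(\tilde g)-\tilde g.
\]

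The final step is a Galois descent along $\overline{K^\s}/K^\s$. Choose a finite Galois subextension $F/K^\s$ of $\overline{K^\s}/K^\s$ containing all the $\tilde c_i$ and, enlarging $F$ if necessary, such that $\tilde g$ belongs to the subfield $K_F:=K\otimes_{K^\s}F$ of $\tilde K$ (which is indeed a field by linear disjointness). Because $F/K^\s$ is finite separable, its trace pairing is nondegenerate, so one can pick $\alpha\in F$ such that the $K^\s$-linear functional $\ell(x):=\mathrm{Tr}_{F/K^\s}(\alpha x)$ on $F$ satisfies $\ell(\tilde c_{i_0})\neq 0$ for some index $i_0$. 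Extend $\ell$ to a $K$-linear map $\ell:K_F\to K$ by $\ell(a\otimes f):=a\,\ell(f)$; since $\s$ acts trivially on $F$, this extension commutes with $\s$. Applying $\ell$ to the identity above then yields $\sum_i \ell(\tilde c_i)\, b_i = \s(\ell(\tilde g))-\ell(\tilde g)$, which is the desired telescoping relation with $c_i:=\ell(\tilde c_i)\in K^\s$ not all zero and $g:=\ell(\tilde g)\in K$.

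The main obstacle in this plan is the middle paragraph: one must justify that the $\s$-simple quotient $\tilde R=\mathfrak R/J$ is genuinely a Picard-Vessiot ring (that is, that $\tilde R^\s=\tilde K^\s$), so that Lemma \ref{lemma:ost} applies. This is a standard but nontrivial consequence of the algebraic closedness of $\tilde K^\s=\overline{K^\s}$, and it is precisely this hypothesis that makes the whole reduction strategy work; once it is secured, both the application of Lemma \ref{lemma:ost} and the concluding trace-based descent are routine.
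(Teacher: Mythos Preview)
Your proof is correct and follows essentially the same three-step architecture as the paper: extend the constants to an algebraic closure $\mathbf{k}$ using that $K^\s$ is relatively algebraically closed in $K$ (so that $K\otimes_{K^\s}\mathbf{k}$ is a field with $\s$-constants $\mathbf{k}$), build a Picard--Vessiot quotient over this enlarged base in which the images of the $z_i$ remain algebraically dependent, apply Lemma~\ref{lemma:ost}, and then descend the resulting relation back to $K$.

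Two cosmetic differences are worth recording. First, the paper works directly with the $\s$-subring $S=\widetilde{K}[z_0,\dots,z_n]$ of $\widetilde{L}=L\otimes_{K^\s}\mathbf{k}$ and mods out by a maximal $\s$-ideal there, whereas you start from the abstract universal solution algebra $\mathfrak R=\tilde K[Z_0,\dots,Z_n]$ and pass through the kernel of the evaluation map; the two constructions are canonically identified. Second, and more notably, for the descent the paper simply picks a $K^\s$-basis $\{d_r\}$ of $\mathbf{k}$, observes via Lemma~\ref{lem:lineardisjonctionoverconstant} that it is also a $K$-basis of $\widetilde{K}$, and reads off the coordinate identities $\sum_i c_{i,r}b_i=\s(g_r)-g_r$ directly; this avoids having to pass to a finite Galois subextension and invoke nondegeneracy of the trace. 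Your trace argument is correct but the basis projection is both shorter and more transparent.

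Finally, note that $K\otimes_{K^\s}\overline{K^\s}$ is already a field under the relative-algebraic-closure hypothesis, so there is no need to take a fraction field.
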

\begin{proof}
 Let $\mathbf{k}$ be the algebraic closure of $K^\s$. We extend $\s$ to be the identity on $\mathbf{k}$\footnote{On the other hand, there is no unique procedure to extend a field automorphism of $K^\s$ to  the algebraic closure $\mathbf{k}$. Indeed, these extensions are controlled by the Galois group of the field $\mathbf{k}$ over $K^\s$.}.  Under the assumption  that $K^\s$ is relatively algebraically closed, the ring $\widetilde{K} = K\otimes_{K^\s}\mathbf{k}$ is an integral domain and in fact is a field.   We have $\widetilde{K}^\sigma = \mathbf{k}$. Let $\widetilde{L}= L\otimes_{K^\s}\mathbf{k}$. We then have a natural inclusion of $\widetilde{K} \subset \widetilde{L}$. Let $S=\widetilde{K}[z_0,\dots,z_n] \subset \widetilde{L}$. It is easily seen that $S$ is a $\s$-ring extension of $\widetilde{K}$. Let $I$ be a maximal difference ideal in $S$ and let $R = S/I$.  For each $r = 0, \ldots , n$, let $u_r$ be the image of $z_r$ in $R$. Since $\widetilde{K}^\sigma = \mathbf{k}$ is algebraically closed and  $R$ is a simple difference ring, we have that $R$ is a Picard-Vessiot ring for the system associated to  $\s(y_r) -y_r= b_r$, $r= 0 , \dots, n$, over $\widetilde{K}$. The elements $u_0,\dots,u_n$ are algebraically dependent over $K$ and solutions of $\s(
y_r) -y_r= b_r, r= 0 , \dots, n$. Lemma~\ref{lemma:ost}  proves that  there exist $c_{i} \in \mathbf{k}$, not all zero, and $g \in \widetilde{K}$ such that 
$$\sum_{0\leq i\leq n} c_{i} b_i= \s(g)-g.$$
 Let $\{d_r\} \subset \mathbf{k}$ be a $K^\s$-basis of $\mathbf{k}$. By Lemma \ref{lem:lineardisjonctionoverconstant}, it is also  a $K$-basis of $\widetilde{K}$.  We may write each $c_{i}$ and $g$ as
\[ c_{i} = \sum_r c_{i,r}d_r \text{    and    } g = \sum_r g_rd_r\]
for some $c_{i,r} \in K^\s$ and $g_r \in K$. Since not all the $c_{i}$ are zero, there exists $r$ such that $c_{i,r}$ are not all zero. For this $r$, we have
\[ \sum_{i \leq n} c_{i,r} b_i = \sigma(g_r) - g_r.\]
This yields the conclusion of the proof.
\end{proof}

\begin{proof}[ Proof of Theorem~\ref{thm:abstractdifftransGaloiscriteria}]

 Assuming that $f$ is $(\partial,\Delta)$-differentially algebraic over $K$, there is some finite set $\{\partial^{i_0}\Delta^{j_0}(f), \ldots , \partial^{i_n}\Delta^{j_n}(f)\} \subset L$ of elements that are algebraically dependent over $K$. Note that  $j_k=0$ for all $k$ if $\Delta$ is $K$-linearly dependent from $\partial$. Since $\s$ commutes with $\Delta$ and $\partial$, we have for all $r=0,\dots,n$,
\[ \sigma(\partial^{i_r}\Delta^{j_r}(f)) - \partial^{i_r}\Delta^{j_r}(f) = \partial^{i_r}\Delta^{j_r}(b).\]
To conclude it remains to apply Lemma \ref{lemma:relativclosedconstant} with $z_r= \partial^{i_r}\Delta^{j_r}(f)$ and $b_r =\partial^{i_r}\Delta^{j_r}(b)$ for $r=0,\dots,n$.
\end{proof}

\section{Meromorphic functions on a Tate curve and their derivations}\label{sec:merofunctiontate}

In this section we translate the galoisian criteria of Theorem \ref{thm:abstractdifftransGaloiscriteria} in the context of elliptic functions field. We start by defining the derivations.
 Studying the transcendence properties of the $\q$-logarithm, we then perform a descent on the field of coefficients and on the number of derivations involved in the telescoping relation.
  
\subsection{Derivation on nonarchimedean elliptic functions field}\label{sec:tderivationtatecurve}
Let $\q \in C^{*}$ such that $|\q|\neq 1 $ and let   $\s_\q$ denote the automorphism of $\cM er(C^*)$ defined  by $\s_\q(f(s))=f(\q s)$. We denote 
by $C_\q$ the field of meromorphic functions fixed by $\s_\q$. By Proposition \ref{prop:Tatecurve}, it is the 
field of rational  functions on the Tate curve $E_{\q}$ or $E_{1/\q}$, depending whether $|\q|< 1 $ or $|\q|>1$.  In this section, we construct, as in   \cite[\S 2]{DiVizioHardouinPacific} a derivation of these functions that encode their $t$-depencies and commute with $\s_\q$. 

The fact that $\partial_s=s\frac{d}{ds}$ acts on $\cM er(C^*)$, and its  commutation with $\sigma_{\q}$ is straightforward. Unfortunately,  the $t$-derivative of $\q$ may be nontrivial, implying a more complicated commutation rule between  $\partial_t=t\frac{d}{dt}$ and $\sigma_{\q}$. 
More precisely, we have
$$\begin{array}{l}
\partial_s\circ\s_\q=\s_\q\circ\partial_s;\\
\partial_t\circ\s_\q= \partial_t(\q)\s_\q \circ \partial_s +\s_\q \circ \partial_t .
\end{array}
$$

The following statement holds.

\begin{lemma}\label{lemma:partialsconstant}
The $\partial_s$-constants $\cM er(C^*)^{\partial_s}=\{ f \in  \cM er(C^*) | \partial_s(f)=0\}$ of $\cM er(C^*)$ are precisely the constant functions $C$. 
\end{lemma}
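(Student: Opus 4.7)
The plan is to reduce the meromorphic case to the holomorphic case via the canonical representation $f=g/h$ with $g,h$ holomorphic and coprime on $C^*$, and then to exploit the Laurent series expansion of a holomorphic function on $C^*$, which is an everywhere convergent bilateral series by the standard structure theorem for holomorphic functions on $C^*$ recalled at the beginning of $\S$\ref{sec:merofunc}.

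First I would handle the holomorphic case: if $f=\sum_{n\in\Z}a_n s^n\in\cO(C^*)$ and $\partial_s(f)=0$, then $\partial_s(f)=\sum_{n\in\Z}n a_n s^n$, and uniqueness of the Laurent expansion on $C^*$ forces $na_n=0$ for all $n$. Since $C$ has characteristic zero, this yields $a_n=0$ for every $n\neq 0$ and hence $f=a_0\in C$.

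Next I would reduce the general case to the previous one. Let $f\in\cM er(C^*)$ be a $\partial_s$-constant and write $f=g/h$ with $g,h$ holomorphic on $C^*$ having no common zeros (this representation is provided by the discussion at the start of $\S$\ref{sec:merofunc}). The relation $\partial_s(f)=0$ translates, after clearing denominators in $\cO(C^*)$, into
\begin{equation*}
g\,\partial_s(h)=h\,\partial_s(g).
\end{equation*}
Suppose, for contradiction, that $h$ has a zero $z_0\in C^*$ of multiplicity $m\geq 1$. Since $(g,h)$ has no common zeros, $g(z_0)\neq 0$. As $z_0\neq 0$ and the characteristic is zero, a local computation gives $\operatorname{ord}_{z_0}(\partial_s(h))=m-1$, hence $\operatorname{ord}_{z_0}(g\,\partial_s(h))=m-1$, while $\operatorname{ord}_{z_0}(h\,\partial_s(g))\geq m$, contradicting the displayed equality. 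Therefore $h$ has no zero in $C^*$, so $1/h$ is holomorphic on $C^*$, and $f=g\cdot(1/h)\in\cO(C^*)$. Applying the first step to $f$ concludes that $f\in C$.

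There is no real obstacle here: the only point that requires a little care is the local order computation for $\partial_s=s\,d/ds$ at a point $z_0\in C^*$, where the factor $s$ is invertible and so the order is governed by $d/ds$ as in the classical case. The reverse inclusion $C\subset\cM er(C^*)^{\partial_s}$ is obvious.
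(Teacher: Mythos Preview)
The paper states Lemma~\ref{lemma:partialsconstant} without proof, so there is no argument to compare against. Your proof is correct: the reduction to the holomorphic case via the coprime representation $f=g/h$ and the order computation at a putative zero of $h$ are sound (the key points being that $z_0\in C^*$ so the factor $s$ in $\partial_s=s\,d/ds$ is a unit there, and that $C$ has characteristic zero so $m\neq 0$), and the holomorphic case is immediate from termwise differentiation of the Laurent expansion and uniqueness of its coefficients.
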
 

 Next Lemma introduces a twisted $t$-derivation that commutes with~$\s_\q$. Remind that the $q$-logarithm $\ell_{\q}$ has been defined in $\S \ref{sec43}$.

\begin{lemma}[Lemma 2.1 in \cite{DiVizioHardouinPacific}]    \label{lemma:goodderivationgenus1}
The following derivations of $\cM er(C^*)$
$$
\left\{
\begin{array}{l}
\partial_s\\
\Delta_{t,\q}= \partial_t(\q)\ell_\q(s)\partial_s+\partial_t,
\end{array}\right.
$$
commute with $\s_{\q}$. Moreover,  we have 
$$
\partial_s \Delta_{t,q} - \Delta_{t,q} \partial_s=\partial_t(\q) \partial_s(\ell_\q) \partial_s,
$$
where $\partial_t(\q) \partial_s(\ell_\q) \in C_\q$.
\end{lemma}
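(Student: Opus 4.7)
The proof is essentially computational: each claim follows from unwinding the definitions together with the two key identities governing $\ell_{\q}$, namely $\s_{\q}(\ell_{\q})=\ell_{\q}+1$ and the commutation rule $\partial_t\circ\s_{\q}= \partial_t(\q)\,\s_{\q}\circ\partial_s+\s_{\q}\circ\partial_t$ recalled just before the lemma.

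For the commutation of $\partial_s$ with $\s_{\q}$, I would simply evaluate both sides on a meromorphic $f$: $(\partial_s\s_{\q}f)(s)=s\,\q f'(\q s)$ equals $(\s_{\q}\partial_sf)(s)=\q s\, f'(\q s)$. For the commutation of $\Delta_{t,\q}$ with $\s_{\q}$, I would expand $\Delta_{t,\q}\s_{\q}(f)$ using the stated commutation rule for $\partial_t\circ \s_{\q}$, and expand $\s_{\q}\Delta_{t,\q}(f)$ using $\s_{\q}(\ell_{\q}(s))=\ell_{\q}(s)+1$ and the previously established commutation $\s_{\q}\partial_s=\partial_s\s_{\q}$. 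Both sides yield the same expression
\[
\partial_t(\q)\,\ell_{\q}(s)\,\s_{\q}(\partial_sf)+\partial_t(\q)\,\s_{\q}(\partial_sf)+\s_{\q}(\partial_tf),
\]
the ``$+1$'' coming from $\s_{\q}(\ell_{\q})$ in the second computation matching exactly the ``extra'' $\partial_t(\q)\s_{\q}\partial_s$ term coming from the commutation rule in the first.

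For the commutator identity, I would compute $\partial_s\Delta_{t,\q}(f)-\Delta_{t,\q}\partial_s(f)$ directly. Applying the Leibniz rule to $\partial_s(\partial_t(\q)\ell_{\q}\partial_sf)$ produces exactly one term of the form $\partial_t(\q)\,\partial_s(\ell_{\q})\,\partial_sf$, while the remaining $\partial_t(\q)\ell_{\q}\partial_s^2f$ and $\partial_s\partial_tf$ terms are cancelled by the corresponding pieces of $\Delta_{t,\q}\partial_sf$, using that $\partial_s$ and $\partial_t$ commute on $\cM er(C^*)$ (as extensions of commuting derivations of $C(s,t)$). This yields the stated formula $[\partial_s,\Delta_{t,\q}]=\partial_t(\q)\,\partial_s(\ell_{\q})\,\partial_s$.

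Finally, to see $\partial_t(\q)\,\partial_s(\ell_{\q})\in C_{\q}$, the factor $\partial_t(\q)$ is a scalar in $C$ hence in $C_{\q}$, and for $\partial_s(\ell_{\q})$ I would apply $\partial_s$ to the defining equation $\s_{\q}(\ell_{\q})=\ell_{\q}+1$; since $\partial_s$ commutes with $\s_{\q}$ (already proved) and $\partial_s(1)=0$, this gives $\s_{\q}(\partial_s\ell_{\q})=\partial_s\ell_{\q}$, so $\partial_s\ell_{\q}\in C_{\q}$. No step is a real obstacle; the only point requiring care is making sure the $+1$ produced by $\s_{\q}(\ell_{\q})$ cancels precisely the inhomogeneous term $\partial_t(\q)\s_{\q}\partial_s$ of the commutation rule between $\partial_t$ and $\s_{\q}$, which is exactly how $\Delta_{t,\q}$ was designed.
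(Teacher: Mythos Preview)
The paper does not actually give a proof of this lemma: it simply quotes it as Lemma~2.1 of \cite{DiVizioHardouinPacific}. Your direct computational verification is correct and is exactly how one would prove it from scratch, using precisely the ingredients the paper records just before the statement (the commutation rule $\partial_t\circ\s_{\q}=\partial_t(\q)\,\s_{\q}\circ\partial_s+\s_{\q}\circ\partial_t$ and the defining relation $\s_{\q}(\ell_{\q})=\ell_{\q}+1$). Your argument for $\partial_s(\ell_{\q})\in C_{\q}$ is in fact the content of the paper's own Remark~\ref{rmk:derivateqlog}, so you are fully aligned with the paper's framework.
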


\begin{rmk}\label{rmk:derivateqlog}
Note that since $\partial_{s},\Delta_{t,\q}$ commute with $\s_\q$, we can derive the equation ${\s_\q(\ell_\q)=\ell_\q+1}$ to find  $\s_\q (\partial_s(\ell_\q))=\partial_s\left(\ell_\q\right)$ and $\s_\q(\Delta_{t,\q}(\ell_\q))=\Delta_{t,\q}(\ell_\q)$. We then conclude that   $\partial_s(\ell_\q),\Delta_{t,\q}(\ell_\q)$ belong to $ C_\q$.
\end{rmk}

The link with the iterates of $\Delta_{t,\q}$ and the derivatives $\partial_{s},\partial_{t}$ is now made in the following lemma.
\begin{lemma}\label{lemma:decompiterateDeltat}
For any $i \in \N$, there exist $c_{j,k,l} \in C_\q$ such that 
$$
\Delta_{t,\q}^i= (\partial_t(\q) \ell_\q)^i  \partial_s^i +\sum_{k=0}^{i-1}\sum_{j=0}^{k} \sum_{l=0}^i c_{j,k,l} \ell_\q^{j} \partial_s^k \partial_t^l.
$$
\end{lemma}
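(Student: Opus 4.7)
The plan is to proceed by induction on $i$, the inductive step being driven by a single clean formula describing how $\Delta_{t,\q}$ acts on a monomial operator of the form $c\,\ell_{\q}^{j}\partial_s^{k}\partial_t^{l}$ with $c \in C_{\q}$. The base cases $i=0$ (identity) and $i=1$ (the definition of $\Delta_{t,\q}$) are immediate. For the inductive step, I will write $\Delta_{t,\q}=A\partial_s+B$ with $A=\partial_t(\q)\ell_{\q}$ and $B=\partial_t$, then compute $\Delta_{t,\q}^{i+1}$ by applying $\Delta_{t,\q}$ term by term to the expression furnished by the inductive hypothesis.

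The key computation is the following identity, valid for any $c\in C_{\q}$:
\begin{multline*}
\Delta_{t,\q}\bigl(c\,\ell_{\q}^{j}\partial_s^{k}\partial_t^{l}\bigr)
= \Delta_{t,\q}(c)\,\ell_{\q}^{j}\partial_s^{k}\partial_t^{l}
+\partial_t(\q)\,c\,\ell_{\q}^{j+1}\partial_s^{k+1}\partial_t^{l}\\
+j\,c\,\Delta_{t,\q}(\ell_{\q})\,\ell_{\q}^{j-1}\partial_s^{k}\partial_t^{l}
+c\,\ell_{\q}^{j}\partial_s^{k}\partial_t^{l+1}.
\end{multline*}
To obtain it, I expand $A\partial_s(c\,\ell_{\q}^{j}\partial_s^{k}\partial_t^{l})$ by Leibniz (producing three terms), and expand $B(c\,\ell_{\q}^{j}\partial_s^{k}\partial_t^{l})$ (three terms again), then use $\partial_t=\Delta_{t,\q}-\partial_t(\q)\ell_{\q}\partial_s$ to rewrite $\partial_t(c)$ and $\partial_t(\ell_{\q})$ in terms of $\Delta_{t,\q}$ applied to $c$ and $\ell_{\q}$. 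Thanks to Remark~\ref{rmk:derivateqlog} and the commutation of $\partial_s,\Delta_{t,\q}$ with $\s_{\q}$, the quantities $\partial_s(c)$, $\partial_s(\ell_{\q})$, $\Delta_{t,\q}(c)$, $\Delta_{t,\q}(\ell_{\q})$ all lie in $C_{\q}$. The ``main obstacle'' is thus a bookkeeping step: two pairs of terms must cancel, namely the $\partial_t(\q)\partial_s(c)\,\ell_{\q}^{j+1}\partial_s^{k}\partial_t^{l}$ coming from $A\partial_s$ cancels against the analogous term coming from $\partial_t(c)$, and the $j\partial_t(\q)c\,\partial_s(\ell_{\q})\,\ell_{\q}^{j}\partial_s^{k}\partial_t^{l}$ coming from $A\partial_s$ cancels against the analogous term coming from $\partial_t(\ell_{\q})$. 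These cancellations are precisely what make $\Delta_{t,\q}$ the ``right'' derivation.

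Given the key identity, the induction closes by degree tracking: the triple $(j,k,l)$ produced from a parent triple $(j,k,l)$ falls into one of $(j,k,l)$, $(j{+}1,k{+}1,l)$, $(j{-}1,k,l)$, or $(j,k,l{+}1)$, all with new coefficients in $C_{\q}$. For the leading term $(i,i,0)$ of $\Delta_{t,\q}^{i}$, only the second case produces $(i{+}1,i{+}1,0)$, with coefficient $\partial_t(\q)\cdot(\partial_t(\q))^{i}=(\partial_t(\q))^{i+1}$, providing the leading term $(\partial_t(\q)\ell_{\q})^{i+1}\partial_s^{i+1}$ of $\Delta_{t,\q}^{i+1}$; the three other children satisfy $k'\le i=(i{+}1){-}1$, $j'\le k'$ and $l'\le i{+}1$. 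For a generic sub-leading term with $j\le k\le i{-}1$ and $l\le i$, one checks immediately that each of the four children satisfies $j'\le k'\le i=(i{+}1){-}1$ and $l'\le i{+}1$. This completes the induction and establishes the claim.
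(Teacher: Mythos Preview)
Your proof is correct and follows essentially the same approach as the paper's: induction on $i$, applying $\Delta_{t,\q}$ term by term to the expression at step $i$, and using that $\Delta_{t,\q}(c)$ and $\Delta_{t,\q}(\ell_{\q})$ remain in $C_{\q}$ (Remark~\ref{rmk:derivateqlog}). The only difference is cosmetic: the paper groups your first and third terms as the single expression $\Delta_{t,\q}(c\,\ell_{\q}^{j})\,\partial_s^k\partial_t^l$ and argues directly that it is a polynomial in $\ell_{\q}$ of degree at most $j$ with coefficients in $C_{\q}$, whereas you expand this further via Leibniz and track the indices $(j,k,l)$ more explicitly.
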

\begin{proof}
Let us prove the result by induction on $i$. For $i=1$, this comes from the fact that $\Delta_{t,\q}= \partial_t(\q)\ell_\q \partial_s +\partial_t$. Let us fix $i\in \N$ and assume that the result  holds for $i$.  We find
$$
\Delta_{t,\q}^{i+1}= ( \partial_t(\q)\ell_\q \partial_s +\partial_t)\left((\partial_t(\q) \ell_\q)^i  \partial_s^i +\sum_{k=0}^{i-1}\sum_{j=0}^{k} \sum_{l=0}^i c_{j,k,l} \ell_\q^{j} \partial_s^k \partial_t^l \right),$$
that is
\begin{multline*}
\Delta_{t,\q}^{i+1}  =  (\partial_t(\q) \ell_\q)^{i+1}  \partial_s^{i+1} +\Delta_{t,\q}((\partial_t(\q) \ell_\q)^i)  \partial_s^i +(\partial_t(\q) \ell_\q)^i  \partial_t \partial_s^i +  \\ 
 \sum_{k=0}^{i-1}\sum_{j=0}^{k} \sum_{l=0}^i \Delta_{t,\q}(c_{j,k,l} \ell_\q^{j}) \partial_s^k \partial_t^l +\sum_{k=0}^{i-1}\sum_{j=0}^{k} \sum_{l=0}^i c_{j,k,l} \partial_t(\q) \ell_\q^{j+1} \partial_s^{k+1} \partial_t^l+ \sum_{k=0}^{i-1}\sum_{j=0}^{k} \sum_{l=0}^i c_{j,k,l} \ell_\q^{j} \partial_s^k \partial_t^{l+1}  .
\end{multline*} 
Note that the commutation of $\sigma_{\q}$ with $\Delta_{t,\q}$ implies that  $C_\q$ is stabilized by $\Delta_{t,\q}$.
Since by Remark~\ref{rmk:derivateqlog},  $\Delta_{t,\q}(\ell_\q)$ belongs to $C_\q$, we get that,  for any integer $j$, any $\tilde{c}\in C_{\q}$, we have ${\Delta_{t,\q}(\tilde{c}(\ell_\q)^{j})=\Delta_{t,\q}(\tilde{c})(\ell_\q)^{j}+ \tilde{c}c(\ell_\q)^{j-1}}$ where ${c =j \Delta_{t,\q}(\ell_\q) \in C_\q}$. Therefore, with ${\Delta_{t,\q}(\tilde{c})\in  C_\q}$, we find that $\Delta_{t,\q}(\tilde{c}(\ell_\q)^{j})\in C_{\q}[\ell_\q]$ is of degree at most $j$ in $\ell_\q$. With
$\partial_t(\q),c_{j,k,l}\in C_\q$, this ends the proof.
\end{proof}

From now on, let us fix $q\in C^*$ with $|q|\neq 1$, that is multiplicatively independent to $\q$,
 that is there are no $r,l\in \Z^{2}\setminus(0,0)$  such that $q^r=\q^l$.
 Remind that $C_{\q}.C_{q}\subset \mathcal{M}er(C^{*})$ is the compositum of fields and $\ell_{\q}\in \mathcal{M}er(C^{*})$ is a solution of $\s_{\q} (\ell_{\q})=\ell_{\q}+1$. We now give examples of difference differential fields for $\sigma_{\q},\partial_{s}$ and $\Delta_{t,\q}$.
 
\begin{lemma}\label{lemma:fielddefinitiongenus1}  The following statement	 hold.
\begin{enumerate}
\item The field $C_\q(s,\ell_\q)$ is stabilized by $\s_\q$, $\partial_s$ and $\Delta_{t,\q}$. The field $C_\q(s)$ is stabilized by $\s_\q$, and $\partial_s$. The field $C(s)$ is stabilized by $\partial_{s}$, $\partial_{t}$.
\item The field $C_{\q}.C_{q}(\ell_\q, \ell_{q})$ is stabilized by $\s_\q$, $\partial_s$ and $\Delta_{t,\q}$. The field $C_{\q}.C_{q}( \ell_{q})$ is stabilized by $\s_\q$, and $\partial_s$. The field $C_{q}(\ell_q)$ is stabilized by $\partial_{s}$, $\partial_{t}$.
\end{enumerate}
\end{lemma}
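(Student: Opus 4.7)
The plan is to verify stability by checking, for each field and each operator, that the generators map into the field under the operator. Both parts (1) and (2) reduce to direct verifications using (i) the commutation rules $\partial_s\circ\sigma_\q=\sigma_\q\circ\partial_s$ and $\sigma_\q\circ\Delta_{t,\q}=\Delta_{t,\q}\circ\sigma_\q$ given in the text, (ii) the defining functional equations $\sigma_\q(\ell_\q)=\ell_\q+1$ and $\sigma_q(\ell_q)=\ell_q+1$, and (iii) Remark~\ref{rmk:derivateqlog}, which gives $\partial_s(\ell_\q),\Delta_{t,\q}(\ell_\q)\in C_\q$ (and, by the completely symmetric argument applied to $q$, $\partial_s(\ell_q),\Delta_{t,q}(\ell_q)\in C_q$ where $\Delta_{t,q}:=\partial_t(q)\ell_q\partial_s+\partial_t$).

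For part (1), I would first note that $\sigma_\q$ stabilizes $C_\q(s,\ell_\q)$ because it fixes $C_\q$ by definition, sends $s$ to $\q s\in C_\q(s)$, and sends $\ell_\q$ to $\ell_\q+1$. For $\partial_s$, the commutation with $\sigma_\q$ forces $\partial_s(C_\q)\subset C_\q$; then $\partial_s(s)=s$ and $\partial_s(\ell_\q)\in C_\q$ by Remark~\ref{rmk:derivateqlog}. For $\Delta_{t,\q}$, the same commutation argument gives $\Delta_{t,\q}(C_\q)\subset C_\q$; since $\partial_t(s)=0$ (as $s$ is a transcendental coordinate independent of $t$), we have $\Delta_{t,\q}(s)=\partial_t(\q)\ell_\q s\in C_\q(s,\ell_\q)$, and $\Delta_{t,\q}(\ell_\q)\in C_\q$ by Remark~\ref{rmk:derivateqlog}. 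The substatements for $C_\q(s)$ and $C(s)$ follow by restriction, with the observation that $C$ is stable under $\partial_t$ by the very construction of $\partial_t$ on Hahn series.

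For part (2), the main new point is that $\sigma_\q$ and $\sigma_q$ commute (both being multiplicative dilations), so $\sigma_\q(C_q)\subset C_q$ and $\sigma_q(C_\q)\subset C_\q$; similarly $\partial_s$ stabilizes $C_q$ because it commutes with $\sigma_q$. Applying $\sigma_\q$ to $\sigma_q(\ell_q)=\ell_q+1$ gives $\sigma_q(\sigma_\q(\ell_q)-\ell_q)=\sigma_\q(\ell_q)-\ell_q$, so $\sigma_\q(\ell_q)-\ell_q\in C_q$, which shows $\sigma_\q$ stabilizes $C_\q.C_q(\ell_\q,\ell_q)$. The delicate point is $\partial_t$: it does \emph{not} stabilize $C_q$ in general, because from $\partial_t\circ\sigma_q=\partial_t(q)\sigma_q\circ\partial_s+\sigma_q\circ\partial_t$ one computes $\sigma_q(\partial_t f)=\partial_t f-\partial_t(q)\partial_s f$ for $f\in C_q$. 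To overcome this I would introduce the symmetric derivation $\Delta_{t,q}=\partial_t(q)\ell_q\partial_s+\partial_t$, which commutes with $\sigma_q$ by the same computation as in Lemma~\ref{lemma:goodderivationgenus1} and therefore stabilizes $C_q$; then $\partial_t=\Delta_{t,q}-\partial_t(q)\ell_q\partial_s$ yields $\partial_t(C_q)\subset C_q(\ell_q)$ and, combined with $\partial_t(\ell_q)=\Delta_{t,q}(\ell_q)-\partial_t(q)\ell_q\partial_s(\ell_q)\in C_q(\ell_q)$, gives stability of $C_q(\ell_q)$ by $\partial_t$ and $\partial_s$.

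It remains to check stability of $C_\q.C_q(\ell_\q,\ell_q)$ under $\Delta_{t,\q}$. Writing $\Delta_{t,\q}=\Delta_{t,q}+\bigl(\partial_t(\q)\ell_\q-\partial_t(q)\ell_q\bigr)\partial_s$, for $f\in C_q$ one obtains $\Delta_{t,\q}(f)=\Delta_{t,q}(f)+\bigl(\partial_t(\q)\ell_\q-\partial_t(q)\ell_q\bigr)\partial_s(f)\in C_\q.C_q(\ell_\q,\ell_q)$; for $f\in C_\q$ the value is in $C_\q$; for $\ell_\q$ and $\ell_q$, direct computation together with Remark~\ref{rmk:derivateqlog} and its $q$-analogue puts the images in $C_\q.C_q(\ell_\q,\ell_q)$. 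The only genuine obstacle is this coexistence of two incompatible shifts $\sigma_\q$ and $\sigma_q$, which forces one to introduce and juggle both auxiliary derivations $\Delta_{t,\q}$ and $\Delta_{t,q}$; once that bookkeeping is in place the proof is a routine case-by-case verification.
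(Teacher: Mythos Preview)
Your proposal is correct and follows essentially the same route as the paper: a generator-by-generator verification using the commutation of $\partial_s$ and $\Delta_{t,\q}$ with $\sigma_\q$, Remark~\ref{rmk:derivateqlog}, and—crucially for part~(2)—the auxiliary derivation $\Delta_{t,q}$ together with the identities $\Delta_{t,\q}=\Delta_{t,q}+(\partial_t(\q)\ell_\q-\partial_t(q)\ell_q)\partial_s$ and $\partial_t=\Delta_{t,q}-\partial_t(q)\ell_q\partial_s$. The paper's proof is organized identically; your identification of the ``delicate point'' (that $\partial_t$ does not preserve $C_q$, forcing the introduction of $\Delta_{t,q}$) is exactly the mechanism the authors use.
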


\begin{proof}
\begin{trivlist}
\item (1) Since $\s_\q (\ell_\q )=\ell_\q +1$, we easily see that  $C_\q(s,\ell_\q),C_\q(s)$ are stabilized by $\s_\q$. Since $\s_{\q}$ commutes with $\partial_{s}$ and $\Delta_{t,\q}$, the field $C_\q$ is stabilized by $\partial_{s}$ and $\Delta_{t,\q}$. It is now clear that $C_\q(s)$ is stabilized by $\partial_{s}$ and $\Delta_{t,\q}(C_\q(s))\subset C_\q(s,\ell_{\q})$.
By Remark \ref{rmk:derivateqlog}, $\Delta_{t,\q}(\ell_{\q}),\partial_{s}(\ell_{\q})\in C_{\q}$. Combining the lasts assertions, we obtain the result for $C_\q(s,\ell_\q)$.  Finally, the field $C(s)$ is stable by $\partial_{s}$, $\partial_{t}$, since $C$ is stable by $\partial_{s},\partial_{t}$, and $\partial_{s}(s)=s$, $\partial_{t}(s)=0$.
 
\item (2) Let us prove that $C_{q}(\ell_{q})$ is stabilized by $\s_\q$. Using $\s_{q}(\ell_{q})=\ell_{q}+1$ and the commutation between $\s_\q$ and $\s_{q}$, we find that 
$\s_\q (\ell_{q})-\ell_{q}\in C_{q}$. Similarly,  $\s_\q (C_{q})\subset C_{q}$, proving that $C_{q}(\ell_{q})$ is stabilized by $\s_\q$. Using $\partial_{s}(C_{\q})\subset C_{\q}$ and $\partial_s (\ell_q) \in C_q$, we find that the field $C_{\q}.C_{q}( \ell_{q})$ is stabilized by $\s_\q$ and $\partial_s$. \par 
Let us now consider the field $C_{\q}.C_{q}(\ell_\q, \ell_{q})$. The field $C_\q(\ell_\q)$  is clearly stable by $\s_\q$. From what preceede, $C_{q}(\ell_{q})$ is stable by $\s_\q$, and therefore, $C_{\q}.C_{q}(\ell_\q, \ell_{q})$ is stable by $\s_\q$. The same arguments than those used in $(1)$, prove that  ${\Delta_{t,\q} (C_{\q}(\ell_{\q})) \subset C_{\q}.C_{q}(\ell_\q)}$  and $\partial_{s}(C_{\q}(\ell_{\q})) \subset C_\q(\ell_\q)$.
It remains to prove that ${\Delta_{t,\q} (C_{q}(\ell_{q})) \subset C_{\q}.C_{q}(\ell_\q,\ell_{q})}$. We note that  ${\partial_t(\q)\ell_{\q}\partial_{s}+\partial_{t} =\Delta_{t,\q}=\Delta_{t,q} +(\partial_t(\q) \ell_\q - \partial_t(q) \ell_{q})\partial_s}$. Since $C_{q}$ is stabilized by $\Delta_{t,q}$ and $\partial_s$, we find that $\Delta_{t,\q}(C_{q})\subset C_{\q}.C_{q}(\ell_\q,\ell_{q}) $. Moreover, since $\partial_{s}(\ell_{q}),\Delta_{t,q}(\ell_{q})$
 belong to $C_{q}$, see Remark~\ref{rmk:derivateqlog}, we find that $\Delta_{t,\q}(\ell_{q}) \in C_{\q}.C_{q}(\ell_\q,\ell_{q})$. We have shown the inclusion ${\Delta_{t,\q} (C_{q}(\ell_{q})) \subset C_{\q}.C_{q}(\ell_\q,\ell_{q})}$. This concludes the proof for $C_{\q}.C_{q}(\ell_\q, \ell_{q})$.\par 
  Let us now consider $C_{q}(\ell_q)$. By Remark~\ref{rmk:derivateqlog} and $\partial_t = \Delta_{t,q}- \partial_t(q)\ell_q \partial_s$, we find that the inclusion holds ${\partial_{s}(\ell_q), \partial_{t}(\ell_q)\in C_q(\ell_q)}$. Since $\partial_{s},\Delta_{t,q}$ commute with $\sigma_{q}$, $C_{q}$ is stable by   $\partial_{s},\Delta_{t,q}$. With $\partial_{t} = \Delta_{t,q}- \partial_t(q)\ell_q \partial_s$, it follows that $\partial_t (C_{q})\subset C_{q}(\ell_q)$. Finally, we obtain that the field $C_q(\ell_q)$ is stable by $\partial_{s}, \partial_{t}$.
 \end{trivlist}
\end{proof}

\subsection{Difference Galois theory  for elliptic function fields}\label{sec3}
In this section, we  apply the results of \S \ref{sec:differenceGaloistheory} to the specific cases of elliptic function fields  introduced in Lemma \ref{lemma:fielddefinitiongenus1}. We recall that  the following fields extensions   are $(\sigma,\partial,\Delta)$-fields extensions. \begin{itemize}
\item Let $\q\in C^*$ with $|\q|\neq 1$. Then, let us consider 
 $$(C_{\q}(s,\ell_{\q}),\sigma_{\q},\partial_s,\Delta_{t,\q} ) \subset (\cM er(C^*),\sigma_{\q},\partial_s,\Delta_{t,\q} ).$$
\item  Let $\q$ and $q$ two elements of $C^*$ such that $|q|,|\q|\neq 1$, that are \emph{multiplicatively independent}. Let us consider
$$(C_{\q}.C_{q}(\ell_{\q},\ell_{q}),\sigma_{\q},\partial_s,\Delta_{t,\q} ) \subset (\cM er(C^*),\sigma_{\q},\partial_s,\Delta_{t,\q} ).$$
\end{itemize}
In that framework,  the  criteria obtained in \S \ref{sec:differenceGaloistheory} to guaranty  the $(\partial_s,\Delta_{t,\q})$-differential transcendence of a solution  of a rank one  $\q$-difference equation can be simplified and some descent arguments prove that the existence of a telescoping relation involving the two derivatives implies the existence of a telescoping relations involving only the derivation $\partial_s$. More precisely, we find the following proposition:

\begin{prop}\label{prop2}
Let $K\subset \mathcal{M}er(C^{*})$ be a $(\sigma_{\q},\partial_{s})$-field and let us assume that 
\begin{trivlist}
\item \textbf{(H1)}\quad $L=K(\ell_{\q})$ is a $(\s_{\q},\partial_{s},\Delta_{t,\q})$-field; 
\item \textbf{(H2)}\quad $K^{\s_{\q}}=L^{\s_{\q}} =C_\q$ is relatively algebraically closed in $L$;
\item \textbf{(H3)}\quad $\ell_{\q}$ is transcendental over $K$.
\end{trivlist}
Let $f\in  \mathcal{M}er(C^{*})$,  that satisfies $\s_{\q}(f)=f+b$, for some $b$ that belongs to a subfield of $K$ stable by $\partial_{s},\partial_{t}$.

If $f$ is $(\partial_s,\Delta_{t,\q})$-differentially algebraic over $L$ then, there exist $m \in \N$, $d_0,\dots,d_m \in C_{\q}$ not all zero, and $h \in K$ such that 
$$
d_0 b_1+d_1\partial_s(b)+\dots +d_m \partial_s^m(b)=\s_{\q}(h)-h.
$$
\end{prop}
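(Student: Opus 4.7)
My strategy is threefold: apply the abstract Galois-theoretic criterion Theorem~\ref{thm:abstractdifftransGaloiscriteria} to the $(\sigma_\q,\partial_s,\Delta_{t,\q})$-field $L$, use the transcendence of $\ell_\q$ over $K$ to turn the resulting telescoping into a polynomial identity in $\ell_\q$ with coefficients in $K$, and then extract a pure $\partial_s$-telescoping by analyzing the top $\ell_\q$-coefficient.

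First, I would apply Theorem~\ref{thm:abstractdifftransGaloiscriteria} to the $(\sigma_\q,\partial_s,\Delta_{t,\q})$-field extension $L\subset\cM er(C^{*})$. Hypothesis~(H2) supplies exactly the relative algebraic closedness of $L^{\sigma_\q}=C_\q$ in $L$ needed as input, while the $(\partial_s,\Delta_{t,\q})$-differential algebraicity of $f$ and the rank-one relation $\sigma_\q(f)-f=b$ close the loop. This yields coefficients $c_{i,j}\in C_\q$ not all zero and $\widetilde{g}\in L=K(\ell_\q)$ with
$$T:=\sum_{i,j} c_{i,j}\,\partial_s^i\Delta_{t,\q}^j(b)=\sigma_\q(\widetilde{g})-\widetilde{g}.$$
By Lemma~\ref{lemma:decompiterateDeltat}, $\Delta_{t,\q}^j$ decomposes as $(\partial_t(\q)\ell_\q)^j\partial_s^j$ plus lower-$\ell_\q$-order corrections built from $C_\q\subset K$ and iterated $\partial_s,\partial_t$; since $b$ lives in a $\partial_s,\partial_t$-stable subfield of $K$, it follows that $T\in K[\ell_\q]$. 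Combining this with $\sigma_\q(\ell_\q)=\ell_\q+1$ and hypothesis~(H3), Lemma~\ref{lem:transclogpolynomialinlogcocycle} applied to $\ell_\q$ as a transcendental element over $K$ forces $\widetilde{g}\in K[\ell_\q]$.

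Writing $\widetilde{g}=\sum_{k=0}^N g_k\ell_\q^k$ with $g_k\in K$ and setting $J_0:=\max\{j:\exists i,\,c_{i,j}\neq 0\}$, the $\ell_\q^{J_0}$-coefficient on the left side of $T=\sigma_\q(\widetilde{g})-\widetilde{g}$ equals $(\partial_t(\q))^{J_0}\sum_i c_{i,J_0}\,\partial_s^{i+J_0}(b)$, since each application of $\partial_s$ strictly lowers the $\ell_\q$-degree of a monomial in $\ell_\q$. On the right, the vanishing of the $\ell_\q^k$-coefficient of $T$ for $k>J_0$ produces a triangular cascade of relations on the higher $g_k$: first $g_N\in C_\q$, then $\sigma_\q(S_k)=g_k$ for each $k>J_0$, where $S_k:=\sum_{r\geq k}\binom{r}{k}g_r\in K$. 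These identities allow me to rewrite the $\ell_\q^{J_0}$-coefficient of $\sigma_\q(\widetilde{g})-\widetilde{g}$ as a genuine $\sigma_\q$-coboundary in $K$, so that after dividing through by $(\partial_t(\q))^{J_0}\in C_\q^\times$ one reads off the desired telescoping $\sum_k d_k\,\partial_s^k(b)=\sigma_\q(h)-h$ with $h\in K$ and $d_k\in C_\q$ not all zero (nonvanishing follows from the fact that the $c_{i,J_0}$ are not all zero by the definition of $J_0$).

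The hard part will be the absorption step at the end: showing that the contributions from $g_{J_0+1},\ldots,g_N$ to the $\ell_\q^{J_0}$-coefficient can be combined with $g_{J_0}$ into a single element of $K$ whose $\sigma_\q$-coboundary equals the relevant expression, rather than leaving behind a $C_\q$-valued obstruction. I expect this to follow by an induction on the excess degree $N-J_0$, at each step using the cascading identity $g_k=\sigma_\q(S_k)$ to peel off one power of $\ell_\q$ from $\widetilde{g}$ and reduce to the case $N=J_0$, in which the top-coefficient equation directly gives the telescoping.
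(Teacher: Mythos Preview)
Your approach tracks the paper's proof closely through the application of Theorem~\ref{thm:abstractdifftransGaloiscriteria}, the use of Lemma~\ref{lemma:decompiterateDeltat} to see that $T\in K[\ell_\q]$, and the application of Lemma~\ref{lem:transclogpolynomialinlogcocycle} to force $\widetilde g\in K[\ell_\q]$. The gap is in your final ``absorption'' step, and it is precisely the one you flagged as hard.

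Write $N=\deg_{\ell_\q}\widetilde g$ (so $g_N\neq 0$) and compare with your $J_0$. The vanishing of the $\ell_\q^N$-coefficient on the left forces $g_N\in C_\q$ when $N>J_0$. But then the $\ell_\q^{N-1}$-coefficient, if $N-1>J_0$, gives $\sigma_\q(g_{N-1})-g_{N-1}=-N g_N$, i.e.\ an element of $K$ whose $\sigma_\q$-coboundary is a nonzero element of $C_\q$. By Lemma~\ref{lem:transccriteriaforlog} this contradicts (H3). So your induction collapses immediately: one always has $N\le J_0+1$, and the only nontrivial case is $N=J_0+1$. In that case the $\ell_\q^{J_0}$-coefficient reads
\[
(\partial_t(\q))^{J_0}\sum_i c_{i,J_0}\,\partial_s^{\,i+J_0}(b)
=\sigma_\q(g_{J_0})-g_{J_0}+(J_0+1)\,g_N,
\]
and the term $(J_0+1)g_N\in C_\q^\times$ is exactly a $C_\q$-valued obstruction that \emph{cannot} be absorbed into a $\sigma_\q$-coboundary in $K$: if it could, say $(J_0+1)g_N=\sigma_\q(\beta)-\beta$ with $\beta\in K$, then $\beta/((J_0+1)g_N)$ would witness $\sigma_\q(\cdot)-(\cdot)=1$ in $K$, again contradicting (H3). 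So your hoped-for reduction to a pure coboundary fails.

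The paper's remedy is simple and you are one line away from it: apply $\partial_s$ to the displayed identity. Since $\partial_s$ commutes with $\sigma_\q$, the constant $(J_0+1)g_N$ disappears and the right-hand side becomes $\sigma_\q(\partial_s g_{J_0})-\partial_s g_{J_0}$; on the left one gets a $C_\q$-linear combination of $\partial_s^k(b)$ (using that $C_\q$ is $\partial_s$-stable), with a nonzero top coefficient. This is exactly the telescoping over $K$ you want, and it is how the paper closes the argument (their equations (\ref{eq5})--(\ref{eq7}) followed by one $\partial_s$-differentiation).
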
 

\begin{proof}
Since $f$ is $(\partial_s,\Delta_{t,\q})$-differentially algebraic over $L$ and $K^{\sigma_{\q}}$ is relatively algebraically closed, Theorem \ref{thm:abstractdifftransGaloiscriteria} yields that there exist  $M \in \N$, $c_{i,j} \in L^{\s_{\q}}$ not all zero, and $g \in L$ such that
\begin{equation}\label{eq2}
\sum_{i,j \leq M} c_{i,j} \partial_s^i\Delta_{t,\q}^{j} (b)= \s_{\q}(g)-g.
\end{equation}

By Lemma \ref{lemma:decompiterateDeltat}, for all $i \in \N$, there exist $c_{j,k,l} \in C_\q$ such that 
\begin{equation}\label{eq:decompiterateDeltat}
\Delta_{t,\q}^i= (\partial_t(\q) \ell_\q)^i  \partial_s^i +\sum_{k=0}^{i-1}\sum_{j=0}^{k} \sum_{l=0}^i c_{j,k,l} \ell_\q^{j} \partial_s^k \partial_t^l.
\end{equation} 

The left hand side of \eqref{eq2} is a polynomial in $\ell_\q$ with coefficients in $K$. By Lemma~\ref{lem:transclogpolynomialinlogcocycle} with $\textbf{(H2)}$ and $\textbf{(H3)}$,  we find that  $g\in K[\ell_\q]$ as well.\par
Thus, let us write  $g=\sum_{k=0}^R \alpha_k \ell_\q^k$ with $\alpha_k \in K$ and $\alpha_{R} \neq 0$.
Let $$N=\max\{j\in \N |\exists i \hbox{ such that } c_{i,j} \neq 0\}.$$ 
By  \eqref{eq:decompiterateDeltat}, the coefficient of highest degree in $\ell_\q$ of the left hand side of \eqref{eq2} is  
\begin{equation}\label{eq30}
\left(\sum_{i \leq M} c_{i,N} (\partial_t(\q))^N \partial_s^{N+i}(b)\right)  \ell_\q^N.
\end{equation}
 
On the other hand, we have 
\begin{equation}\label{eq40}
\s_{\q}(g)-g =\ell_\q^{R}( \s_{\q}(\alpha_R)-\alpha_R)) + \ell_\q^{R-1}(\s_{\q}(\alpha_{R-1}) -\alpha_{R-1} +R\s_{\q}(\alpha_R))  + P(\ell_\q),
\end{equation} 
where $P(X) \in K[X]$ is a polynomial of degree strictly smaller than $R-1$. Then, comparing \eqref{eq30} and \eqref{eq40},  we find that  
 \begin{itemize}
\item either $R< N$ so that 
\begin{equation}\label{eq5}
\sum_{i \leq M} c_{i,N} (\partial_t(\q))^N \partial_s^{N+i}(b) = 0,
\end{equation}
\item  either $R=N$ so that \begin{equation}\label{eq6}
\sum_{i \leq M} c_{i,N} (\partial_t(\q))^N \partial_s^{N+i}(b) = \s_{\q}(\alpha_N)-\alpha_N,
\end{equation}
\item or $R > N$ so that $R>0$,  $0\neq \alpha_R \in L^{\s_{\q}}$. We claim that $R=N-1$. Indeed, $R>N-1$ implies $\s_{\q}(\alpha_R)=\s_{\q}(\alpha_R)$, $\s_{\q}(\alpha_{R-1}) -\alpha_{R-1} +R \alpha_R=0$ and then $\s_{\q}(\frac{\alpha_{R-1}}{\alpha_R}) - \frac{\alpha_{R-1}}{\alpha_R} +R =0$  with $\frac{\alpha_{R-1}}{\alpha_R} \in K$ in contradiction  with  Lemma~\ref{lem:transccriteriaforlog}  applied to  $f=\ell_{\q}$. Thus, we get $R=N-1$ and
\begin{equation}\label{eq7}
\sum_{i \leq M} \frac{c_{i,N}}{\alpha_R} (\partial_t(\q))^N \partial_s^{N+i}(b)
 = \s_{\q}\left(\frac{\alpha_{R-1}}{\alpha_R}\right)-\frac{\alpha_{R-1}}{\alpha_R} +R. \end{equation}
\end{itemize} 

For all these cases, note that there exists $i_0$ such that $c_{i_0,N} \neq 0$ by definition of $N$.  Since $\partial_s$ commutes with $\s_{\q}$, we can derive  \eqref{eq7} with respect to   $\partial_s$ and obtain that in any case, there exists $d_k \in L^{\s_{\q}}= C_{\q}$ not all zero and $h \in K$ such that 
 \begin{equation}\label{eqn:telescopedxfinal}
\sum_{k\leq M+1} d_{k}  \partial_s^{k}(b) = \s_{\q}(h)-h.
\end{equation}
\end{proof}

\subsection{Transcendence properties}\label{sec:trsnforellipticfunctions}

The goal of this subsection is to prove  some transcendence properties of the $\q$-logarithm in order to perform some descent procedure on telescopers. More precisely, we need to prove that the assumptions \textbf{(H1)} to \textbf{(H3)} of Proposition \ref{prop2} are satisfied for the fields $C_{\q}(s)$ and $C_{\q}.C_{q}(\ell_\q,\ell_q)$ for  $ \q$ and $q$  two multiplicatively independent elements of $C^{*}$ with $|q|\neq 1$, $|\q|\neq 1$.  We  recall  that $q$ and $\q$ are multiplicatively independent if  there are no $(r,l) \in \Z^{2}\setminus(0,0)$  such that $q^r=\q^l$. Remind that $C_{\q}.C_{q}\subset \mathcal{M}er(C^{*})$ is the compositum of fields and $\ell_{\q}\in \mathcal{M}er(C^{*})$ is a solution of $\s_{\q} (y)=y+1$. With Lemma \ref{lemma:fielddefinitiongenus1},
 \textbf{(H1)} of Proposition \ref{prop2} is satisfied for $K=C_{\q}(s)$ and $K=C_{\q}.C_{q}(\ell_{q})$.
 
\begin{lemma}\label{lemma:tqconstantCq}
Any element  in a $\s_\q$-extension of   $C_q$\footnote{We recall that since  $\s_\q$ and $\s_q$ commute, the field $C_q$ is a $\s_\q$-field.} that is algebraic over $C_q$ and 
invariant by $\s_{\q}$  is in $C$.    Any element  in a $\s_q$-extension of   $C_\q$ that is algebraic over $C_\q$ and 
invariant by $\s_{q}$  is in $C$. \end{lemma}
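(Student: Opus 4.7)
The plan is to reduce both statements to the assertion that $C_q \cap C_\q = C$, and to extract the latter from the observation that $\sigma_\q$ descends to a translation of infinite order on the Tate curve $E_q$.

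First, let $f$ belong to a $\sigma_\q$-extension of $C_q$, be algebraic over $C_q$, and satisfy $\sigma_\q(f) = f$. I would denote by $P(X) \in C_q[X]$ the minimal monic polynomial of $f$ over $C_q$. Applying $\sigma_\q$ coefficient-wise to the relation $P(f)=0$, and using $\sigma_\q(f)=f$, I obtain that $\sigma_\q(P)(f)=0$. Since $\sigma_\q(P)$ is a monic polynomial in $C_q[X]$ of the same degree as $P$ having $f$ as a root, minimality forces $\sigma_\q(P)=P$, so the coefficients of $P$ lie in $C_q \cap C_\q$. Provided I can establish $C_q \cap C_\q = C$, the polynomial $P$ has coefficients in $C$, hence $f$ is algebraic over $C$; since $C$ is algebraically closed, this yields $f \in C$. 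The second statement follows by symmetry, exchanging the roles of $q$ and $\q$.

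The main step is therefore to prove $C_q \cap C_\q = C$. By Proposition \ref{prop:Tatecurve}, the map $\pi : s \mapsto [X(s):Y(s):1]$ identifies $C_q$ with the field $C(E_q)$ of rational functions on the Tate curve $E_q = C^*/q^{\mathbb{Z}}$. Since $\sigma_\q$ acts on $C^*$ as multiplication by $\q$ and commutes with multiplication by $q$, it descends to an automorphism of $E_q$, namely the translation $T_\tau$ by the class $\tau \in E_q$ of $\q$. An element of $C_q \cap C_\q$ corresponds to a rational function on $E_q$ fixed by $T_\tau^*$. A classical fact says that the stabilizer in $E_q$ of a nonconstant rational function on $E_q$ under translation is a (Zariski-)closed proper subgroup of $E_q$, hence a finite subgroup. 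Therefore, if $\tau$ has infinite order in $E_q$, only constants are fixed by $T_\tau^*$. The multiplicative independence of $q$ and $\q$ guarantees precisely this: if $n\tau = 0$ in $E_q$ for some $n \geq 1$, then $\q^n \in q^{\mathbb{Z}}$, contradicting the hypothesis. Thus the fixed field is $C$, and $C_q \cap C_\q = C$.

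The main obstacle in the argument is the classical step about rational functions on an elliptic curve being stabilized only by finite translation subgroups; rather than invoking it as a black box, I would spell it out in one line as follows: for nonconstant $g \in C(E_q)$, the morphism $g : E_q \to \mathbb{P}^1$ is finite, so a generic fiber $g^{-1}(c)$ is finite, and the stabilizer $\mathrm{Stab}(g) = \{P \in E_q : T_P^* g = g\}$ acts on each fiber; being both a subgroup of $E_q$ and acting faithfully on a finite set, it is finite. Everything else — the minimal polynomial descent, the algebraic closedness of $C$, and the symmetry $q \leftrightarrow \q$ — is routine, so the proof will be short.
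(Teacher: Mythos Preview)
Your proof is correct and follows essentially the same route as the paper: both reduce the statement to $C_q \cap C_\q = C$ via the minimal-polynomial descent, then establish this intersection by showing that $\sigma_\q$ induces a translation of infinite order on $E_q$ which can only fix constants. The only cosmetic difference is that the paper argues the latter by observing that $\sigma_\q$ permutes the finitely many poles of a nonconstant $f\in C_q$ (forcing $\q^m c = q^d c$ for some $m>0$ and a pole $c$, contradicting multiplicative independence), whereas you phrase it as the translation stabilizer of a nonconstant rational function being finite via a fiber argument; these are the same idea.
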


\begin{proof}
The two statements are symmetrical, so let us only prove the first one. First let us prove that $C_q \cap C_\q=C$. Let $f$ be an element of $C_q$ that is $\s_{\q}$-invariant.
Suppose to the contrary that $f$ is nonconstant. Then $f$ has a nonzero pole $c$.
 Since $\s_{\q}(f)=f$, the multiplication by $\q$ induces a permutation of the poles of $f$ modulo $q$. Since the set of poles modulo $q$ is a finite set,  there exists $m \in\N$   such that $\q^m c =q^dc$ for some $d \in \Z$. A contradiction with the fact that $q$ and $\q$ are multiplicatively independent. 
 Now, let $f$ be  in a $\s_\q$-extension of   $C_q$  algebraic over $C_q$ and 
invariant by $\s_{\q}$. Let $\mu(X) \in C_q[X]$ be the monic minimal polynomial of $f$ above $C_q$. Since $\s_\q(f)=f$, we easily see that the coefficients of $\mu$ must be fixed by $\s_\q$. Then, 
these coefficients belong to $C_q\cap C_\q$, which is equal to $C$. Then, $f$ is algebraic over $C$. The latter field being algebraically closed, we conclude that $f \in C$.
 \end{proof}

\begin{lemma}\label{lemma:transcendanceofqlog1}
The following statements hold:
\begin{enumerate}
\item  the fields $C_\q$ and $C_{q}$ are linearly disjoint over $C$;
\item for all  $\alpha \in C_{\q}.C_{q}$, $\s_{q}(\alpha) \neq \alpha +1$ and 
$\s_\q(\alpha) \neq \alpha +1$;
\item for all  $\alpha \in C_\q(s)$,  
$\s_\q(\alpha) \neq \alpha +1$.
\end{enumerate}

\end{lemma}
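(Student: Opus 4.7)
The plan is to prove the three assertions in turn, each resting on a distinct technical core: linear disjointness via Lemma~\ref{lem:lineardisjonctionoverconstant} for (1); a meromorphic pole-counting argument exploiting the multiplicative independence of $q$ and $\q$ for (2); and an elementary partial-fraction computation for (3). I expect (2) to be the main obstacle, its extension from a statement about $C_\q$ alone to one about $C_\q\cdot C_q$ requiring the tensor-product structure provided by (1).

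For (1), I would apply Lemma~\ref{lem:lineardisjonctionoverconstant} with $\sigma = \sigma_q$, $M = C_\q$ endowed with the restriction of $\sigma_q$, and $L = C_q$, on which $\sigma_q$ acts trivially. Lemma~\ref{lemma:tqconstantCq} gives $C_\q^{\sigma_q} \subseteq C_\q \cap C_q = C$; together with the obvious reverse containment one obtains $M^{\sigma_q} = C$, so taking $K = C$ the hypotheses of Lemma~\ref{lem:lineardisjonctionoverconstant} are satisfied, and its conclusion yields (1).

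The core of (2) is the claim that no $u \in C_\q$ satisfies $\sigma_q(u) - u = 1$ (with the symmetric statement for $C_q$ and $\sigma_\q$). Such a $u$ would be nonconstant and therefore have some pole $c \in C^*$. Since $u \in C_\q$ is $\sigma_\q$-invariant, its pole set is $\q$-stable, and the identity $\sigma_q(u) = u + 1$ further forces $q$-stability; hence the entire orbit $\{q^m \q^n c : (m,n) \in \Z^2\}$ lies in the pole set and is infinite by multiplicative independence of $q$ and $\q$. Splitting into the case where $\log|q|$ and $\log|\q|$ are $\Q$-linearly dependent (infinitely many orbit points share a single norm, hence cluster on any annulus around that circle) and the case where they are $\Q$-linearly independent (orbit norms form a dense subset of $(0,\infty)$, giving infinitely many points in every compact annulus) yields in both cases a contradiction with the standard fact that a meromorphic function on $C^*$ has only finitely many poles on each affinoid subdomain. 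To promote this core claim to (2), I would invoke Lemma~\ref{lem:transccriteriaforlog} applied to $K = C_\q\cdot C_q \subset L = \mathcal{M}er(C^*)$---whose $\sigma_q$-fixed fields both equal $C_q$---reducing the nonexistence of $\alpha \in C_\q\cdot C_q$ with $\sigma_q(\alpha)-\alpha = 1$ to the assertion $\ell_q \notin C_\q\cdot C_q$. By (1), $C_\q\cdot C_q = \mathrm{Frac}(C_\q \otimes_C C_q)$; writing a hypothetical $\ell_q = N/D$ there and expanding $N,D$ in the free $C_\q$-basis of $C_\q \otimes_C C_q$ coming from a $C$-basis of $C_q$ containing $1$, the relation $\sigma_q(N)D = (N+D)\sigma_q(D)$ eventually produces, in the $\sigma_q$-invariant case for $D$, a $C_\q$-coordinate equation $\sigma_q(c) - c \in C \setminus\{0\}$, contradicting the core claim. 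The $\sigma_\q$ half of (2) is symmetric in $q \leftrightarrow \q$.

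For (3), suppose $\alpha \in C_\q(s)$ satisfies $\sigma_\q(\alpha) = \alpha + 1$. Since $C_\q[s]$ is a PID, Euclidean division yields a unique decomposition $\alpha = P(s) + R(s)/Q(s)$ with $P \in C_\q[s]$ and $\deg_s R < \deg_s Q$. The automorphism $\sigma_\q$ substitutes $\q s$ for $s$ while fixing the coefficients (which lie in $C_\q$), preserving the degree condition; by uniqueness of the decomposition, equating polynomial parts in $\sigma_\q(\alpha) - \alpha = 1$ forces $P(\q s) - P(s) = 1$. Evaluating this identity at $s = 0$ gives $P(0) = P(0) + 1$, the required contradiction.
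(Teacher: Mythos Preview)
Your arguments for (1) and (3) are correct and match the paper's. The real issue is in (2): your ``promotion'' from the core claim to the full statement is incomplete. You reduce correctly, via Lemma~\ref{lem:transccriteriaforlog}, to showing $\ell_q \notin C_\q\cdot C_q$, and then write $\ell_q = N/D$ with $N,D \in C_\q\otimes_C C_q$. But you only extract a coordinate equation $\sigma_q(c)-c\in C\setminus\{0\}$ \emph{in the $\sigma_q$-invariant case for $D$}, and you give no argument that $D$ can be taken $\sigma_q$-invariant (equivalently, $D\in C_q$). In general it cannot: $C_\q\otimes_C C_q$ is only a domain, not a field, and an arbitrary element of its fraction field has no reason to admit a denominator in $C_q$. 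The relation $\sigma_q(N)D=(N+D)\sigma_q(D)$ in the non-invariant case mixes all coordinates and does not obviously yield a relation of the desired shape.

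The paper closes this gap by exploiting the explicit structure $C_q=C(x,y)$ with $y$ quadratic over $C(x)$: linear disjointness (your part (1)) then gives $C_\q\cdot C_q=C_\q(x,y)$, so any $\alpha$ there is uniquely $P(x)y+Q(x)$ with $P,Q\in C_\q(X)$. Since $x,y$ are $\sigma_q$-fixed, the equation $\sigma_q(\alpha)=\alpha+1$ forces $Q^{\sigma_q}(x)-Q(x)=1$; evaluating at $X=0$ (after stripping the principal part there) produces $\tilde\beta\in\overline{C_\q}$ with $\sigma_q(\tilde\beta)-\tilde\beta=1$, which is then dispatched via $\partial_s$ and Lemma~\ref{lemma:tqconstantCq}. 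This last step is essentially your core claim, so your overall architecture is right; what is missing is precisely this structural description of $C_\q\cdot C_q$ that makes the descent to a single $C_\q$-equation work. A minor point: your ``cluster'' language in the core claim is archimedean intuition; over a non-locally-compact field infinitely many points need not accumulate, but the fact you invoke (finitely many poles on each affinoid) is correct and suffices. An even shorter route: $u\in C_\q$ has finitely many poles modulo $\q^{\Z}$, while the $q$-orbit of any pole is infinite modulo $\q^{\Z}$ by multiplicative independence.
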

\begin{proof}
\begin{trivlist}
\item (1) This is Lemmas   \ref{lemma:tqconstantCq} and \ref{lem:lineardisjonctionoverconstant}  with $K=C$, $M=C_q$ and 
$L=C_\q$, $\sigma=\sigma_\q$.

\item (2) Suppose to the contrary  that there exists $\alpha \in C_{\q}.C_{q}$, such that $\s_{q}(\alpha) = \alpha +1$. Since 
$C_{q}$ is by Proposition \ref{prop:Tatecurve}, the field of meromorphic functions over a Tate curve, there exist $x,y  \in C_{q}$ such that $x$ is transcendental over $C$, $y$ algebraic of degree $2$ over $C(x)$ and $C_{q}=C(x,y)$. Since $C_\q$ is linearly disjoint from $C_{q}$ over $C$, the field  $C_{\q}.C_{q}$ equals $C_\q(x,y)$ and there are ${P(X),Q(X) \in C_\q(X)}$ such that $\alpha =P(x)y+Q(x)$. Since $x,y$ are fixed by $\s_{q}$ and $y$ is of degree $2$ over $C_{\q}(x)$, we deduce from $\s_{q}(\alpha)=\alpha +1$ that
$P^{\s_{q}}(x)=P(x)$ and
 $Q^{\s_{q}}(x)-Q(x)=1$ where $P^{\s_{q}}(X)$ (resp.  $Q^{\s_{q}}(X)$) denotes the fraction obtained from $P(X)$ (resp. $Q(X)$) by applying $\s_{q}$ to the coefficients.
Let  $\overline{C_{\q}}$  be some algebraic closure of $C_{\q}$. We endow $\overline{C_{\q}}$ with a structure of $\s_q$-field extension of $C_\q$.  Let us  write  $Q(X)=\frac{c_r}{X^r}+\dots+\frac{c_1}{X} +R(X)$ with $R \in \overline{C_{\q}}(X)$ with no pole at $X=0$. Then, since $x$ is   transcendental over $\overline{C_{\q}}$ and fixed by $\s_q$
 $$Q^{\s_{q}}(x)-Q(x)=1=\frac{\s_q(c_r)-c_r}{x^r}+\dots+\frac{\s_q(c_1)-c_1}{x} +R^{\s_q}(x)-R(x).$$
Using the transcendence of $x$ over $\overline{C_{\q}}$, we find that $1=\s_q(\tilde{\beta})-\tilde{\beta}$ for $\tilde{\beta}=R(0) \in \overline{C_{\q}}$.  There exists a unique derivation extending  $\partial_s$ to $ \overline{C_{\q}}$ and this derivation commutes with $\s_q$. Denoting this derivation by $\partial_s$ and  
 deriving  $1=\s_q(\tilde{\beta})-\tilde{\beta}$, we conclude  that
 $\partial_s (\tilde{\beta}) \in C_q \cap C_{\q^r}$. Note that $q$ and $\q^{r}$ are multiplicatively independent. By Lemma \ref{lemma:tqconstantCq}, we  find that  $\partial_s(\tilde{\beta}) \in C$ which leads to $\tilde{\beta}=cs+d$ for some $c,d \in C$. A contradiction with $1=\s_q(\tilde{\beta})-\tilde{\beta}$.
The proof for $\q$ is similar. 

\item (3)  Let  $\alpha \in C_\q(s)$. Using the partial fraction decomposition of $\alpha$ in $\overline{C_{\q}}(s)$, the fact that ${\s_\q(s)=\q s}$ and  the transcendence of $s$ over $C_\q$, one can easily see that $\s_\q(\alpha)-\alpha \neq 1$.
 \end{trivlist}
\end{proof}

\begin{lemma}\label{lemma:transcendenceqlogfinal}
The following statements hold:
\begin{enumerate}
\item the function $\ell_{\q}$ (resp. $\ell_{q}$) is transcendental over $C_{\q}.C_{q}$;
\item  the function $\ell_{\q}$
is transcendental over $C_{\q}(s)$. In particular, \textbf{(H3)} of Proposition \ref{prop2} is satisfied for $K=C_{\q}(s)$.
\end{enumerate}

\end{lemma}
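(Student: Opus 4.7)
The plan is to deduce both transcendence statements from Lemma~\ref{lem:transccriteriaforlog} applied to the appropriate difference field extensions, using Lemma~\ref{lemma:transcendanceofqlog1} to rule out the telescoping condition that would force algebraicity.

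For part (2), I would take $K = C_{\q}(s)$ and $L = K(\ell_{\q}) \subset \cM er(C^*)$, with the automorphism $\sigma = \sigma_{\q}$. By Lemma~\ref{lemma:fielddefinitiongenus1}, $K$ is stable under $\sigma_{\q}$. The field of $\sigma_{\q}$-invariants of $\cM er(C^*)$ is $C_{\q}$ by definition, so both $L^{\sigma_{\q}}$ and $K^{\sigma_{\q}}$ lie between $C_{\q}$ and $C_{\q}$, hence equal $C_{\q}$. The hypothesis $L^{\sigma_{\q}} = K^{\sigma_{\q}}$ of Lemma~\ref{lem:transccriteriaforlog} is thus satisfied, and we have $\sigma_{\q}(\ell_{\q}) = \ell_{\q} + 1$ with $c = 1 \in K^{*}$. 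By Lemma~\ref{lemma:transcendanceofqlog1}(3), there is no $\alpha \in C_{\q}(s)$ with $\sigma_{\q}(\alpha) = \alpha + 1$, so condition (3) of Lemma~\ref{lem:transccriteriaforlog} fails, which means (2) also fails: $\ell_{\q}$ is not algebraic over $C_{\q}(s)$.

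For part (1), the argument is completely analogous with $K = C_{\q}.C_{q}$. I would verify first that $K$ is a $\sigma_{\q}$-field (and by symmetry a $\sigma_{q}$-field): stability under $\sigma_{\q}$ follows from the trivial stability of $C_{\q}$ together with the stability of $C_{q}$ under $\sigma_{\q}$, which is obtained from the commutation $\sigma_{\q}\sigma_{q} = \sigma_{q}\sigma_{\q}$. The $\sigma_{\q}$-invariants of $K$ and of $K(\ell_{\q})$ are sandwiched between $C_{\q}$ and $\cM er(C^*)^{\sigma_{\q}} = C_{\q}$, hence both equal $C_{\q}$. Lemma~\ref{lemma:transcendanceofqlog1}(2) rules out the existence of $\alpha \in C_{\q}.C_{q}$ with $\sigma_{\q}(\alpha) = \alpha + 1$, so Lemma~\ref{lem:transccriteriaforlog} yields the transcendence of $\ell_{\q}$ over $C_{\q}.C_{q}$. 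The transcendence of $\ell_{q}$ over $C_{\q}.C_{q}$ is obtained by the symmetric argument, interchanging the roles of $\q$ and $q$: use $\sigma = \sigma_{q}$, note that $\sigma_{q}$-invariants of $\cM er(C^*)$ form the field $C_{q}$ so that $K^{\sigma_{q}} = (K(\ell_{q}))^{\sigma_{q}} = C_{q}$, and invoke the second half of Lemma~\ref{lemma:transcendanceofqlog1}(2) to exclude $\sigma_{q}(\alpha) = \alpha + 1$ for $\alpha \in C_{\q}.C_{q}$.

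There is no real obstacle in this argument: all the hard work has been packaged into Lemma~\ref{lem:transccriteriaforlog} (the classical Kolchin--Ostrowski flavored criterion for a solution of $\sigma(y) = y + c$ to be transcendental) and Lemma~\ref{lemma:transcendanceofqlog1} (which rules out the crucial ``anti-telescoper'' over the relevant base fields via partial fraction analysis and the multiplicative independence of $q$ and $\q$). The only point one has to be careful about is identifying the field of $\sigma$-constants of the relevant extension so that Lemma~\ref{lem:transccriteriaforlog} actually applies, and this is immediate because $\cM er(C^*)^{\sigma_{\q}} = C_{\q}$ (resp.\ $\cM er(C^*)^{\sigma_{q}} = C_{q}$) already caps the invariants from above.
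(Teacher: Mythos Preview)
Your proposal is correct and follows essentially the same approach as the paper: both arguments apply Lemma~\ref{lem:transccriteriaforlog} (using the sandwich $C_{\q}\subset K^{\sigma_{\q}}\subset \cM er(C^*)^{\sigma_{\q}}=C_{\q}$ to identify the $\sigma$-constants) and then invoke Lemma~\ref{lemma:transcendanceofqlog1} to rule out the existence of $\alpha$ with $\sigma(\alpha)=\alpha+1$. Your write-up is a bit more explicit about checking stability of the base fields, but the logical skeleton is identical.
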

\begin{proof}
\begin{trivlist}
\item (1)
Since $\s_{\q}(\ell_\q) =\ell_\q +1$ and $C_\q\subset(C_{\q}.C_{q})^{\s_\q}\subset \cM er(C^*)^{\s_\q}= C_\q$, we can apply Lemma~\ref{lem:transccriteriaforlog} and find that $\ell_\q$ is algebraic over $C_{\q}.C_{q}$ if and only if there exists $\alpha \in C_{\q}.C_{q}$ such that
$\s_\q(\alpha) =\alpha +1$. We conclude by Lemma \ref{lemma:transcendanceofqlog1}. The proof for $\ell_{q}$ is symmetrical. 
\item (2)
Since $\s_{\q}(\ell_\q) =\ell_\q +1$ and $C_\q\subset (C_\q(s))^{\s_\q}\subset \cM er(C^*)^{\s_\q}=C_\q$, we can apply Lemma~\ref{lem:transccriteriaforlog} and find that $\ell_\q$ is algebraic over $C_\q(s)$ if and only if there exists $\alpha \in C_\q(s)$ such that
$\s_\q(\alpha)=\alpha +1$. We again conclude by Lemma \ref{lemma:transcendanceofqlog1}.
\end{trivlist}
\end{proof}

\begin{lemma}\label{lemma:descentcocycles}
 The following statement  hold:
 \begin{enumerate}
\item  let $f \in C_{q}$. If  there exists $\alpha \in C_{\q}.C_{q}$  satisfying $\s_{\q}(\alpha)-\alpha= f$, then there exists $\beta \in C_{q}$ such that 
$\s_{\q}(\beta)-\beta =f$; 
 \item  let $f \in C_{\q}.C_{q}$. If there exists $\alpha \in C_{\q}.C_{q}(\ell_{q})$  satisfying $\s_{\q}(\alpha)-\alpha= f$, then, there exist $\tilde{a} \in C_\q, \tilde{b} \in C_{\q}.C_{q}$ such that 
$\s_{\q}(\tilde{a} \ell_{q} +\tilde{b})- (\tilde{a} \ell_{q} +\tilde{b}) =f$.

\end{enumerate}  

\end{lemma}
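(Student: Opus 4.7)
Both parts rest on a single structural ingredient: by Lemma~\ref{lemma:transcendanceofqlog1}(1), $C_\q$ and $C_q$ are linearly disjoint over $C$, so a $C$-basis $(w_k)_{k\in K}$ of $C_\q$ with $w_{k_0}=1$ is simultaneously a $C_q$-basis of $C_\q.C_q$ and a $C_q[\ell_q]$-basis of $C_\q.C_q[\ell_q]$. Since $\s_\q$ fixes each $w_k$, and since $C_q$ (resp.\ $C_q[\ell_q]$) is $\s_\q$-stable---the former because $\s_\q$ commutes with $\s_q$, the latter because $\gamma := \s_\q(\ell_q)-\ell_q$ lies in $C_q$---the coboundary operator $\s_\q-\mathrm{id}$ acts coordinate-wise in these bases.

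For part~(1), I expand $\alpha = \sum_k w_k \beta_k$ with $\beta_k \in C_q$ and $f = w_{k_0}\cdot f$ in the basis; matching the coefficients in $\s_\q(\alpha)-\alpha=f$ forces $\s_\q(\beta_{k_0})-\beta_{k_0}=f$, so $\beta := \beta_{k_0}\in C_q$ is the sought element.

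For part~(2), I first show $\alpha \in C_\q.C_q[\ell_q]$. Here $\gamma \in C_q$ must be nonzero: otherwise $\ell_q \in \cM er(C^*)^{\s_\q}=C_\q$, contradicting the transcendence of $\ell_q$ over $C_\q.C_q$ given by Lemma~\ref{lemma:transcendenceqlogfinal}(1). I then invoke Lemma~\ref{lem:transclogpolynomialinlogcocycle} applied to $(C_\q.C_q,\s_\q)\subset(\cM er(C^*),\s_\q)$ (both having $\s_\q$-constants equal to $C_\q$), with $\ell_q$ playing the role of the transcendental cocycle with shift $\gamma \in C_\q.C_q\setminus\{0\}$. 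Since $\s_\q(\alpha)-\alpha=f$ belongs to $C_\q.C_q \subset C_\q.C_q[\ell_q]$, the lemma yields $\alpha \in C_\q.C_q[\ell_q]$.

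Second, using the $C_q[\ell_q]$-basis $(w_k)$, I write $\alpha = \sum_k w_k \alpha_k$ and $f = \sum_k w_k f_k$ with $\alpha_k \in C_q[\ell_q]$ and $f_k \in C_q$, reducing the problem to $\s_\q(\alpha_k)-\alpha_k=f_k$ for each $k$. Writing $\alpha_k = \sum_{i=0}^{n} a_i \ell_q^i$ with $a_n \neq 0$ and expanding $\s_\q(\alpha_k) = \sum_i \s_\q(a_i)(\ell_q+\gamma)^i$, the coefficient of $\ell_q^n$ for $n \geq 1$ gives $a_n \in C_q \cap C_\q = C$ by Lemma~\ref{lemma:tqconstantCq}, while the coefficient of $\ell_q^{n-1}$ for $n \geq 2$ yields $\s_\q(a_{n-1})-a_{n-1}=-n a_n \gamma$. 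The main obstacle---and the heart of the argument---is to conclude $a_n=0$ from this: the relation says $a_n [\gamma]=0$ in the $C$-vector space $C_q/(\s_\q-\mathrm{id})(C_q)$, and $[\gamma]\neq 0$ because otherwise some $\eta \in C_q$ with $\s_\q(\eta)-\eta=\gamma$ would make $\ell_q-\eta$ a $\s_\q$-invariant element of $\cM er(C^*)$ (hence of $C_\q \subset C_\q.C_q$) satisfying $\s_q(y)-y=1$, contradicting Lemma~\ref{lemma:transcendanceofqlog1}(2). Thus $\alpha_k = a_1^{(k)} \ell_q + a_0^{(k)}$ with $a_1^{(k)}\in C$, so $f_k = a_1^{(k)} \gamma + \s_\q(a_0^{(k)})-a_0^{(k)}$, and assembling $\tilde{a} := \sum_k w_k a_1^{(k)} \in C_\q$ and $\tilde{b} := \sum_k w_k a_0^{(k)} \in C_\q.C_q$ produces the required identity.
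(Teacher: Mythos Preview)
Your proof is correct, and it takes a genuinely different route from the paper's. Both arguments for part~(2) begin with Lemma~\ref{lem:transclogpolynomialinlogcocycle} to reduce to $C_\q.C_q[\ell_q]$, but diverge thereafter. For part~(1), the paper writes $C_\q = C(x,y)$ explicitly as a function field of a Tate curve, expands $\alpha = P(x)y + Q(x)$ with $P,Q \in C_q(x)$, extracts via a partial-fraction argument an element $\tilde\beta \in \overline{C_q}$ with $\s_\q(\tilde\beta)-\tilde\beta = f$, and then descends from $\overline{C_q}$ to $C_q$ using Lemma~\ref{lem:transccriteriaforlog}. For part~(2), the paper works directly in $C_\q.C_q[\ell_q]$ and argues by minimality of the $\ell_q$-degree, showing that degree $N>1$ forces an algebraic relation for $\ell_q$ over $C_\q.C_q$. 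You instead exploit linear disjointness (Lemma~\ref{lemma:transcendanceofqlog1}(1)) to choose a $C$-basis $(w_k)$ of $C_\q$ and observe that, since each $w_k$ is $\s_\q$-fixed and the coefficient rings $C_q$, $C_q[\ell_q]$ are $\s_\q$-stable, the operator $\s_\q-\mathrm{id}$ acts diagonally in this basis. This reduces both parts to coboundary problems over $C_q$ (resp.\ $C_q[\ell_q]$), entirely bypassing the explicit description of $C_\q$ as $C(x,y)$ and the detour through $\overline{C_q}$. Your approach is more uniform and conceptually transparent; the paper's is more hands-on but makes visible the elliptic structure. As a byproduct, your argument shows each $a_1^{(k)}$ already lies in $C$, a slightly sharper intermediate statement than the paper's $a_1 \in C_\q$.
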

\begin{proof}
\begin{trivlist} \item (1)
 Analogously to the proof of Lemma \ref{lemma:transcendanceofqlog1}, let us write $\alpha =P(x)y+Q(x)$ for $P(X),Q(X) \in C_{q}(X)$ and $C_{\q}=C(x,y)$.  Reasoning as in the proof of Lemma \ref{lemma:transcendanceofqlog1},  we find that $Q^{\s_{\q}}(x)-Q(x)=f$. Since $x$ is transcendental over $C_{q}$, we conclude as in Lemma \ref{lemma:transcendanceofqlog1} that  there is $\tilde{\beta} \in \overline{C_{q}}$, for some $\overline{C_{q}}$ algebraic closure of $C_q$ such that   $\s_{\q}(\tilde{\beta})-\tilde{\beta} =f$. Since by Lemma~\ref{lemma:tqconstantCq}, $\overline{C_{q}}^{\sigma_\q}= C_q^{\sigma_\q}=C$, Lemma \ref{lem:transccriteriaforlog} implies that there exists  $\beta \in C_q$ such that 
$\s_\q(\beta)-\beta=f$.
\item (2) First of all, let us note that since $\s_\q$ and $\s_{q}$ commute, there exists $d \in C_{q}$ such that 
\begin{equation}\label{eq:qeqell_q}
\s_\q(\ell_{q})=\ell_{q}+d.
\end{equation}
By Lemma \ref{lemma:transcendenceqlogfinal}, the function $\ell_{q}$ is transcendental over $C_{\q}.C_{q}$. This implies that $\ell_{q}\notin C_{\q}$ and then $d\neq 0$.  Since $C_{\q}.C_{q}(\ell_{q})^{\s_\q}=C_\q =\mathcal{M}er (C^{*})^{\s_\q}=C_{\q}.C_{q}^{\s_\q}=C_\q$,  Lemma \ref{lem:transclogpolynomialinlogcocycle}, applied to $\s_\q(\ell_q)=\ell_q +d$, implies that there exists $P \in C_{\q}.C_{q}[X]$ such that 
$$
f= \s_\q( P(\ell_{q})) -P(\ell_{q}).
$$
Now, let us write $P(X)=\sum_{k=0}^N a_k X^k$ with $a_k \in C_{\q}.C_{q}$,  and $N$ minimal. We find 
\begin{multline}\label{eq:polynomialcocycle}
f =(\s_\q(a_N)-a_N) \ell_{q}^N+ (\s_\q(a_{N-1})-a_{N-1} +Nd \s_\q(a_N))\ell_{q}^{N-1}+ \\
\mbox{terms of order less than } N-1.
\end{multline}

 We conclude in view of \eqref{eq:polynomialcocycle} that if $N =0$ we are done by setting $\tilde{a}=0$ and ${\tilde{b}=a_N}$. Let us now assume that $N>0$. Then, by minimality of $N$, ${\s_\q(a_N)=a_N}$. We claim that ${\s_\q(a_{N-1})-a_{N-1} +Nd \s_\q(a_N) =\s_\q(a_{N-1})-a_{N-1} +Nd a_N \neq 0}$. To the contrary, ${\s_\q(a_{N-1})=a_{N-1} -Nd a_N}$ implies $\s_\q( \frac{a_{N-1}}{a_N} +N \ell_{q})=\frac{a_{N-1}}{a_N} +N \ell_{q}$ and ${\frac{a_{N-1}}{a_N} +N \ell_{q} \in C_\q}$, contradicting the transcendence of $\ell_{q}$ over $C_{\q}.C_{q}$, see Lemma \ref{lemma:transcendenceqlogfinal}. This proves the claim. If $N>1$, then \eqref{eq:polynomialcocycle} with $\s_\q(a_N)=a_N$ and
 $\s_\q(a_{N-1})-a_{N-1} +Nd a_N \neq 0$, would give an equation of order $N-1$ which would contradicts the transcendence of $\ell_{q}$ over $C_{\q}.C_{q}$. This proves that $N=1$ and $f = \s_\q(a_1 \ell_{q} +a_0) - (a_1 \ell_{q} +a_0)$ for some $a_1 \in C_\q,a_0\in C_{\q}.C_{q}$.
\end{trivlist}
 \end{proof}

\begin{lemma}\label{lemma:algindependceqlogtqlog}
The function $\ell_{\q}$ is transcendental over $C_{\q}.C_{q}(\ell_{q})$. In particular, the assumption \textbf{(H3)} of Proposition \ref{prop2} holds for  $K=C_{\q}.C_{q}(\ell_{q})$.
\end{lemma}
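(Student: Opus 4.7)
The plan is to apply Lemma~\ref{lem:transccriteriaforlog} with $K = C_{\q}.C_{q}(\ell_q)$ and $\sigma = \sigma_\q$; the required identity $K^{\sigma_\q} = \cM er(C^*)^{\sigma_\q} = C_\q$ is immediate since $C_\q \subset K \subset \cM er(C^*)$. It therefore suffices to show that the equation $\sigma_\q(\alpha) - \alpha = 1$ has no solution $\alpha \in K$. Note that $d := \sigma_\q(\ell_q) - \ell_q$ lies in $C_q$ and is nonzero: otherwise $\ell_q \in C_\q \subset C_\q.C_q$, contradicting Lemma~\ref{lemma:transcendenceqlogfinal}(1).

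The first reduction is via Lemma~\ref{lemma:descentcocycles}(2) applied to $f = 1 \in C_\q.C_q$: any solution $\alpha$ produces $\tilde a \in C_\q$ and $\tilde b \in C_\q.C_q$ with $\tilde a d + \sigma_\q(\tilde b) - \tilde b = 1$. Decomposing $\tilde a$ and $\tilde b$ over a $C$-basis $\{e_i\}_{i \in I}$ of $C_\q$ containing $e_{i_0}=1$, and using that the $e_i$ remain $C_q$-linearly independent inside $C_\q.C_q$ (Lemma~\ref{lemma:transcendanceofqlog1}(1)), the coefficient of $e_{i_0}$ reduces the problem to the scalar equation $ad + \sigma_\q(b) - b = 1$ with $a \in C$ and $b \in C_q$. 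The case $a=0$ is ruled out by Lemma~\ref{lemma:transcendanceofqlog1}(2); when $a \neq 0$, the element $\gamma := a\ell_q + b$ lies in $C_q(\ell_q)$ and satisfies $\sigma_\q(\gamma) - \gamma = 1$, and the task becomes to rule out such a $\gamma$.

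I would then apply Lemma~\ref{lem:transclogpolynomialinlogcocycle} to the tower $C_q \subset C_q(\ell_q)$ with $\sigma = \sigma_\q$, $f = \ell_q$, $c = d \neq 0$. The hypothesis $C_q(\ell_q)^{\sigma_\q} = C_q^{\sigma_\q} = C$ follows from Lemma~\ref{lemma:transcendenceqlogfinal}(1): since $\ell_q$ is transcendental over $C_\q.C_q$, one has $C_q(\ell_q) \cap C_\q.C_q = C_q$, whence $C_q(\ell_q) \cap C_\q = C$. The lemma then yields $\gamma \in C_q[\ell_q]$; write $\gamma = \sum_{k=0}^N a_k \ell_q^k$ with $a_N \neq 0$. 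Matching the coefficient of $\ell_q^N$ in $\sigma_\q(\gamma) - \gamma = 1$ forces $a_N \in C_q \cap C_\q = C$. The case $N=0$ contradicts Lemma~\ref{lemma:transcendanceofqlog1}(2); for $N \geq 2$, the coefficient of $\ell_q^{N-1}$ rearranges to $\sigma_\q(a_{N-1} + N a_N \ell_q) = a_{N-1} + Na_N\ell_q$, placing this element in $C_q(\ell_q) \cap C_\q = C$ and thereby forcing $\ell_q \in C_q$, contradicting the transcendence of $\ell_q$ over $C_q$.

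The remaining case $N = 1$ is the main obstacle. Here $\gamma = a_1 \ell_q + a_0$ with $a_1 \in C^*$ and $a_0 \in C_q$, so $\sigma_\q(a_0) - a_0 = 1 - a_1 d$. Applying $\partial_s$, which commutes with $\sigma_\q$ and annihilates $a_1 \in C$, and using $\partial_s d = (\sigma_\q - \mathrm{id})(\partial_s \ell_q)$ together with $\partial_s \ell_q \in C_q$, one obtains $\sigma_\q(\partial_s a_0 + a_1 \partial_s \ell_q) = \partial_s a_0 + a_1 \partial_s \ell_q$; as this element also lies in $C_q$, it lies in $C_q \cap C_\q = C$. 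Hence $\partial_s(a_0 + a_1\ell_q) \in C$. The concluding step is the genuinely nonarchimedean observation that any $f \in \cM er(C^*)$ with $\partial_s f \in C$ is constant: a pole of $f$ at some $s_0 \in C^*$ would force a pole of $\partial_s f$ of strictly higher order, so $f$ must be holomorphic on $C^*$ and hence expressible as an everywhere convergent Laurent series $\sum_n c_n s^n$; the identity $\sum_n n c_n s^n \in C$ then forces $c_n = 0$ for $n \neq 0$. Applied to $a_0 + a_1\ell_q$, this gives $a_0 + a_1\ell_q \in C$, hence $\ell_q \in C_q$, the final contradiction.
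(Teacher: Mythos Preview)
Your proof is correct and takes a somewhat different route from the paper's. The paper keeps $\ell_\q$ in play throughout: from $\sigma_\q(\tilde a\ell_q+\tilde b)-(\tilde a\ell_q+\tilde b)=1$ it first deduces $\ell_\q=\tilde a\ell_q+\tilde b_1$ with $\tilde b_1\in C_\q.C_q$, then differentiates to force $\tilde a\in C$, then invokes Lemma~\ref{lemma:descentcocycles}(1) to descend $\tilde b$ to some $\tilde b_2\in C_q$, obtaining $\ell_\q+c=\tilde a\ell_q+\tilde b_2$ with $c\in C_\q$; a second differentiation and the identity $C_\q\cap C_q=C$ then place $\partial_s(\ell_\q+c)$ in $C$, contradicting the transcendence of $\ell_\q$ over $C_\q(s)$. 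You instead perform the descent in one stroke via the $C$-basis decomposition of $C_\q$ (exploiting linear disjointness), landing directly on $a\in C$, $b\in C_q$ with $ad+\sigma_\q(b)-b=1$; your final contradiction is through $\ell_q\in C_q$ rather than through $\ell_\q$. Your path is slightly more elementary in that it avoids the second appeal to Lemma~\ref{lemma:descentcocycles} and never needs Lemma~\ref{lemma:transcendenceqlogfinal}(2).

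One organizational remark: once you have $\gamma=a\ell_q+b$ from your basis argument, the appeal to Lemma~\ref{lem:transclogpolynomialinlogcocycle} and the case analysis on $N$ are superfluous, since $\gamma$ is already a degree-one polynomial in $\ell_q$; you can pass directly to the $N=1$ paragraph. Also, in your last step the conclusion is slightly stronger than stated: $\partial_s f\in C$ for $f\in\cM er(C^*)$ forces $\partial_s f=0$ (the constant term of $\sum n c_n s^n$ is zero), hence $f\in C$ by Lemma~\ref{lemma:partialsconstant}; this is exactly what you need.
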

\begin{proof}
By Lemma \ref{lem:transccriteriaforlog}, the function $\ell_\q$ is algebraic over $C_{\q}.C_{q}(\ell_{q})$ if and only if we have ${\ell_\q \in C_{\q}.C_{q}(\ell_{q})}$.  Suppose to the contrary that ${\ell_\q \in C_{\q}.C_{q}(\ell_{q})}$. Since $1 =\s_\q(\ell_\q)-\ell_\q$, we conclude by Lemma \ref{lemma:descentcocycles} that there exist $\tilde{a} \in C_\q, \tilde{b} \in C_{\q}.C_{q}$ such that $1 =\s_\q(\tilde{a} \ell_{q} +\tilde{b})- (\tilde{a} \ell_{q} +\tilde{b})$. Combining this equation with $\s_\q(\ell_\q)-\ell_\q =1$, we find that $\s_\q(\ell_\q)-\ell_\q=\s_\q(\tilde{a} \ell_{q} +\tilde{b})- (\tilde{a} \ell_{q} +\tilde{b})$, proving that $\s_\q(\tilde{a} \ell_{q} +\tilde{b}-\ell_\q)=\tilde{a} \ell_{q} +\tilde{b}-\ell_\q\in C_\q$. Then, there exists $\widetilde{b_1} \in C_{\q}.C_{q}$ such that 
\begin{equation}\label{eq:relationlqandltq}
\ell_\q =\widetilde{a} \ell_{q} +\widetilde{b_1}.
\end{equation}
Deriving \eqref{eq:relationlqandltq} with respect to $\partial_s$, we find
$$
\partial_s(\ell_\q) =\partial_s(\widetilde{a} ) \ell_{q}+ \widetilde{a}  \partial_s(\ell_{q}) +\partial_s(\widetilde{b_1} ).
$$
By Remark \ref{rmk:derivateqlog}, $\partial_s(\ell_\q), \partial_s(\ell_{q})\in C_{\q}.C_{q}$. In virtue of the commutation between $\partial_{s}$ and $\s_\q,\s_q$, the fields $C_{q},C_{\q}$ are stabilized by $\partial_{s}$, which implies $\partial_s(\widetilde{a} ),  \partial_s(\widetilde{b_1} ) \in C_{\q}.C_{q}$. By Lemma \ref{lemma:transcendenceqlogfinal}, the function  $\ell_{q}$ is transcendental over the latter field, we conclude that $\partial_s(\widetilde{a})=0$ and therefore $\widetilde{a} \in C$. In particular it belongs to $C_{\q}$ and $C_{q}$. Using $1 =\s_\q(\tilde{a} \ell_{q} +\tilde{b})- (\tilde{a} \ell_{q} +\tilde{b})$, we find 
$$
1-\widetilde{a} d =\s_\q(\widetilde{b})-\widetilde{b},$$
where $d =\s_\q(\ell_q)-\ell_q \in C_q$, see \eqref{eq:qeqell_q}.
Since $1-\widetilde{a} d\in C_{q}$, we conclude by Lemma \ref{lemma:descentcocycles}, that there exists $\widetilde{b_2} \in C_{q}$ such that $1-\widetilde{a} d =\s_\q(\widetilde{b_2})-\widetilde{b_2}$. Replacing the left hand side gives $$\s_\q(\ell_{\q})-\ell_{\q}-
\s_\q(\widetilde{a}\ell_{q})+\widetilde{a}\ell_{q}=\s_\q(\widetilde{b_2})-\widetilde{b_2}.$$

This shows that $\ell_{\q}-
\widetilde{a}\ell_{q}-\widetilde{b_2}\in C_{\q}$ and then, there exists $c \in C_\q$ such that $\ell_\q + c= \widetilde{a} \ell_{q} +\widetilde{b_2}$. Deriving this equation with respect to $\partial_s$, we find (we use  
$\partial_{s}(\widetilde{a})=0$)
$$\partial_s(\ell_\q) +\partial_s (c) = \widetilde{a}\partial_s(\ell_{q})+ \partial_s(\widetilde{b_2}).$$ 
By Remark \ref{rmk:derivateqlog}, the left hand side  of the equation belongs to $C_\q$ whereas the right hand side is in $C_{q}$. By Lemma \ref{lemma:tqconstantCq}, we conclude that $\partial_s(\ell_\q +c) \in C$. This means that  there exist $a_0, b_0 \in C$ such that $\ell_\q =a_0 s+ b_0 -c$ in contradiction with $\ell_\q$ transcendental over $C_{\q}(s)$, see Lemma~\ref{lemma:transcendenceqlogfinal}.
\end{proof}

We can now prove  that our fields satisfy the assumption \textbf{(H2)} of Proposition \ref{prop2}.

\begin{lemma}\label{lemma:relativealgebraicclosure}
The following holds:
\begin{enumerate}
\item$C_{\q}$ is relatively algebraically closed in $C_{\q}  (s, \ell_\q)$;
\item $C_{\q}$ is relatively algebraically closed in $C_{\q}.C_{q} ( \ell_\q,\ell_{q})$.
\end{enumerate}
In particular, \textbf{(H2)} of Proposition \ref{prop2} holds for $K=C_{\q}(s)$ and $K=C_{\q}.C_{q}(\ell_{q})$.\end{lemma}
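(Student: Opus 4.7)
The plan is to prove both statements by reducing them, via transitivity of relative algebraic closure, to the elementary fact that any field $K$ is relatively algebraically closed in a purely transcendental extension $K(x_{1},\dots,x_{n})$.

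For (1), I would first check that $s$ is transcendental over $C_{\q}$. If $P\in C_{\q}[X]$ of degree $n\geq 1$ were the minimal polynomial of $s$, then applying $\sigma_{\q}$ (which fixes the coefficients of $P$) to $P(s)=0$ would yield $P(\q^{k}s)=0$ for every $k\in\Z$; the elements $\q^{k}s$ are pairwise distinct because $|\q|\neq 1$, contradicting $\deg P<\infty$. Combined with Lemma~\ref{lemma:transcendenceqlogfinal}(2), which asserts the transcendence of $\ell_{\q}$ over $C_{\q}(s)$, this shows that $C_{\q}(s,\ell_{\q})$ is purely transcendental over $C_{\q}$ of transcendence degree $2$, and the elementary fact gives (1).

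For (2), I would proceed in two steps, combining them by transitivity. \emph{Step A:} show that $C_{\q}$ is relatively algebraically closed in $C_{\q}.C_{q}$. Using the Tate normal form $C_{q}=C(x,y)$ with $y^{2}+xy=x^{3}+Bx+\widetilde{C}$ (Proposition~\ref{prop:Tatecurve}) and the linear disjointness of $C_{\q}$ and $C_{q}$ over $C$ (Lemma~\ref{lemma:transcendanceofqlog1}(1)), the element $x$ is transcendental over $C_{\q}$, so $C_{\q}(x)/C_{\q}$ is purely transcendental. To show that $Y^{2}+xY-(x^{3}+Bx+\widetilde{C})$ remains irreducible over $C_{\q}(x)$, it suffices to check that its discriminant $4x^{3}+x^{2}+4Bx+4\widetilde{C}$ is not a square in $C_{\q}(x)$; writing any candidate square root as $g/h$ with $g,h\in C_{\q}[x]$ coprime forces $h\in C_{\q}^{*}$, and then the equality $g^{2}=h^{2}(4x^{3}+x^{2}+4Bx+4\widetilde{C})$ is impossible by comparing even and odd degrees in $x$. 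Thus $C_{\q}.C_{q}=C_{\q}(x)[y]$ is a quadratic extension of $C_{\q}(x)$ with nontrivial Galois involution $\tau:y\mapsto -x-y$. For $\alpha=a+by\in C_{\q}.C_{q}$ algebraic over $C_{\q}$, $\tau(\alpha)$ is also algebraic over $C_{\q}$ (since $\tau$ fixes $C_{\q}$), and hence so is $\alpha-\tau(\alpha)=b(2y+x)$. Squaring gives $b^{2}(4x^{3}+x^{2}+4Bx+4\widetilde{C})\in C_{\q}(x)$ algebraic over $C_{\q}$, hence in $C_{\q}$ (since $C_{\q}$ is algebraically closed in the purely transcendental $C_{\q}(x)$); another degree-parity argument then forces $b=0$, so $\alpha=a\in C_{\q}(x)$ is algebraic over $C_{\q}$, hence $\alpha\in C_{\q}$.

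\emph{Step B:} show that $C_{\q}.C_{q}$ is relatively algebraically closed in $C_{\q}.C_{q}(\ell_{q},\ell_{\q})$; by Lemma~\ref{lemma:transcendenceqlogfinal}(1) and Lemma~\ref{lemma:algindependceqlogtqlog} the elements $\ell_{q}$ and $\ell_{\q}$ are transcendental over $C_{\q}.C_{q}$ and over $C_{\q}.C_{q}(\ell_{q})$ respectively, so the extension is purely transcendental and the elementary fact applies. Combining Steps A and B via transitivity yields (2). The main obstacle is Step A: establishing irreducibility of the Tate equation over $C_{\q}(x)$ and carrying out the trace-type argument on the resulting quadratic extension. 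Both parts hinge on the parity of the degree of $4x^{3}+x^{2}+4Bx+4\widetilde{C}$ as a polynomial in $x$, which is the arithmetic input that transforms the linear disjointness statement into the desired algebraic closure property.
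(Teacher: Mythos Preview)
Your proof is correct. Part~(1) and Step~B of part~(2) coincide with the paper's argument: reduce to the transcendence statements already established (Lemma~\ref{lemma:transcendenceqlogfinal}, Lemma~\ref{lemma:algindependceqlogtqlog}) and use that a field is relatively algebraically closed in any purely transcendental extension.

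Your Step~A, however, follows a genuinely different route. The paper writes $C_{\q}=C(x,y)$ (so that $x,y$ are $\sigma_{\q}$-invariant) and exploits the difference structure: an element $f\in C_{\q}.C_{q}$ algebraic over $C_{\q}$ satisfies $\sigma_{\q}^{r}(f)=f$ for some $r$ by Lemma~\ref{lemma:algoverdifferenceconstants}; writing $f=P(x)y+Q(x)$ with $P,Q\in C_{q}(x)$, one deduces that the coefficients of $P$ and $Q$ are fixed by $\sigma_{\q}^{r}$, and Lemma~\ref{lemma:tqconstantCq} (via the multiplicative independence of $q$ and $\q$) forces them into $C$, whence $f\in C(x,y)=C_{\q}$. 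You instead take $C_{q}=C(x,y)$ and argue purely field-theoretically: the discriminant $4x^{3}+x^{2}+4Bx+4\widetilde{C}$ has odd degree in $x$, so it is not a square in $C_{\q}(x)$, the extension $C_{\q}(x,y)/C_{\q}(x)$ is genuinely quadratic, and the Galois involution $y\mapsto -x-y$ together with a second parity count forces the $y$-component of any element algebraic over $C_{\q}$ to vanish. Your approach is more self-contained---it never invokes $\sigma_{\q}$ directly, only through the linear disjointness of Lemma~\ref{lemma:transcendanceofqlog1}(1)---while the paper's approach is shorter once the difference-Galois machinery is in place and makes the role of multiplicative independence more visible.
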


\begin{proof}
\begin{trivlist}
\item (1)
The first point is a consequence of transcendence of $s$ over $C_{\q}$, and the transcendence of $\ell_{\q}$
 over $C_{\q}(s)$, see Lemma \ref{lemma:transcendenceqlogfinal}.

\item (2) Let us prove the second point.  Let us start by proving that $C_{\q}$ is relatively  algebraically closed in $C_{\q}.C_{q}$. As in the proof of Lemma~\ref{lemma:transcendanceofqlog1}, we have $C_{\q}= C(x,y)$ and $C_{\q}.C_{q} = C_q(x,y)$  where  $y$ is of degree $2$ over both $C(x)$ and $C_q(x)$. Let $f \in C_q(x,y)$. Then $f = P(x) y + Q(x)$ with $P(x), Q(x) \in C_q(x)$. 
If $f$ is algebraic over $C_{\q}$ then Lemma~\ref{lemma:algoverdifferenceconstants} implies that $\s_{\q}^r(f) = f$ for some $r \in \ZX^*$ and therefore $\s_{\q}^r(P(x)) = P(x)$ and $\s_{\q}^r(Q(x)) = Q(x)$. We claim that $P(x)$ and $Q(x)$ are in $C(x)$, and therefore that $f \in C_{\q}$. Let $P(x) = P_1(x)/P_2(x)$ where $P_1(x), P_2(x) \in C_{q}[x]$ are relatively prime and $P_1(x)$ is monic.  We then have that $\s_{\q}^r(P_1(x))P_2(x)  =\s_{\q}^r(P_2(x)) P_1(x)$ and consequently $P_1(x) $ divides $\s_{\q}^r(P_1(x))$ (resp. $\s_{\q}^r(P_1(x))$ divides $P_1(x)$). Since $P_1(x)$ is monic,  $P_1(x) = \s_{\q}^r(P_1(x))$ and $P_2(x) = \s_{\q}^r(P_2(x))$. This implies that the coefficients of $P_1(x)$ and $P_2(x)$ are left fixed by $\s_{\q}^r$ . Note that by assumption, $q$ and $\q^{r}$ are multiplicatively independent. Therefore, by Lemma~\ref{lemma:tqconstantCq}, applied with $\q$ replaced by $\q^{r}$, $P_1, P_2 \in C[X]$. The proof for $Q$ is similar. This proves our claim and show that $f\in C_{\q}$. Then $C_{\q}$ is relatively  algebraically closed in $C_{\q}.C_{q}$.\par 

Note that Lemma~\ref{lemma:transcendenceqlogfinal} implies that $\ell_{\q}$ is transcendental over $C_{\q}.C_{q}$  and Lemma~\ref{lemma:algindependceqlogtqlog} implies that $\ell_q$ is transcendental over $C_{\q}.C_{q}(\ell_{\q})$.  Therefore $C_{\q}$ is relatively algebraically closed in $C_{\q}.C_{q} (\ell_{\q}, \ell_q)$.
\end{trivlist}
\end{proof}

Finally, we prove a lemma that will allows us to descend some \emph{telescoping relations} on smaller base fields.

\begin{lemma}\label{Lemma:telescoperdescentgenus1}
Let $b \in C_q$ such that there exist $N\in \N$, $c_{i} \in C_{\q}$ with $c_N \neq 0$, and ${g \in C_{\q}.C_{q} (\ell_\q,\ell_q)}$ that satisfy
\begin{equation}\label{eq:telescoperbigfield}
\sum_{i=0}^N c_{i} \partial_s^{i}(b)=\s_\q(g)-g.
\end{equation}
Then, there exist $m \in \N$, $d_0,\dots,d_m \in C$ not all zero and $h \in C_{q}$ such that 
$$
d_0 b_2+d_1\partial_s(b_2)+\dots +d_m \partial_s^m(b_2)=\s_\q(h)-h.
$$
\end{lemma}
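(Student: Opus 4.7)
The strategy is a two-stage descent: first reduce the ``auxiliary'' function $g$ from $C_\q.C_q(\ell_\q,\ell_q)$ down to $C_\q.C_q$ by eliminating $\ell_\q$ and then putting it in polynomial form in $\ell_q$; second, pass from $C_\q$-valued coefficients to $C$-valued ones by decomposing the resulting identity in a $C$-basis of $C_\q$ and using the linear disjointness of $C_\q$ and $C_q$ over $C$.

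First I would observe that $L := \sum_{i=0}^{N} c_i \partial_s^i(b)$ belongs to $C_\q.C_q$, since $b \in C_q$ and $\partial_s$ commutes with $\s_q$, hence preserves $C_q$. To eliminate $\ell_\q$ from $g$, I would set $K := C_\q.C_q(\ell_q)$ and apply Lemma~\ref{lem:transclogpolynomialinlogcocycle} with the transcendental element $f = \ell_\q$: its transcendence over $K$ is Lemma~\ref{lemma:algindependceqlogtqlog}, and both $K^{\s_\q}$ and $K(\ell_\q)^{\s_\q}$ equal $C_\q$ because they are contained in $\mathcal{M}er(C^*)^{\s_\q} = C_\q$. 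This forces $g \in K[\ell_\q]$. Writing $g = \sum_{k=0}^R \alpha_k \ell_\q^k$ and matching the coefficients of $\ell_\q$ in $\s_\q(g) - g = L \in K$ exactly as in the proof of Proposition~\ref{prop2}, one sees that the assumption $R \geq 1$ forces $\alpha_R \in K^{\s_\q} = C_\q$ and then produces $\beta := \alpha_{R-1}/(R\alpha_R) \in K$ with $\s_\q(\beta) - \beta = -1$, contradicting Lemma~\ref{lem:transccriteriaforlog} applied to $f=\ell_\q$ over $K$. Hence $R = 0$ and $g \in C_\q.C_q(\ell_q)$.

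With $g \in C_\q.C_q(\ell_q)$, I would invoke Lemma~\ref{lemma:descentcocycles}(2) with $f = L \in C_\q.C_q$ to replace $g$ by an element of the form $\tilde{a}\,\ell_q + \tilde{b}$ with $\tilde{a} \in C_\q$ and $\tilde{b} \in C_\q.C_q$ satisfying the same $\s_\q$-cocycle equation. Setting $d := \s_\q(\ell_q) - \ell_q \in C_q$ (which is nonzero, as noted in the proof of that lemma), this rewrites as
\[ \tilde{a}\, d + \s_\q(\tilde{b}) - \tilde{b} = L. \]
Now I would fix a $C$-basis $\{e_j\}_{j \in J}$ of $C_\q$, which by Lemma~\ref{lemma:transcendanceofqlog1}(1) is also a $C_q$-basis of $C_\q.C_q$. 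Expanding $\tilde{a} = \sum_j a_j e_j$, $\tilde{b} = \sum_j \tilde{b}_j e_j$ and $c_i = \sum_j c_{ij} e_j$ (so that $a_j, c_{ij} \in C$ and $\tilde{b}_j \in C_q$), and introducing $L_j := \sum_i c_{ij} \partial_s^i(b) \in C_q$, the above identity decomposes, by $C_q$-linear independence of $\{e_j\}$, into the system
\[ a_j\, d + \s_\q(\tilde{b}_j) - \tilde{b}_j = L_j, \qquad j \in J. \]

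To finish I would seek scalars $(\lambda_j)_{j \in J} \in C^{(J)}$ with $\sum_j \lambda_j a_j = 0$ (so that the $d$-contribution vanishes after summing) and $d_i := \sum_j \lambda_j c_{ij}$ not all zero; the combination then yields
\[ \sum_i d_i\, \partial_s^i(b) = \s_\q(h) - h, \qquad h := \sum_j \lambda_j \tilde{b}_j \in C_q, \]
which is the desired relation. The existence of such $\lambda$ amounts to a linear-algebra statement on the $C$-vectors $(a_j)_{j \in J}$ and $(c_{ij})_{j \in J}$ ($i=0,\dots,N$) in $C^{(J)}$; when $\tilde{a} = 0$ one simply picks $\lambda$ supported on any index $j$ with $c_{Nj} \neq 0$ (which exists by $c_N \neq 0$), and the main technical obstacle is the case $\tilde{a} \neq 0$, where one has to argue — using the uniqueness of $\tilde{a}$ produced by Lemma~\ref{lemma:descentcocycles}(2) and the hypothesis $c_N \neq 0$ — that the vectors $(c_{ij})_{j \in J}$ cannot all be proportional to $(a_j)_{j \in J}$ over $C$, so that a suitable $\lambda$ can indeed be found.
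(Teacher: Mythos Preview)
There are two genuine gaps.

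\textbf{First gap: the reduction to $R=0$ is incorrect.} After writing $g=\sum_{k=0}^R\alpha_k\ell_\q^k$ with $\alpha_k\in K=C_\q.C_q(\ell_q)$, your argument only excludes $R\ge 2$: in that case the coefficient of $\ell_\q^{R-1}$ in $\s_\q(g)-g$ must vanish, which indeed yields $\s_\q(\beta)-\beta=-1$ for $\beta=\alpha_{R-1}/(R\alpha_R)$ and a contradiction. But for $R=1$ the coefficient of $\ell_\q^0$ equals $L$, not $0$, so you only get $\s_\q(\alpha_0)-\alpha_0+\alpha_1=L$ and no contradiction. The analogy with Proposition~\ref{prop2} is misleading: there the left-hand side involves $\Delta_{t,\q}$-derivatives and hence, via Lemma~\ref{lemma:decompiterateDeltat}, has positive $\ell_\q$-degree; here it has $\ell_\q$-degree $0$. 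The paper handles $R=1$ by dividing by $\alpha_1\in C_\q^*$ and applying $\partial_s$, which produces a new telescoper with witness in $C_\q.C_q(\ell_q)$.

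\textbf{Second gap: the basis argument does not close.} Your proposed obstruction---that all $c_i$ are $C$-multiples of $\tilde a$---cannot be ruled out by ``uniqueness of $\tilde a$'' and $c_N\ne 0$. Indeed it can occur: take $N=0$, $b=d:=\s_\q(\ell_q)-\ell_q\in C_q$, and $c_0=\tilde a\in C_\q\setminus C$; then $L=\tilde a\,d=\s_\q(\tilde a\,\ell_q)-\tilde a\,\ell_q$, so the descent produces exactly this $\tilde a$, and the single coefficient vector $(c_{0j})=(a_j)$ is trivially proportional to $(a_j)$. No $\lambda$ as you describe exists, yet the lemma's conclusion holds (take $d_1=1$, $h=\partial_s\ell_q\in C_q$). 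The paper avoids this difficulty entirely: when $\tilde a\ne 0$ it divides by $\tilde a$ and applies $\partial_s$ to obtain a telescoper with witness $\partial_s(\ell_q)+\partial_s(\tilde b/\tilde a)\in C_\q.C_q$; then a minimality argument in the order of derivation, combined with applying $\s_q$ and Lemma~\ref{lemma:tqconstantCq}, forces the coefficients into $C$, after which Lemma~\ref{lemma:descentcocycles}(1) puts the witness in $C_q$.
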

\begin{proof}
 First of all note that the left hand side of \eqref{eq:telescoperbigfield} belongs to $C_{\q}.C_{q}$.
By Lemma \ref{lemma:algindependceqlogtqlog}, the function $\ell_\q$ is transcendental over 
$C_{\q}.C_{q}(\ell_{q})$. 
By Lemma \ref{lem:transclogpolynomialinlogcocycle},  $g \in C_{\q}.C_{q}(\ell_{q})[\ell_\q]$. So let us write   $g=\sum_{k=0}^R \alpha_k \ell_\q^k$ with $\alpha_k \in C_{\q}.C_{q}(\ell_{q}) $, $\alpha_{R} \neq 0$.

\textbf{Claim.} There exist $m \in \N$, $c'_k \in C_{\q}$, $c'_m \neq 0$, and $\alpha \in C_{\q}.C_{q}(\ell_{q})$ such that 
 \begin{equation}\label{eqn:telescopedxgenus1}
\sum_{k=0}^m c'_{k}  \partial_s^{k}(b) = \s_\q(\alpha)-\alpha.
\end{equation}
If $R=0$ the claim is proved. Assume that $R>0$.
Then, we have 
\begin{equation}\label{eqn:coeffrighthandside2}
 \s_\q(g)-g =\ell_\q^{R}( \s_\q(\alpha_R)-\alpha_R)) + \ell_\q^{R-1}(\s_\q(\alpha_{R-1}) -\alpha_{R-1} +R \alpha_R)  + P(\ell_\q),
\end{equation} 
where $P(X) \in C_{\q}.C_{q}(\ell_{q})[X]$ is a polynomial of degree smaller than $R-1$.
Then, comparing \eqref{eqn:coeffrighthandside2} and \eqref{eq:telescoperbigfield},  we find, by transcendence of $\ell_\q$ over $C_{\q}.C_{q}(\ell_{q})$, see Lemma  \ref{lemma:algindependceqlogtqlog}, that $\s_\q(\alpha_R)=\alpha_R$. Let us prove that $\s_\q(\alpha_{R-1}) -\alpha_{R-1} +R \alpha_R\neq 0$. Indeed if $\s_\q(\alpha_{R-1}) -\alpha_{R-1} +R \alpha_R=0$ then $\s_\q(\frac{\alpha_{R-1}}{\alpha_R}) - \frac{\alpha_{R-1}}{\alpha_R} +R =0$  with $\frac{\alpha_{R-1}}{\alpha_R} \in C_{\q}.C_{q}$ in contradiction  with  Lemma  \ref{lemma:transcendenceqlogfinal} and Lemma~\ref{lem:transccriteriaforlog}. We then obtain that $R=1$ since otherwise we would deduce from \eqref{eqn:coeffrighthandside2} an algebraic relation for $\ell_\q$ over $C_{\q}.C_{q}(\ell_{q})$, contradicting Lemma \ref{lemma:algindependceqlogtqlog}. Thus, 
\begin{equation}\label{eqn:telescopedxgenus1caseb}
 \sum_{i=0}^{N}\frac{c_{i}}{\alpha_1} \partial_s^{i}(b)  = \s_\q\left(\frac{\alpha_{0}}{\alpha_1}\right)-\frac{\alpha_{0}}{\alpha_1} +1.
 \end{equation}
Remind that $\alpha_1 \in C_{\q}$ and the latter field is stable by $\partial_{s}$ due to the commutation between $\partial_{s}$ and $\s_\q$.  By Lemma \ref{lemma:fielddefinitiongenus1}, the field $C_{\q}.C_{q}(\ell_{q})$ is stabilized by $\partial_s$. We can  derive \eqref{eqn:telescopedxgenus1caseb} with respect to  $\partial_s$ and using the commutation between $\s_\q$ and $\partial_{s}$, we obtain our claim.

\textbf{Claim.} There exist 
$M \in \N$, $d_k \in C_{\q}$, $d_M \neq 0$ and $\beta \in C_{\q}.C_{q}$ such that 
$$
\sum_{k=0}^M d_{k}  \partial_s^{k}(b) = \s_\q(\beta)-\beta.
$$
Indeed, by Lemma \ref{lemma:descentcocycles}, we can find $a \in C_\q, b \in C_{\q}.C_{q}$ such that 
\begin{equation}\label{eq:telescoper2genus1}
\sum_{k=0}^m c'_{k}  \partial_s^{k}(b) = \s_\q(a\ell_{q} +b)-(a \ell_{q} +b).
\end{equation}
Either $a =0$ and $\sum_{k} c'_{k}  \partial_s^{k}(b) = \s_\q(b)-(b)$ for some $b \in C_{\q}.C_{q}$. Or $a \neq 0$ and dividing \eqref{eq:telescoper2genus1} by $a$ and deriving with respect to $\partial_s$, we find
$$
\sum_{k=0}^{m+1} d_{k}  \partial_s^{k}(b) = \s_\q(\partial_s(\ell_{q})  +\partial_s(b/a))-(\partial_s( \ell_{q}) +\partial_s(b/a)),
$$
where the $d_k$ are in $C_\q$, $d_{m+1}=\frac{c'_m}{a} \neq 0$. Furthermore, by Remark \ref{rmk:derivateqlog} and the fact that $C_{\q}$, $C_{q}$, are stable by $\partial_{s}$, we find
 $\partial_s(\ell_{q})+\partial_s(b/a) \in C_{\q}.C_{q}$. This proves the claim.
 
Now, let us consider an equation of the form
$$
\sum_{k=0}^{M} d_{k}  \partial_s^{k}(b) = \s_{\q}(\beta)-\beta,
$$
 with $\beta \in C_{\q}.C_{q}$, $d_k \in C_\q$ and $d_M \neq 0$, minimal with respect to the maximal  order of derivation $M$ of $b$.  We can write this minimal equation as follows
$$
d_M\partial_s^{M}(b)+ \sum_{k=0}^{M-1} d_{k}  \partial_s^{k}(b) = \s_\q(\beta)-\beta,
$$
with $d_M \in C_\q^*$. Then dividing by $d_M$, we find
$$
\partial_s^{M}(b)+ \sum_{k=0}^{M-1} \frac{d_{k}}{d_M}  \partial_s^{k}(b) = \s_\q \left(\frac{\beta}{d_M}\right)-\frac{\beta}{d_M}.
$$
Therefore, we can without loss of assumption assume that $d_M=1$. Now, if we compute the element $\s_{q}(\s_\q(\beta)-\beta))-(\s_\q(\beta)-\beta))$ and use the fact that $b \in C_{q}$, we find
$$
\sum_{k=0}^{M-1}(\s_{q}( d_{k})- d_k)  \partial_s^{k}(b) = \s_\q(  \s_{q}( \beta)-\beta)-( \s_{q}( \beta)-\beta).
$$
By minimality, we find that, for all $k$, the element $d_k \in C_{\q}$ is fixed by $\s_{q}$. This means that $d_k \in C$ by Lemma \ref{lemma:tqconstantCq}.

Since $\partial_s^{M}(b)+ \sum_{k=0}^{M-1} d_{k}  \partial_s^{k}(b) \in C_{q}$ and 
$\partial_s^{M}(b)+ \sum_{k=0}^{M-1} d_{k}  \partial_s^{k}(b)=\s_\q(\beta)-\beta$ with $\beta \in C_{\q}.C_{q}$, Lemma   \ref{lemma:descentcocycles}  shows that we have the existence of $h \in C_{q}$ such that 
 $$\partial_s^{M}(b)+ \sum_{k=0}^{M-1} d_{k}  \partial_s^{k}(b)=\s_\q(h)-h. $$
\end{proof}


The   results of Appendix \ref{sec:trsnforellipticfunctions}  are summarized in  the following crucial corollary.

\begin{cor}\label{cor1}
The assumptions  of Proposition \ref{prop2} are satisfied for 
\begin{itemize}
\item Genus zero case: $K=C_{\q}(s)$ and $b \in C(s)$ with $\q\in C^*$ such that $|\q|\neq 1$;
\item Genus one case: $K=C_{\q}.C_{q}(\ell_{q})$ and $b \in C_{q}(\ell_q)$ with $\q,q\in C^*$ such that ${|\q|,|q|\neq 1}$ and $\q$ and $q$ are multiplicatively independent.
\end{itemize}
\end{cor}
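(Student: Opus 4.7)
The strategy for Corollary~\ref{cor1} is to verify, for each of the two cases, that the hypotheses \textbf{(H1)}, \textbf{(H2)}, \textbf{(H3)} of Proposition~\ref{prop2} hold and that the element $b$ lies in a subfield of $K$ stable under $\partial_s$ and $\partial_t$. This is essentially a bookkeeping exercise assembling the preparatory lemmas of Appendix~\ref{sec:trsnforellipticfunctions}; there is no substantive new argument to carry out.

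For the genus zero case with $K = C_{\q}(s)$: assumption \textbf{(H1)} (that $L = C_{\q}(s, \ell_{\q})$ is stable under $\s_{\q}, \partial_s, \Delta_{t,\q}$) is exactly item (1) of Lemma~\ref{lemma:fielddefinitiongenus1}. The transcendence assumption \textbf{(H3)} is exactly the second statement of Lemma~\ref{lemma:transcendenceqlogfinal}. The constants computation $K^{\s_{\q}} = L^{\s_{\q}} = C_{\q}$ follows since any $\s_{\q}$-invariant element of $C_{\q}(s, \ell_{\q}) \subset \cM er(C^{*})$ lies in $\cM er(C^{*})^{\s_{\q}} = C_{\q}$, and the relative algebraic closedness statement in \textbf{(H2)} is item (1) of Lemma~\ref{lemma:relativealgebraicclosure}. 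Finally, the condition that $b \in C(s)$ lies in a subfield stable by $\partial_s$ and $\partial_t$ is the last sentence of item (1) of Lemma~\ref{lemma:fielddefinitiongenus1}.

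For the genus one case with $K = C_{\q}.C_{q}(\ell_{q})$: assumption \textbf{(H1)} (that $L = C_{\q}.C_{q}(\ell_{q}, \ell_{\q})$ is stable under $\s_{\q}, \partial_s, \Delta_{t,\q}$) is exactly item (2) of Lemma~\ref{lemma:fielddefinitiongenus1}. The transcendence assumption \textbf{(H3)} is Lemma~\ref{lemma:algindependceqlogtqlog}. The identities $K^{\s_{\q}} = L^{\s_{\q}} = C_{\q}$ again follow from $L \subset \cM er(C^{*})$ together with $\cM er(C^{*})^{\s_{\q}} = C_{\q}$, while the relative algebraic closedness in \textbf{(H2)} is item (2) of Lemma~\ref{lemma:relativealgebraicclosure}. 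That $b \in C_{q}(\ell_{q})$ lies in a subfield stable by $\partial_s, \partial_t$ is the last assertion of item (2) of Lemma~\ref{lemma:fielddefinitiongenus1}.

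Since the two cases are verified and all hypotheses of Proposition~\ref{prop2} are met, the corollary follows immediately. The only point that required any nontrivial preparation was the transcendence of $\ell_{\q}$ over $C_{\q}.C_{q}(\ell_{q})$ (Lemma~\ref{lemma:algindependceqlogtqlog}) and the relative algebraic closedness of $C_{\q}$ in $C_{\q}.C_{q}(\ell_{\q}, \ell_{q})$ (item (2) of Lemma~\ref{lemma:relativealgebraicclosure}); both rest ultimately on Lemma~\ref{lemma:tqconstantCq}, which exploits the multiplicative independence of $q$ and $\q$, so no genuine obstacle arises at the level of the corollary itself.
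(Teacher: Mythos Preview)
Your proof is correct and takes essentially the same approach as the paper: both simply invoke Lemmas~\ref{lemma:fielddefinitiongenus1}, \ref{lemma:transcendenceqlogfinal}, \ref{lemma:algindependceqlogtqlog}, and \ref{lemma:relativealgebraicclosure} to verify the hypotheses. Your write-up is in fact more explicit than the paper's one-line citation, spelling out which lemma handles which hypothesis and also covering the stability condition on the subfield containing $b$.
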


\begin{proof}
The fact that the field $K$ and $b$ satisfy the assumptions \textbf{(Hi)}  is  Lemmas \ref{lemma:fielddefinitiongenus1},  \ref{lemma:transcendenceqlogfinal}, \ref{lemma:algindependceqlogtqlog}, and \ref{lemma:relativealgebraicclosure}.
\end{proof}

\end{appendix}

\bibliographystyle{alpha}
\bibliography{walkbib, qG}

\end{document}